 \definecolor{ao(english)}{rgb}{0.0, 0.5, 0.0}
	\definecolor{eng}{rgb}{0.0, 0.5, 0.0}
\definecolor{apple}{rgb}{0.55, 0.71, 0.0}
\definecolor{cadmium}{rgb}{0.0, 0.42, 0.24}
\definecolor{darkspringgreen}{rgb}{0.09, 0.45, 0.27}
\newcommand{\algebra}{{\mathscr{H}^\sigma_n}}
 \newcommand{\kay}{a}
  \newcommand{\WHY}{{y }}
\def\psucc
\def\ppsucc
\def\ppsucceq
  \def\psucceq
\definecolor{toggle}{rgb}{0,0,0}
\newcommand{\reflectpath}{\SSTP_\al^\flat}
\newcommand\REMOVETHESE[2]{{{{\mathsf{M}}_{#1}^{#2}	}}}
\newcommand\ADDTHIS[2]{{{{\mathsf{P}}_{#1}^{#2}}}}
 \newcommand{\al}{{{{\color{magenta}\boldsymbol\alpha}}}}
  \newcommand{\gam}{{{{\color{ao(english)}\boldsymbol\gamma}}}}
 \newcommand{\bet}{{{\color{cyan}\boldsymbol\beta}}}
 \newcommand{\emp}{{{\color{magenta}\mathbf{\boldsymbol\clock}}}}
  \newcommand{\empb}{{{\color{cyan}\mathbf{\boldsymbol\clock}}}}
\newcommand{\exx}{{b_\al }}
\newcommand{\eps}{ \varepsilon}
\newcommand{\Shl}{\widehat{\mathfrak{S}}_{\aatch  }}
\newcommand{\w}{{\underline{w}}}
\newcommand{\x}{{\underline{x}}}
\newcommand{\y}{{\underline{y}}}
\newcommand{\vvv}{{\underline{v}}}
\newcommand{\Alc}{\text{\bf Alc}}
\newlength{\superthick}
\newlength{\cornerradius}
\tikzstyle{corner}=[rounded corners=\cornerradius]
\tikzstyle{dot}=[circle, inner sep=0pt, minimum size=4.8pt]
\tikzstyle{string}=[line width=\superthick]
\tikzstyle{std}=[string,dash pattern=on 0.9pt off 0.9pt]
\definecolor{realcyan}{rgb}{0,1,1}
\mathchardef\mhyphen="2D
\definecolor{mediumblue}{rgb}{0.0, 0.0, 0.8}
 \colorlet{darkgreen}{green!50!black}
\newcommand{\Res}{{\rm Res}}
\newcommand{\Rem}{{\rm Rem}}
\renewcommand{\geq}{\geqslant}
\renewcommand{\leq}{\leqslant}
\tikzset{wei/.style= 
{red,double=red,double
distance=0.5pt}}
\tikzset{wei2/.style={red,double=red,double
distance=0.5pt}}
\numberwithin{equation}{section}
\newtheorem{thm}{Theorem}[section]
\newtheorem{cor}[thm]{Corollary}
\newtheorem{lem}[thm]{Lemma}
\newtheorem{prop}[thm]{Proposition}
\newtheorem*{prop*}{Proposition}
\newtheorem*{thmA*}{Theorem A}
\newtheorem*{thmB*}{Theorem B}
\newtheorem*{thmC*}{Theorem C}\newtheorem*{thm*}{Theorem D}
\newtheorem*{cor*}{Corollary}
\newtheorem*{conj*}{Conjecture A}
\newtheorem*{conj1*}{Conjecture B}
\newtheorem*{Acknowledgements*}{Acknowledgements}
\theoremstyle{rmk}
\theoremstyle{defn}
\newtheorem{rmk}[thm]{Remark}
\newtheorem{defn}[thm]{Definition}
\newtheorem{eg}[thm]{Example}
\newtheorem{notn}[thm]{Notation}
\newcommand{\great}{>}
\newcommand{\less}{<}
\newcommand{\rad}{\mathrm{rad}}
\newcommand{\res}{\mathrm{res}}
\newcommand{\Std}{{\rm Std}}
\newcommand{\Shape}{\operatorname{Shape}} 
\newcommand{\Path}{{\rm Path}}
\newcommand{\RStd}{{\rm RStd}}
\newcommand{\CStd}{{\rm CStd}}
\newcommand{\la}{\lambda}
\newcommand{\I}{i}
\newcommand{\J}{j}
\newcommand{\M}{m}
\newcommand{\SSTS}{\mathsf{S}}
\newcommand{\SSTT}{\mathsf{T}}  
\newcommand{\SSTP}{\mathsf{P}}  
\newcommand{\SSTU}{\mathsf{U}}  
\newcommand{\SSTQ}{\mathsf{Q}}  
\newcommand{\sts}{\mathsf{S}}  
\newcommand{\stt}{\mathsf{T}}  
\newcommand{\ZZ}{{\mathbb Z}}
\newcommand{\NN}{{\mathbb N}}
\newcommand\mydots{\makebox[1em][c]{.\hfil.\hfil.}}
\tikzset{
ultra thin/.style= {line width=0.05pt},
very thin/.style=  {line width=0.2pt},
thin/.style=       {line width=0.1pt},
semithick/.style=  {line width=0.6pt},
thick/.style=      {line width=0.8pt},
very thick/.style= {line width=1.2pt},
ultra thick/.style={line width=1.6pt}
}
\crefname{defn}{Definition}{Definitions}
\crefname{thm}{Theorem}{Theorems}
\crefname{prop}{Proposition}{Propositions}
\crefname{lem}{Lemma}{Lemmas}
\crefname{cor}{Corollary}{Corollaries}
\crefname{conj}{Conjecture}{Conjectures}
\crefname{section}{Section}{Sections}
\crefname{subsection}{Subsection}{Subsections}
\crefname{eg}{Example}{Examples}
\crefname{figure}{Figure}{Figures}
\crefname{rem}{Remark}{Remarks}
\crefname{rmk}{Remark}{Remarks}
\crefname{equation}{equation}{equation}
\Crefname{defn}{Definition}{Definitions}
\Crefname{thm}{Theorem}{Theorems}
\Crefname{prop}{Proposition}{Propositions}
\Crefname{lem}{Lemma}{Lemmas}
\Crefname{cor}{Corollary}{Corollaries}
\Crefname{conj}{Conjecture}{Conjectures}
\Crefname{section}{Section}{Sections}
\Crefname{subsection}{Subsection}{Subsections}
\Crefname{eg}{Example}{Examples}
\Crefname{figure}{Figure}{Figures}
\Crefname{rem}{Remark}{Remarks}
\Crefname{rmk}{Remark}{Remarks}
      \newcommand{\aatch}{h}
      \newcommand{\aatchpair}{{\underline{h}}}
      \newcommand{\enn}{{h}}
\begin{document}
  
 \title[Path combinatorics and light leaves  for  quiver Hecke algebras] {Path combinatorics and light leaves  \\ for  quiver Hecke algebras}
 
\begin{abstract}
We recast the classical notion of ``standard tableaux" 
in an alcove-geometric setting and 
extend these classical ideas to all ``reduced paths" in our geometry. 
This broader path-perspective is   essential 
for implementing the higher categorical ideas of 
Elias--Williamson  in the setting of quiver Hecke algebras.  
Our first main result is the construction of  light leaves bases
of  quiver Hecke algebras.  
These bases are richer and  encode more structural  information than their classical counterparts, even in the case of the symmetric groups.  
Our second main result provides   path-theoretic generators 
for the ``Bott--Samelson truncation" of the quiver Hecke  algebra.  
\end{abstract}
 
  \author{Chris Bowman}
       \address{Department of Mathematics, 
University of York, Heslington, York, YO10 5DD, UK}
\email{Chris.Bowman-Scargill@york.ac.uk}
  
  \author{Anton Cox}

 	\address{Department of Mathematics, City, University of London,   London, UK}
\email{A.G.Cox@city.ac.uk}

   \author{Amit Hazi}
   	\address{Department of Mathematics, City, University of London,   London, UK}
   \email{Amit.Hazi@city.ac.uk}

       \author{Dimitris Michailidis }
          \address{School of Mathematics, Statistics and Actuarial Science University of Kent, Canterbury,  UK}
\email{D.Michailidis@kent.ac.uk}

   \maketitle
   
   \renewcommand{\tau}{h}
   
 The  
  symmetric group lies at the intersection of two great categorical theories:  
  Khovanov--Lauda and Rouquier's  categorification of quantum groups and their knot invariants \cite{MR2525917,ROUQ} and Elias--Williamson's diagrammatic categorification in terms of  endomorphisms of Bott--Samelson bimodules.  
   The purpose of this paper and its companion \cite{cell4us2} is to     construct an explicit isomorphism between   these two  diagrammatic worlds.  
The backbone of this isomorphism is provided by the ``light-leaves" bases of these algebras.  
 
The light leaves bases of diagrammatic Bott--Samelson endomorphism algebras 
 were crucial 
in  the    calculation of  counterexamples to the expected bounds of Lusztig's and James' conjectures \cite{w13}.  
 These bases are structurally  far richer than any known basis of the  quiver  Hecke algebra ---    they vary with respect to each  possible  choice  of   reduced word/path-vector in the alcove geometry --- and this richer structure is necessary in order to construct a basis in terms of the ``Soergel 2-generators" of these algebras.  
 In this paper we show that the   (quasi-hereditary quotients of) quiver Hecke algebras,  
      $\mathcal{H}_n^\sigma $ for $\sigma \in \ZZ^\ell$, 
       have analogues of the light leaves bases in the ``non-singular" or ``regular" case.

Without loss of generality, we assume that  $\sigma  \in \ZZ^\ell$ is weakly increasing.  
We then let    $\underline{\tau}= (\tau_0,\dots,\tau_{\ell-1})  \in \NN^\ell$
      be such that  $\tau_m\leq   \sigma_{m+1}-\sigma_{m} $ for $0\leq m< \ell-1$ and  $\tau_{\ell-1}<  e+\sigma_0-\sigma_{\ell-1}  $.  
The light leaves bases we construct are     indexed  by paths in an alcove geometry  of type 
$$      A_{\tau_0-1}\times \mydots \times A_{\tau_{\ell-1}-1}  \backslash \widehat{A}_{\tau_0+\dots +\tau_{\ell-1} -1} $$
for $e> h= \tau_0+\dots+\tau_{\ell-1} $, where each point in this geometry corresponds to an $\ell$-multipartition with 
 at most $\tau_m$ columns in the $m$th component 
(we denote the set of such $\ell$-multipartitions by $\mathscr{P}_{\underline{\tau} }(n)$). 
 In this manner, we obtain   cellular bases of  the largest possible quasi-hereditary quotients of       $\mathcal{H}_n^\sigma $  controlled by non-singular Kazhdan--Lusztig theory (this is the broadest possible generalisation, in the context of cyclotomic quiver Hecke algebras, of studying the category of tilting modules of the principal block of the general linear group, ${\rm GL}_h(\Bbbk)$, in characteristic $p>h$).

\begin{thmA*}[Regular light leaves bases for quiver Hecke algebras]  
For each $\la\in \mathscr{P}_{\underline{\tau}}(n)$ we fix a reduced path $\SSTQ_\la\in \Path_{\aatchpair  }(\la)$ and 
for each $\SSTS \in \Path_{\aatchpair  }(\la)$, 
 	we fix an associated
 reduced path vector 	 ${\underline{  \SSTP}_\SSTS}$ terminating with $\SSTQ_\la$ 
 (this notation is defined in  \cref{9999,sec3}).  
  We have that 
$$
\{\Upsilon^\SSTS_{\underline{\SSTP}_\SSTS}
 \Upsilon_\SSTT^{\underline{\SSTP}_\SSTT}\mid 
\SSTS, \SSTT \in \Path_{\aatchpair  }(\la),\la\in \mathscr{P}_{\underline{\tau}}(n)\}
$$
is a cellular basis of $\mathcal{H}^\sigma_n /\mathcal{H}^\sigma_n {\sf y}_{\aatchpair}\mathcal{H}^\sigma_n $ 
where we quotient by the ideal generated by the element 
$ {\sf y}_{\underline{\tau}  } $  
which    kills all simples indexed by $\ell$--partitions with more than $\tau_m$ columns in the $m$th component.     
\end{thmA*}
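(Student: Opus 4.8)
The plan is to establish the three defining properties of a cellular basis for the quotient $\algebra/\algebra {\sf y}_{\aatchpair}\algebra$: that the displayed set spans, that it is linearly independent, and that it interacts correctly with an anti-involution and with the dominance order on $\mptn$. Since each element $\Upsilon^\SSTS_{\underline{\SSTP}_\SSTS}$ is built \emph{recursively} along the reduced path vector $\underline{\SSTP}_\SSTS$ --- transporting the Elias--Williamson light-leaf construction into the path language of \cref{9999,sec3} --- the anti-involution axiom will be essentially formal: the standard anti-automorphism of the quiver Hecke algebra swaps the two halves of the construction, so that $(\Upsilon^\SSTS_{\underline{\SSTP}_\SSTS}\Upsilon_\SSTT^{\underline{\SSTP}_\SSTT})^{*}=\Upsilon^\SSTT_{\underline{\SSTP}_\SSTT}\Upsilon_\SSTS^{\underline{\SSTP}_\SSTS}$ is again a member of the set.

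First I would prove linear independence by a leading-term argument. To each light leaf one attaches a leading term --- an idempotent-decorated monomial in the $\psi$-generators recording the pair of paths $(\SSTS,\SSTQ_\la)$ --- and one checks that the recursive ``add-a-box'' and ``reflection'' steps each preserve, or strictly increase, a statistic refining the dominance order on $\mptn$ together with a fixed-shape path order. The leading terms of the products $\Upsilon^\SSTS_{\underline{\SSTP}_\SSTS}\Upsilon_\SSTT^{\underline{\SSTP}_\SSTT}$ are then pairwise distinct and occur with unit coefficient, so no nontrivial linear combination can vanish.

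Spanning is the substantive point. The strategy is a straightening algorithm: starting from a known $\psi$-monomial basis of $\algebra$, one rewrites each monomial by repeated application of the path-theoretic relations, pushing the $\psi$- and $y$-generators into the configuration dictated by a pair of reduced paths, with all error terms strictly lower in the triangular order above; quotienting by $\algebra {\sf y}_{\aatchpair}\algebra$ is precisely what annihilates the contributions of paths that leave the geometry $A_{\tau_0-1}\times\cdots\times A_{\tau_{\ell-1}-1}\backslash\widehat{A}_{h-1}$, i.e. of multipartitions with more than $\tau_m$ columns in component $m$. Together with linear independence this already gives a basis; as a consistency check its cardinality then equals $\sum_\la |\Path_{\aatchpair}(\la)|^2$, the dimension of the largest quasi-hereditary quotient of $\algebra$ controlled by regular Kazhdan--Lusztig combinatorics. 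Finally, the cell-ideal filtration and the cellular multiplication rule --- right multiplication of $\Upsilon^\SSTS_{\underline{\SSTP}_\SSTS}\Upsilon_\SSTT^{\underline{\SSTP}_\SSTT}$ by an algebra element is, modulo $\algebra^{\gdom\la}$, a combination of the $\Upsilon^\SSTS_{\underline{\SSTP}_\SSTS}\Upsilon_\SSTV^{\underline{\SSTP}_\SSTV}$ with coefficients independent of $\SSTS$ --- drop out of the same recursive structure, since right multiplication only affects the $\SSTT$-half and, modulo more dominant cell ideals, acts through the associated path (cell) module.

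The step I expect to be the main obstacle is the straightening/spanning argument. Making the rewriting terminate and keeping the error terms under control requires a delicate compatibility between the KLR relations --- the braid-type relations among the $\psi$'s, the nilHecke relations binding the $\psi$'s and $y$'s, and the cyclotomic relation --- and the combinatorics of reduced paths in the alcove geometry. In particular one must analyse the ``fork'' and ``adjustment'' configurations (the $\Forkq$- and $\Pdipj$-type terms) that arise when two reduced paths to the same alcove differ by a braid move; this local analysis is exactly where the higher-categorical input of Elias--Williamson is brought to bear, and matching all the signs and lower-order corrections there is the delicate part.
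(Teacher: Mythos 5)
Your proposal takes a genuinely different route from the paper, and the differences matter. The paper does \emph{not} prove the light leaves basis by a direct straightening algorithm or a leading-term argument. Instead it proceeds in two stages. In Section~\ref{sec3} it proves, by a lengthy induction with the reverse cylindric order and Garnir-belt combinatorics (\cref{mainproof}, \cref{spanner}), that the single fixed-path set $\{\psi^\SSTS_{\SSTT_\la}\psi_\SSTT^{\SSTT_\la}\}$ is a cellular basis --- this is where all the ``straightening'' lives, and it is entirely in terms of the maximal tableau $\SSTT_\la$, not in terms of arbitrary reduced path vectors. In Section~\ref{9999} it then derives the light leaves basis by a \emph{unitriangular change of basis within each cell layer} $\mathscr{H}^{\succeq\la}/\mathscr{H}^{\succ\la}$: the key ingredients are \cref{alemmmerd} (for any two reduced paths $\SSTP_\la,\SSTQ_\la$ one has $\Upsilon^{\SSTP_\la}_{\SSTQ_\la}\Upsilon^{\SSTQ_\la}_{\SSTP_\la}\equiv k\,e_{\SSTP_\la}$ with $k$ a unit, modulo $\mathscr{H}^{\succ\la}$) and the restriction filtration of Specht modules (\cref{hfsaklhsalhskafhjksdlhjsadahlfdshjksadflhafskhsfajk}, together with the factorisation \cref{popopo}--\cref{popopo2} of $\w^\SSTS_{\SSTT_\la}$ through one-box branching coefficients). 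One then peels off one step of the path at a time, inserting an $\Upsilon^{\SSTT_\nu}_{\SSTP_{\SSTS,k}}\Upsilon^{\SSTP_{\SSTS,k}}_{\SSTT_\nu}$ sandwich at each level, re-bracketing, and cleaning up with \cref{move a dot down}. After $n$ iterations one obtains exactly the displayed light leaves products. Both spanning and independence are thus inherited from the tableaux basis.

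Your proposed independence argument has a genuine gap. You claim each product $\Upsilon^\SSTS_{\underline{\SSTP}_\SSTS}\Upsilon^{\underline{\SSTP}_\SSTT}_\SSTT$ has a distinguished leading $\psi$-monomial with unit coefficient, but the light leaf element is a product of modified branching coefficients $d_p(\SSTS,\underline{\SSTP}_\SSTS)=\Upsilon^{\SSTP_{\SSTS,p-1}\boxtimes\SSTP_{i_p}}_{\SSTP_{\SSTS,p}}$, each of which is itself a rex-type move between two reduced paths; expanding the product in any fixed $\psi$-monomial normal form produces many terms and there is no obvious monomial order in which a single term dominates with unit coefficient. What actually makes the argument go through in the paper is the unit-triangularity guaranteed by \cref{alemmmerd}, which shows that, modulo $\mathscr{H}^{\succ\la}$, the light leaf element differs from the tableaux basis element by a unit scalar --- a statement about the \emph{quotient}, not about leading monomials in the full algebra. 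Likewise, your spanning-by-straightening step would have to target a normal form, but the light leaves basis is not a normal form: the set depends on the free choice of $\underline{\SSTP}_\SSTS$ for each $\SSTS$, and the theorem asserts that \emph{every} such choice yields a basis. This ambiguity is precisely what the change-of-basis argument handles cleanly and what a direct rewriting argument would struggle with. (As a side note, the objects $\Forkq$ and $\Pdipj$ you invoke are not defined or used in this paper's proof.) If you want to salvage your approach, you would at minimum need the analogue of \cref{alemmmerd} to control the coefficients, at which point you are essentially reconstructing the paper's argument with extra steps.
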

         \color{black}

We then   consider the so-called  ``Bott--Samelson truncations" to the principal block
    $${\sf f}_{n,\sigma }(\mathcal{H}^\sigma_n /\mathcal{H}^\sigma_n {\sf y}_{\aatchpair}\mathcal{H}^\sigma_n) {\sf f}_{n,\sigma } 
 \qquad \text{for}\qquad  
 {\sf f}_{n,\sigma} =\sum_{\begin{subarray}c
   \SSTS\in \Std_{n,\sigma} (\la)
   \\
   \la  \in \mathscr{P}_{\underline{\tau}}(n)
   \end{subarray}} { e}_{\SSTS}
   $$ 
which we will show (in the companion paper \cite{cell4us2}) are isomorphic to the  (breadth enhanced) diagrammatic  Bott--Samelson endomorphism algebras of Elias--Williamson \cite{MR3555156}.  
 The charm of this isomorphism is that  it allows one to view the current state-of-the-art regarding  $p$-Kazhdan--Lusztig theory  (in type $A$) 
  entirely within  the context of the group algebra of the symmetric group, 
without the need for calculating intersection cohomology groups, or working with
  parity sheaves, or appealing to the deepest results of 2-categorical Lie theory. 
  In this paper, we specialise Theorem A by making certain path-theoretic choices which allow us to 
reconstruct Elias--Williamson's  generators entirely within the quiver Hecke algebra itself, using our language of  paths.

\begin{thmB*}
The Bott--Samelson  truncation of the Hecke  algebra
 ${\sf f}_{n,\sigma }(\mathcal{H}^\sigma_n /\mathcal{H}^\sigma_n {\sf y}_{\aatchpair}\mathcal{H}^\sigma_n) {\sf f}_{n,\sigma } $ is generated  
  by  horizontal and vertical concatenation of 
 the elements 
 $$
   e_{\SSTP_\al}, \quad  {\sf fork}_{\al\al}^{\al\emp},
 \quad
  {\sf spot}_{\al}^{\emp},
 \quad
  {\sf hex}_{\al\bet\al}^{\bet\al\bet},  
   \quad
  {\sf com}_{\bet\gam}^{\gam\bet},
   \quad e_{\SSTP_\emptyset}, \ \text{and} \ \
  {\sf adj}_{\al\emptyset}^{\emptyset\al}
 $$
(this notation is defined in \cref{geners})  for $\al,\bet,\gam \in \Pi$ such that $\al$ and $\bet$ label an arbitrary pair  of non-commuting reflections and 
$\bet$ and $\gam$ label  an arbitrary pair  of commuting reflections.  
 \end{thmB*}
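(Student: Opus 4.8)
The plan is to leverage Theorem A as the starting point: we already possess a cellular (hence spanning) basis $\{\Upsilon^\SSTS_{\underline{\SSTP}_\SSTS}\Upsilon_\SSTT^{\underline{\SSTP}_\SSTT}\}$ of $\mathcal{H}^\sigma_n/\mathcal{H}^\sigma_n{\sf y}_{\aatchpair}\mathcal{H}^\sigma_n$, and truncating by the idempotent ${\sf f}_{n,\sigma}$ gives a spanning set of the Bott--Samelson truncation indexed by pairs of paths $\SSTS,\SSTT$ with $\Shape(\SSTS)=\Shape(\SSTT)$ lying in the principal block. So the real content is: (i) every truncated basis element ${\sf f}_{n,\sigma}\Upsilon^\SSTS_{\underline{\SSTP}_\SSTS}\Upsilon_\SSTT^{\underline{\SSTP}_\SSTT}{\sf f}_{n,\sigma}$ can be rewritten as a (horizontal and vertical) composite of the seven named generators, and (ii) each of those seven elements genuinely lies in the truncated algebra. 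Step (ii) is a bookkeeping check: for each of $e_{\SSTP_\al}$, ${\sf fork}_{\al\al}^{\al\emp}$, ${\sf spot}_{\al}^{\emp}$, ${\sf hex}_{\al\bet\al}^{\bet\al\bet}$, ${\sf com}_{\bet\gam}^{\gam\bet}$, $e_{\SSTP_\emptyset}$, ${\sf adj}_{\al\emptyset}^{\emptyset\al}$ one verifies that the source and target path-vectors are built from the distinguished paths $\SSTP_\al$, $\SSTP_\emptyset$ sitting in the principal block, so that the element is fixed on both sides by ${\sf f}_{n,\sigma}$; this will follow directly from the definitions in \cref{geners} together with the path-combinatorial set-up of \cref{sec3}.

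For step (i), the strategy is to factor the light-leaf map $\Upsilon^\SSTS_{\underline{\SSTP}_\SSTS}$ along the sequence of elementary steps of the path $\SSTS$ (equivalently, along the reduced path vector ${\underline{\SSTP}_\SSTS}$). The light leaves construction is inductive: at each step one either adds a box, removes a box, or reflects through a wall, and each such move is implemented by a specific diagrammatic generator. The key is to match each elementary move against one of the seven generators: wall-crossings in a single component give ${\sf fork}$ and ${\sf spot}$ type morphisms; moves that interchange the roles of two non-commuting reflections are absorbed by the hexagon ${\sf hex}_{\al\bet\al}^{\bet\al\bet}$; moves interchanging commuting reflections by the commutator ${\sf com}_{\bet\gam}^{\gam\bet}$; moves between components (the ``$\emptyset$'' versus ``$\al$'' bookkeeping) by the adjustment ${\sf adj}_{\al\emptyset}^{\emptyset\al}$; and the idempotents $e_{\SSTP_\al}, e_{\SSTP_\emptyset}$ provide the identity strands to pad out horizontal concatenations so that everything composes. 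Then $\Upsilon^\SSTS_{\underline{\SSTP}_\SSTS}\Upsilon_\SSTT^{\underline{\SSTP}_\SSTT}$ is the vertical composite of the word for $\SSTS$ with the (flipped) word for $\SSTT$, and since these exhaust a basis, the seven elements generate.

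The main obstacle I expect is \emph{completeness of the generating set} in step (i): showing that the elementary moves appearing in \emph{arbitrary} reduced paths $\SSTS$ (not just the distinguished $\SSTQ_\la$) never require a morphism outside the list. This is where the freedom in Theorem A to choose the path vectors ${\underline{\SSTP}_\SSTS}$ must be exploited carefully — one wants to choose them so that consecutive paths in the vector differ by a single generator-implementable move, which amounts to a statement that any reduced path can be transformed to the reference path $\SSTQ_\la$ through a sequence of such elementary moves (a path-theoretic analogue of Matsumoto's theorem / the fact that any two reduced expressions are connected by braid moves). Establishing this connectivity, and checking that the braid-type relations among reduced words in the affine symmetric group $\widehat{A}_{h-1}$ are all generated by the commuting-pair relation and the non-commuting (hexagon) relation — with the inter-component adjustments handled separately — is the technical heart. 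The degenerate/boundary cases (paths touching the walls of the finite $A_{\tau_m-1}$ factors, where ${\sf spot}$ and ${\sf fork}$ interact) will need separate attention but should reduce to the rank-one and rank-two calculations.
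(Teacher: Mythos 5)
Your proposal is correct and takes essentially the same route as the paper. The paper's \cref{twins} constructs light-leaf elements $c^{\SSTT}_{\SSTS}$ inductively as composites of the seven generators along the four elementary step types $U_0,U_1,D_0,D_1$ (with a ``rex'' move built from ${\sf hex},{\sf com},{\sf adj}$ realigning to the reference reduced path), and \cref{Theorembbyanyothername} then proves these coincide with the Theorem~A basis $\Upsilon^{\underline{\SSTP}_\SSTS}_{\SSTS}$ for the specific reduced path vectors you anticipate choosing — which is exactly the exploitation of the freedom in Theorem~A that you single out as the crux, together with the Matsumoto-style connectivity of reduced paths under hexagon/commutation/adjustment moves that the paper also invokes to define the rex moves.
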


  The paper is structured as follows.  
In Section 1 we construct a ``classical-type" cellular basis of $\mathcal{H}^\sigma_n /\mathcal{H}^\sigma_n {\sf y}_{\aatchpair}\mathcal{H}^\sigma_n$ in terms of tableaux but using a slightly exotic dominance ordering ---   
the proofs in this section are a little dry and can be skipped on the first reading.  
In Section 2, we upgrade this basis to a ``light leaf type" construction and prove Theorem A.  
Finally, in Section 3   of the paper we  
illustrate how  Theorem A  allows us to reconstruct the precise analogue of the light leaves basis for the Bott--Samelson endomorphism algebras for regular blocks of quiver Hecke algebras
 (as a special case of Theorem A, written in terms of the generators of Theorem B).  We do this in the exact   language used by Elias, Libedinsky, and Williamson in order to make the construction clear for a reader whose background lies in either field.

This paper is a companion to \cite{cell4us2}, but the reader should note that the results here are entirely self-contained (although we refer to \cite{cell4us2} for further development of ideas, examples, etc).

\section{A tableaux basis of the quiver Hecke algebra }  \label{sec3}
   
   We let $\mathfrak{S}_n$ denote the symmetric group on $n$ letters and let $\ell:  \mathfrak{S}_n \to \ZZ$ denotes its length function.  
 We let $\leq $ denote the (strong) Bruhat order on $\mathfrak{S}_n$.  
 Given $\underline{i}=(i_1,\dots, i_n)\in (\ZZ/e\ZZ)^n$ and $s_r=(r,r+1)\in \mathfrak{S}_n$ we set 
 $s_r(\underline{i})= (i_1,\dots, i_{r-1}, i_{r+1}, i_r ,i_{r+2}, \dots ,i_n)$.  
 
\begin{defn}[\cite{MR2551762,MR2525917,ROUQ}]\label{defintino1}
Fix $e > 2 $.  
The {\sf quiver Hecke algebra} (or KLR algebra),  $\mathcal{H}_n $,   is defined to be the unital, associative $\ZZ$-algebra with generators
$$ 
\{e_{\underline{i}}  \ | \ {\underline{i}}=(i_1,\mydots,i_n)\in   (\ZZ/e\ZZ)^n\}\cup\{y_1,\mydots,y_n\}\cup\{\psi_1,\mydots,\psi_{n-1}\},
$$
subject to the    relations 
\begin{align}
\tag{R1}\label{rel1}  e_{\underline{i}} e_{\underline{j}} =\delta_{{\underline{i}},{\underline{j}}} e_{\underline{i}} 
 \qquad \textstyle 
\sum_{{\underline{i}} \in   (\ZZ/e\ZZ )^n } e_{{\underline{i}}} =1_{\mathcal{H} _n}   
\qquad y_r		e_{{\underline{i}}}=e_{{\underline{i}}}y_r 
\qquad
\psi_r e_{{\underline{i}}} = e_{s_r({\underline{i}})} \psi_r 
\qquad
y_ry_s =y_sy_r
 \end{align}
for all $r,s,{\underline{i}},{\underline{j}}$ and 
\begin{align}
\tag{R2}\label{rel2}
\psi_ry	_s  = y_s\psi_r \  \text{ for } s\neq r,r+1&
\qquad  
&\psi_r\psi_s = \psi_s\psi_r \ \text{ for } |r-s|>1
 \\ \tag{R3}\label{rel3}
y_r \psi_r e_{\underline{i}}
  =
(\psi_r y_{r+1} \color{black}-\color{black} 
\delta_{i_r,i_{r+1}})e_{\underline{i}}  &
 \qquad 
&y_{r+1} \psi_r e_{\underline{i}}   =(\psi_r y_r \color{black}+\color{black} 
\delta_{i_r,i_{r+1}})e_{\underline{i}} 
\end{align}
\begin{align}
\tag{R4}\label{rel4}
\psi_r \psi_r  e_{\underline{i}} &=\begin{cases}
\mathrlap0\phantom{(\psi_{r+1}\psi_r\psi_{r+1} + 1)e_{\underline{i}} \qquad}& \text{if }i_r=i_{r+1},\\
e_{\underline{i}}  & \text{if }i_{r+1}\neq i_r, i_r\pm1,\\
(y_{r+1} - y_r) e_{\underline{i}}  & \text{if }i_{r+1}=i_r +1  ,\\
(y_r - y_{r+1}) e_{\underline{i}} & \text{if }i_{r+1}=i_r -1   
\end{cases}\\
\tag{R5}\label{rel5}
\psi_r \psi_{r+1} \psi_r e_{\underline{i}} &=\begin{cases}
(\psi_{r+1}\psi_r\psi_{r+1} - 1)e_{\underline{i}}\qquad& \text{if }i_r=i_{r+2}=i_{r+1}+1  ,\\
(\psi_{r+1}\psi_r\psi_{r+1} + 1)e_{\underline{i}}& \text{if }i_r=i_{r+2}=i_{r+1}-1    \\
 \psi_{r+1} \psi_r\psi_{r+1}e_{\underline{i}}   &\text{otherwise} 
\end{cases} 
\end{align}
 for all  permitted $r,s,i,j$.  We identify such elements with decorated permutations  and the multiplication with vertical concatenation, $\circ$,  of these diagrams  in the  standard fashion of \cite[Section 1]{MR2551762} and as illustrated in \cref{fkgjhdfgjkhdhfjgkdgfhjkdhgfjk}.  We let $\ast$ denote the anti-involution which fixes the generators.  
 
\end{defn}

\begin{defn}Fix $e > 2 $ and $\sigma\in  \ZZ ^\ell$.  
The {\sf cyclotomic quiver Hecke algebra}, $\mathcal{H}_n^\sigma$,   is defined to be the
 quotient of $\mathcal{H}_n$ by the relation
\begin{align}
\label{rel1.12} y_1^{\sharp\{\sigma_m | \sigma_m= i_1 ,1\leq m \leq \ell 	\}} e_{\underline{i}} &=0 
\quad 
\text{ for    ${\underline{i}}\in (\ZZ/e\ZZ)^n$.}
\end{align}  
 \end{defn}

\color{black}

   As we see in \cref{fkgjhdfgjkhdhfjgkdgfhjkdhgfjk}, the $y_k$ elements are visualised as dots on strands; we hence refer to them as {\sf KLR dots}. Given   $p<q$ we set 
$$
w^p_q= s_p s_{p+1}\dots s_{q-1}\quad
w^q_p=  s_{q-1} \dots s_{p+1} s_{p }
\qquad \psi^p_q= \psi_{p}\psi_{p+1}\mydots \psi_{q-1} 
\qquad \psi^q_p=   \psi_{q-1}  \mydots
 \psi_{p+1}\psi_{p}.
 $$  
and  given an expression $\w=s_{i_1}\dots s_{i_p}\in \mathfrak{S}_n$ we set 
$\psi_\w= \psi_{i_1}\dots \psi_{i_p}\in \mathcal{H}_n$.   
We let $\boxtimes$ denote the horizontal concatenation of  KLR diagrams.  
 Finally, we define the degree   as follows, 
$$
{\rm deg}(e_{\underline{i}})=0 \quad 
{\rm deg}( y_r)=2\quad 
{\rm deg}(\psi_r e_{\underline{i}})=
\begin{cases}
-2		&\text{if }i_r=i_{r+1} \\
1		&\text{if }i_r=i_{r+1}\pm 1 \\
 0 &\text{otherwise} 
\end{cases}. $$ 

\begin{figure}[ht!]
 $$   
 \scalefont{0.8}\begin{tikzpicture}
[xscale=0.8,yscale=  
   0.8]
  
     \draw(0,0) rectangle (13,1.5);
        \foreach \i in {0.5,1.5,...,12.5}
  {
   \fill(\i,0) circle(1.5pt) coordinate (a\i);
          \fill(\i,1.5)circle(1.5pt)  coordinate (d\i);
    } 
    
 \draw[fill](0.5,0)--++(90:0.75) circle (3.5pt);       
 \draw[fill](4.5,0)--++(90:0.75) circle (3.5pt);

 \draw(0.5,0)--++(90:1.5) node[below, at start] {$0$}; 
 \draw(1.5,0)-- (2.5,1.5) node[below, at start] {$1$};    
 \draw(2.5,0)-- (3.5,1.5) node[below, at start] {$2$};       
  \draw(1.5,1.5)-- (3.5,0) node[below, at end] {$3$};

 \draw(4+0.5,0)--++(90:1.5) node[below, at start] {$0$}; 
 \draw(4+1.5,0)-- (4+2.5,1.5) node[below, at start] {$1$};    
 \draw(4+2.5,0)-- (4+3.5,1.5) node[below, at start] {$2$};       
 \draw(5+2.5,0)-- (5+3.5,1.5) node[below, at start] {$3$};       

  \draw(4+1.5,1.5)-- (5+3.5,0) node[below, at end] {$4$};

  \draw(10.5,0)-- (9.5+3,1.5) node[below, at start] {$1$};    
  \draw(9.5,0)-- (9.5,1.5) node[below, at start] {$0$};        
  \draw(1+10.5,0)-- (1+9.5,1.5) node[below, at start] {$2$};        
    \draw(1+1+10.5,0)-- (1+1+9.5,1.5) node[below, at start] {$3$};           
   
    \draw[fill](9.5,0)--++(90:0.75) circle (3.5pt);

  \end{tikzpicture}  
  $$
 
 \caption{The element  $y_1\psi^2_4 e_{(0,1,2,3)} \boxtimes y_1\psi^2_5 e_{(0,1,2,3,4)} \boxtimes
  y_1   \psi^4_2 e_{(0,1,2,3)}	
 	$. }
\label{fkgjhdfgjkhdhfjgkdgfhjkdhgfjk} \end{figure}

\subsection{Box configurations, partitions, residues and tableaux}   %
For a fixed $n\in \NN$ and $\ell\geq 1$ we define a  {\sf  box-configuration}  to be a subset of
$$\{[i, j, m] \mid  0\leq m < \ell,  1\leq i, j \leq n  \}$$
of $n$ elements, which we call boxes, and we let  $\mathcal{B}_{ \ell}(n)$ denote the set of all such  box-configurations.  
We refer to a box $[i,j,m]\in \la \in \mathcal{B}_{ \ell}(n)$ as being in the $i$th row and $j$th column of the $m$th component of $\lambda$.    Given a box, $[i,j,m]$,  
we define the {\sf content} of this box  to be  ${\sf ct}[i,j,m] = \sigma_m+ j-i$ and we define its {\sf residue} to be ${\rm res}[i,j,m]= {\sf ct}[i,j,m] \pmod e$.  
We refer to a box of residue  $r\in \ZZ/e\ZZ$ as an $r$-box.

We define a {\sf composition}, $\lambda$,  of $n$ to be a   finite   sequence  of non-negative integers $ (\lambda_1,\lambda_2, \ldots)$ whose sum, $|\lambda| = \lambda_1+\lambda_2 + \mydots$, equals $n$.    We say that $\la$ is a partition if, in addition, this sequence is weakly decreasing.  
An     {\sf $\ell $-multicomposition} (respectively {\sf 
   $\ell$-multipartition})   $\lambda=(\lambda^{(0)},\mydots,\lambda^{(\ell-1)})$ of $n$ is an $\ell $-tuple of   compositions (respectively  partitions)  such that $|\lambda^{(0)}|+\mydots+ |\lambda^{(\ell-1)}|=n$. We denote the sets of 
$\ell$-multipartitions  and $\ell$-multi-compositions of $n$ by $ \mathscr{P}_{\ell}(n)$ and 
$ \mathcal{C}_{\ell}(n)$, respectively.  
 Given  $\lambda=(\lambda^{(0)},\lambda^{(1)},\ldots ,\lambda^{(\ell-1)}) \in \mathscr{P}_{\ell}(n)$, the {\sf Young diagram} of $\lambda$    is defined to be the box configuration, 
\[
\{[i,j,m] \mid  1\leq  j\leq \lambda^{(m)}_i,0
\leq m <\ell \}.
\]
 We do not distinguish between the multipartition and its Young diagram. 
    We let $\varnothing$ denote the empty multipartition.   
  Given $\la \in\mathcal{B}_\ell (n)$, we let   
  $${\rm Rem} (\la)=\{[r,c,m]  \in \la \mid [r+1,c,m]\not\in \la,  [r,c+1,m]\not\in \la\}$$ 
 $${\rm Add}  (\la)=\{[r,c,m] \not \in \la \mid [r-1,c,m] \in \la \text{ if }r>1,  [r,c-1,m] \in \la\text{ if }c>1\}$$  for $\la \in\mathscr{P}_\ell(n)\subseteq \mathcal{B} _\ell (n)$ this coincides with the usual definition of addable and removable boxes.  
  We let  ${\rm Rem}_r  (\la)\subseteq {\rm Rem}  (\la)$ and  ${\rm Add}_r (\la)\subseteq {\rm Add}  (\la)$  denote the subsets of boxes of residue $r\in \ZZ/e\ZZ$.  
 
  Given $\lambda\in \mathcal{B}_{\ell}(n)$, we define a {\sf $\la$-tableau}  to be a filling of the boxes of $[\la] $ with the numbers 
$\{1,\mydots , n\}$. For  $\lambda\in \mathcal{B}_{\ell}(n)$, 
we say that a $\la$-tableau is {\sf row-standard}, {\sf column-standard},  or simply {\sf standard} if  the entries in each component increase along the rows, increase along the columns, or increase along both rows and columns, respectively.  
We say that a $\la$-tableau $\sts$ has {\sf shape} $\la$ and write ${\sf Shape}(\sts)=\la$. 
Given $\lambda\in\mathcal{B}_{ \ell}(n)$, we let 
${\rm Tab}(\la)$ denote the set of all    tableaux of shape $\lambda\in\mathcal{B}_{ \ell}(n)$.   Given $\SSTT \in {\rm Tab}(\la)$ and $1\leq k\leq n$, we let $\SSTT^{-1}(k)$ denote the box  $\square\in \la$ 
 such that  $\SSTT(\square)=k$. 
We let 
$\RStd (\lambda), \CStd (\lambda), \Std (\lambda) \subseteq 
{\rm Tab}(\la)
$ denote the subsets of all row-standard, column-standard, and 
 standard tableaux, respectively.  
We let 
$\Std(n)=\cup_{\la\in \mathscr{P}_\ell (n)} \Std(\la)   $
 for    $n\in \NN$.

\begin{defn} 
We define the {\sf reverse cylindric   ordering}, $\succ$,  as follows.  
Let $1\leq i,i',j,j' \leq n$ and $0\leq m,m' < \ell$.  
 We write $[i,j,m] \succ [i',j',m']$ 
 if  $i<i'$, 
  or  $i=i'$ and $m<m'$,
 or $i=i'$ and $m=m'$ and $j<j'$.  
%
%
%
%
For $\la,\mu \in \mathcal{B}_{\ell}(n)$, we write $\la  \succ \mu$ if the $\succ$-minimal  box $\square  \in 
( \la\cup\mu) \setminus ( \la \cap \mu)  $ belongs to $\mu$.  
 \end{defn}

\begin{defn} 
We define the {\sf dominance   ordering}, $\rhd$,    as follows.  
Let $1\leq i,i',j,j' \leq n$ and $0\leq m,m' < \ell$.  
 We write $[i,j,m] \rhd [i',j',m']$ 
 if  $m<m'$, 
  or  $m=m'$ and $i<i'$,
 or $i=i'$ and $m=m'$ and $j<j'$.  
Given $\la,\mu \in \mathcal{B}_{\ell}(n)$, we write $\la  \rhd \mu$ if the $\rhd$-maximal  box $\square  \in 
( \la\cup\mu) \setminus ( \la \cap \mu)  $ belongs to $\la$.  
 \end{defn}
  
   Given $\sts$, we write $\sts{\downarrow}_{\leq k} $ or $\sts{\downarrow}_{\{1,\dots, k\}} $ (respectively $\sts{\downarrow}_{\geq k} $) for the subtableau of $\sts$ consisting solely of the entries $1$ through $k$ (respectively of the entries $k$ through $n$).  
 Given $\la \in  \mathcal{B}_{\ell}(n)$, we let $\stt_\la$ denote the     $\la$-tableau in which we place the  entry $n$ in the minimal $\succ$-node of $\la$, then continue in this fashion inductively.  
Given $\la \in  \mathcal{B}_\ell(n)$, we let $\SSTS_\la$ denote the     $\la$-tableau in which we place the  entry $n$ in the minimal $\rhd$-node of $\la$, then continue in this fashion inductively.  
\color{black} 
Finally, given $\sts,\stt$ two $\la$-tableaux, we let $w^\sts_ {\stt } \in \mathfrak{S}_n$ 
 be the permutation such that $w^\sts_ {\stt }(\stt )=\sts$.

\newcommand{\tener}{10}
\newcommand{\ten}{11}
\newcommand{\tenty}{12}
\newcommand{\tentyr}{13}
\newcommand{\tentyrr}{14}
\newcommand{\tentyrrr}{15}
\begin{eg}\label{oftttt}For $\la=((2,1^2),(2^2,1),(1^3))$, we have that $w^\sts _{\stt_\la}=(4,5)(2,6)$ 
for $\sts$ and $\stt_\la$ the   tableaux 
$$\stt_\la= \Bigg(\; \Yvcentermath1\gyoung(1;2,6,\tener)	\; , \;  	\gyoung(3;4,7;8,\ten)
\; , \;  \gyoung(5,9,\tenty)\; \Bigg)
\qquad
\sts=  \Bigg(\; \Yvcentermath1\gyoung(1;6,2,\tener)	\; , \;  	\gyoung(3;5,7;8,\ten)
\; , \;  \gyoung(4,9,\tenty)\; \Bigg)   $$
\end{eg}


\begin{defn}  Given any box, $[r,c,m] $,  we define the associated  ($\succ$)-{\sf  Garnir belt}  to be the collection of boxes, $ {\sf Gar}_{\succ}([r,c,m])$, as follows 
 $$   
\{[r,j,k]  \mid j\geq 1, 1\leq k <  m \}
\cup\{[r,j,m]\  \mid 1\leq   j \leq c  \}
\cup\{[r-1,j,m]   \mid c\leq j    \}
\cup
\{[r-1,j,k] \mid  j\geq 1,   k > m \}		 
$$
with the convention that we ignore any box from the ``zeroth" row.  
We define    
$  {\sf Gar}_{\rhd}([r,c,m]) $ to be the intersection of    ${\sf Gar}_{\succ}([r,c,m]) $ with the $m$th component and refer to this as the 
($\rhd$)-{\sf  Garnir belt}.    
\end{defn}

\begin{eg}\label{continuation}
Let  $\sigma=(0,3,8)\in \ZZ^3$ and $e=14$.  Given $\la= ((3^2,2^2,1 ),(5^2,3,2,1 ),(4^2,3,1^2 ))$ and  the box  $[3,3,1]\in\la$, we    colour  
  ${\sf Gar}_{\succ}([3,3,1]) \cap \la$ 
below  
 $$   \left( \ 
 \begin{minipage}{2.1cm}
\scalefont{0.9} \begin{tikzpicture}
  [xscale=0.5,yscale=  
  0.5]

  \draw[thick ] (1,-4)--++(180:1)--++(-90:1)
--++(0:1)--++(90:1);
    \draw(0.5,-4.5) node {$10$};
 
\draw[thick, fill=white] (0,0)--++(0:3)--++(-90:2)
--++(180:1)
--++(-90:2)
--++(180:1) 
--++(180:1)--++(90:4);

\clip (0,0)--++(0:3)--++(-90:2)
--++(180:1)
--++(-90:2)
--++(180:1) 
--++(180:1)--++(90:4);

  \fill[yellow!60] (2,-2)  --++(-90:1)--++(180:2)--++(90:1)--++(0:2);

\draw(0.5,-0.5) node {$0$};
\draw(1.5,-0.5) node {$1$};
\draw(2.5,-0.5) node {$2$};
  \draw(0.5,-1.5) node {$13 $};
\draw(1.5,-1.5) node {$0$};
\draw(2.5,-1.5) node {$1$};
 \draw(0.5,-2.5) node {$12$};
\draw(1.5,-2.5) node {$13$};

\draw(0.5,-3.5) node {$11$};
\draw(1.5,-3.5) node {$12$};

	\foreach \i in {0,1,2,...,10}
	{
		\path  (0,0)++(-90:1*\i cm)  coordinate (a\i);
		\path  (0.5,-0.5)++(0:1*\i cm)  coordinate (b\i);
		\path  (0.5,-1.5)++(0:1*\i cm)  coordinate (c\i);
		\path  (0.5,-2.5)++(0:1*\i cm)  coordinate (d\i);
		\draw[thick] (a\i)--++(0:6);
		\draw[thick] (1,0)--++(-90:6);
		\draw[thick] (2,0)--++(-90:5);
				\draw[thick] (3,0)--++(-90:5);
	}

\draw[thick ] (0,0)--++(0:3)--++(-90:2)
--++(180:1)
--++(-90:2)
--++(180:1)--++(-90:1)
--++(180:1)--++(90:5);

  \end{tikzpicture}  \end{minipage}  
   \!\!\! , \;\; 
    \begin{minipage}{2.8cm}
\scalefont{0.9} \begin{tikzpicture}
  [xscale=0.5,yscale=  
  0.5]

\draw[thick, fill=white] (0,0)--++(0:5)--++(-90:2)
--++(180:2)--++(-90:1)
--++(180:1)--++(-90:1)
--++(180:1) 
--++(180:1) --++(-90:1)
--++(90:5);
\clip (0,0)--++(0:5)--++(-90:2)
--++(180:2)--++(-90:1)
--++(180:1)--++(-90:1)
--++(180:1) --++(-90:1)
--++(180:1)--++(90:5);

  \fill[orange!60] (2,-1) --++(0:3)--++(-90:1)--++(180:2)--++(-90:1)--++(180:3)
  --++(90:1)--++(0:2)--++(90:1);

\draw(0.5,-0.5) node {$3$};
\draw(1.5,-0.5) node {$4$};
\draw(2.5,-0.5) node {$5$};
\draw(3.5,-0.5) node {$6$};
\draw(4.5,-0.5) node {$7$};
\draw(0.5,-1.5) node {$2$};
\draw(1.5,-1.5) node {$3$};
 \draw(2.5,-1.5) node {$4$};
\draw(3.5,-1.5) node {$5$};
\draw(4.5,-1.5) node {$6$};

  \draw(0.5,-2.5) node {$1$};
\draw(1.5,-2.5) node {$2$};
\draw(2.5,-2.5) node {$3$};

\draw(0.5,-3.5) node {$0$};
\draw(1.5,-3.5) node {$1$};

\draw(0.5,-4.5) node {$14$};

	\foreach \i in {0,1,2,...,10}
	{
		\path  (0,0)++(-90:1*\i cm)  coordinate (a\i);
		\path  (0.5,-0.5)++(0:1*\i cm)  coordinate (b\i);
		\path  (0.5,-1.5)++(0:1*\i cm)  coordinate (c\i);
		\path  (0.5,-2.5)++(0:1*\i cm)  coordinate (d\i);
		\draw[thick] (a\i)--++(0:6);
		\draw[thick] (1,0)--++(-90:6);
		\draw[thick] (2,0)--++(-90:5);
				\draw[thick] (3,0)--++(-90:5);
	}

\draw[thick ] (0,0)--++(0:4)--++(-90:2)
--++(180:1)--++(-90:1)
--++(180:1)--++(-90:1)
--++(180:1)--++(-90:1)
--++(180:1)--++(90:5);

  \end{tikzpicture}  \end{minipage} 
, \;\; 
   \begin{minipage}{2.4cm}
\scalefont{0.9} \begin{tikzpicture}
  [xscale=0.5,yscale=  
  0.5]
   \draw[thick ] (1,-4)--++(180:1)--++(-90:1)
--++(0:1)--++(90:1);
    \draw(0.5,-4.5) node {$4$};

\draw[thick, fill=white] (0,0)--++(0:4)--++(-90:2)
--++(180:1)--++(-90:1)
--++(180:2)--++(-90:1)
--++(180:1)--++(90:4);
\clip (0,0)--++(0:4)--++(-90:2)
--++(180:1)--++(-90:1)
--++(180:2)--++(-90:1)
--++(180:1)--++(90:4);

  \fill[yellow!60] (4,-1)  --++(-90:1)--++(180:4)--++(90:1)--++(0:4); 
\draw  (0,0)--++(0:4)--++(-90:2)
--++(180:1)--++(-90:1)
--++(180:1)--++(-90:1)
--++(180:1)--++(-90:1)
--++(180:1)--++(90:5);


 \draw(0.5,-0.5) node {$8$};
\draw(1.5,-0.5) node {$9$};
\draw(2.5,-0.5) node {$10$};
\draw(3.5,-0.5) node {$11$};

 \draw(0.5,-2.5+1) node {$7$};
\draw(1.5,-2.5+1) node {$8$};
\draw(2.5,-2.5+1) node {$9$};
\draw(3.5,-2.5+1) node {$10$};

 \draw(0.5,-3.5+1) node {$6$};
 \draw(1.5,-3.5+1) node {$7$};
 \draw(2.5,-3.5+1) node {$8$};

    \draw(0.5,-3.5) node {$5$};

\draw[thick ] (0,0)--++(0:4)--++(-90:2)
--++(180:1)--++(-90:1)
--++(180:1)--++(-90:1)
--++(180:1)--++(-90:1)
--++(180:1)--++(90:5);

	\foreach \i in {0,1,2,...,10}
	{
		\path  (0,0)++(-90:1*\i cm)  coordinate (a\i);
		\path  (0.5,-0.5)++(0:1*\i cm)  coordinate (b\i);
		\path  (0.5,-1.5)++(0:1*\i cm)  coordinate (c\i);
		\path  (0.5,-2.5)++(0:1*\i cm)  coordinate (d\i);
		\draw[thick] (a\i)--++(0:6);
		\draw[thick] (1,0)--++(-90:6);
		\draw[thick] (2,0)--++(-90:5);
				\draw[thick] (3,0)--++(-90:5);
	}

  \end{tikzpicture}  \end{minipage}  \right).
   $$
The $(\rhd)$-Garnir belt is  the subset of boxes coloured in orange.  
   \end{eg}

    Given $\aatchpair\in \NN^\ell$, we let  $\mathscr{P}_{\underline{\tau}}(n)$ (respectively $\mathscr{C}_{\aatchpair}(n)$)  denote the subsets of  $\ell$-multipartitions (respectively  $\ell$-multicompositions)  with at most $\tau_m$ columns in the $m$th component.

 \begin{lem}\label{easy1}  
Let  $\la\in \mathscr{P}_{\underline{\tau}}(n)$.  
For any  box $[i,j,m]$,  the multiset of  residues of the boxes in  $\la \cap {\sf Gar}_{\succ}[i,j,m]   $ 
  is multiplicity-free (i.e. no residue appears more than once).   
\end{lem}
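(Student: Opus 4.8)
The plan is to reduce everything to elementary interval arithmetic on \emph{contents}, exploiting the two defining inequalities for $\underline\tau$. Write $[r,c,m]$ for the box $[i,j,m]$ of the statement, to match the notation in the definition of ${\sf Gar}_{\succ}$. The first step is to spell out $\la\cap{\sf Gar}_{\succ}([r,c,m])$ component by component: it consists of the boxes $[r,j,k]$ with $1\le j\le\la^{(k)}_r$ for each $k<m$; the boxes $[r,j,m]$ with $1\le j\le\min(c,\la^{(m)}_r)$ together with the boxes $[r-1,j,m]$ with $c\le j\le\la^{(m)}_{r-1}$; and the boxes $[r-1,j,k]$ with $1\le j\le\la^{(k)}_{r-1}$ for each $k>m$ (the convention of discarding the zeroth row makes the row-$(r-1)$ contributions empty when $r=1$, which does no harm). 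Since $\la\in\mathscr{P}_{\underline\tau}(n)$, every column index $j$ occurring here is $\le\tau_k$.

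The second step is to locate the contents. Adding $r-1$ to the content $\sigma_k+j-i$ of each box above: the component-$k$ boxes with $k<m$ give values in $\{\sigma_k,\dots,\sigma_k+\tau_k-1\}$, the component-$k$ boxes with $k>m$ give values in $\{\sigma_k+1,\dots,\sigma_k+\tau_k\}$, and the component-$m$ boxes give values in $\{\sigma_m,\dots,\sigma_m+\tau_m\}$ --- more precisely its row-$r$ part gives $\{\sigma_m,\dots,\sigma_m+\min(c,\la^{(m)}_r)-1\}$ and its row-$(r-1)$ part gives $\{\sigma_m+c,\dots,\sigma_m+\la^{(m)}_{r-1}\}$, two ranges that are disjoint because the first ends below $\sigma_m+c$. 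Invoking $\tau_k\le\sigma_{k+1}-\sigma_k$ for $k<\ell-1$, I would check that these $\ell$ intervals are pairwise disjoint and listed in increasing order of $k$; hence their union is contained in the block of integers $\{\sigma_0,\dots,\sigma_{\ell-1}+\tau_{\ell-1}\}$.

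The conclusion then follows in one stroke. First, $\square\mapsto{\sf ct}(\square)$ is injective on $\la\cap{\sf Gar}_{\succ}([r,c,m])$: boxes in different components land in different intervals of the increasing list, boxes in a common component $k\neq m$ sit in a single row and so have distinct contents, and the two rows contributing to component $m$ occupy the disjoint ranges noted above. Second, all these contents lie (after the shift by $r-1$) in $\{\sigma_0,\dots,\sigma_{\ell-1}+\tau_{\ell-1}\}$, and by the second defining inequality $\tau_{\ell-1}<e+\sigma_0-\sigma_{\ell-1}$ this block has at most $e$ integers in it; since reduction modulo $e$ is injective on any run of at most $e$ consecutive integers, $\square\mapsto{\rm res}(\square)$ is injective on $\la\cap{\sf Gar}_{\succ}([r,c,m])$, which is exactly the asserted multiplicity-freeness.

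I expect the only genuinely fiddly point to be the bookkeeping around component $m$: the column ranges $\{1,\dots,\min(c,\la^{(m)}_r)\}$ in row $r$ and $\{c,\dots,\la^{(m)}_{r-1}\}$ in row $r-1$ overlap in the single column $c$, but the one-row difference in the content formula turns this overlap into two \emph{adjacent} content blocks, so component $m$ still contributes at most $\tau_m+1$ distinct contents lying inside $\{\sigma_m,\dots,\sigma_m+\tau_m\}$ rather than spilling over. Everything else is the interval arithmetic above; the only facts about $\la$ used are that it is a Young diagram and that it has at most $\tau_k$ columns in its $k$th component.
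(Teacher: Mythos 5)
Your proof is correct and is exactly the expansion of the paper's one-line justification ("follows immediately from the definitions since $\la\in\mathscr{P}_{\underline{\tau}}(n)$"): it makes explicit that each component contributes a content interval, that the two inequalities on $\underline{\tau}$ force these intervals to be pairwise disjoint and to fit inside a window of at most $e$ consecutive integers, and that reduction mod $e$ is therefore injective. The one genuinely delicate point --- that the row-$r$ and row-$(r-1)$ contributions in component $m$ land in adjacent but disjoint content ranges, because of the $+1$ shift in content between consecutive rows --- you have identified and resolved correctly.
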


\begin{proof}
This follows immediately from the definitions since $\la\in \mathscr{P}_{\underline{\tau}}(n)$.  
\end{proof}

Let $\geq $ be an ordering on $  
  \mathcal{B}_\ell (n)$.   Given $1\leq k\leq n$, we let ${\mathcal A}^{\geq}_\stt(k)$, 
(respectively ${\mathcal R}^{\geq}_\stt(k)$)  denote the set of   all addable $\res (\stt^{-1}(k))$-boxes (respectively all
removable   $\res (\stt^{-1}(k))$-boxes)  of the box-configuration $\Shape(\stt{\downarrow}_{\{1,\dots ,k\}})$ which
   are less than  $\stt^{-1}(k)$ in the $\geq $-order.     We define the $(\geq)$-degree of $\stt\in \Std(\la)$ for $\la \in  \mathscr{P}_\ell  (n)$  as follows,
$$
\deg_{\geq}(\stt) = \sum_{k=1}^n \left(	|{\mathcal A}^{\geq}_\stt(k)|-|{\mathcal R}^{\geq}_\stt(k)|	\right).
$$
\color{black} 
We let ${{\rm res} (\SSTT)}$ denote  the {\sf residue sequence} consisting  of  $\res (\stt^{-1}(k))$ for $k=1,\dots, n$ in order.  
We set $e_\SSTT:= e_{{\rm res} (\SSTT)}\in \mathscr{H}_n^\sigma$.  
We set 
\begin{align}
\label{tableau not}
\WHY_\la^{\succ} = \prod _{k=1}^n y_k^{ |{\mathcal A}^{\succ}_{\SSTT_\la}(k)|}e_{\SSTT_\la}
\qquad \text{ and } \qquad 
\WHY_\la^{\rhd} = \prod _{k=1}^n y_k^{ |{\mathcal A}^{\rhd}_{\SSTS_\la}(k)|}e_{\SSTS_\la}.
\end{align}
 For $\la \in \mathscr{P}_\ell(n)$, the element $\WHY_\la^{\rhd} $ was first defined in \cite[Definition 4.15]{hm10}.   
We remark that $\WHY^\succ_\la = e_{\SSTT_\la}$ for $\la \in 
  \mathscr{P}_{\aatchpair} (n)$.   
Given $\SSTS, \SSTT \in \Std(\la)$ and $\w$ any fixed  reduced word for $w^\SSTS_\SSTT$ we 
let $\psi^\SSTS_\SSTT:=e_\SSTS\psi_\w e_\SSTT$.

 \begin{defn} We set $\mathcal{Y}_n=\langle e_{\underline{i}}, y_k \mid \underline{i}\in(\ZZ/e\ZZ)^n, 1\leq k \leq n\rangle$.  
 \end{defn}

\subsection{The quotient}
  
A long-standing belief in modular Lie theory  is that we should (first) restrict our attention to fields whose characteristic, $p$, is greater than the Coxeter number, $ h$, of the algebraic  group we are studying.   
This allows one to consider a ``regular block"     of 
the  algebraic group in question.
  What does this mean on the other side of the Schur--Weyl duality  relating ${\rm GL}_h(\Bbbk)$ and $\Bbbk\mathfrak{S}_n$?  
 By the second fundamental theorem of invariant theory, the kernel of the group algebra of the symmetric group acting on $n$-fold $h$-dimensional tensor space is the 2-sided ideal generated by the  element 
$$
\textstyle \sum _{g \in \mathfrak{S}_{h+1}\leq \mathfrak{S}_n} {{\rm sgn}(g)} g\in \Bbbk\mathfrak{S}_n
$$
Modulo ``more dominant terms"  this  element  can be rewritten in the   form we introduced in \cref{tableau not}, as follows 
  \begin{equation} \label{ppppps}
y_{ {(h+1)} }  ^{\succ}\boxtimes {1}_{\mathcal{H}_{n-h-1}^\sigma}
 \end{equation} 
      by \cite[4.16 Corollary]{hm10}.
The simples of this algebra are indexed by partitions with at most $h$  columns.   
Given $\sigma \in \ZZ^\ell$, we let    $\underline{\tau}= (\tau_0,\dots,\tau_{\ell-1})  \in \NN^\ell$
      be such that  $\tau_m\leq   \sigma_{m+1}-\sigma_{m} $ for $0\leq m< \ell-1$ and  $\tau_{\ell-1}<  e+\sigma_0-\sigma_{\ell-1}  $.
      We define  
   \begin{equation}\label{idempotent} {\sf y}_{\underline{\tau}  }=
 \sum_{\begin{subarray}c
     0\leq m < \ell 
      \end{subarray}} y^{\succ}_{(\emptyset,\dots,\emptyset , (h_m+1) ,\emptyset, \dots,\emptyset)}\boxtimes 1_{\mathcal{H}_{n-h_m-1}^\sigma},
\end{equation}  
to be  the higher level analogue of the element in (\ref{ppppps}).

 \begin{defn}\label{ajghkfdlsfkghsldkjfghsdkjfghsdkfjghsdk}
Given weakly increasing $\sigma \in \ZZ^\ell$, we let    $\underline{\tau}= (\tau_0,\dots,\tau_{\ell-1})  \in \NN^\ell$
      be such that  $\tau_m\leq   \sigma_{m+1}-\sigma_{m} $ for $0\leq m< \ell-1$ and  $\tau_{\ell-1}<  e+\sigma_0-\sigma_{\ell-1}  $. 
 We define $ \algebra:=  \mathcal{H}^\sigma_n /\mathcal{H}^\sigma_n {\sf y}_{\aatchpair}\mathcal{H}^\sigma_n$. 
 \end{defn}
 
  In level $\ell=1$, the condition of \cref{ajghkfdlsfkghsldkjfghsdkjfghsdkfjghsdk} is equivalent to $h_0< e$ (and $\mathscr{P}_{\underline{\tau}}(n)$  is the set of  partitions of $n$ with  strictly fewer than $e$ columns).

\subsection{Generator/partition combinatorics}
 Our cellular basis will provide a stratification of $\algebra $ in which each layer is generated by an idempotent correspond to some multipartition.   
Whence we wish to understand the effect of multiplying a generator of a given layer in the cell-stratification by a   KLR ``dot generator".  This leads us to define combinatorial analogues of 
the   dot generators as maps on the set of  box configurations.

 \begin{defn}\color{black} 
Let $\la \in \mathcal{B}_{ \ell}(n)$ and let $[i,j,m]\in \la$.  
 We say that $[i,j,m]$ is left-justified if either $j\leq e$ or   there exists some $[i,j-p,m]\in \la$ for $1\leq p \leq e$.  
\end{defn}

  \begin{defn}\color{black} \label{reddefn}
Let $\la\in \mathcal{B}_{ \ell}(n)$.  
 For $\alpha \in \la $   an $r$-box, we define 
 $Y_{\alpha}(\la)
=
  \la  -   \alpha \cup   \beta  $ 
 where  the $r$-box   $\beta\not \in \la$ is such that $\beta\succ\alpha$, it  is left-justified, 
 and   is  minimal in $\succ$ with respect to these properties (if such a box exists).  
 If such a box does not exist then
   we say that $Y_\alpha(\la)$ and $\beta$ are both  undefined.

   We write $\la\psucc \mu$ if    $ 
 \la= Y_\alpha(\mu)
  $ 
  for some $\alpha \in \mu$ and we then extend $\ \psucc$ to a partial ordering on $ \mathcal{B}_{ \ell}(n)$ by taking the transitive closure.  
  Suppose that 
 $\{[i_k,j_k,m_k]\mid {0< k\leq p}  \} $  is a set of   $r$-boxes and that 
 $Y_{[i_k,j_k,m_k]}(\la\cup [i_k,j_k,m_k])
=
  \la  \cup  [i_{k+1},j_{k+1},m_{k+1}]  $ 
  for $k\geq 1$.  We define 
$$Y^p _ {[i_1,j_1,m_1]}(\la\cup [i_1,j_1,m_1])
=(\la\cup [i_p,j_p,m_p]).
$$
 \end{defn}

\begin{figure}[ht!]
$$
   \begin{minipage}{3.52cm}
\scalefont{0.9} \begin{tikzpicture}
  [xscale=0.5,yscale=  
  0.5]

\draw[thick ](2,0) rectangle (3,-1);  
\draw(2.5,-0.5) node {$2$};

\draw[thick](-2,0.5) rectangle (7.5,-9.5);

\draw[thick, fill=white] (0,0)--++(0:2)--++(-90:3)--++(180:1)--++(-90:6)--++(180:1)--++(90:9);
  \clip (0,0)--++(0:2)--++(-90:3)--++(180:1)--++(-90:6)--++(180:1)--++(90:9);
\draw(0.5,-0.5) node {$0$};
\draw(1.5,-0.5) node {$1$};
\draw(2.5,-0.5) node {$2$};
\draw(0.5,-1.5) node {$4$};
\draw(1.5,-1.5) node {$0$};
\draw(2.5,-1.5) node {$1$};
\draw(0.5,-2.5) node {$3$};
\draw(1.5,-2.5) node {$4$};
\draw(2.5,-2.5) node {$0$};

\draw(0.5,-3.5) node {$2$};
\draw(1.5,-3.5) node {$3$};

\draw(0.5,-4.5) node {$1$};
\draw(1.5,-4.5) node {$2$};

\draw(0.5,-5.5) node {$0$};
\draw(1.5,-5.5) node {$1$};

\draw(0.5,-6.5) node {$4$};\draw(0.5,-7.5) node {$3$};\draw(0.5,-8.5) node {$2$};
 
	\foreach \i in {0,1,2,...,20}
	{
		\path  (0,0)++(-90:1*\i cm)  coordinate (a\i);
		\path  (0.5,-0.5)++(0:1*\i cm)  coordinate (b\i);
		\path  (0.5,-1.5)++(0:1*\i cm)  coordinate (c\i);
		\path  (0.5,-2.5)++(0:1*\i cm)  coordinate (d\i);
		\draw[thick] (a\i)--++(0:6);
		\draw[thick] (1,0)--++(-90:6);
		\draw[thick] (2,0)--++(-90:5);
	}

  \end{tikzpicture}  \end{minipage}  
  \qquad  \qquad  \qquad
   \begin{minipage}{3.52cm}
\scalefont{0.9} \begin{tikzpicture}
  [xscale=0.5,yscale=  
  0.5]

\draw[thick ](2+3,-1) rectangle (3+3,-2);  
\draw(2.5+3,-1.5) node {$4$};

\draw[thick](-2,0.5) rectangle (7.5,-9.5); 
 
\draw[thick ](2,0) rectangle (3,-1);  
\draw(2.5,-0.5) node {$2$};

\draw[thick, fill=white] (0,0)--++(0:2)--++(-90:2)--++(180:1)--++(-90:7)--++(180:1)--++(90:9);
  \clip (0,0)--++(0:2)--++(-90:2)--++(180:1)--++(-90:7)--++(180:1)--++(90:9);
\draw(0.5,-0.5) node {$0$};
\draw(1.5,-0.5) node {$1$};
\draw(2.5,-0.5) node {$2$};
\draw(0.5,-1.5) node {$4$};
\draw(1.5,-1.5) node {$0$};
\draw(2.5,-1.5) node {$1$};
\draw(0.5,-2.5) node {$3$};
 \draw(2.5,-2.5) node {$0$};

\draw(0.5,-3.5) node {$2$};
\draw(1.5,-3.5) node {$3$};

\draw(0.5,-4.5) node {$1$};
\draw(1.5,-4.5) node {$2$};

\draw(0.5,-5.5) node {$0$};
\draw(1.5,-5.5) node {$1$};

\draw(0.5,-6.5) node {$4$};\draw(0.5,-7.5) node {$3$};\draw(0.5,-8.5) node {$2$};
 
	\foreach \i in {0,1,2,...,20}
	{
		\path  (0,0)++(-90:1*\i cm)  coordinate (a\i);
		\path  (0.5,-0.5)++(0:1*\i cm)  coordinate (b\i);
		\path  (0.5,-1.5)++(0:1*\i cm)  coordinate (c\i);
		\path  (0.5,-2.5)++(0:1*\i cm)  coordinate (d\i);
		\draw[thick] (a\i)--++(0:6);
		\draw[thick] (1,0)--++(-90:6);
		\draw[thick] (2,0)--++(-90:5);
	}

  \end{tikzpicture}  \end{minipage}  \qquad  \qquad   \qquad
  \begin{minipage}{3.52cm}
\scalefont{0.9} \begin{tikzpicture}
  [xscale=0.5,yscale=  
  0.5]

\draw[thick ](2,0) rectangle (3,-1);  
\draw(2.5,-0.5) node {$2$};

\draw[thick](-2,0.5) rectangle (7.5,-9.5);

\draw[thick](3,-1) rectangle (4,-2); 
 \draw(3.5,-1.5) node {$2$};
 
\draw[thick, fill=white] (0,0)--++(0:2)--++(-90:3)--++(180:1)--++(-90:6)--++(180:1)--++(90:9);

  \draw[white, fill=white] (-0.3,-3) rectangle (1.3,-4);
   \draw[thick] (0,-3) --++(0:2);
   \draw[thick] (0,-4) --++(0:1);   \clip (0,0)--++(0:2)--++(-90:3)--++(180:1)--++(-90:6)--++(180:1)--++(90:9);

\draw(0.5,-0.5) node {$0$};
\draw(1.5,-0.5) node {$1$};
\draw(2.5,-0.5) node {$2$};
\draw(0.5,-1.5) node {$4$};
\draw(1.5,-1.5) node {$0$};
\draw(2.5,-1.5) node {$1$};
\draw(0.5,-2.5) node {$3$};
\draw(1.5,-2.5) node {$4$};
\draw(2.5,-2.5) node {$0$};

\draw(0.5,-3.5) node {$2$};
\draw(1.5,-3.5) node {$3$};

\draw(0.5,-4.5) node {$1$};
\draw(1.5,-4.5) node {$2$};

\draw(0.5,-5.5) node {$0$};
\draw(1.5,-5.5) node {$1$};

\draw(0.5,-6.5) node {$4$};\draw(0.5,-7.5) node {$3$};\draw(0.5,-8.5) node {$2$};

	\foreach \i in {0,1,2,...,20}
	{
		\path  (0,0)++(-90:1*\i cm)  coordinate (a\i);
		\path  (0.5,-0.5)++(0:1*\i cm)  coordinate (b\i);
		\path  (0.5,-1.5)++(0:1*\i cm)  coordinate (c\i);
		\path  (0.5,-2.5)++(0:1*\i cm)  coordinate (d\i);
		\draw[thick] (a\i)--++(0:6);
		\draw[thick] (1,0)--++(-90:6);
		\draw[thick] (2,0)--++(-90:5);
	}

  \draw[white, fill=white] (-0.3,-3) rectangle (1.3,-4);
  
   \draw[thick] (0,-3) --++(0:2);
   \draw[thick] (0,-4) --++(0:1);

  \end{tikzpicture}  \end{minipage}   
\qquad\quad
    $$
  \caption{The   box configurations, $\la$, $Y_{[3,2,0]}(\la)$, and 
     and $Y^2_{[4,1,0]}(\la)$ 
  }
  \label{creg}
    \end{figure}

We remark that $\la \psucc \mu$  implies that $\la \succ \mu$ and so $\psucc$  is a coarsening of $\succ$.

  \begin{eg}\label{creg2}
Let  $e=5$ and $\ell=1$.  For $\la=(3,2^2,1^6) \in\mathscr{P}_{1}(13)$, we have that 
$Y_{[3,2,0]} (\la) =  (3,2,1^7) \cup [2,6,0] $.
 We have that $Y^2_{[3,2,0]} (\la) =  Y_{[2,6,0]}((3,2,1^7) \cup [2,6,0]) = (3,2,1^7) \cup [1,5,0] $.

 \end{eg}

\begin{eg}\label{refereback}
Let   $e=5$ and $\ell=1$ and $r=2$.  For $\la=(3,2^2,1^6) \in\mathscr{P}_{1}(13)$, we have  
$Y^1_{[4,1,0]} (\la) =  (3,2^2,1^6) \cup [3,5,0] - [4,1,0] $  and $Y^2_{[4,1,0]} (\la) =  (3,2^2,1^6) \cup [2,4,0] - [4,1,0] $ (see  \cref{creg}).
 \end{eg}

Given an idempotent  generator, $e_{\underline{j} }$ for $\underline{j} \in (\ZZ/e\ZZ)^n$, of   the KLR algebra, we wish to identify 
to which layer of our stratification our idempotent belongs. To this end we make the following definition.

  \begin{defn}
\label{makeatable}
 Associated to any  
$\underline{j} =  (j_1,\dots, j_n) \in (\ZZ/e\ZZ)^n$,  
we have an element ${\sf J} \in\Std(n) \cup \{0\}$ given by 
placing the entry $k= 1,2,\dots,n+1$ in the lowest addable $j_k$-box under  $\succ$ of the partition $\Shape({\sf J}_{\leq k-1})$, and formally setting   ${\sf J}=0$ and 
$\Shape({\sf J})=0$ if no such box exists for some $1\leq k\leq n$. 
  \end{defn}

 \begin{figure}[ht!!!]
\!\! $$  \begin{minipage}{3.52cm}
\scalefont{0.9} \begin{tikzpicture}
  [xscale=0.5,yscale=  
  0.5]

\draw[thick ](2,0) rectangle (3,-1);  
\draw(2.5,-0.5) node {$2$};

\draw[thick](-2,0.5) rectangle (7.5,-9.5);

\draw[thick, fill=white] (0,0)--++(0:2)--++(-90:3)--++(180:1)--++(-90:6)--++(180:1)--++(90:9);
  \clip (0,0)--++(0:2)--++(-90:3)--++(180:1)--++(-90:6)--++(180:1)--++(90:9);
\draw(0.5,-0.5) node {$0$};
\draw(1.5,-0.5) node {$1$};
\draw(2.5,-0.5) node {$2$};
\draw(0.5,-1.5) node {$4$};
\draw(1.5,-1.5) node {$0$};
\draw(2.5,-1.5) node {$1$};
\draw(0.5,-2.5) node {$3$};
\draw(1.5,-2.5) node {$4$};
\draw(2.5,-2.5) node {$0$};

\draw(0.5,-3.5) node {$2$};
\draw(1.5,-3.5) node {$3$};

\draw(0.5,-4.5) node {$1$};
\draw(1.5,-4.5) node {$2$};

\draw(0.5,-5.5) node {$0$};
\draw(1.5,-5.5) node {$1$};

\draw(0.5,-6.5) node {$4$};\draw(0.5,-7.5) node {$3$};\draw(0.5,-8.5) node {$2$};
 
	\foreach \i in {0,1,2,...,20}
	{
		\path  (0,0)++(-90:1*\i cm)  coordinate (a\i);
		\path  (0.5,-0.5)++(0:1*\i cm)  coordinate (b\i);
		\path  (0.5,-1.5)++(0:1*\i cm)  coordinate (c\i);
		\path  (0.5,-2.5)++(0:1*\i cm)  coordinate (d\i);
		\draw[thick] (a\i)--++(0:6);
		\draw[thick] (1,0)--++(-90:6);
		\draw[thick] (2,0)--++(-90:5);
	}

  \end{tikzpicture}  \end{minipage}  
   \qquad   \qquad\qquad
     \begin{minipage}{3.52cm}
\scalefont{0.9} \begin{tikzpicture}
  [xscale=0.5,yscale=  
  0.5]

\draw[thick ](2,0) rectangle (3,-1);  
\draw(2.5,-0.5) node {$13$};

\draw[thick ](-2,0.5) rectangle (7.5,-9.5);

\draw[thick, fill=white] (0,0)--++(0:2)--++(-90:3)--++(180:1)--++(-90:6)--++(180:1)--++(90:9);
  \clip (0,0)--++(0:2)--++(-90:3)--++(180:1)--++(-90:6)--++(180:1)--++(90:9);
\draw(0.5,-0.5) node {$1$};
\draw(1.5,-0.5) node {$2$};
 \draw(0.5,-1.5) node {$3$};
\draw(1.5,-1.5) node {$4$};
 \draw(0.5,-2.5) node {$5$};
\draw(1.5,-2.5) node {$6$};
 
\draw(0.5,-3.5) node {$7$};
 
\draw(0.5,-4.5) node {$8$};
 
\draw(0.5,-5.5) node {$9$};
 
\draw(0.5,-6.5) node {$10$};\draw(0.5,-7.5) node {$11$};\draw(0.5,-8.5) node {$12$};
 
	\foreach \i in {0,1,2,...,20}
	{
		\path  (0,0)++(-90:1*\i cm)  coordinate (a\i);
		\path  (0.5,-0.5)++(0:1*\i cm)  coordinate (b\i);
		\path  (0.5,-1.5)++(0:1*\i cm)  coordinate (c\i);
		\path  (0.5,-2.5)++(0:1*\i cm)  coordinate (d\i);
		\draw[thick] (a\i)--++(0:6);
		\draw[thick] (1,0)--++(-90:6);
		\draw[thick] (2,0)--++(-90:5);
	}

  \end{tikzpicture} 
    \end{minipage}  $$  
  \caption{Let $e=5$ and $\ell=1$. 
On the left, we have the residues for the partition $\la=(3,2^2,1^6)$.  On the right, we have the tableau
${\sf J} \in \Std(3,2^2,1^6)$ for  $\underline{j}=(0,1,4,0,3,4,2,1,0,4,3,2,2)$ as in \cref{makeatable}. 	 }
 \label{reducedeg224343432}
 \end{figure}

\subsection{A tableaux theoretic basis}
We are now ready to construct our first basis of $\mathscr{H}^\sigma_n $.  This basis will serve as the starting point for our light-leaves bases of Theorem A.  The combinatorics of this basis will be familiar to anyone who has studied symmetric groups and cyclotomic Hecke algebras (but with respect to the, less familiar, $(\succ)$-ordering). 

\begin{defn}\label{reddefn2} Let $  \la \in   \mathscr{P}_{\underline{\tau}}(n )$.  
We define the       {\sf  Garnir adjacency set}  of an $r$-box $\alpha=[i,j,m]\not \in \la$ 
to be the set of boxes, $\gamma\in   \la \cap {\sf Gar}_{\succ}(\alpha)$
 such that 
  $|{\rm res}(\gamma ) - r |\leq 1$ and denote this set by ${\sf Adj\text{-}Gar}_{\succ}(\alpha)$.  
  We set ${\rm res}( {\sf Adj\text{-}Gar}_{\succ}(\alpha))=\{  \res(\gamma ) \mid \gamma  \in {\sf Adj\text{-}Gar}_{\succ}(\alpha)
   \}$.  

    \end{defn}

         \color{black}

\!\! \begin{figure}[ht!]
    \begin{minipage}{3.52cm}
\scalefont{0.9} \begin{tikzpicture}
  [xscale=0.5,yscale=  
  0.5]

\draw[thick,fill=white](-2,0.5) rectangle (7.5,-9.5);

\draw[thick ](5.5,-8.5) to (-0.5,-8.5) to [out=180,in=0] (-2,-8); 
\draw[thick,->](7.5,-8)  to [out=180,in=0] (6,-7.5)--++(180:1.2) ;

\draw[thick ](4.5,-7.5) to (-0.5,-8.5+1) to [out=180,in=0] (-2,-8+1); 
\draw[thick,->](7.5,-8+1)  to [out=180,in=0] (6,-7.5+1)--++(180:2.2) ;

\draw[thick ](4.5,-7.5+1) to (-0.5,-8.5+1+1) to [out=180,in=0] (-2,-8+1+1); 
\draw[thick,->](7.5,-8+1+1)  to [out=180,in=0] (6,-7.5+1+1)--++(180:3.2) ;

\draw[thick ](4.5,-7.5+1+1) to (-0.5,-8.5+1+1+1) to [out=180,in=0] (-2,-8+1+1+1); 
\draw[thick,->](7.5,-8+1+1+1)  to [out=180,in=0] (6,-7.5+1+1+1)--++(180:4.2) ;

\draw[thick ](3.5,-7.5+1+1+1) to (-0.5,-8.5+1+1+1+1) to [out=180,in=0] (-2,-8+1+1+1+1); 
 \draw[thick,->](7.5,-8+1+1+1+1)  to [out=180,in=0] (6,-7.5+1+1+1+1)--++(180:0.2) ;

\draw[thick ](5.5,-7.5+1+1+1+1) to (-0.5,-8.5+1+1+1+1+1) to [out=180,in=0] (-2,-8+1+1+1+1+1); 
  \draw[thick,->](7.5,-8+1+1+1+1+1)  to [out=180,in=0] (6,-7.5+1+1+1+1+1)--++(180:1.2) ;

\draw[thick ](4.5,-7.5+1+1+1+1+1) to (-0.5,-8.5+1+1+1+1+1+1) to [out=180,in=0] (-2,-8+1+1+1+1+1+1); 
  \draw[thick,->](7.5,-8+1+1+1+1+1+1)  to [out=180,in=0] (6,-7.5+1+1+1+1+1+1)--++(180:2.2) ;

\draw[thick ](3.5,-7.5+1+1+1+1+1+1) to (-0.5,-8.5+1+1+1+1+1+1+1) to [out=180,in=0] (-2,-8+1+1+1+1+1+1+1); 
  \draw[thick,->](7.5,-8+1+1+1+1+1+1+1)  to [out=180,in=0] (6,-7.5+1+1+1+1+1+1+1)--++(180:3.2) ;

\draw[thick,fill=white](1.5,-2.5-2) circle (8pt)  ;
\draw[thick,fill=white](2.5,-2.5-3) circle (8pt)  ;
\draw[thick,fill=white](3.5,-3.5-3) circle (8pt);  
\draw[thick,fill=white](4.5,-4.5-3) circle (8pt);  
\draw[thick,fill=white](5.5,-5.5-3) circle (8pt);  
\draw[thick,fill=white](5.5,-2.5-1) circle (8pt)  ;
\draw[thick,fill=white](4.5,-2.5 ) circle (8pt)  ;
\draw[thick,fill=white](3.5,-1.5 ) circle (8pt)  ; 
\draw[thick,fill=white](2.5,-0.5 ) circle (8pt)  ;

\draw[thick, fill=white] (0,0)--++(0:2)--++(-90:3)--++(180:1)--++(-90:6)--++(180:1)--++(90:9);
  \clip (0,0)--++(0:2)--++(-90:3)--++(180:1)--++(-90:6)--++(180:1)--++(90:9);
\draw(0.5,-0.5) node {$0$};
\draw(1.5,-0.5) node {$1$};
\draw(2.5,-0.6) node {$2$};
\draw(0.5,-1.5) node {$4$};
\draw(1.5,-1.5) node {$0$};
\draw(2.5,-1.5) node {$1$};
\draw(0.5,-2.5) node {$3$};
\draw(1.5,-2.5) node {$4$};
\draw(2.5,-2.5) node {$0$};

\draw(0.5,-3.5) node {$2$};
\draw(1.5,-3.5) node {$3$};

\draw(0.5,-4.5) node {$1$};
\draw(1.5,-4.5) node {$2$};

\draw(0.5,-5.5) node {$0$};
\draw(1.5,-5.5) node {$1$};

\draw(0.5,-6.5) node {$4$};\draw(0.5,-7.5) node {$3$};\draw(0.5,-8.5) node {$2$};
 
	\foreach \i in {0,1,2,...,20}
	{
		\path  (0,0)++(-90:1*\i cm)  coordinate (a\i);
		\path  (0.5,-0.6)++(0:1*\i cm)  coordinate (b\i);
		\path  (0.5,-1.5)++(0:1*\i cm)  coordinate (c\i);
		\path  (0.5,-2.5)++(0:1*\i cm)  coordinate (d\i);
		\draw[thick] (a\i)--++(0:6);
		\draw[thick] (1,0)--++(-90:6);
		\draw[thick] (2,0)--++(-90:5);
	}

  \end{tikzpicture} 
    \end{minipage}   
  \caption{ For $n=13$ and $\la =(2^3,1^6)$,
 we illustrate how   the idempotent, $e_{(0,1,4,0,3,4,2,1,0,4,3,2,2)}$, labelled by ${\sf J}$ in  \cref{reducedeg224343432}
 is rewritten in the form (\ref{Claim 1}).  
     The box moves through each row  until it comes to rest at the point    ${\sf J}^{-1}(13)=[1,3,0]$. 
     This involves repeated applications of \cref{Claim 3} to deduce  (\ref{Claim 1}).  
   For the purposes of later referencing, we label the 9 boxes from bottom-to-top by $\alpha_1=[9,6,0],\alpha_2=[8,5,0] , \dots ,   \alpha_9=[1,3,0]$.  
   This visualisation is explained in \cref{endeavour11}.  
    }
 \label{reducedeg222}
  \label{reducedeg2223}
 \end{figure}
 
\begin{eg}
Continuing with \cref{continuation}, 
we have that 
$  {\sf Adj\text{-}Gar}_{\succ}([3,3,1]) =\{[3,2,1],[2,3,1]\}$.  
\end{eg}

\begin{rmk}\label{endeavour11}
 We are endeavouring to construct a 2-sided chain  of ideals of $\mathscr{H}_n^\sigma$,  ordered by $\psucc$,   
 in which each 2-sided ideal is generated by an idempotent $e_{\SSTT_\la}$ for $\la \in \mathscr{P}_{\underline{\tau}}(n)$.  
 \Cref{Claim 3,Claim 2} of \cref{mainproof} will allow us  to rewrite any element of 
 $\mathcal{Y}_n $ in the required form by moving a given box $\square$ through the partition $\la$ one row at a time along the $\psucc$ ordering until it comes to rest at some point  $\la\cup\square'\in \mathscr{P}_{\underline{\tau}}(n )$.
For ${\sf J}$   the tableau in \cref{reducedeg224343432}, then   the eight steps involved in 
  rewriting $e_{\sf J}$ as an element of ${\mathscr{H}}^{\succeq (3,2^2,1^6)}$ 
   are illustrated in 
\cref{reducedeg2223}. 

\end{rmk}

The following theorem is the technical heart of  this section.  
We define  $\mathscr{H}^ {\geq \la }=
\mathscr{H}^\sigma_n\langle e_{\stt_\nu} \mid \nu \geq  {\la }\rangle\mathscr{H}^\sigma_n$ (respectively $\mathcal{H}^ {\geq \la }=
\mathcal{H}^\sigma_n\langle e_{\stt_\nu} \mid \nu \geq  {\la }\rangle\mathcal{H}^\sigma_n$)  for $\geq$ any ordering  on $ \mathscr{P}_{\underline{\tau}}(n)$ (respectively $\mathscr{P} _\ell(n)$).  
Given $\geq $ a total order on $ \mathscr{P}_{\underline{\tau}}(n)$,  we let 
 $
\la^{[0]} > \la^{[1]} >\dots  >\la^{[m]}
$ 
denote the complete set of elements of $ \mathscr{P}_{\underline{\tau}}(n)$ enumerated according to the   total ordering $>$. 
Given $\la \in \mathcal{B}_\ell (n-1)$ and $\alpha\not \in \la$, we  
define 
$$\mathscr{H}^{(\ppsucc  \la)\cup\alpha }_{n} =\!\! \sum_{
		{   
		\begin{subarray}c 
		\{ \mu \in  \mathcal{B}_\ell  (n-1) \mid \mu  \hspace{1pt}\ppsucc   \la, 
	\alpha  \not \in	\mu    \}	 	\end{subarray}   	}
	 }		\!\!\!\!    	
 \mathscr{H}_n^ {\sigma }  e_{\SSTT_{\mu\cup\alpha }}		 \mathscr{H}^ {\sigma }_n 
  \leq \mathscr{H}_n^ { \ppsucc  (\la \cup\alpha )  }.
  $$

         \begin{thm} \label{mainproof}\label{mainproof2}
     Let   $\la\in  \mathscr{P}_{\underline{\tau}}(n-1 )$ and 
    assume  $ \alpha = [i,j,m] \not \in \la$ is left-justified. 
    We set   $a= \SSTT_{\la\cup\alpha}(\alpha)  	 $.  
    \begin{itemize}[leftmargin=*]
 
 \item[$(a)$]If $\alpha= [1,j,0] $ for some $j\geq 1$ then 
  $(i)$       $    \WHY_{\SSTT_{\la \cup\alpha}} =0$ if $\la \cup \alpha \not \in \mathscr{P}_{\underline{\tau}}(n)$ and
  $(ii)$  $    y_{a}  \WHY_{\SSTT_{\la \cup\alpha}} =0$ if 
  $\la \cup \alpha  \in \mathscr{P}_{\underline{\tau}}(n)$.  

 \item[$(b)$]
 
 For $\alpha \neq [1,j,0] $ for some $j\geq 1$, we set $\beta$ to be  the box  determined by 
\begin{equation}\label{L+!!!!!}
  \begin{cases}  
   Y^{\ell+1}_\alpha(\la\cup\alpha) 	= 
   \la\cup  \beta     &\text{ if ${\rm res}( {\sf Adj\text{-}Gar}(\alpha))=\{r-1\} $;}  
      \\
 Y^1_\alpha(\la\cup\alpha) 	= 
   \la\cup  \beta 	&\text{ otherwise } 
    	\end{cases}	 
	\end{equation}
 and we set  $b=   \SSTT_{\la \cup  \beta   } (\beta)	 $.   
 \begin{itemize}[leftmargin=*]
 
\item[$\bullet$]   If $\la\cup \alpha \not \in \mathscr{P}_{\underline{\tau}}(n)$,    
 then we have that 
\begin{equation}\label{Claim 3}
  \WHY^{\succ}_{\stt_{\la\cup\alpha}}
  \in 
\begin{cases}
 \pm \psi^a_b  
\WHY^{\succ}_{\stt_{\la\cup\beta}}
 \psi^b_a
+  \mathscr{H}_n^ {(\ppsucc  \la)\cup\beta  }    & \text{or} 
\\ 
 \pm (y_{a-1} \psi^a_b  
\WHY^{\succ}_{\stt_{\la\cup\beta}}
 \psi^b_a
 -
  \psi^a_b  
\WHY^{\succ}_{\stt_{\la\cup\beta}}
 \psi^b_a  y_{a})
+  \mathscr{H}_n^ {(\ppsucc  \la)\cup\beta  }   
\end{cases} 
 \end{equation}
   (the cases are detailed in the proof).  

 \item[$\bullet$] 
If $\la\cup \alpha \in \mathscr{P}_{\underline{\tau}}(n)$, then we have that 
  \begin{align}\label{Claim 2}
    y_{a} (\WHY^{{\succ}}_{\SSTT_{\la \cup\alpha}}) \in 
    \pm 	\psi^a_b 		\WHY^{\succ}_{\SSTT_{\la \cup  \beta   }  } \psi^b_a   +  \mathscr{H}_n^ {(\ppsucc  \la)\cup\beta  } 
  \end{align} 
\end{itemize}
\item[$(c)$]
For $\underline{j}\in (\ZZ/e\ZZ)^{n }$,  using the notation of \cref{makeatable}, 
we have that: 
  \begin{align}  \label{Claim 1}
\begin{cases}
e_{\underline{j} }  \in  \pm 
\tilde{\psi}_{\stt_{\nu }}^{\sf J} \tilde{\psi}^{\stt_{\nu }}_{\sf J} + 
 \mathscr{H}^{ \ppsucc\nu  }
   &\text{if }  \Shape( {\sf J} )=\nu  \in \mathscr{P}_{\underline{\tau}}(n) \\
e_{\underline{j} }  =  0   &\text{if }  \Shape( {\sf J} )= 0
  \end{cases}
      \end{align}      where $\tilde{\psi}_{\stt_{\nu }}^{\sf J} $ is obtained from $ {\psi}_{\stt_{\nu }}^{\sf J} $ by possibly adding some dot decorations along the strands.  
 Thus if  $\geq $ is any total refinement  of $\psucc$ 
 then the $\ZZ$-algebra $\mathscr{H}^\sigma_n$
  has a chain of two-sided ideals  
\begin{equation}\label{Claim 4}0  \subset  \mathscr{H}^{\geq  \la^{[0]}}_n
  \subset  \mathscr{H}^{\geq  \la^{[1]}}_n
  \subset \dots \subset  \mathscr{H}^{\geq  \la^{[m]}}_n= \mathscr{H}^\sigma_n .\end{equation}
  \end{itemize}     \end{thm}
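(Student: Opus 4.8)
The plan is to prove parts $(a)$, $(b)$, $(c)$ in that logical order, since $(c)$ is assembled from $(a)$ and $(b)$ by an inductive ``sliding" argument, and then to deduce the ideal chain \eqref{Claim 4} as a formal consequence. Throughout, the key tool is \Cref{easy1}: because $\la\cup\alpha$ has at most $\tau_m$ columns in component $m$ (or because we are working inside a Garnir belt), the residues appearing in the relevant Garnir belt are multiplicity-free, which means the KLR relations \eqref{rel3}--\eqref{rel5} can be applied ``cleanly'' --- every time a $\psi$-strand is dragged past another strand, the residues it meets are distinct from each other, so the quadratic and braid relations \eqref{rel4}, \eqref{rel5} are governed entirely by the $\pm 1$ and $(y_r-y_{r+1})$ cases rather than producing a cascade of new terms. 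The error terms that do appear are, by the construction of $\SSTT_\la$ via the $\succ$-order, supported on box configurations that are strictly larger in the $\ppsucc$-order, hence land in $\mathscr{H}_n^{(\ppsucc\la)\cup\beta}$.

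First I would treat $(a)$: here $\alpha=[1,j,0]$ sits in the top row of the first component, so $a=\SSTT_{\la\cup\alpha}(\alpha)$ is an addable $r$-box with no boxes $\succ$ it in its own Garnir belt except possibly in row $0$ (which we ignore). If $\la\cup\alpha\notin\mathscr{P}_{\underline\tau}(n)$ then $[1,j,0]$ with $j>\tau_0$ forces, via the cyclotomic relation \eqref{rel1.12} together with the commutation relations \eqref{rel3} pushing $y_a$ down the strand to $y_1$, that the whole element vanishes; if $\la\cup\alpha\in\mathscr{P}_{\underline\tau}(n)$ the same computation kills $y_a\WHY_{\SSTT_{\la\cup\alpha}}$ after one extra dot is present. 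This is essentially the cyclotomic relation read through the definition of $\WHY^\succ$, and the computation of the dot-count $|{\mathcal A}^\succ_{\SSTT_{\la\cup\alpha}}(k)|$ along the first column.

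Next, $(b)$ is the combinatorial and computational core. Given $\alpha=[i,j,m]$ not in the top row, I would explicitly build the reduced word for $w^{\SSTT_{\la\cup\beta}}_{\SSTT_{\la\cup\alpha}}$ realising the transition $\la\cup\alpha\leadsto\la\cup\beta$ (this is either a ``one row up'' move $Y^1_\alpha$ or, in the $\{r-1\}$-adjacency case, the longer move $Y^{\ell+1}_\alpha$ across component boundaries), and then compute $\psi^a_b\,\WHY^\succ_{\stt_{\la\cup\beta}}\,\psi^b_a$ by commuting the $\psi$'s past the $\WHY$-element. The bookkeeping splits into the two cases in \eqref{Claim 3}/\eqref{Claim 2} according to whether $\alpha$ sees an $(r{-}1)$-box, an $(r{+}1)$-box, or neither in $\sf{Adj\text{-}Gar}_\succ(\alpha)$: in the ``generic'' case the straightening is clean and gives the $\pm\psi^a_b\WHY\psi^b_a$ term; in the exceptional case relation \eqref{rel4} (the $y_{r+1}-y_r$ substitution) forces the extra $y_{a-1}(\cdots)-(\cdots)y_a$ correction. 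The sign is tracked through how many times relation \eqref{rel5} contributes a $-1$. I expect \textbf{this case analysis --- proving that the error terms genuinely land in $\mathscr{H}_n^{(\ppsucc\la)\cup\beta}$ and nowhere worse, and pinning down exactly which of the two forms in \eqref{Claim 3} occurs --- to be the main obstacle}; it is where \Cref{easy1} and the precise definition of the $\succ$- and $\ppsucc$-orders must be used most carefully, since a priori a stray application of \eqref{rel4} or \eqref{rel5} could produce a term involving a smaller configuration.

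Finally, for $(c)$: given $\underline j$ and the associated tableau $\sf J$ of shape $\nu$, I would induct on $n$, peeling off the entry $n$, which sits in some box $\square=\sf{J}^{-1}(n)$. If $\square$ is already in the ``resting'' position (i.e.\ $\Shape(\sf J)\in\mathscr{P}_{\underline\tau}(n)$ and $\square$ is where it belongs in $\stt_\nu$) we are done by induction; otherwise we repeatedly apply \eqref{Claim 3} (when $\Shape$ is not yet a $\underline\tau$-partition) or \eqref{Claim 2} (the last step, landing in $\mathscr{P}_{\underline\tau}(n)$) to slide $\square$ row by row along $\ppsucc$, each application introducing conjugating $\psi$'s (absorbed into $\tilde\psi^{\sf J}_{\stt_\nu}$, possibly with dots) and error terms in $\mathscr{H}^{\ppsucc\nu}$, exactly as illustrated in \Cref{reducedeg2223}; the process terminates because $\ppsucc$ is a partial order on the finite set $\mathcal{B}_\ell(n)$. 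If at any stage no legal addable box exists, part $(a)(i)$ or the $\Shape(\sf J)=0$ clause kills $e_{\underline j}$. This proves \eqref{Claim 1}. The ideal chain \eqref{Claim 4} is then immediate: each $\mathscr{H}_n^{\geq\la^{[i]}}$ is by definition a two-sided ideal, the containments are clear from refining $\ppsucc$, and \eqref{Claim 1} shows that the $e_{\underline j}$ --- hence (together with the $y_k$'s via $(b)$ and the $\psi$'s) a spanning set of $\mathscr{H}_n^\sigma$ --- are exhausted as $\la^{[i]}$ runs over $\mathscr{P}_{\underline\tau}(n)$, so the top term of the chain is all of $\mathscr{H}_n^\sigma$.
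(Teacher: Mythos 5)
Your outline of part $(a)$ and the sliding argument for part $(c)$ match the paper in spirit, and you correctly identify \Cref{easy1} and the careful control of error terms as essential. However, there is a genuine gap in your approach to part $(b)$, and it is precisely at the spot you yourself flag as the ``main obstacle.''

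The paper does \emph{not} prove \eqref{Claim 3} and \eqref{Claim 2} by building the reduced word $w^{\SSTT_{\la\cup\beta}}_{\SSTT_{\la\cup\alpha}}$ and computing $\psi^a_b\,\WHY^\succ_{\stt_{\la\cup\beta}}\,\psi^b_a$ directly. Instead, parts $(a)$, $(b)$, $(c)$ are proved by a \emph{simultaneous induction} on two indices at once: the rank $n$, and, within rank $n$, the partial order $\psucc$ on box configurations. The mechanism that makes the rank induction bite inside the proof of $(b)$ is \emph{horizontal concatenation}: after one or two applications of the KLR relations \eqref{rel3}--\eqref{rel5}, the element under consideration factors as (a smaller diagram on the first $a-1$ or $a-2$ strands) $\boxtimes$ (an idempotent tail), and the inductive hypothesis for \eqref{Claim 2} or \eqref{Claim 3} at rank $a-1<n$ or $a-2<n$ is then applied to the left-hand factor before re-concatenating. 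This is what the paper's remark immediately after the theorem is about: the reverse cylindric order $\succ$ reads configurations \emph{backwards}, which is exactly what makes ``ideals at smaller rank, horizontally concatenated with a tail, land in the right ideal at rank $n$.'' Without this trick, a direct computation of $\psi^a_b\,\WHY^\succ_{\stt_{\la\cup\beta}}\,\psi^b_a$ is not tractable: $b$ and $a$ may be far apart, and dragging $\psi^a_b$ past the dot decorations of $\WHY^\succ_{\stt_{\la\cup\beta}}$ via \eqref{rel3} produces a cascade of lower terms whose $\psucc$-location you would have no handle on.

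A secondary mismatch: the paper's case analysis in part $(b)$ is organised not by what lies in ${\sf Adj\text{-}Gar}_{\succ}(\alpha)$ as you propose, but by the residue of the single box $\alpha'=\SSTT_{\la\cup\alpha}^{-1}(a-1)$ occupying the position immediately before $\alpha$ in the $\succ$-filling; there are four cases (residue $r$, $r+1$, $r-1$, and $|d-r|>1$), and the $r+1$ case itself bifurcates according to whether $\alpha'$ sits in the first column. Each case funnels into a horizontal-concatenation-plus-lower-rank-induction step of the kind described above, and it is only in the $r-1$ case (which is exactly the case governing the longer move $Y^{\ell+1}_\alpha$) that an extra application of \eqref{rel5} plus a secondary induction at rank $c-1$ is needed. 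To make your proposal rigorous you would need to restructure $(b)$ as part of the same induction that proves $(c)$, not as a standalone calculation.
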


 \begin{rmk} 
In the proof, we will often relate ideals 
in smaller and larger algebras using horizontal concatenation of diagrams, this is made possible by the definition of the reverse cylindric  ordering $\succ$ (which distinguishes between box configurations based on the first discrepancy upon reading a pair of box configurations {\em backwards}).  
    \end{rmk}

\begin{rmk}    If $\la \cup \alpha \in  \mathscr{P}_{\underline{\tau}}(n)$,  
then $\WHY^{{\succ}}_{{{\SSTT_{\la\cup\alpha}}}}=e_{\SSTT_{\la\cup\alpha}}$ by \cref{easy1}.
    \end{rmk}

\begin{proof}[Proof of \cref{mainproof}]
Part $(a)$, let  $\alpha= [1,j,0] $ for some $j\geq 1$. Claim $(ii)$ follows by    applying   case~3 of \ref{rel4} $a$ times, followed by the commuting KLR relations and the cyclotomic relation.  
The proof of  claim $(i)$ is similar. 
Thus $(a)$ follows.  

For parts  $(b)$ and $(c)$, 
we assume that \cref{Claim 2,Claim 3,Claim 1,Claim 4} all hold for rank $n-1$.    
 We   further  assume that    \cref{Claim 2,Claim 3} have been proven for all $\nu=\mu\cup\alpha$  with $\mu\in \mathscr{P}_{\underline{\tau}}(n-1)$ such that 
  $\mu \psucc \la$;
   thus leaving us to  prove \cref{Claim 2,Claim 3} for  $\nu=\la\cup\alpha $ for $ \la\in \mathscr{P}_{\underline{\tau}}(n-1)$ and \cref{Claim 1} for all $\underline{j}\in (\ZZ/e\ZZ)^{n }$.     
\Cref{Claim 4} follows immediately from \cref{Claim 1} and the idempotent decomposition of the identity in  relation \ref{rel1}.
 By \cref{reddefn},   $\res(\alpha)=\res(\beta)$ and we set this residue equal to $r \in \ZZ/e\ZZ$.


\smallskip

 \noindent {\bf Proof of  \cref{Claim 3} for a given $\la$ and $\alpha$}.   
   We include a running example    for  $e=5$ and $\ell=1$ and $\la=(2^3,1^6)$.    We assume that $\la \cup\alpha \not \in \mathscr{P}_\aatchpair(n)$.  
 There are four cases to consider, depending on the residue of     $\alpha':=\SSTT_{\la\cup\alpha}^{-1}(a-1)$.  We assume that $\alpha'$ is in the same row of the same component as $\alpha$ (as otherwise 
  $y_{\SSTT_{\la\cup \alpha}}=y_{\SSTT_{\la\cup \beta}}$ by definition).  
  We let $\beta'$  be  the box  determined by 
 $$
  \begin{cases}  \label{popopopopopopopo}
     Y^{\ell+1}_{\alpha'}(\la\cup\alpha') 	= 
   \la\cup  \beta '    &\text{ if ${\rm res}( {\sf Adj\text{-}Gar}(\alpha'))=\{r-1\} $;}  
      \\
 Y^1_{\alpha'}(\la\cup\alpha') 	= 
   \la\cup  \beta' 	&\text{ otherwise.} 
    	\end{cases}	 $$
    \begin{itemize}[leftmargin=*]

 \item[$(i)$]
Suppose $\alpha'  $ has residue $r\in \ZZ/e\ZZ$ and so $ \WHY^{{\succ}}_{\stt_{\la\cup\alpha}}=e_{\stt_{\la\cup\alpha}}$ by \cref{easy1}.
   By application of relations \ref{rel3} and \ref{rel4}, we have that 
\begin{align}\label{kkkkkkkk} 
e_{\stt_{\la\cup\alpha}}
&=\psi
_{\kay-1} 
 y_{\kay-1}e_{\stt_{\la\cup\alpha}}\psi
  _{\kay}
y_{\kay-1} - y_{\kay }  \psi_{\kay-1}y_{\kay-1}e_{\stt_{\la\cup\alpha}}\psi_{\kay-1} .
\end{align} 
  An example   of the visualisation of the idempotents on the  righthand-side  of \cref{kkkkkkkk} is   given in the first step  of \cref{reducedeg2223};   the corresponding  righthand-side of \cref{kkkkkkkk} is depicted in \cref{adjustmntexample55}.
  Now, we have that 
$$
y_{\kay-1}e_{\stt_{\la\cup\alpha}}
=
(y_{\kay-1}e_{\stt_{\la \cup\alpha }   \downarrow_{\leq a-1} }) \boxtimes 
e_r \boxtimes  e_{\stt_{\la \cup\alpha }   \downarrow_{> a} } 
$$
and so  by our  inductive assumption for \cref{Claim 2} for rank $a-1<n$, we have that 
\begin{align*}
y_{\kay-1}e_{\stt_{\la\cup\alpha}}
&\in  \pm 
\psi^{a-1}_b 
 \WHY^{{\succ}}_{\stt_{\la \cup\beta }   \downarrow_{\leq a-1} }  \psi_{a-1}^b  \boxtimes 
 e_{\stt_{\la \cup\beta }   \downarrow_{\geq a} } 
  + \mathscr{H}_n^ {(\ppsucc  \la)\cup\beta  } 
   =
 \pm   \psi^{a-1}_b \WHY^{{\succ}}_{\stt_{\la \cup\beta }   }  \psi_{a-1}^b    + \mathscr{H}_n^ {(\ppsucc  \la)\cup\beta  } 
\end{align*}
 where we have implicitly used the following facts:
$(i)$  
 $Y_{\alpha'} (\la\cup \alpha)=\la\cup\alpha\cup\beta-\alpha' $ 
$(ii)$  ${\stt_{\la \cup\alpha }   {\downarrow}_{> a} }= {\stt_{\la \cup\beta }   {\downarrow}_{> a} } 
  $ and 
  $(iii)$ 
 $e_{\SSTT_{\la\cup\beta\cup\alpha-\alpha'}}=
e_{\SSTT_{\la\cup\beta }}$.     
 Substituting this back into \cref{kkkkkkkk}, we obtain 
 \begin{align}
e_{\stt_{\la\cup\alpha}}
&\in   
 \pm (  \psi^{\kay}_b 
\WHY^{{\succ}}_{\stt_{\la\cup\beta}}
\psi_ {\kay}^b y_{a-1}
- y_{\kay}
\psi^{\kay}_b 
\WHY^{{\succ}}_{\stt_{\la\cup\beta}}
\psi_ {\kay}^b  ) + \mathscr{H}_n^ {(\ppsucc  \la)\cup\beta  } 
\end{align}
as required (as in the second case in \cref{Claim 3}).   An example is depicted in \cref{adjustmntexample55} (although we remark that the error terms belonging to $\mathscr{H}_n^ {(\ppsucc  \la)\cup\beta  } $ are actually all zero in this case).  
  
 \begin{figure}[ht!] 
  $$  
 \begin{minipage}{7.7cm}\begin{tikzpicture}
  [xscale=0.45,yscale=  
  0.45]

 \draw (3,0) rectangle (16,3);
  
   \foreach \i in {3.5,4.5,...,15.5}
  {
   \fill[black](\i,0) circle(1.5pt); 
      \fill[black](\i,3)  circle(1.5pt);
       }

 \draw[black] (3.5-3+3,-0.6) node  {$0$};
\draw[black] (4.5-3+3,-0.6) node  {$1$};
\draw[black] (5.5-3+3,-0.6) node  {$4$};
\draw[black] (3.5+3,-0.6) node  {$0$};
\draw[black] (4.5+3,-0.6) node  {$3$};
\draw[black] (5.5+3,-0.6) node  {$4$};
 \draw[black] (6.5+3,-0.6) node  {$2$};
\draw[black] (8.5+2,-0.6) node  {$1$};
\draw[black] (7.5+3+1,-0.6) node  {$0$};
  \draw[black] (6.5+6,-0.6) node  {$4$};
\draw[black] (7.5+6,-0.6) node  {$3$};
\draw[black] (8.5+6,-0.6) node  {$2$};
\draw[black] (9.5+6,-0.6) node  {$2$};

\draw[black] (0.5,3.3) node {$\scalefont{0.8}  \stt_{\la \cup \alpha_1} $}; 

\draw[black] (0.5,1.5) node {$\scalefont{0.8}  \stt_{\la \cup \alpha_2} $};

\draw[black] (0.5,-0.3) node {$\scalefont{0.8}  \stt_{\la \cup \alpha_1} $};

\draw(14.5,3) to [out=-90,in=90] (15.5,1.5)  
 to [out=-90,in=90] (14.5,0) ; 
 
 \draw(15.5,3) to [out=-90,in=90] (14.5,1.5) 
 to [out=-90,in=90] (15.5,0) ; 

 \fill (14.5,1.5) circle (4.5pt);

 \fill (14.65,0.39) circle (4.5pt);

    \foreach \i in {3.5,4.5,...,13.5}
 {
\draw(\i,0)--++(90:1.5*2);
 }

\draw[black] (3.5-3+3,3.6) node  {$0$};
\draw[black] (4.5-3+3,3.6) node  {$1$};
\draw[black] (5.5-3+3,3.6) node  {$4$};
\draw[black] (3.5+3,3.6) node  {$0$};
\draw[black] (4.5+3,3.6) node  {$3$};
\draw[black] (5.5+3,3.6) node  {$4$};
 \draw[black] (6.5+3,3.6) node  {$2$};
\draw[black] (8.5+2,3.6) node  {$1$};
\draw[black] (7.5+3+1,3.6) node  {$0$};
  \draw[black] (6.5+6,3.6) node  {$4$};
\draw[black] (7.5+6,3.6) node  {$3$};
\draw[black] (8.5+6,3.6) node  {$2$};
\draw[black] (9.5+6,3.6) node  {$2$}; 
 
    \end{tikzpicture} 
    \end{minipage}
  \;\;\;  -\; \; 
    \begin{minipage}{7.1cm}\begin{tikzpicture}
  [xscale=0.45,yscale=  
  0.45]

 \draw (3,0) rectangle (16,3);
  
   \foreach \i in {3.5,4.5,...,15.5}
  {
   \fill[black](\i,0) circle(1.5pt); 
      \fill[black](\i,3)  circle(1.5pt);
       }

 \draw[black] (3.5-3+3,-0.6) node  {$0$};
\draw[black] (4.5-3+3,-0.6) node  {$1$};
\draw[black] (5.5-3+3,-0.6) node  {$4$};
\draw[black] (3.5+3,-0.6) node  {$0$};
\draw[black] (4.5+3,-0.6) node  {$3$};
\draw[black] (5.5+3,-0.6) node  {$4$};
 \draw[black] (6.5+3,-0.6) node  {$2$};
\draw[black] (8.5+2,-0.6) node  {$1$};
\draw[black] (7.5+3+1,-0.6) node  {$0$};
  \draw[black] (6.5+6,-0.6) node  {$4$};
\draw[black] (7.5+6,-0.6) node  {$3$};
\draw[black] (8.5+6,-0.6) node  {$2$};
\draw[black] (9.5+6,-0.6) node  {$2$};

\draw(14.5,3) to [out=-90,in=90] (15.5,1.5)  
 to [out=-90,in=90] (14.5,0) ; 
 
 \draw(15.5,3) to [out=-90,in=90] (14.5,1.5) 
 to [out=-90,in=90] (15.5,0) ; 

 \fill (14.5,1.5) circle (4.5pt);

 \fill (15.5-.15,3-0.39) circle (4.5pt);

\draw[black] (12.5+6,3.3) node {$\scalefont{0.8}  \stt_{\la \cup \alpha_1} $}; 

\draw[black] (12.5+6,1.5) node {$\scalefont{0.8}  \stt_{\la \cup \alpha_2} $};

\draw[black] (12.5+6,-0.3) node {$\scalefont{0.8}  \stt_{\la \cup \alpha_1} $};

    \foreach \i in {3.5,4.5,...,13.5}
 {
\draw(\i,0)--++(90:1.5*2);
 }

\draw[black] (3.5-3+3,3.6) node  {$0$};
\draw[black] (4.5-3+3,3.6) node  {$1$};
\draw[black] (5.5-3+3,3.6) node  {$4$};
\draw[black] (3.5+3,3.6) node  {$0$};
\draw[black] (4.5+3,3.6) node  {$3$};
\draw[black] (5.5+3,3.6) node  {$4$};
 \draw[black] (6.5+3,3.6) node  {$2$};
\draw[black] (8.5+2,3.6) node  {$1$};
\draw[black] (7.5+3+1,3.6) node  {$0$};
  \draw[black] (6.5+6,3.6) node  {$4$};
\draw[black] (7.5+6,3.6) node  {$3$};
\draw[black] (8.5+6,3.6) node  {$2$};
\draw[black] (9.5+6,3.6) node  {$2$}; 
 
    \end{tikzpicture} 
    \end{minipage}     $$  
    
    \!\!\!
    \caption{  We continue with the example in \cref{reducedeg222} 
     for $\la=(2^6,1^3)$.   
    This is the   righthand-side of \cref{kkkkkkkk}    for $ e_{\SSTT_{\la\cup\alpha_1}}$.  
  }
\label{adjustmntexample55}    \end{figure}

\item  [$(ii)$] 
Now suppose   $\alpha'=[x,y,z]$ has residue $r+1\in \ZZ/e\ZZ$.  We have two subcases to consider.   
We first consider the easier subcase, in which 
  $  
  [x,y,z]=[i,1,m]$ and so $b=a-1$.
We have that  $ [i+1,1,m] \in {\rm Add}_r (\SSTT_{\la\cup\alpha}{\downarrow}_{\leq a})$
whereas  $  {\rm Add}_r (\SSTT_{\la\cup\beta}{\downarrow}_{\leq b})=\emptyset$.   
By  relation \ref{rel4}, we have that 
$$\WHY^{{\succ}}_{\stt_{\la\cup\alpha}}=
y_a e_{\stt_{\la\cup\alpha}}
=
y_{a-1}e_{\stt_{\la\cup\alpha}}
-
e_{\stt_{\la\cup\alpha}}\psi_{a-1} 
e_{\stt_{\la\cup\beta}}
\psi_{a-1} e_{\stt_{\la\cup\alpha}}
=
y_{a-1}e_{\stt_{\la\cup\alpha}}
-
 \psi^a_b 
y^{\succ}_{\stt_{\la\cup\beta}}
\psi^b_{a}.  
$$
%
%
We have that  $y_{a-1}\WHY_{\stt_{\la }} 
 \in  \mathscr{H}_{n-1}^{ \ppsucceq \la \cup\beta'-\alpha' }
$
and so the former term  is of the required form   by our assumption for $\la\cup\beta'-\alpha'=\mu \psucc \la$.

Now, if $y>1$  then the $(\kay-2)$th, $(\kay-1)$th  and  $\kay$th strands have residues $r$, $r+1$, and $r$ respectively.  
We have that 
 \begin{align}\notag
\WHY^{\succ}_{\stt_{\la\cup\alpha}}=e_{\stt_{\la\cup\alpha}}
&= 
e_{\stt_{\la\cup\alpha}}
\psi_{\kay-2} \psi_{\kay-1} \psi_{\kay-2} 
e_{\stt_{\la\cup\alpha}}  
-
e_{\stt_{\la\cup\alpha}}
  \psi_{\kay-1}  \psi_{\kay-2} \psi_{\kay-1}
  e_{\stt_{\la\cup\alpha}}  
\\[3pt]
\label{kkkkkkkk2}
&= 
- e_{\stt_{\la\cup\alpha}}
\psi_{\kay-2} \psi_{\kay-1}y_{\kay-1} \psi_{\kay-1} \psi_{\kay-2} 
e_{\stt_{\la\cup\alpha}}  
+
e_{\stt_{\la\cup\alpha}}
  \psi_{\kay-1}  \psi_{\kay-2} y_{\kay-2} \psi_{\kay-2}  \psi_{\kay-1}
  e_{\stt_{\la\cup\alpha}}.   
    \end{align}
    where the first equality follows from \cref{easy1}, the second from  relation~\ref{rel5} and the third follows from relations  \ref{rel3} and \ref{rel4}.  
We set $\xi=\Shape(\SSTT_\la{\downarrow}_{<\kay-2})$.  The  two terms in \cref{kkkkkkkk2}   factor through the elements 
\begin{equation}\label{hjhjh}
 \underbrace{    e_{\SSTT_\lambda {\downarrow}_{<\kay-2}}    
   \boxtimes 
   e_{r+1}}_{\xi\cup[x,y,z]  }  \boxtimes y_1e_{r,r}
   \boxtimes 
      e_{\SSTT_{\lambda\cup\alpha} {\downarrow}_{> a}}
 \qquad 
  \underbrace{  e_{\SSTT_\lambda {\downarrow}_{<\kay-2}}    
   \boxtimes 
 y_1  e_r 	}_{\xi\cup[x,y-1,z]}	 \boxtimes e_{r,r+1}
   \boxtimes 
      e_{\SSTT_{\lambda\cup\alpha} {\downarrow}_{> a}}
\end{equation} respectively.

\begin{itemize}[leftmargin=*]
\item [$\bullet$]We first consider the latter term on the righthand-side of \cref{kkkkkkkk2} (which we will see, is the required non-zero term).  
 We note that $[x,y-1,z]$ and $\alpha$ have the same residue and so  
$Y_{[x,y-1,z]} (\xi  \cup {[x,y-1,z]})=  \xi \cup \beta $.  By our  inductive assumption  that  \cref{Claim 2} holds for rank $a-2<n$, we have that 
\begin{align}\label{ssjlkfgflkgjdflg}
 e_{\SSTT_\lambda {\downarrow}_{<\kay-2}}    
   \boxtimes 
 y_1  e_r 
 = y_{a-2} e_{\SSTT_{\xi    \cup {[x,y-1,z]} } }
 \in  
 \pm 
  \psi_b^{a-2}   \WHY^{{\succ}}_{\stt_{\xi \cup \beta}}   \psi^b_{a-2}     
  + \mathscr{H}_{a-2}^{(\ppsucc  \xi)\cup\beta  } 
 \end{align}
Substituting this back into the  second term of   \cref{hjhjh} we obtain 
$$
 \psi_b^{a-2}  \WHY^{{\succ}}_{\stt_{\xi \cup \beta}}    \psi^b_{a-2}  
 \boxtimes e_{r,r+1}
   \boxtimes 
      e_{\SSTT_{\lambda\cup\alpha} {\downarrow}_{> a}}\in   
 \pm  \psi_b^{a-2}   \WHY^{{\succ}}_{\stt_{\la\cup\beta}}
      \psi^b_{a-2}
        + \mathscr{H}_{n}^{(\ppsucc  \la)\cup\beta  } 
    $$
and then  substituting  into the  second term of   \cref{kkkkkkkk2} we obtain
\begin{align*}
e_{\stt_{\la\cup\alpha}}
  \psi_{\kay-1}  \psi_{\kay-2} y_{\kay-2} \psi_{\kay-2}  \psi_{\kay-1}
  e_{\stt_{\la\cup\alpha}}
  & \in
 \pm    \psi_{a-1} \psi_{a-2} 
 \psi_b^{a-2}   \WHY^{{\succ}}_{\stt_{\la\cup\beta}}
      \psi^b_{a-2}
        \psi_{a-2} \psi_{a-1} 
       + \mathscr{H}_{n}^{(\ppsucc  \la)\cup\beta  } 
\\  & = \pm \psi_b^{a}  \WHY^{{\succ}}_{\stt_{\la\cup\beta}}
      \psi^b_{a}   + \mathscr{H}_{n}^{(\ppsucc  \la)\cup\beta  } 
\end{align*}as required.

\item [$\bullet$] We now consider the former term of \cref{kkkkkkkk2} (which, we will see, is zero modulo   the ideal).  
 We have that    $Y_{[x,y,z]}	(\xi\cup[x,y,z]) = ( \xi\cup 	\gamma )\psucc (\xi\cup[x,y,z])$  for 
 $\gamma$  a box of residue $r+1\in \ZZ/e\ZZ$.  
 We set $c=\SSTT_{\xi \cup \gamma}^{-1}(\gamma)$.   
   We have that 
 $$
  e_{\SSTT_\xi   }  \boxtimes    e_{r+1} 
 = 
 e_{\SSTT_{\xi   \cup [x,y,z]   }}
 \in \pm 
\psi_c^{a-2} \WHY^{{\succ}}_{\SSTT_{\xi   \cup\gamma}}\psi  _{a-2}^c
  +   \mathscr{H}^{(\ppsucc \xi ) \cup \gamma}_{a-2}    $$
by our  inductive assumption  that  \cref{Claim 3} holds for rank $a-2<n$.
Concatenating, we have  that 
 $$
      e_{\SSTT_\lambda {\downarrow}_{<\kay-2}}    
   \boxtimes 
   e_{r+1}  \boxtimes y_1e_{r,r}
   \boxtimes 
      e_{\SSTT_{\lambda\cup\alpha} {\downarrow}_{> a}}
       \in\pm 
    \psi_c^{a-2} 
      \WHY^{{\succ}}_{\SSTT_{\lambda  \cup \gamma\cup \alpha -[x,y,z]}} 
   \psi^c_{a-2}      +\mathscr{H}_{n}^{(\ppsucc ( \la-[x,y,z]\cup \gamma)      )\cup \alpha }
 $$
 and we note that the idempotent on the righthand-side belongs to the ideal $\mathscr{H}_{n}^{(\ppsucceq( \la-[x,y,z]\cup \gamma))      \cup \alpha }$ 
 and so the result follows by our assumption for 
 $ \lambda  \cup \gamma   -[x,y,z]  =\mu  \psucc \la  $.  
     \color{black}
 \end{itemize}

   \item[$(iii)$]
Now suppose $\alpha' $ has residue $d\in \ZZ/e\ZZ$ such that $|d-r|>1$.    We  set $\xi=\Shape(\SSTT_\la{\downarrow}_{< \kay-1})$.  
By case 2 of relation~\ref{rel4}, we have that  
\begin{align}\label{kkkkkkkk3}
  y_a^k e_{\stt_{\la\cup\alpha}} 
&=   \psi_{\kay-1} \big( \underbrace{e_{\SSTT_\la{\downarrow}_{<\kay-1}}\boxtimes
  y_1^k e_{r } }_{\xi\cup\alpha}\boxtimes e_{d} \boxtimes e_{\SSTT_{\la\cup\alpha}{\downarrow}_{>a}}   \big)\psi_{\kay-1}
  \end{align}
for $k\in\{0,1\}$. By the  inductive  assumption for rank $a-1<n$ of \cref{Claim 3}, we have that 
$$
\left.\begin{array}{rr}
\text{  if }\alpha'  =[i,2,m] \text{ and } d=r+2 \text{  then}	&		e_{\SSTT_\la{\downarrow}_{<\kay-1}}\boxtimes y_1  e_{r }
 \\
\text{otherwise} & e_{\SSTT_\la{\downarrow}_{<\kay-1}}\boxtimes  e_{r } 
\end{array}\right\}
\in\pm  \psi^{a-1}_b \WHY^{{\succ}}_{\SSTT_{\xi\cup\beta}} \psi_{a-1}^b + 
\mathscr{H}_{a-1}^{(\ppsucc  \xi) \cup \beta}
 $$
 As in the case $(ii)$ above, we concatenate to deduce the result.  
  Two examples of the visualisation of the    righthand-side  of \cref{kkkkkkkk3} are   given in the third and fourth steps of \cref{reducedeg2223};   the corresponding elements are depicted in \cref{adjustmntexample44}.

 \begin{figure}[ht!] 
  $$   
     \begin{minipage}{7.5cm}\begin{tikzpicture}
  [xscale=0.45,yscale=  
  0.45]

 \draw (3,0) rectangle (16,3);
  
   \foreach \i in {3.5,4.5,...,15.5}
  {
   \fill[black](\i,0) circle(1.5pt); 
      \fill[black](\i,3)  circle(1.5pt);
       }

 \draw[black] (3.5-3+3,-0.6) node  {$0$};
\draw[black] (4.5-3+3,-0.6) node  {$1$};
\draw[black] (5.5-3+3,-0.6) node  {$4$};
\draw[black] (3.5+3,-0.6) node  {$0$};
\draw[black] (4.5+3,-0.6) node  {$3$};
\draw[black] (5.5+3,-0.6) node  {$4$};
 \draw[black] (6.5+3,-0.6) node  {$2$};
\draw[black] (8.5+2,-0.6) node  {$1$};
\draw[black] (7.5+3+1,-0.6) node  {$0$};
  \draw[black] (6.5+6,-0.6) node  {$4$};
\draw[black] (7.5+6,-0.6) node  {$2$};
\draw[black] (8.5+6,-0.6) node  {$3$};
\draw[black] (9.5+6,-0.6) node  {$2$};

\draw[black] (3.5-3+3,3.6) node  {$0$};
\draw[black] (4.5-3+3,3.6) node  {$1$};
\draw[black] (5.5-3+3,3.6) node  {$4$};
\draw[black] (3.5+3,3.6) node  {$0$};
\draw[black] (4.5+3,3.6) node  {$3$};
\draw[black] (5.5+3,3.6) node  {$4$};
 \draw[black] (6.5+3,3.6) node  {$2$};
\draw[black] (8.5+2,3.6) node  {$1$};
\draw[black] (7.5+3+1,3.6) node  {$0$};
  \draw[black] (6.5+6,3.6) node  {$4$};
\draw[black] (7.5+6,3.6) node  {$2$};
\draw[black] (8.5+6,3.6) node  {$3$};
\draw[black] (9.5+6,3.6) node  {$2$};

    \foreach \i in {3.5,4.5,...,11.5}
 {
\draw(\i,0)--++(90:1.5*2);
 }
 \draw(14.5,0)--++(90:1.5*2);
  \draw(15.5,0)--++(90:1.5*2);

\draw[black] (12.5+6,3.3)node {$\scalefont{0.8}  \stt_{\la \cup \alpha_3} $};

\draw(-2+14.5,3) to [out=-90,in=90] (-2+15.5,1.5)  
 to [out=-90,in=90] (-2+14.5,0) ; 
 
 \draw(-2+15.5,3) to [out=-90,in=90] (-2+14.5,1.5) 
 to [out=-90,in=90] (-2+15.5,0) ;

\draw[black] (12.5+6,1.5) node {$\scalefont{0.8}  \SSTT_ {\la \cup \alpha_4} $}; 

\draw[black] (12.5+6,-0.3) node {$\scalefont{0.8}  \stt_{\la \cup \alpha_3} $}; 
 
    \end{tikzpicture} 
    \end{minipage}
    \qquad\quad
        \begin{minipage}{7.5cm}\begin{tikzpicture}
  [xscale=0.45,yscale=  
  0.45]

 \draw (3,0) rectangle (16,3);
  
   \foreach \i in {3.5,4.5,...,15.5}
  {
   \fill[black](\i,0) circle(1.5pt); 
      \fill[black](\i,3)  circle(1.5pt);
       }

 \draw[black] (3.5-3+3,-0.6) node  {$0$};
\draw[black] (4.5-3+3,-0.6) node  {$1$};
\draw[black] (5.5-3+3,-0.6) node  {$4$};
\draw[black] (3.5+3,-0.6) node  {$0$};
\draw[black] (4.5+3,-0.6) node  {$3$};
\draw[black] (5.5+3,-0.6) node  {$4$};
 \draw[black] (6.5+3,-0.6) node  {$2$};
\draw[black] (8.5+2,-0.6) node  {$1$};
\draw[black] (7.5+3+1,-0.6) node  {$0$};
  \draw[black] (6.5+6,-0.6) node  {$2$};
\draw[black] (7.5+6,-0.6) node  {$4$};
\draw[black] (8.5+6,-0.6) node  {$3$};
\draw[black] (9.5+6,-0.6) node  {$2$};

\draw[black] (3.5-3+3,3.6) node  {$0$};
\draw[black] (4.5-3+3,3.6) node  {$1$};
\draw[black] (5.5-3+3,3.6) node  {$4$};
\draw[black] (3.5+3,3.6) node  {$0$};
\draw[black] (4.5+3,3.6) node  {$3$};
\draw[black] (5.5+3,3.6) node  {$4$};
 \draw[black] (6.5+3,3.6) node  {$2$};
\draw[black] (8.5+2,3.6) node  {$1$};
\draw[black] (7.5+3+1,3.6) node  {$0$};
  \draw[black] (6.5+6,3.6) node  {$2$};
\draw[black] (7.5+6,3.6) node  {$4$};
\draw[black] (8.5+6,3.6) node  {$3$};
\draw[black] (9.5+6,3.6) node  {$2$};

    \foreach \i in {3.5,4.5,...,10.5}
 {
\draw(\i,0)--++(90:1.5*2);
 }
 \draw(14.5,0)--++(90:1.5*2);
  \draw(15.5,0)--++(90:1.5*2);
 \draw(13.5,0)--++(90:1.5*2);

\draw[black] (12.5+6,3.3)node {$\scalefont{0.8} \SSTT_ {\la \cup \alpha_4} $};

\draw(-2+14.5-1,3) to [out=-90,in=90] (-2+15.5-1,1.5)  
 to [out=-90,in=90] (-2+14.5-1,0) ; 
 
 \draw(-2+15.5-1,3) to [out=-90,in=90] (-2+14.5-1,1.5) 
 to [out=-90,in=90] (-2+15.5-1,0) ;

\draw[black] (12.5+6,1.5) node {$\scalefont{0.8}  \SSTT_{\la \cup \alpha_5} $}; 

\draw[black] (12.5+6,-0.3) node {$\scalefont{0.8}   \SSTT_{\la \cup \alpha_4} $}; 
 
    \end{tikzpicture} 
    \end{minipage}
    $$  
    
    \!\!\!
    \caption{The     righthand-side of \cref{kkkkkkkk3} for $\la=(2^6,1^3)$ and  $\alpha=\alpha_3$ and $\alpha_4$ respectively.   
       }
\label{adjustmntexample44}    \end{figure}

 \item[$(iv)$] Suppose $\alpha'=[x,y,z]$ has residue $r-1\in \ZZ/e\ZZ$ 
 (thus $[x,y,z]= [i,j-1,m]$ by residue considerations)  and 
that $ [i-1,j,m]\not \in \la$ (if $[i-1,j,m]\in \la$, this implies  that  $\la\cup \alpha \in \mathscr{P}_{\underline{\tau}}(n)$ and so the process would terminate).
We remark that ${\rm res}( {\sf Adj\text{-}Gar}(\alpha))=\{r-1\} $ and this is the unique case of the proof for which this holds.

 Let $\gamma = [i-1,j-1,m]$
and we set   $c=\SSTT_{\la\cup\alpha} (\gamma) $ and let 
$\xi =  
\Shape(\SSTT_\la{\downarrow}_{<\kay-1}) 
 $ (see \cref{reducedeg22244} for an example).   
By \cref{easy1}, $ e_{{\stt_  {\la\cup\alpha}}}=  \WHY^{\succ}_{{\stt_  {\la\cup\alpha}}}$. 
 We have that 
\begin{align}
 e_{{\stt_  {\la\cup\alpha}}}&= 
 e_{{\stt_  {\la  \cup\alpha}}}\psi^{c}  
 _{ {\kay-2}  }   \psi_{c}  
 ^{ {\kay-2} }  
  e_{{\stt_  {\la\cup\alpha}}} \\[2pt] &
 =
 e_{{\stt_  {\la  \cup\alpha}}}\psi_{\kay-1}\psi^{c}  
 _{ {\kay-1} }  
  \psi_{c}  
 ^{ \kay-2 }  \psi_{\kay-1} 
 e_{{\stt_  {\la\cup\alpha}}}
 -   e_{{\stt_  {\la  \cup\alpha}}}
    \psi^{c}  
 _{ {\kay-1} }  
  \psi_{c}  
 ^{ \kay  }   e_{{\stt_  {\la  \cup\alpha}}}
 \\[2pt]
\label{gfgfgfgfgfgf} &=
 e_{{\stt_  {\la\cup\alpha}}}   \psi^ {\kay}  
 _{ c+1 }  
  \psi_ {\kay}  
 ^{ c  }   e_{{\stt_  {\la  \cup\alpha}}}
 -
      \psi^{c}  
 _{ {\kay-1} }  
 ( e_{\stt_{\xi-\gamma }	}
\boxtimes 
e_{r-1,r,r}
\boxtimes
e_{\SSTT_{\la\cup\alpha}{\downarrow}_{>a}}
) 
 \psi_{c}  
 ^{ \kay }    
  \\[2pt]
\label{gfgfgfgfgfgf2} &=
-   e_{{\stt_  {\la\cup\alpha}}} \psi^ {\kay}  
 _{ c  }  
y_{c }
  \psi_ {\kay}  
 ^{ c  }   e_{{\stt_  {\la  \cup\alpha}}}
 -
      \psi^{c}  
 _{ {\kay-1} }  
 ( e_{\stt_{\xi-\gamma }	}
\boxtimes 
e_{r-1,r,r}
\boxtimes
e_{\SSTT_{\la\cup\alpha}{\downarrow}_{>a}}
) 
 \psi_{c}  
 ^{ \kay }    
  \\[2pt] \label{1.19ffddfdf}
  &\in 
  -   \psi^ {\kay}  
 _{ b  }  
 \WHY^{\succ}_{{\stt_  {\la  \cup\beta} }} 
   \psi_ {\kay}  
 ^{ b  }   e_{{\stt_  {\la  \cup\alpha}}}
 -
      \psi^{c}  
 _{ {\kay-1} }  
 ( e_{\stt_{\xi-\gamma }	}
\boxtimes 
e_{r-1,r,r}
\boxtimes
e_{\SSTT_{\la\cup\alpha}{\downarrow}_{>a}}
) 
 \psi_{c}  
 ^{ \kay }   +\mathscr{H}^{(\ppsucc\la) \cup \beta}
\end{align}
 where the first and third equalities follow  from the commuting case 2 of relation~\ref{rel4} and \cref{easy1}, and the second equality  follows from case~1 of relation~\ref{rel5}; the fourth equality follows from \ref{rel3}; the fifth is either trivial or follows from  case $(3)$ of  \ref{rel4} (in the latter case, the error term is zero by our  inductive assumption for rank $c-1<n$ of \cref{Claim 2}).  
For our continuing example, 
the righthand-side of \cref{gfgfgfgfgfgf2} is depicted in \cref{adjustmntexample787};
 the   box-configurations labelling the idempotents on the left and righthand-sides of \cref{gfgfgfgfgfgf} are depicted in \cref{reducedeg22244}.

 \begin{figure}[ht!!!] $$ \begin{minipage}{3.8cm}
\scalefont{0.9} \begin{tikzpicture}
  [xscale=0.5,yscale=  
  0.5]

\draw[thick,fill=white](-2,0.5) rectangle (5.5,-9.5);

\draw[thick ](5-3-1,-9+4) rectangle (6-4,-8+4);  
\draw(5.5-4,-8.5+4) node {$2$};

\draw[thick, fill=white] (0,0)--++(0:2)--++(-90:3)--++(180:1)--++(-90:6)--++(180:1)--++(90:9);

  \draw[thick, fill=gray!30] (0,0)--++(0:2)--++(-90:3)--++(180:1)--++(-90:1)--++(180:1)--++(90:4);

  \clip (0,0)--++(0:2)--++(-90:3)--++(180:1)--++(-90:6)--++(180:1)--++(90:9);
\draw(0.5,-0.5) node {$0$};
\draw(1.5,-0.5) node {$1$};
\draw(2.5,-0.6) node {$2$};
\draw(0.5,-1.5) node {$4$};
\draw(1.5,-1.5) node {$0$};
\draw(2.5,-1.5) node {$1$};
\draw(0.5,-2.5) node {$3$};
\draw(1.5,-2.5) node {$4$};
\draw(2.5,-2.5) node {$0$};

\draw(0.5,-3.5) node {$2$};
\draw(1.5,-3.5) node {$3$};

\draw(0.5,-4.5) node {$1$};
\draw(1.5,-4.5) node {$2$};

\draw(0.5,-5.5) node {$0$};
\draw(1.5,-5.5) node {$1$};

\draw(0.5,-6.5) node {$4$};\draw(0.5,-7.5) node {$3$};\draw(0.5,-8.5) node {$2$};
 
	\foreach \i in {0,1,2,...,20}
	{
		\path  (0,0)++(-90:1*\i cm)  coordinate (a\i);
		\path  (0.5,-0.6)++(0:1*\i cm)  coordinate (b\i);
		\path  (0.5,-1.5)++(0:1*\i cm)  coordinate (c\i);
		\path  (0.5,-2.5)++(0:1*\i cm)  coordinate (d\i);
		\draw[thick] (a\i)--++(0:6);
		\draw[thick] (1,0)--++(-90:6);
		\draw[thick] (2,0)--++(-90:5);
	}

  \end{tikzpicture} 
    \end{minipage}   
\;    \leftrightarrow \;\;
       \begin{minipage}{5.3cm}
\scalefont{0.9} \begin{tikzpicture}
  [xscale=0.5,yscale=  
  0.5]

\draw[thick,fill=white](-2,0.5) rectangle (5.5+3,-9.5); 

%

\draw[thick ](5-3-1+2+2,-9+4+1) rectangle (6-4+2+2,-8+4+1);  
\draw(5.5-4+2+2,-8.5+4+1 ) node {$2$};


\draw[thick, fill=white] (0,0)--++(0:2)--++(-90:3)--++(180:1)--++(-90:6)--++(180:1)--++(90:9);
  \clip (0,0)--++(0:2)--++(-90:3)--++(180:1)--++(-90:6)--++(180:1)--++(90:9);
\draw(0.5,-0.5) node {$0$};
\draw(1.5,-0.5) node {$1$};
\draw(2.5,-0.6) node {$2$};
\draw(0.5,-1.5) node {$4$};
\draw(1.5,-1.5) node {$0$};
\draw(2.5,-1.5) node {$1$};
\draw(0.5,-2.5) node {$3$};
\draw(1.5,-2.5) node {$4$};
\draw(2.5,-2.5) node {$0$};

\draw(0.5,-3.5) node {$2$};
\draw(1.5,-3.5) node {$3$};

\draw(0.5,-4.5) node {$1$};
\draw(1.5,-4.5) node {$2$};

\draw(0.5,-5.5) node {$0$};
\draw(1.5,-5.5) node {$1$};

\draw(0.5,-6.5) node {$4$};\draw(0.5,-7.5) node {$3$};\draw(0.5,-8.5) node {$2$};
 
	\foreach \i in {0,1,2,...,20}
	{
		\path  (0,0)++(-90:1*\i cm)  coordinate (a\i);
		\path  (0.5,-0.6)++(0:1*\i cm)  coordinate (b\i);
		\path  (0.5,-1.5)++(0:1*\i cm)  coordinate (c\i);
		\path  (0.5,-2.5)++(0:1*\i cm)  coordinate (d\i);
		\draw[thick] (a\i)--++(0:6);
		\draw[thick] (1,0)--++(-90:6);
		\draw[thick] (2,0)--++(-90:5);
	}

  \end{tikzpicture} 
    \end{minipage}  
    \;\;\;\;\;\;\;\; -  
     \begin{minipage}{5cm}
\scalefont{0.9} \begin{tikzpicture}
  [xscale=0.5,yscale=  
  0.5]

\draw[thick,fill=white](-2,0.5) rectangle (5.5+3,-9.5);

\draw[thick ](5-3-1,-9+4) rectangle (6-4,-8+4);  
\draw(5.5-4,-8.5+4) node {$2$};

\draw[thick ](5-3-1+2+3,-9+4) rectangle (6-4+2+3,-8+4);  
\draw(5.5-4+2+3,-8.5+4) node {$2$};

\draw[thick, fill=white] (0,0)--++(0:2)--++(-90:3)--++(180:1)--++(-90:6)--++(180:1)--++(90:9);

\draw[white,fill=white,thick](-0.1,-3) rectangle (1.01,-4);  
\draw[thick](0,-3) -- (1,-3);  
\draw[thick](0,-4) -- (1,-4);  
\draw[thick](0,-2)--++(-90:1) -- (1.1,-3);  
\draw[thick](0,-5)--++(90:1) -- (1.1,-4);

  \clip (0,0)--++(0:2)--++(-90:3)--++(180:1)--++(-90:6)--++(180:1)--++(90:9);

\draw(0.5,-0.5) node {$0$};
\draw(1.5,-0.5) node {$1$};
\draw(2.5,-0.6) node {$2$};
\draw(0.5,-1.5) node {$4$};
\draw(1.5,-1.5) node {$0$};
\draw(2.5,-1.5) node {$1$};
\draw(0.5,-2.5) node {$3$};
\draw(1.5,-2.5) node {$4$};
\draw(2.5,-2.5) node {$0$};

 \draw(1.5,-3.5) node {$3$};

\draw(0.5,-4.5) node {$1$};
\draw(1.5,-4.5) node {$2$};

\draw(0.5,-5.5) node {$0$};
\draw(1.5,-5.5) node {$1$};

\draw(0.5,-6.5) node {$4$};
\draw(0.5,-7.5) node {$3$};
\draw(0.5,-8.5) node {$2$};
 
	\foreach \i in {0,1,2,...,20}
	{
		\path  (0,0)++(-90:1*\i cm)  coordinate (a\i);
		\path  (0.5,-0.6)++(0:1*\i cm)  coordinate (b\i);
		\path  (0.5,-1.5)++(0:1*\i cm)  coordinate (c\i);
		\path  (0.5,-2.5)++(0:1*\i cm)  coordinate (d\i);
		\draw[thick] (a\i)--++(0:6);
		\draw[thick] (1,0)--++(-90:6);
		\draw[thick] (2,0)--++(-90:5);
	}

\draw[white,fill=white ](0,-3) rectangle (1,-4);  
\draw[thick](0,-2)--++(-90:1) -- (1.1,-3);  
\draw[thick](0,-5)--++(90:1) -- (1.1,-4);  

  \end{tikzpicture} 
    \end{minipage} 
  $$   \caption{
 	The lefthand-side is  $\la\cup\alpha_5$ as in   \cref{reducedeg2223} (with $\xi$ shaded grey).  The righthand-side labels the idempotents obtained from applying \cref{gfgfgfgfgfgf}.
 The middle box configuration 
 is $\SSTT_{\la\cup\alpha_6}$.   
 	}
 \label{reducedeg22244}
 \end{figure}

\begin{figure}[ht!]
$$
  \begin{minipage}{5.3cm}
\scalefont{0.9} \begin{tikzpicture}
  [xscale=0.5,yscale=  
  0.5]

\draw[thick,fill=white](-2,0.5) rectangle (5.5+3,-9.5);

\draw[thick ](5-3-1,-9+4) rectangle (6-4,-8+4);  
\draw(5.5-4,-8.5+4) node {$2$};

\draw[thick ](5-3-1+2+3,-9+4) rectangle (6-4+2+3,-8+4);  
\draw(5.5-4+2+3,-8.5+4) node {$2$};

\draw[thick, fill=white] (0,0)--++(0:2)--++(-90:3)--++(180:1)--++(-90:6)--++(180:1)--++(90:9);

\draw[white,fill=white,thick](0,-3) rectangle (1,-4);  
\draw[thick](0,-3) -- (1,-3);  
\draw[thick](0,-4) -- (1,-4);  
\draw[thick](0,-2)--++(-90:1) -- (1.1,-3);  
\draw[thick](0,-5)--++(90:1) -- (1.1,-4);

  \clip (0,0)--++(0:2)--++(-90:3)--++(180:1)--++(-90:6)--++(180:1)--++(90:9);

\draw(0.5,-0.5) node {$0$};
\draw(1.5,-0.5) node {$1$};
\draw(2.5,-0.6) node {$2$};
\draw(0.5,-1.5) node {$4$};
\draw(1.5,-1.5) node {$0$};
\draw(2.5,-1.5) node {$1$};
\draw(0.5,-2.5) node {$3$};
\draw(1.5,-2.5) node {$4$};
\draw(2.5,-2.5) node {$0$};

 \draw(1.5,-3.5) node {$3$};

\draw(0.5,-4.5) node {$1$};
\draw(1.5,-4.5) node {$2$};

\draw(0.5,-5.5) node {$0$};
\draw(1.5,-5.5) node {$1$};

\draw(0.5,-6.5) node {$4$};
\draw(0.5,-7.5) node {$3$};
\draw(0.5,-8.5) node {$2$};
 
	\foreach \i in {0,1,2,...,20}
	{
		\path  (0,0)++(-90:1*\i cm)  coordinate (a\i);
		\path  (0.5,-0.6)++(0:1*\i cm)  coordinate (b\i);
		\path  (0.5,-1.5)++(0:1*\i cm)  coordinate (c\i);
		\path  (0.5,-2.5)++(0:1*\i cm)  coordinate (d\i);
		\draw[thick] (a\i)--++(0:6);
		\draw[thick] (1,0)--++(-90:6);
		\draw[thick] (2,0)--++(-90:5);
	}

\draw[white,fill=white ](0,-3) rectangle (1,-4);  
\draw[thick](0,-2)--++(-90:1) -- (1.1,-3);  
\draw[thick](0,-5)--++(90:1) -- (1.1,-4);  

  \end{tikzpicture} 
    \end{minipage} 
 \;\leftrightarrow \;\;
 \begin{minipage}{5.3cm}
\scalefont{0.9} \begin{tikzpicture}
  [xscale=0.5,yscale=  
  0.5]

 \draw[thick](0,-3)--++(0:1)  --++(-90:1);  

\draw[thick,fill=white](-2,0.5) rectangle (5.5+3,-9.5); 
  
\draw(0.5,-3.5) node {$2$};


\draw[thick ](5-3-1+2+2,-9+4+1) rectangle (6-4+2+2,-8+4+1);  
\draw(5.5-4+2+2,-8.5+4+1 ) node {$2$};

\draw[thick ]  (2,-1) rectangle (3,-2);   
\draw(2.5,-1.5) node {$1$};

\draw(0.5,-3.5) node {$2$};

\draw[thick, fill=white] (0,0)--++(0:2)--++(-90:3)--++(180:1)--++(-90:6)--++(180:1)--++(90:9);

 \draw(0.5,-3.5) node {$2$};
\draw[white,fill=white,thick](0,-4) rectangle (1,-5);  
\draw[thick](0,-3-1) -- (1,-3-1);  
\draw[thick](0,-4-1) -- (1,-4-1);  
\draw[thick](0,-2-1)--++(-90:1) -- (1,-3-1);  
\draw[thick](0,-5-1)--++(90:1) -- (1,-4-1);  

 \draw[thick](0,-3)--++(0:1)  --++(-90:1);  

\draw(0.5,-3.5) node {$2$};
  \clip (0,0)--++(0:2)--++(-90:3)--++(180:1)--++(-90:6)--++(180:1)--++(90:9);

\draw(0.5,-0.5) node {$0$};
\draw(1.5,-0.5) node {$1$};
\draw(2.5,-0.6) node {$2$};
\draw(0.5,-1.5) node {$4$};
\draw(1.5,-1.5) node {$0$};
\draw(2.5,-1.5) node {$1$};
\draw(0.5,-2.5) node {$3$};
\draw(1.5,-2.5) node {$4$};
\draw(2.5,-2.5) node {$0$};

 \draw(1.5,-3.5) node {$3$};

\draw(0.5,-3.5) node {$2$};
\draw(1.5,-4.5) node {$2$};

\draw(0.5,-5.5) node {$0$};
\draw(1.5,-5.5) node {$1$};

\draw(0.5,-6.5) node {$4$};
\draw(0.5,-7.5) node {$3$};
\draw(0.5,-8.5) node {$2$};
 
	\foreach \i in {0,1,2,...,20}
	{
		\path  (0,0)++(-90:1*\i cm)  coordinate (a\i);
		\path  (0.5,-0.6)++(0:1*\i cm)  coordinate (b\i);
		\path  (0.5,-1.5)++(0:1*\i cm)  coordinate (c\i);
		\path  (0.5,-2.5)++(0:1*\i cm)  coordinate (d\i);
		\draw[thick] (a\i)--++(0:6);
		\draw[thick] (1,0)--++(-90:6);
		\draw[thick] (2,0)--++(-90:5);
	}

\draw[white,fill=white ](0,-3) rectangle (1.1,-4);
\draw[white,fill=white ](0,-4) rectangle (1.1,-5);  
\draw[thick](0,-3)--++(-90:1) -- (1,-4);  
\draw[thick](0,-6)--++(90:1) -- (1,-5);  
 \draw[thick](0,-3)--++(0:1)  --++(-90:1);  
 \draw(0.5,-3.5) node {$2$};
  \end{tikzpicture} 
    \end{minipage} 
    $$
    \caption{Rewriting the first term after the equality in \cref{gfgfgfgfgfgf}. We have moved the 1-box using case $(iii)$ and this leaves us free to move the 2-boxes up their corresponding diagonals.  
    The righthand-side is a box configuration which is strictly higher than $\la \cup \beta$ in the $\succ$-ordering.  
     }
    \label{khjlgfshjkldsfghjfgkdghdjlsfghdfjkls}
    \end{figure}

  We now consider the second term on the righthand-side of  \cref{gfgfgfgfgfgf}. 
  We have that 
 $$
 e_{\stt_{\xi-\gamma}	}
\boxtimes 
e_{r-1,r,r}
=e_{\SSTT_{ \xi-\gamma \cup[i,j-1,m] \cup\alpha \cup [i,j+e,m]}}$$
By our  inductive assumption for ranks $a-2,a-1<n$ for \cref{Claim 3}, we have that:  
$$  e_{\SSTT_{ \xi-\gamma\cup[i,j-1,m]}}	\in 	 \mathscr{H}_{a-2}^{\ppsucceq \rho			} \; 
\Longrightarrow \; 
 e_{\SSTT_{ \xi-\gamma\cup[i,j-1,m] \cup\alpha}}   \in \mathscr{H}_{a-1}^{(\ppsucceq \rho)	\cup \gamma	 }  
 \leq  \mathscr{H}_{a-1}^{ \ppsucceq (\rho 	\cup \gamma)	 }  $$
for $\rho= Y^{\ell+1}_{[i,j-1,m]}(\xi-\gamma\cup [i,j-1,m]) $.  
 Given $\pi\psucceq (\rho \cup \gamma)$, we can left justify $\pi\cup [i,j+e,m]$ to obtain  $\pi\cup\alpha$.  We note that 
 ${\sf Gar}_{\succ}(\alpha)\cap \pi$ contains no nodes of residue $r$ or $r\pm1$.   
 Therefore 
 $$
e_{\SSTT_{ \xi-\gamma\cup[i,j-1,m] \cup\alpha \cup[i,j+e,m]}}
 \in \mathscr{H}^{(\succeq \rho 	\cup \gamma)	\cup\beta	}_a	
 \; 
\Longrightarrow \; 
  e_{\stt_{\xi}	}
\boxtimes 
e_{r-1,r,r}
\boxtimes
e_{\SSTT_{\la\cup\alpha}{\downarrow}_{>a}}
 \in \mathscr{H}^{(\ppsucc  \la) 	\cup \beta	} _n
  $$
 as required.     (Note that $\gamma,\beta\not\in \pi 
  $ by   \cref{easy1}.)    
 See \cref{khjlgfshjkldsfghjfgkdghdjlsfghdfjkls} for an example.  
\end{itemize}

 \begin{figure}[ht!] 
  $$ 
 \begin{minipage}{7.5cm}\begin{tikzpicture}
  [xscale=0.5,yscale=  
  0.5]

 \draw (3,0) rectangle (16,3);
  
   \foreach \i in {3.5,4.5,...,15.5}
  {
   \fill[black](\i,0) circle(1.5pt); 
      \fill[black](\i,3)  circle(1.5pt);
       }

\draw[black] (3.5-3+3,3.5) node  {$0$};
\draw[black] (4.5-3+3,3.5) node  {$1$};
\draw[black] (5.5-3+3,3.5) node  {$4$};
\draw[black] (3.5+3,3.5) node  {$0$};
\draw[black] (4.5+3,3.5) node  {$3$};
\draw[black] (5.5+3,3.5) node  {$4$};
 \draw[black] (6.5+3,3.5) node  {$2$};
\draw[black] (8.5+2,3.5) node  {$1$};
\draw[black] (7.5+3+1,3.5) node  {$2$};
  \draw[black] (6.5+6,3.5) node  {$0$};
\draw[black] (7.5+6,3.5) node  {$4$};
\draw[black] (8.5+6,3.5) node  {$3$};
\draw[black] (9.5+6,3.5) node  {$2$}; 
 
 \draw[black] (3.5-3+3,-0.5) node  {$0$};
\draw[black] (4.5-3+3,-0.5) node  {$1$};
\draw[black] (5.5-3+3,-0.5) node  {$4$};
\draw[black] (3.5+3,-0.5) node  {$0$};
\draw[black] (4.5+3,-0.5) node  {$3$};
\draw[black] (5.5+3,-0.5) node  {$4$};
 \draw[black] (6.5+3,-0.5) node  {$2$};
\draw[black] (8.5+2,-0.5) node  {$1$};
\draw[black] (7.5+3+1,-0.5) node  {$2$};
  \draw[black] (6.5+6,-0.5) node  {$0$};
\draw[black] (7.5+6,-0.5) node  {$4$};
\draw[black] (8.5+6,-0.5) node  {$3$};
\draw[black] (9.5+6,-0.5) node  {$2$};

\draw(-1+1+14.5-4,3) to [out=-90,in=90]  
(2+-2+1+14.5-4,1.5) to [out=-90,in=90]  
(-1+1+14.5-4,0) ; 
 
\draw(-1+14.5-4+2,3) to [out=-90,in=90]  (-3+14.5-2,0) ; 
 
  \draw(2+-1+14.5-4,0) to [out=90,in=-90] (-2-4+15.5,1.5)  
  to [out=90,in=-90] (-3-2+14.5,3) ;

  
    \foreach \i in {3.5,4.5,...,8.5}
 {
\draw(\i,0)--++(90:1.5*2);
 }
 
 \foreach \i in {12.5,13.5,...,15.5}
 {
\draw(\i,0)--++(90:1.5*2);
 }

   \draw[black] (0.5+0.2,3.2) node {$\scalefont{0.8}  \stt_{\la \cup \alpha_5}$};

\draw[black] (0.5+0.2,-0.2) node {$\scalefont{0.8}  \stt_{\la \cup \alpha_5}$};

    \end{tikzpicture} 
    \end{minipage}
    \;\;\;\;\;\;\;\;\;    -\; 
      \begin{minipage}{7.7cm}\begin{tikzpicture}
  [xscale=0.5,yscale=  
  0.5]
\draw (3,0) rectangle (16,3);
  
   \foreach \i in {3.5,4.5,...,15.5}
  {
   \fill[black](\i,0) circle(1.5pt); 
      \fill[black](\i,3)  circle(1.5pt);
       }

 \draw[black] (3.5-3+3,-0.5) node  {$0$};
\draw[black] (4.5-3+3,-0.5) node  {$1$};
\draw[black] (5.5-3+3,-0.5) node  {$4$};
\draw[black] (3.5+3,-0.5) node  {$0$};
\draw[black] (4.5+3,-0.5) node  {$3$};
\draw[black] (5.5+3,-0.5) node  {$4$};
 \draw[black] (6.5+3,-0.5) node  {$2$};
\draw[black] (8.5+2,-0.5) node  {$1$};
\draw[black] (7.5+3+1,-0.5) node  {$2$};
  \draw[black] (6.5+6,-0.5) node  {$0$};
\draw[black] (7.5+6,-0.5) node  {$4$};
\draw[black] (8.5+6,-0.5) node  {$3$};
\draw[black] (9.5+6,-0.5) node  {$2$};

\draw(-1+1+14.5-4,3) to [out=-90,in=90]  
(-2+1+14.5-4,1.5) to [out=-90,in=90]  
(-1+1+14.5-4,0) ; 
 
\draw(-1+14.5-4,3) to [out=-90,in=90]  (-3+14.5,0) ; 
 
 \draw(-1+14.5-4,0) to [out=90,in=-90] (-4+15.5,1.5)  
 to [out=90,in=-90] (-3+14.5,3) ;

  
    \foreach \i in {3.5,4.5,...,8.5}
 {
\draw(\i,0)--++(90:1.5*2);
 }
 
 \foreach \i in {12.5,13.5,...,15.5}
 {
\draw(\i,0)--++(90:1.5*2);
 }

\draw[black] (3.5-3+3,3.5) node  {$0$};
\draw[black] (4.5-3+3,3.5) node  {$1$};
\draw[black] (5.5-3+3,3.5) node  {$4$};
\draw[black] (3.5+3,3.5) node  {$0$};
\draw[black] (4.5+3,3.5) node  {$3$};
\draw[black] (5.5+3,3.5) node  {$4$};
 \draw[black] (6.5+3,3.5) node  {$2$};
\draw[black] (8.5+2,3.5) node  {$1$};
\draw[black] (7.5+3+1,3.5) node  {$2$};
  \draw[black] (6.5+6,3.5) node  {$0$};
\draw[black] (7.5+6,3.5) node  {$4$};
\draw[black] (8.5+6,3.5) node  {$3$};
\draw[black] (9.5+6,3.5) node  {$2$};

%
%
%

\draw[black] (12.5+6-0.2,3.2)node {$\scalefont{0.8}  \stt_{\la \cup \alpha_5}$}; 
\draw[black] (12.5+6-0.2,-0.2) node {$\scalefont{0.8}  \stt_{\la \cup \alpha_5}$}; 

    \end{tikzpicture} 
    \end{minipage} 
  \quad\quad   $$  
    
     \caption{   The  righthand-side of \cref{gfgfgfgfgfgf} for $\la=(2^6,1^3)$ and $\alpha_5$ as in \cref{reducedeg222}.   
     The left diagram factors through the element $y^\succ_{\SSTT_{\la\cup\alpha_6}}$; we will further manipulate this in \cref{adjustmntexample78722} below.   }
\label{adjustmntexample787}    \end{figure}

 \begin{figure}[ht!] 
  $$ 
 \begin{minipage}{7.5cm}\begin{tikzpicture}
  [xscale=0.5,yscale=  
  0.5]

 \draw (3,0) rectangle (16,3);
  
   \foreach \i in {3.5,4.5,...,15.5}
  {
   \fill[black](\i,0) circle(1.5pt); 
      \fill[black](\i,3)  circle(1.5pt);
       }

\draw[black] (3.5-3+3,3.5) node  {$0$};
\draw[black] (4.5-3+3,3.5) node  {$1$};
\draw[black] (5.5-3+3,3.5) node  {$4$};
\draw[black] (3.5+3,3.5) node  {$0$};
\draw[black] (4.5+3,3.5) node  {$3$};
\draw[black] (5.5+3,3.5) node  {$4$};
 \draw[black] (6.5+3,3.5) node  {$2$};
\draw[black] (8.5+2,3.5) node  {$1$};
\draw[black] (7.5+3+1,3.5) node  {$2$};
  \draw[black] (6.5+6,3.5) node  {$0$};
\draw[black] (7.5+6,3.5) node  {$4$};
\draw[black] (8.5+6,3.5) node  {$3$};
\draw[black] (9.5+6,3.5) node  {$2$}; 
 
 \draw[black] (3.5-3+3,-0.5) node  {$0$};
\draw[black] (4.5-3+3,-0.5) node  {$1$};
\draw[black] (5.5-3+3,-0.5) node  {$4$};
\draw[black] (3.5+3,-0.5) node  {$0$};
\draw[black] (4.5+3,-0.5) node  {$3$};
\draw[black] (5.5+3,-0.5) node  {$4$};
 \draw[black] (6.5+3,-0.5) node  {$2$};
\draw[black] (8.5+2,-0.5) node  {$1$};
\draw[black] (7.5+3+1,-0.5) node  {$2$};
  \draw[black] (6.5+6,-0.5) node  {$0$};
\draw[black] (7.5+6,-0.5) node  {$4$};
\draw[black] (8.5+6,-0.5) node  {$3$};
\draw[black] (9.5+6,-0.5) node  {$2$};

\draw(-1+1+14.5-4,3) to [out=-90,in=90]  
(2+-2+1+14.5-4,1.5) to [out=-90,in=90]  
(-1+1+14.5-4,0) ; 

\draw(-1+1+14.5-4-1,3) to [out=-90,in=90]  
(2+-2+1+14.5-4-1,1.5) to [out=-90,in=90]  
(-1+1+14.5-4-1,0) ;

\draw(-1+1+14.5-4+1,3) to [out=-90,in=90]  
(2+-2+1+14.5-4-2,1.5) to [out=-90,in=90]  
(-1+1+14.5-4+1,0) ; 

\draw[fill](2+-2+1+14.5-4-2,1.5)  circle (4pt); 
%
 

  
    \foreach \i in {3.5,4.5,...,8.5}
 {
\draw(\i,0)--++(90:1.5*2);
 }
 
 \foreach \i in {12.5,13.5,...,15.5}
 {
\draw(\i,0)--++(90:1.5*2);
 }

   \draw[black] (12.5+6-0.2,3.2) node {$\scalefont{0.8}  \stt_{\la \cup \alpha_5}$};

      \draw[black] (12.5+6-0.2,1.5) node {$\scalefont{0.8}  \stt_{\la \cup \alpha_7}$}; 

      \draw[black] (0.5+1.2,1.5) node {$-$};

\draw[black] (12.5+6-0.2,-0.2) node {$\scalefont{0.8}  \stt_{\la \cup \alpha_5}$};

    \end{tikzpicture} 
    \end{minipage}   $$  
    
     \caption{   The leftmost term in \cref{adjustmntexample787} rewritten in the form in \cref{1.19ffddfdf}.    This diagram factors through 
     $y^\succ_{\SSTT_{\la\cup\alpha_7}}$.   
In this case $\alpha=\alpha_5$ and $\beta=\alpha_7$  as in the first case of  \cref{L+!!!!!}.
      }
\label{adjustmntexample78722}    \end{figure}

\smallskip

 \noindent {\bf Proof of  \cref{Claim 2} for a given $\la$ and $\alpha$}.   
We assume that \cref{Claim 2} holds for all $\la\in \mathscr{P}_{\underline{\tau}}(n-1)$.   
We set $\nu=\la\cup\alpha \in \mathscr{P}_{\underline{\tau}}(n)$ and we assume that  $\alpha$   is of residue $r\in \ZZ/e\ZZ$. 
We have that $e_{\SSTT_{\la\cup\alpha}}=\WHY^{\succ}_{\SSTT_{\la\cup\alpha}}$.  
   We have that 
\begin{align}\label{gsjfkhkdfhgdkfghd}
y_{a} \WHY_{\stt_{\la\cup\alpha}}
 &=
 \begin{cases}
  y_{a-1}\WHY_{\stt_{\la\cup\alpha}}
  + 
     \psi_{b}^a  \WHY_{\stt_{\la\cup\beta}}  \psi_{a}^b 
 &\text{ if $r-1\in {\rm res}( {\sf Adj\text{-}Gar}_{\succ}(\alpha))$;} \\
  y _{\stt_{\la\cup\beta}} 
&\text{ if  $r-1\not\in{\rm res}( {\sf Adj\text{-}Gar}_{\succ}(\alpha)) $;}				
\end{cases} 
\end{align}
 In the first case, this follows from case 3 of relation~\ref{rel4} and the commutativity relations, to see this note that   the $ (a-1)$th  
 strand has residue $r-1\in \ZZ/e\ZZ$.  
     In the second case,  the statement follows from \cref{easy1}   and the fact that  $b=a$.  
 Letting $\beta'$ be as in \cref{popopopopopopopo},  we note that 
 $y_{a-1}\WHY_{\stt_{\la }}\in  \mathscr{H}_{n-1}^{ \ppsucceq \la\cup\beta' -\alpha'   }
$
and so the first case of \cref{gsjfkhkdfhgdkfghd} is of the required form   by our assumption for $\la\cup\beta' -\alpha' =\mu \psucc \la$.

\bigskip

 \noindent {\bf Proof of  \cref{Claim 1}}.   
Let $\underline{j}= (j_1,\dots, j_{n-1},r)\in (\ZZ/e\ZZ)^{n }$.  
We can assume that  $\Shape({\sf J}_{\leq n-1}) = \la\in \mathscr{P}_\aatch(n-1) $ as otherwise 
$e_{\underline{j}} \boxtimes e_{j_n} = 0 \boxtimes e_{r} = 0$ by induction.  
Thus it remains to show that 
$$\tilde{\psi}_{\stt_{\la }}^{{\sf J}_{<n}} \tilde{\psi}^{\stt_{\la }}_{{\sf J}_{<n}} \boxtimes e_{r}
\in 
\pm \tilde{\psi}_{\stt_{\nu }}^{\sf J} \tilde{\psi}^{\stt_{\nu }}_{\sf J} + 
 \mathscr{H}^{ \ppsucc\nu  }_n
. $$
 We can associate this rightmost $r$-strand to the (unique) left-justified $r$-box $\alpha$ such that $\square \succ \alpha$    for all $\square \in \la$. (For example, $\square=[9,6,0]$ for 
 $\la =(2^3,1^6)$, see \cref{reducedeg2223}.)   
Thus every strand  in the diagram is labelled by a box.  
We  pull the strand labelled by $\alpha$ through the centre of the diagram
(which is equal to the idempotent  $e_{\SSTT_\nu}$)
one row at a time using \cref{Claim 3}.  
We can utilise \cref{Claim 3}  precisely when $\la \cup \alpha \not \in \mathscr{P}_\aatch(n)$.   
Therefore this process terminates when we reach the smallest addable $r$-box of $\la$ under the $\succ$-ordering,  namely 
 $ {\sf J}^{-1}(n )$.  Thus \cref{Claim 1} follows.  
       \end{proof}

%
%
%
%


 \begin{prop}[{\cite[Lemma 2.4 and Proposition 2.5]{bkw11}}]\label{move a dot down} 
We let $\w$, $\w'$ be any two choices of reduced expression for $w\in \mathfrak{S}_n $ and let $\vvv$ be any non-reduced expression for $w $. 
 We have that  
\begin{align}  \label{BK1}
 e_{\underline{i}}\psi_\w e_{\underline{j}}
    &= \textstyle  e_{\underline{i}}\psi_{\w'} e_{\underline{j}}+ \sum_{
 \begin{subarray}c
 \underline{x} < \w,\w'
  \end{subarray}
 }  e_{\underline{i}} \psi_{ \underline{x}}  f_{\x}(y)		e_{\underline{j}}   \\[3pt]  \label{BK2}
 e_{\underline{i}}\psi_\vvv e_{\underline{j}}  &=   \textstyle \sum_{
 \begin{subarray}c
\x < \vvv
 \end{subarray}
 }  e_{\underline{i}}   \psi_{\x} e_{\underline{j}} g_{\x} (y) 
 \\[3pt]  \label{BK3}
  y_{k}
e_{\underline{i}} \psi_\w e_{\underline{j}}  &= \textstyle  e_{\underline{j}} \psi_\w e_{\underline{i}} y_{w(k)} 
  +  \sum_{
 \begin{subarray}c
   \x < \w 
 \end{subarray}
 }  e_{\underline{i}} \psi_{\x} e_{\underline{j}}
 \end{align}
for some $ f_{\x}(y), g_{\x}(y)\in \mathcal{Y}_n$.  
   \end{prop}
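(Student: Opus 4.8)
The result is \cite[Lemma 2.4 and Proposition 2.5]{bkw11}; here I indicate the strategy one would follow. The plan is to prove all three identities simultaneously by induction on the Coxeter length $\ell(w)$, using only the defining relations \ref{rel1}--\ref{rel5} of \cref{defintino1} together with Matsumoto's theorem (any two reduced expressions for $w\in\mathfrak{S}_n$ are linked by a chain of braid moves) and the subword property of the Bruhat order (every subword of a reduced word for $w$ represents an element $\leq w$, and strictly below $w$ if the subword is shorter). The base cases $\ell(w)\leq 1$ are immediate for all three.

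For \eqref{BK1} I would first reduce, via Matsumoto's theorem, to the case where $\w'$ is obtained from $\w$ by a single braid move. A commuting move $s_rs_s=s_ss_r$ with $|r-s|>1$ gives $\psi_r\psi_s=\psi_s\psi_r$ on the nose by \ref{rel2}, with no correction term; a braid move $s_rs_{r+1}s_r=s_{r+1}s_rs_{r+1}$ is handled by \ref{rel5}, which in the generic case is again an equality and in the two exceptional cases $i_r=i_{r+2}=i_{r+1}\pm1$ produces one extra summand $\pm\psi_{\underline x}e_{\underline j}$, where $\underline x$ is the word obtained by deleting the central triple. Performing such a move in the interior of a longer word does the same, with $\underline x$ a proper subword of $\w$, hence of strictly smaller length and so $<w$ in Bruhat order. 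Chaining the braid moves supplied by Matsumoto's theorem, each accumulated error term has strictly smaller length and can be rewritten using \eqref{BK2} and \eqref{BK3} for elements $<w$; collecting terms yields \eqref{BK1}.

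For \eqref{BK2}, given a non-reduced expression $\vvv$ for $w$, I would (after some braid moves, invoking \eqref{BK1} and absorbing the resulting lower terms) locate a place where two equal generators $\psi_r\psi_r$ sit adjacent, and apply \ref{rel4}: this either annihilates the summand or replaces $\psi_r\psi_r e_{\underline i}$ by $e_{\underline i}$, by $(y_{r+1}-y_r)e_{\underline i}$, or by $(y_r-y_{r+1})e_{\underline i}$, in every case decreasing the word-length by $2$ at the cost of introducing at most a polynomial in the $y$'s; iterating terminates and writes $e_{\underline i}\psi_\vvv e_{\underline j}$ as a $\mathcal{Y}_n$-combination of the $e_{\underline i}\psi_{\underline x}e_{\underline j}$ with $\underline x<\vvv$. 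For \eqref{BK3}, I would slide the dot $y_k$ up the $k$-th strand of the reduced diagram $\psi_\w$ one crossing at a time: past a crossing not involving the strand it commutes freely by \ref{rel2}, and past an adjacent crossing it passes by \ref{rel3} at the cost of a term $\pm e_{\underline i}\psi_{\underline x}e_{\underline j}$ in which $\underline x$ is $\w$ with that crossing deleted, hence a proper subword and so $<w$; once the dot reaches the top it has become $y_{w(k)}$, and the accumulated error terms are put in the required form by \eqref{BK2}.

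The main obstacle, as always with these ``straightening'' lemmas, is keeping the induction well-founded: the corrections produced by \ref{rel5} inside a long word and by \ref{rel3} are in general \emph{not} reduced, so they must be fed through \eqref{BK2}, whose own proof calls on \eqref{BK1}. One therefore has to run a single induction on $\ell(w)$ in which \eqref{BK1} for $w$ only ever invokes \eqref{BK2} and \eqref{BK3} for permutations of strictly smaller length, \eqref{BK2} for $\vvv$ reduces by decreasing word-length to \eqref{BK1} and \eqref{BK3} for the reduced element underlying $\vvv$, and \eqref{BK3} for $w$ only invokes \eqref{BK2} for elements $<w$. Verifying that every crossing-deletion and every \ref{rel5}-correction genuinely lands strictly below $w$ in Bruhat order, via the subword property, is the one point requiring care; the full bookkeeping is carried out in \cite[\S2]{bkw11}, to which I refer for the details.
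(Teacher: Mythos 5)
The paper does not prove this proposition; it is quoted verbatim (with attribution) from \cite[Lemma~2.4 and Proposition~2.5]{bkw11} and used as a black box, so there is no in-paper argument to compare against. Your sketch is the standard straightening argument underlying the cited result, and it is correct in substance: Matsumoto plus relation~(R5) for \eqref{BK1}, relation~(R4) to shorten non-reduced words for \eqref{BK2}, and relation~(R3) to slide the dot for \eqref{BK3}, with the error terms in each case being subwords of strictly smaller length and hence strictly below $w$ in Bruhat order. You have correctly identified the one delicate point, namely that the three statements feed into one another and the induction must be organised so that \eqref{BK2} for non-reduced expressions only ever calls on \eqref{BK3} for strictly shorter words (not for $w$ itself, which would be circular), and that the $y$-polynomials produced in the middle of a word by cases~(3)--(4) of~(R4) and by~(R3) must themselves be pushed to the right using the inductively-known instances; deferring that bookkeeping to \cite{bkw11} is reasonable since the paper itself does exactly that.
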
 

  \begin{prop} \label{spanner}
Let   $\Bbbk$  be an integral domain.      The $\Bbbk$-algebra  $   \mathscr{H}^\sigma_n$ has spanning set 
 \begin{align} 
  \{ \psi ^\SSTS_{\SSTT_\la}
  \psi _\SSTT^{\SSTT_\la}
  \mid
   \sts,\stt \in \Std (\lambda ), 
 \lambda  \in \mathscr{P}_ {\aatchpair}(n) \} .  
\end{align}
 \end{prop}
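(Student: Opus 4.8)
The plan is to bootstrap from \cref{mainproof}. Enumerate $\mathscr{P}_{\underline{\tau}}(n)$ as $\la^{[0]}>\dots>\la^{[m]}$ according to a fixed total refinement of $\psucc$, and write $\mathscr{H}^{[k]}:=\mathscr{H}^{\geq\la^{[k]}}_n$; by construction $\mathscr{H}^{[k]}=\mathscr{H}^{[k-1]}+\mathscr{H}^\sigma_n e_{\SSTT_{\la^{[k]}}}\mathscr{H}^\sigma_n$, and \eqref{Claim 4} provides the chain $0\subset\mathscr{H}^{[0]}\subset\dots\subset\mathscr{H}^{[m]}=\mathscr{H}^\sigma_n$. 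Unwinding this filtration, it is enough to show that for each $k$ the subquotient $\mathscr{H}^{[k]}/\mathscr{H}^{[k-1]}$ --- that is, the image of $\mathscr{H}^\sigma_n e_{\SSTT_\la}\mathscr{H}^\sigma_n$ inside $\mathscr{H}^\sigma_n/\mathscr{H}^{[k-1]}$, where $\la:=\la^{[k]}$ --- is spanned by the images of $\{\psi^{\SSTS}_{\SSTT_\la}\psi^{\SSTT_\la}_{\SSTT}\mid\SSTS,\SSTT\in\Std(\la)\}$.

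Since $\mathscr{H}^{[k-1]}$ is a two-sided ideal and $\mathscr{H}^\sigma_n e_{\SSTT_\la}\mathscr{H}^\sigma_n$ is $\Bbbk$-spanned by products $xy$ with $x\in\mathscr{H}^\sigma_n e_{\SSTT_\la}$ and $y\in e_{\SSTT_\la}\mathscr{H}^\sigma_n$ (as $e_{\SSTT_\la}^2=e_{\SSTT_\la}$), it suffices to establish the two inclusions
\[
\mathscr{H}^\sigma_n e_{\SSTT_\la}\subseteq\textstyle\sum_{\SSTS\in\Std(\la)}\Bbbk\,\psi^{\SSTS}_{\SSTT_\la}+\mathscr{H}^{[k-1]}\qquad\text{and}\qquad e_{\SSTT_\la}\mathscr{H}^\sigma_n\subseteq\textstyle\sum_{\SSTT\in\Std(\la)}\Bbbk\,\psi^{\SSTT_\la}_{\SSTT}+\mathscr{H}^{[k-1]}.
\]
Indeed, given these inclusions, $xy\equiv\sum_{\SSTS,\SSTT}c_{\SSTS}d_{\SSTT}\,\psi^{\SSTS}_{\SSTT_\la}\psi^{\SSTT_\la}_{\SSTT}$ modulo $\mathscr{H}^{[k-1]}$, and each $\psi^{\SSTS}_{\SSTT_\la}\psi^{\SSTT_\la}_{\SSTT}$ is literally one of the asserted spanning elements; in particular, we do not need the composite word $w^{\SSTS}_{\SSTT_\la}w^{\SSTT_\la}_{\SSTT}$ to be reduced. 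As the anti-involution $\ast$ swaps the two inclusions, I argue only the second.

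By a standard consequence of the defining relations, $e_{\SSTT_\la}\mathscr{H}^\sigma_n$ is spanned by elements $e_{\SSTT_\la}\psi_\w\big(\prod_r y_r^{c_r}\big)e_{\underline j}$ with $\w$ a reduced expression. Using \eqref{BK3} to carry the KLR dots across $\psi_\w$ --- at the cost of terms of strictly shorter permutation length, treated by a sub-induction --- places all the dots on the strands of $\SSTT_\la$, next to $e_{\SSTT_\la}$; since $\WHY^{\succ}_{\la}=e_{\SSTT_\la}$ for $\la\in\mathscr{P}_{\underline{\tau}}(n)$ (\cref{easy1}), iterating \eqref{Claim 2} --- with \eqref{Claim 3} to clear the auxiliary dots carried by intermediate $\WHY$-elements, and part $(a)$ of \cref{mainproof} to terminate --- rewrites this block, modulo $\mathscr{H}^{[k-1]}$, as a combination of $\psi$-conjugates of $e_{\SSTT_\la}$, the dots being absorbed into flanking strands or annihilated. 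Hence, modulo $\mathscr{H}^{[k-1]}$, the space $e_{\SSTT_\la}\mathscr{H}^\sigma_n$ is spanned by dot-free elements $e_{\SSTT_\la}\psi_\w e_{\underline j}$. Finally, read $\psi_\w e_{\SSTT_\la}$ as the filling $\w^{-1}(\SSTT_\la)$ of $\la$: if it is a standard tableau $\SSTT$ then $e_{\SSTT_\la}\psi_\w e_{\underline j}=\psi^{\SSTT_\la}_{\SSTT}$ up to the choice of reduced word, which by \eqref{BK1} costs only strictly shorter terms; and if it is not standard, straightening it by the Garnir-type instances of \eqref{rel4} and \eqref{rel5} established inside the proof of \cref{mainproof} produces only terms factoring through some $e_{\SSTT_\nu}$ with $\nu\ppsucc\la$, hence lying in $\mathscr{H}^{[k-1]}$. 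This gives the second inclusion and closes the induction.

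The main obstacle --- and the only real work beyond \cref{mainproof} and \cref{move a dot down} --- is the bookkeeping of error terms: each rewriting step above (pushing dots through braids, clearing dots off the $\SSTT_\la$-strands, straightening a non-standard filling, changing the reduced word) produces corrections, and the layer-by-layer induction closes only because every such correction is supported on idempotents $e_{\SSTT_\nu}$ with $\nu\ppsucc\la$, so that it lands in $\mathscr{H}^{[k-1]}$. Verifying this uniformly is precisely what the case analysis in \cref{mainproof} accomplishes, and it is also the place where the compatibility of the reverse cylindric order $\succ$ with concatenation of diagrams across ranks, and of its coarsening $\ppsucc$ with the chain \eqref{Claim 4}, is used. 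Finally, that composing $\psi^{\SSTS}_{\SSTT_\la}$ with $\psi^{\SSTT_\la}_{\SSTT}$ yields a generally non-reduced word is why the statement claims only a spanning set, and not yet a basis.
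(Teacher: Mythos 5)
Your proposal uses the same underlying machinery as the paper's proof --- \cref{mainproof} to organise the filtration and push bad terms into higher layers, and \cref{move a dot down} to compare reduced expressions and strip KLR dots --- but packages the argument differently: you factor the two-sided ideal $\mathscr{H}^\sigma_n e_{\SSTT_\la}\mathscr{H}^\sigma_n$ into a product of the one-sided pieces $\mathscr{H}^\sigma_n e_{\SSTT_\la}$ and $e_{\SSTT_\la}\mathscr{H}^\sigma_n$, argue only the right-hand inclusion, and close with the anti-involution. The paper instead fixes a sandwiched expression $\sum e_{\underline{i}}a_x e_{\SSTT_\la}a_y e_{\underline{j}}$ (via \eqref{Claim 1}) and manipulates both sides simultaneously, modulo the span of Bruhat-shorter terms plus $\mathscr{H}^{\ppsucc\la}_n$. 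Both organisations work, because $\mathscr{H}^{[k-1]}$ is a two-sided ideal, so your one-sided inclusions do multiply up correctly; the one-sided version has the small pedagogical bonus of making visible that the claim is really the cell-module spanning statement (\cref{GANRIENR}).

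The one place you elide detail is precisely where the paper does real work: the elimination of non-standard fillings. Your phrase ``straightening it by the Garnir-type instances of \eqref{rel4} and \eqref{rel5}'' reads as if one were applying a classical Garnir rewriting, but that is not the mechanism here. The paper's argument is that the \emph{idempotent} attached to a non-standard filling already lies in the higher ideal. Concretely: for $\SSTT$ column-standard but not standard, one may choose a reduced word for $w^{\SSTT_\la}_\SSTT$ ending in a row-adjacent transposition $s_i$ of $\SSTT_\la$ (the reduced-word flexibility purchased by \eqref{BK1} is used here), and then $e_{\SSTT_\la}\psi_{s_i}=\psi_{s_i}e_{s_i(\SSTT_\la)}$ with $e_{s_i(\SSTT_\la)}\in\mathscr{H}^{\ppsucceq\alpha}_n$ for $\alpha=Y_{\SSTT_\la^{-1}(i+1)}(\la)\ppsucc\la$ by \eqref{Claim 3}; for $\SSTT$ row-standard but not standard, the idempotent factors by horizontal concatenation as $e_\SSTT=e_{\SSTT_{\leq k}}\boxtimes e_{\SSTT_{>k}}$ with the initial block of non-partition shape, and \eqref{Claim 3} applied in rank $k$ together with the concatenation-compatibility of $\succ$ forces $e_\SSTT\in\mathscr{H}^{\ppsucc\la}_n$. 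Neither step is a relation-by-relation straightening; both are instances of ``the offending idempotent is already upstairs.'' Your cited ingredients (\eqref{Claim 3}, concatenation, and the design of the $\succ$-ordering) are exactly the right ones, so this is a detail to unwind rather than a missing idea, but as written it is the weakest link in an otherwise faithful reconstruction.
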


\begin{proof} 
 Let $d \in e_{\underline{i}}
\mathscr{H}^\sigma_n  e_{\underline{j}}$ for some $\imath,\jmath \in(\ZZ/e\ZZ)^n$. By 
  \cref{Claim 1}, we can 
 rewrite $e_{\underline{j}}$ (or equivalently $e_{\underline{i}}$) so that 
 $d=\sum_{x,y\in\mathfrak{S}_n} e_{\underline{i}} a_x e_{\stt_\la}a  _y e_{\underline{j}}$ for some  
 $a_x, a_y $ which are linear combinations  of KLR elements   tracing out some bijections 
  $x,y\in \mathfrak{S}_n$ respectively 
   (but are possibly decorated with dots and need not be reduced).  
  It remains to show that 
  $a_x, a_y\in \mathscr{H}_n^\sigma$ can be assumed to be reduced and undecorated.  
 We   establish  this  by induction  along the Bruhat order,   
by working modulo the span of elements   
\begin{equation}\label{workmod}
{\rm Span}_\Bbbk
 \{ \psi _{\underline{u}} e_{\stt_\la} \psi _{\underline{v}} 
  \mid u < x \text{  or  }
v  <  y
  \} +     \mathscr{H}^{\ppsucc \la}_{n}   .       
\end{equation} 
If $\x$ is not reduced
    $\psi_\x  e_{\stt_\la} a_y$ is zero modulo 
  (\ref{workmod}) 
 by  \cref{BK2}.  
Given two choices $\x,\x' $ of reduced expression for $x \in \mathfrak{S}_n$ we have that 
$ ( \psi_\x- \psi_{\x'}) e_{\SSTT_\la}a_\y$ belongs to  
 (\ref{workmod}) by \cref{BK1} followed by \cref{Claim 2}.
 Finally, if $a_x$ is obtained from 
  $\psi_\x$ by adding a linear combination  of dot decorations (at any  points within the expression $\psi_{\x}=\psi_{s_{i_1}}\dots \psi_{s_{i_k}}$) then 
    $\psi_\x  e_{\stt_\la} a_y$ is zero modulo 
  (\ref{workmod}) 
 by  \cref{BK3} followed by \cref{Claim 2}.  
    Thus $\mathscr{H}^{\ppsucceq \la}_n/\mathscr{H}^{\ppsucc  \la}_n$  is spanned by elements of the form 
  \begin{equation}\label{workmod2}  
 \{ \psi _{\underline{x}} e_{\stt_\la} \psi _{\underline{y}} 
  \mid  
\text{for  $\x,\y$   arbitrary choices of fixed reduced expressions of $x,y\in \mathfrak{S}_n$}
  \} +     \mathscr{H}^{\ppsucc \la}_{n}.    
\end{equation}
   It  remains to show that a spanning set is given by the  elements 
    $
    x= w^\sts_{\stt_\la} $, $ y= w  _\stt^{\stt_\la}		
    $ 
    for   $     \sts,\stt \in \Std (\lambda )  $.  
 
Given $\stt\in  \CStd  (\lambda )\setminus  \Std (\lambda )$, we have that $w_\stt^{\stt_\la}$ has a pair of crossing strands from 
$1\leq i<j\leq n$ to $1\leq w_\stt^{\stt_\la}(j)<w_\stt^{\stt_\la}(i)\leq n$ such that $\stt_\la^{-1}(i)=[r,c,m]$ and 
 $\stt_\la^{-1}(j)=[r,c+1,m]$  are in the same row and in particular so that $i=j-1$.  It suffices to show that $ \psi _{\underline{x}} e_{\stt_\la} \psi _{\underline{y}} $ belongs to the ideal $\mathscr{H}^{\ppsucc\la}_{n}$ for a preferred choice of $\y$; we choose $\y= s_{i} \w$ 
 (for some $w\in \mathfrak{S}_n$ such that 
  $s_i w  =y$).  Thus it remains to show that $e_{\stt_\la}   \psi_{s_{i} \w} 
   \in \mathscr{H}_{n}^{\ppsucc\la}$.  However, this   immediately follows from \cref{Claim 3}  
because   $e_{\stt_\la}\psi_{s_{i}  }
=\psi_{s_{i}  } e_{s_i(\stt_\la)}  $ and we have that 
 $e_{s_i(\stt_\la)}  
   \in    \mathscr{H}^{\ppsucceq \alpha} _n   $ for $\alpha= Y _{\SSTT_\la^{-1}(i+1)} (\la)\psucc \la $.    
 
Given any $\stt\in  \RStd  (\lambda )\setminus  \Std (\lambda )$ 
   we let $k$ be minimal such that 
   $\stt{\downarrow}_{<k}\in   \Std (\mu )$  for some $\mu\in \mathscr{P}_{\aatchpair}(k-1)$ 
   and 
      $\Shape(\stt{\downarrow}_{\leq k})=\nu$  for some $\nu\not \in \mathscr{P}_{\aatchpair}(k)$.     
   We
   have that 
$e_{\sf T}= e_{\SSTT_{\leq k}}\boxtimes e_{\SSTT_{> k}}$  
where $\Shape (e_{\SSTT_{\leq k}}) =\nu   \in \mathscr{C}_{\underline{\tau}}(k) \setminus 
 \mathscr{P}_{\underline{\tau}}(k)$   and so 
 $e_{\SSTT_{\leq k}} \in  \mathscr{H}_{k}^{\ppsucc\nu }$  
 by  
\cref{Claim 3} and so  $e_{\sf T}\in  \mathscr{H}_{n}^{\ppsucc \la}$ by concatenation and the definition of $\psucc$.  
 This implies that $e_{\SSTT_\la} \psi_\SSTT^{\SSTT_\la} =
  \psi_\SSTT^{\SSTT_\la} e_{\SSTT}  \in \mathscr{H}_{n}^{\ppsucc \la}$, as required.  
\end{proof}

 \begin{thm}  \label{cellularitybreedscontempt} 
Let   $\Bbbk$  be an integral domain.      The $\Bbbk$-algebra  $   \mathscr{H}^\sigma_n$ is       graded cellular   with basis     
 \begin{align}\label{required}
  \{ \psi ^\SSTS_{\SSTT_\la}
  \psi _\SSTT^{\SSTT_\la}
  \mid
   \sts,\stt \in \Std (\lambda ), 
 \lambda  \in \mathscr{P}_ {\aatchpair}(n) \} 
\end{align}
 anti-involution $\ast$ and the degree function ${\sf deg}:\Std \to\ZZ$.  
 For $\Bbbk$ a field,   $   \mathscr{H}^\sigma_n$ is quasi-hereditary.  
 \end{thm}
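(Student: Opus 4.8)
The plan is to deduce \cref{cellularitybreedscontempt} from \cref{spanner} by upgrading the spanning set to a basis and then verifying the axioms of a graded cellular algebra in the sense of Graham--Lehrer (as graded by Hu--Mathas). The poset will be $(\mathscr{P}_{\underline{\tau}}(n),\psucc)$ — or any total refinement thereof, as produced by \cref{Claim 4} — with cell modules indexed by $\la\in\mathscr{P}_{\underline{\tau}}(n)$ and cellular basis data $\{\psi^\SSTS_{\SSTT_\la}\psi_\SSTT^{\SSTT_\la}\}$ running over pairs $\SSTS,\SSTT\in\Std(\la)$. First I would record that $\ast$ fixes the KLR generators, so $(\psi^\SSTS_{\SSTT_\la}\psi_\SSTT^{\SSTT_\la})^\ast=\psi^\SSTT_{\SSTT_\la}\psi_\SSTS^{\SSTT_\la}$, giving the required $\ast$-symmetry of the basis; and that the stated degree function is additive on the factors, so the basis is homogeneous. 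The ideal filtration of \cref{Claim 4} shows each two-sided ideal $\mathscr{H}^{\ppsucceq\la}_n$ (spanned, modulo $\mathscr{H}^{\ppsucc\la}_n$, by the $\psi^\SSTS_{\SSTT_\la}\psi_\SSTT^{\SSTT_\la}$ by \cref{spanner}) is $\ast$-stable, and the multiplication rule $a\cdot \psi^\SSTS_{\SSTT_\la}\psi_\SSTT^{\SSTT_\la}\equiv \sum_{\SSTS'}r_a(\SSTS,\SSTS')\,\psi^{\SSTS'}_{\SSTT_\la}\psi_\SSTT^{\SSTT_\la}\pmod{\mathscr{H}^{\ppsucc\la}_n}$ with coefficients independent of $\SSTT$ follows from the same rewriting used in the proof of \cref{spanner}: any product can be pushed into the layer $e_{\SSTT_\la}\mathscr{H}^\sigma_n e_{\SSTT_\la}$ and there the $e_{\SSTT_\la}$-component is, modulo more dominant terms, the identity times a scalar.

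The key remaining point — and the main obstacle — is linear independence of \cref{required}, upgrading the spanning set of \cref{spanner} to a basis. The standard route is a dimension/rank count. I would compare with the cyclotomic Hecke algebra $\mathscr{H}^\sigma_n$ (before quotienting) which, by Hu--Mathas \cite{hm10}, is graded cellular with basis $\{\psi^\SSTS_{\SSTT_\mu}\psi_\SSTT^{\SSTT_\mu}\mid \SSTS,\SSTT\in\Std(\mu),\mu\in\mathscr{P}_\ell(n)\}$ and with the element $\WHY^{\rhd}_\mu$ (equivalently, via the argument around \eqref{ppppps}--\eqref{idempotent}, the generators ${\sf y}_{\underline{\tau}}$ of the ideal we quotient by) annihilating exactly the cell modules $\Delta(\mu)$ for $\mu\notin\mathscr{P}_{\underline{\tau}}(n)$. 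Hence over a field $\Bbbk$,
\begin{align}\label{dimcount}
\dim_\Bbbk(\mathscr{H}^\sigma_n/\mathscr{H}^\sigma_n{\sf y}_{\aatchpair}\mathscr{H}^\sigma_n)
\;\geq\; \sum_{\la\in\mathscr{P}_{\underline{\tau}}(n)}|\Std(\la)|^2,
\end{align}
because the quotient still surjects onto each $\mathrm{End}$ of the cell modules $\Delta(\la)$, $\la\in\mathscr{P}_{\underline{\tau}}(n)$, equivalently onto $\bigoplus_\la \mathrm{Mat}_{|\Std(\la)|}$ after base change to a splitting field, once one knows these cell modules survive the quotient. Combined with the spanning statement of \cref{spanner}, which gives the reverse inequality with exactly $\sum_\la|\Std(\la)|^2$ elements, this forces equality and hence linear independence; the argument is then lifted from a field to an arbitrary integral domain by the usual specialisation/flatness argument (the $\ZZ$-form has a spanning set of the right size, and freeness follows once it is free after $\otimes\Bbbk$ for one field). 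The subtlety to handle carefully here is that one must argue the cell modules of $\mathscr{H}^\sigma_n$ indexed by $\mathscr{P}_{\underline{\tau}}(n)$ are \emph{not} killed in the quotient — this is precisely the content of the computation identifying ${\sf y}_{\underline\tau}$ with the element of \cite[4.16~Corollary]{hm10} in each component, which annihilates $\Delta(\mu)$ iff $\mu^{(m)}$ has more than $\tau_m$ columns for some $m$.

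Having established that \cref{required} is a graded cellular basis, the final sentence — quasi-heredity over a field — is essentially automatic: a graded cellular algebra is quasi-hereditary precisely when every cell module $\Delta(\la)$ has a well-defined (one-dimensional, in the split case) graded bilinear form that is nonzero, equivalently when $e_{\SSTT_\la}\notin\mathscr{H}^{\ppsucc\la}_n$ for every $\la$. But this is exactly what \cref{Claim 1}, \cref{Claim 2} and \cref{Claim 3} buy us: the layer idempotent $e_{\SSTT_\la}$ is, modulo $\mathscr{H}^{\ppsucc\la}_n$, congruent to $\tilde\psi^{\SSTT_\la}_{\SSTT_\la}\tilde\psi_{\SSTT_\la}^{\SSTT_\la}$ up to sign, and the basis theorem just proved shows this is a nonzero element of the layer — so the form is nonzero and each cell module has simple head, giving a heredity chain $0\subset\mathscr{H}^{\geq\la^{[0]}}_n\subset\cdots\subset\mathscr{H}^\sigma_n$ refining \eqref{Claim 4}. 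I would close by noting that the grading is inherited from the degree function, so $\mathscr{H}^\sigma_n$ is in fact graded quasi-hereditary.
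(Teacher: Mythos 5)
Your strategy---upgrade the spanning set of \cref{spanner} to a basis by a rank count against the Hu--Mathas $(\rhd)$-cellular structure of $\mathcal{H}^\sigma_n$, then read off cellularity and quasi-heredity---matches the paper's, but the pivotal rank-count step contains a directional error. You assert that ${\sf y}_{\underline{\tau}}$ annihilates $\Delta(\mu)$ precisely when $\mu\notin\mathscr{P}_{\underline{\tau}}(n)$, i.e.\ when some component has more than $\tau_m$ columns. This is backwards: what is needed (and what the paper proves) is that ${\sf y}_{\underline{\tau}}\,\psi^\SSTS_{\SSTS_\la}\,\WHY^\rhd_\la\in\mathcal{H}^{\rhd\la}_n$ for every $\la\in\mathscr{P}_{\underline{\tau}}(n)$, so that ${\sf y}_{\underline{\tau}}$ kills exactly the $(\rhd)$-cell modules labelled by multipartitions with \emph{at most} $\tau_m$ columns in each component. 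Only then is the two-sided ideal $\mathcal{H}^\sigma_n{\sf y}_{\underline{\tau}}\mathcal{H}^\sigma_n$ confined to the layers indexed by $\mathscr{P}_\ell(n)\setminus\mathscr{P}_{\underline{\tau}}(n)$, bounding its rank by $\sum_{\mu\notin\mathscr{P}_{\underline{\tau}}(n)}|\Std(\mu)|^2$; subtracting from $\dim\mathcal{H}^\sigma_n=\sum_{\mu\in\mathscr{P}_\ell(n)}|\Std(\mu)|^2$ then yields the lower bound on the quotient that matches the size of the spanning set. With your version the ideal would be concentrated in exactly the layers you want to retain, and the argument collapses.

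Two further gaps remain even after fixing the sign. The claim that the quotient surjects onto $\mathrm{End}_\Bbbk(\Delta(\la))$, or onto $\mathrm{Mat}_{|\Std(\la)|}$ after base change, is not available over a general field: $\Delta(\la)$ is not absolutely simple, so even $\mathcal{H}^\sigma_n$ itself does not map onto its full endomorphism ring, and no useful lower bound comes this way---the dimension count must go through the complementary bound on the ideal as above. Finally, the annihilation claim is not ``precisely the content'' of \cite[4.16 Corollary]{hm10}; that reference is invoked around \eqref{ppppps} only to recognise the antisymmetriser as an element of the form $y^{\succ}_{(h+1)}\boxtimes 1$, and says nothing about which cell modules are killed. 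The claim needs its own argument, and it is the technical heart of the proof: the idempotent summands of ${\sf y}_{\underline{\tau}}$ have residue sequences occurring in no $\Std(\la)$ for $\la\in\mathscr{P}_{\underline{\tau}}(n)$ and so act by zero trivially; the nilpotent summands are handled by pushing the dot down the diagram via \cref{move a dot down} and invoking the Specht-module presentation of \cite[Definition 5.9]{MR3004104} to see that only non-standard terms survive. Your proposal gestures at this but does not supply it.
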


\begin{proof} 
We first prove that the spanning set of    \cref{spanner} is a $\Bbbk$-basis.  We will show that the  rank, as a $\Bbbk$-module,  of 
 $\mathcal{H}^\sigma_n {\sf y}_{\aatchpair}\mathcal{H}^\sigma_n $  is less than or equal to 
 $\sum_{\la\in \mathscr{P}(n) \setminus \mathscr{P}_{\aatchpair}(n)}|\Std(\la)|^2$.  
 The ($\psucc$)-ordering does not give us an easily constructible  basis of $   \mathcal{H}^\sigma_n$  (see \cite{manycell}).
 Hu--Mathas  \cite[Main Theorem]{hm10} have shown that the classical ordering $\rhd$ does give us an easily constructible  basis   $\{\psi^ \SSTS_{\SSTS_\la} 
y^{\rhd}_{\SSTS_\la}\psi_ \SSTT^{\SSTS_\la}  \mid \sts,\stt \in \Std(\la)\}$  
and presentations of all 
these cell-modules are given in \cite[Definition 5.9]{MR3004104}.  We claim that 
${\sf y}_{\aatchpair}$ annihilates any ($\rhd$)-cell-module  labelled by a $\la \in  \mathscr{P}_{\aatchpair}(n)$; in other words  
 ${\sf y}_{\aatchpair} \psi^ \SSTS_{\SSTS_\la} 
y^{\rhd}_{\SSTS_\la} \in \mathcal{H}^{\rhd \la}_n$ for  $\la \in  \mathscr{P}_{\aatchpair}(n)$.   Once we have proven the claim, we will deduce that the ideal is of the required rank, thus the spanning set is linearly independent (and hence a basis) as required. 
   The algebra is   then cellular (by its construction via idempotent ideals) with the stated basis (since we have a spanning set of the required rank).  
 Finally, we note that each layer of the cell chain contains an idempotent $e_{\stt_\la}$ and so the algebra is quasi-hereditary, as required.  
  We now turn to the proof of the claim.  If $m<\ell-1$ and $h_m<\sigma_{m+1}-\sigma_{m}$, or if $m=\ell-1$ and $h_{\ell-1}<e+\sigma_{0}-\sigma_{\ell-1}$    then the $m$th summand 
\begin{equation}\label{summer}
  y^{\succ}_{(\emptyset,\dots,\emptyset , (h_m+1) ,\emptyset, \dots,\emptyset)}\boxtimes 1_{\mathcal{H}_{n-h_m}^\sigma}
=
  e_{(\sigma_{m}, \sigma_{m}+1,\dots ,  \sigma_{m}+h_m) }
  \boxtimes 1_{\mathcal{H}_{n-h_m-1}^\sigma}
\end{equation} in \cref{idempotent} is an idempotent whose  residue sequence is not equal to that of any tableau $\SSTS \in \Std(\la)$  for $\la \in \mathscr{P}_{\aatchpair}(n)$ and therefore 
the claim is immediate.  If $h_m=\sigma_{m+1}-\sigma_{m}$, then the term in (\ref{summer}) is nilpotent and equal to 
\begin{equation}\label{summer2}
y_{h_m+1} e_{ \SSTT_{(\emptyset,\dots,\emptyset , {(h_m+1)} ,\emptyset, \dots,\emptyset)}}\boxtimes 1_{\mathcal{H}_{n-h_m-1}^\sigma}
=
y_{h_m+1} e_{\SSTT_{(\emptyset,\dots,\emptyset ,  {(h_m)} ,(1),\emptyset, \dots,\emptyset)}}\boxtimes 1_{\mathcal{H}_{n-h_m-1}^\sigma}.
\end{equation}
The idempotent in (\ref{summer2})
 annihilates  
   $\psi^ \SSTS_{\SSTS_\la} 
y^{\rhd}_\la  $
unless $\SSTS{\downarrow}_{\{1,\dots,h_m+1\}}$ is equal to  $\SSTT_{ (\emptyset,\dots,\emptyset ,(h_m), (1),  \emptyset,\dots,\emptyset  )} $.  
We now suppose that $\SSTS{\downarrow}_{\{1,\dots,h_m+1\}}$ is of this form and we set   $s_{m+1}\equiv\sigma_{m+1} \; {\rm mod }\; e$.  

 Since $\la \in \mathscr{P}_{\aatchpair}(n)$, we observe that  $[1,1,m+1]$ is the unique box in $\la$ of residue $s_{m+1}\in \ZZ/e\ZZ$ 
 in which we can place the integer $h_m+1$ (or any integer smaller than $h_m+1$) without violating the standard condition, by \cref{easy1}.  
 The presentation of the Specht module in \cite[Definition 5.9]{MR3004104} implies that 
$(i)$   $\psi_\w y^{\rhd}_\la   \in \mathcal{H}^{\rhd \la}_n$ for any $w \neq w^\SSTS_{\SSTS_\la}$ for some $\SSTS \in \Std(\la)$   with $\la \in \mathscr{P}_{\aatchpair}(n)$ (since 
every ($\rhd$)-Garnir belt has   fewer than  $e$ boxes) and $(ii)$
   $y_k y^{\rhd}_\la   \in \mathcal{H}^{\rhd \la}_n$ for any $1\leq k \leq n$.  

We  are now  ready to prove the claim.  
Using \cref{BK3}, we move the dot at the top of 
  $y_{h_m+1}\psi^ \SSTS_{\SSTS_\la} 
y^{\rhd}_\la  $ down the $(h_m+1)$th strand to obtain a linear combination
 of undecorated diagrams (in which we have undone some number of  crossings $s_m$-strands) and  $ \psi^ \SSTS_{\SSTS_\la} 
y_{\SSTS_\la (1,1,m+1) } y^{\rhd}_\la    $.  
By our above observation, all of these undecorated diagrams are labelled by non-standard $\la$-tableaux.  
Therefore all of these terms (and hence  $y_{h_m+1}\psi^ \SSTS_{\SSTS_\la} 
y^{\rhd}_\la  $) are zero,  by $(i)$ and $(ii)$.    
The claim and result follow.
 \end{proof}

Let   $\Bbbk$  be an integral domain.    
We define the   {\sf standard} or {\sf Specht} modules of  
$  \mathscr{H}^\sigma_n $   as follows, 
\begin{equation}  \label{identification}
  {\bf S}_\Bbbk(\lambda) = \{ \psi^\sts_{\stt_\la} + 
  \mathscr{H}^{\succ \la }
       \mid \sts \in \Std  (\lambda)\}
\end{equation}
 for $\la\in{\mathscr P}_{\aatchpair  }(n) $.   
We immediately deduce the following corollary of \cref{cellularitybreedscontempt}.
 \begin{cor}\label{GANRIENR}
 The   module $ {\bf S}_\Bbbk(\lambda) $ is the module generated by 
 $e_{\stt_\la}$ subject to the following relations:
 \begin{itemize}[leftmargin=*]
 \item $e_{\underline{i}}e_{\stt_\la}=\delta_{\underline{i}, {\rm res}({\stt_\la})}e_{\stt_\la}$ for $\underline{i}\in (\ZZ/e\ZZ)^n$;
\item $y_k  e_{\stt_\la}=0$ for $1\leq k \leq n$;
\item $\psi_k   e_{\stt_\la}=0$ for any $1\leq k< n$ such that $s_k(\stt_\la)$ is not row standard;
\item $\psi^{\sf S}_{\stt_\la}e_{\stt_\la}=0$ for ${\sf S}   \in  \RStd  (\lambda )\setminus  \Std (\lambda )$.  
 \end{itemize}
 \end{cor}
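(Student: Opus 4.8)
The plan is to obtain the presentation of ${\bf S}_\Bbbk(\lambda)$ in \cref{GANRIENR} directly from the cellular structure established in \cref{cellularitybreedscontempt} together with the technical results \cref{Claim 2,Claim 3,Claim 1} of \cref{mainproof}. Write $M$ for the module defined by the generator $e_{\stt_\la}$ and the four listed relations, and write ${\bf S}_\Bbbk(\lambda)$ for the cell module of \cref{identification}, which by \cref{cellularitybreedscontempt} has $\Bbbk$-basis $\{\psi^\sts_{\stt_\la}+\mathscr{H}^{\succ\la}\mid \sts\in\Std(\lambda)\}$. First I would check that all four relations of $M$ hold in ${\bf S}_\Bbbk(\lambda)$: the first is the idempotent decomposition \ref{rel1}; the second is immediate since $y_k e_{\stt_\la}\in\mathscr{H}^{\succ\la}$ by \cref{Claim 2} (or trivially if the relevant addable set is empty); the third and fourth follow from the arguments already run at the end of the proof of \cref{spanner} — if $s_k(\stt_\la)$ is not row standard then $\psi_k e_{\stt_\la}=\psi_k e_{s_k(\stt_\la)}\cdot e_{\stt_\la}$ and $e_{s_k(\stt_\la)}\in\mathscr{H}^{\psucceq \alpha}_n$ with $\alpha\psucc\la$, hence lies in $\mathscr{H}^{\succ\la}$; and for $\sf S\in\RStd(\lambda)\setminus\Std(\lambda)$ the computation there shows $\psi^{\sf S}_{\stt_\la}e_{\stt_\la}\in\mathscr{H}^{\psucc\la}\subseteq\mathscr{H}^{\succ\la}$. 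Therefore there is a surjective $\mathscr{H}^\sigma_n$-module map $M\twoheadrightarrow {\bf S}_\Bbbk(\lambda)$ sending the canonical generator of $M$ to $e_{\stt_\la}+\mathscr{H}^{\succ\la}$.

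Next I would show $M$ is spanned over $\Bbbk$ by the images of $\{\psi^\sts_{\stt_\la}\mid \sts\in\Std(\lambda)\}$, which bounds $\dim_\Bbbk M\le|\Std(\lambda)|=\dim_\Bbbk{\bf S}_\Bbbk(\lambda)$ and forces the surjection to be an isomorphism. Since $\mathscr{H}^\sigma_n$ is spanned by the $\psi^\SSTU_\SSTV$ (more precisely by $e_{\underline i}\psi_{\underline w}e_{\underline j}$ type elements), and using \cref{BK1,BK2,BK3} to reduce words and push dots past crossings, any element of $M$ is a $\Bbbk$-combination of elements $\psi_{\underline x}\cdot e_{\stt_\la}$ with $\underline x$ reduced; relation (2) kills dots that filter to the right onto $e_{\stt_\la}$, and \cref{BK3} lets one clear all dots modulo lower terms. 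Then relation (3) (applied repeatedly, exactly as in \cref{spanner}) reduces any such $\psi_{\underline x}e_{\stt_\la}$ to those with $\underline x=w^\sts_{\stt_\la}$ for $\sts\in\RStd(\lambda)$, and relation (4) further kills those with $\sts\in\RStd(\lambda)\setminus\Std(\lambda)$. This leaves the claimed spanning set.

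The main obstacle is the bookkeeping in the last reduction step: one must be careful that when relations (3) and (4) are invoked inside a word $\psi_{\underline x}$, the reductions can genuinely be carried out \emph{modulo already-produced lower terms} without circularity, i.e.\ one needs the usual straightening argument for cellular bases — induct on Bruhat length of $x$, and at each stage rewrite $\psi_{\underline x}e_{\stt_\la}$ using \cref{BK1} to a fixed reduced expression (the error terms are shorter, hence handled by induction, after using \cref{Claim 2} to absorb dots), then apply relation (3) to a leftmost descent taking $\stt_\la$ out of row-standardness, or relation (4) once $\sts$ is row- but not column-standard. The point is that these are exactly the relations already shown to hold in ${\bf S}_\Bbbk(\lambda)$, so the argument is self-consistent. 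Once the spanning bound $\dim_\Bbbk M\le|\Std(\lambda)|$ is in hand, comparison of dimensions with the basis of \cref{cellularitybreedscontempt} completes the proof that $M\cong{\bf S}_\Bbbk(\lambda)$.
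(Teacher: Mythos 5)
Your proof is correct and follows essentially the same approach as the paper's own (much terser) argument: verify the four relations hold (so there is a surjection $M\twoheadrightarrow{\bf S}_\Bbbk(\lambda)$ from the abstractly presented module), then show $M$ is spanned by the images of $\psi^\sts_{\stt_\la}$ for $\sts\in\Std(\lambda)$ via the straightening argument from the proof of \cref{spanner}, and conclude by comparing with $\dim_\Bbbk{\bf S}_\Bbbk(\lambda)=|\Std(\lambda)|$. If anything your write-up is slightly more careful than the paper's, which loosely phrases the remaining step as checking ``the homomorphism is surjective''---surjectivity is automatic since $e_{\stt_\la}$ generates the target, and the real content is precisely the spanning bound on $M$ that forces the surjection to be an isomorphism.
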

 
 \begin{proof}
We have already checked that all of these relations hold (and so one can define a homomorphism from the abstractly defined module with this presentation 
to $ {\bf S}_\Bbbk(\lambda)$) it only remains to check that these relations will suffice (i.e. the homomorphism is surjective).   
  We know that  $ {\bf S}_\Bbbk(\lambda)$ has a basis indexed by standard tableaux and so the result follows. 
  \end{proof}

 
   We now recall that the   cellular structure allows us to define bilinear forms, for each
 $\la \in \mathscr{P}_{\underline{\tau}}(n)$,  there  is a bilinear form  
  $ \langle\ ,\ \rangle ^{   \la}$ 
  on $ {\bf S}_\Bbbk(\la)  $,   which
is determined by
\begin{equation}\label{geoide}
   \psi ^{\stt_\la}_{\sts}\psi ^{\stt}_{\stt_\la}\equiv
  \langle     \psi ^{\sts}_{\stt_\la }, \psi ^{\stt}_{\stt_\la}
  \rangle ^{   \la} \;  
e_{\stt_\la}\pmod{\mathcal{H} ^{\succ \lambda}}
  \end{equation}
for any $\sts,\stt \in \Std(\lambda  )$.  
Let   $\Bbbk$  be a field of arbitrary characteristic.    
Factoring out by the radicals of these forms,  we obtain a complete set of non-isomorphic simple $ \mathscr{H}^\sigma_n$-modules 
   $$ 
  {\bf D}_\Bbbk(\lambda) =
 {\bf S}_\Bbbk(\lambda) /
  \rad( {\bf S}_\Bbbk(\lambda) ), \text{  $\la \in \mathscr{P} _{\aatchpair  }(n)$. } 
 $$

\begin{prop}\label{hfsaklhsalhskafhjksdlhjsadahlfdshjksadflhafskhsfajk} Let  $\lambda \in \mathscr{P}_{\aatchpair  } (n)$ and let $A_1\succ   A_2 \succ  \dots \succ  A_z$ denote the removable boxes of $\lambda$, totally ordered according to the $\succeq$-ordering.  The  restriction of ${\bf S}_\Bbbk (\la)$ has  an $\mathscr{H}_{n-1} ^\sigma $-module filtration 
\begin{align}\label{restriction}
0= {\bf S}^{z+1,\lambda}  \subset 
{\bf S}^{z,\lambda} 
\subset \dots 
\subset {\bf S}^{1,\lambda} =\Res_{\mathscr{H}_  {n-1}^\sigma} ( {\bf S}_\Bbbk (\la))
\end{align}
given by 
$${\bf S}^{x,\lambda} = \Bbbk\{\psi^\sts_{\stt_\la} \mid \Shape(\sts_{\leq n-1}) 
= \la-A_y \text{ for some }z\geq y \geq x			\}  .
$$
For each $1\leq r\leq z$, we have that  
\begin{align}\label{restriction2}
\varphi_r: {\bf S} (\la-A_r)\langle 	 \deg (A_r)	\rangle  
\cong
{\bf S}^{r,\lambda}  / {\bf S}^{r+1,\lambda} 
\qquad
:\psi_{\SSTS_{\leq n-1}} \mapsto \psi_{\SSTS_{\leq n-1}} \circ \psi^n_{\SSTT_\la (A_r) }
\end{align} 
\end{prop}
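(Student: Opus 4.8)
The plan is to index the cellular basis of ${\bf S}_\Bbbk(\lambda)$ from \cref{cellularitybreedscontempt} by the box occupied by the largest entry $n$, to verify directly that the resulting subspaces are $\mathscr H^\sigma_{n-1}$-stable, and finally to build the isomorphisms $\varphi_r$ by ``detaching the rightmost strand'' and reading off the degree shift from the combinatorics.

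\emph{Step 1: the filtration is by submodules.} Since $n$ is the largest entry of any $\sts\in\Std(\lambda)$, the box $\sts^{-1}(n)$ is removable, hence one of $A_1\succ\dots\succ A_z$, so by \cref{cellularitybreedscontempt} the space ${\bf S}^{x,\lambda}$ is the $\Bbbk$-span of the basis elements $\psi^\sts_{\stt_\lambda}+\mathscr H^{\succ\lambda}$ with $\sts^{-1}(n)\in\{A_x,\dots,A_z\}$; in particular \eqref{restriction} is a chain of $\Bbbk$-subspaces. To see that ${\bf S}^{x,\lambda}$ is $\mathscr H^\sigma_{n-1}$-stable it is enough, as ${\bf S}^{y,\lambda}\subseteq{\bf S}^{x,\lambda}$ for $y\ge x$, to show $a\,\psi^\sts_{\stt_\lambda}\in{\bf S}^{y,\lambda}$ for $a\in\mathscr H^\sigma_{n-1}$ and $\sts^{-1}(n)=A_y$. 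Under the embedding $\mathscr H^\sigma_{n-1}\hookrightarrow\mathscr H^\sigma_n$ that adjoins a strand on the right, $a$ acts only on strands $1,\dots,n-1$; I would write $a\,\psi^\sts_{\stt_\lambda}=\big(a\,e_{\res(\sts{\downarrow}_{\leq n-1})}\boxtimes 1\big)\psi_{w^\sts_{\stt_\lambda}}e_{\stt_\lambda}$, re-expand the factor $a\,e_{\res(\sts{\downarrow}_{\leq n-1})}$ in the cellular basis of $\mathscr H^\sigma_{n-1}$ using \cref{mainproof}, \cref{move a dot down} and the argument of \cref{spanner}, and observe that the rightmost strand is untouched and still ends carrying $\res(A_y)$; because the elementary moves $Y_\alpha$ of \cref{mainproof} only push a box to a $\succ$-smaller removable position or into $\mathscr H^{\succ\lambda}$ when straightening the diagram, every class that appears is $\psi^\stt_{\stt_\lambda}$ with $\stt^{-1}(n)\preceq A_y$, i.e.\ lies in ${\bf S}^{y,\lambda}$.

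\emph{Step 2: the subquotient isomorphism.} Restriction $\sts\mapsto\sts{\downarrow}_{\leq n-1}$ is a bijection from $\{\sts\in\Std(\lambda)\mid\sts^{-1}(n)=A_r\}$ onto $\Std(\lambda-A_r)$ (the inverse adjoins $n$ in $A_r$, which is automatically standard), so the classes $\psi^\sts_{\stt_\lambda}+{\bf S}^{r+1,\lambda}$ with $\sts^{-1}(n)=A_r$ form a $\Bbbk$-basis of ${\bf S}^{r,\lambda}/{\bf S}^{r+1,\lambda}$ matched with the standard basis of ${\bf S}(\lambda-A_r)$; I take $\varphi_r$ to be this $\Bbbk$-linear bijection. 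Setting $a_r=\SSTT_\lambda(A_r)$, a box count shows $\SSTT_{\lambda-A_r}(\square)=\SSTT_\lambda(\square)$ for $\square\succ A_r$ and $\SSTT_{\lambda-A_r}(\square)=\SSTT_\lambda(\square)-1$ for $\square\prec A_r$, whence $w^\sts_{\stt_\lambda}=w^{\sts{\downarrow}_{\leq n-1}}_{\SSTT_{\lambda-A_r}}\cdot(s_{n-1}s_{n-2}\cdots s_{a_r})$ with lengths adding; choosing reduced words accordingly, and using that $\psi^n_{a_r}e_{\stt_\lambda}$ has top idempotent $e_{\SSTT_{\lambda-A_r}}\boxtimes e_{\res(A_r)}$, one gets $\psi^\sts_{\stt_\lambda}=\big(\psi^{\sts{\downarrow}_{\leq n-1}}_{\SSTT_{\lambda-A_r}}\boxtimes 1\big)\psi^n_{a_r}e_{\stt_\lambda}$, so that $\varphi_r\colon\psi_{\sts{\downarrow}_{\leq n-1}}\mapsto\psi_{\sts{\downarrow}_{\leq n-1}}\circ\psi^n_{\SSTT_\lambda(A_r)}$ exactly as in \eqref{restriction2}. $\mathscr H^\sigma_{n-1}$-linearity then follows from this factorisation: for $a\in\mathscr H^\sigma_{n-1}$ one writes $a\,\psi^{\sts{\downarrow}_{\leq n-1}}_{\SSTT_{\lambda-A_r}}\equiv\sum_\SSTV c_\SSTV\,\psi^\SSTV_{\SSTT_{\lambda-A_r}}\pmod{\mathscr H^{\succ(\lambda-A_r)}_{n-1}}$, hence $a\,\psi^\sts_{\stt_\lambda}\equiv\sum_\SSTV c_\SSTV\,\psi^{\SSTV\cup(n\to A_r)}_{\stt_\lambda}$ modulo $\big(\mathscr H^{\succ(\lambda-A_r)}_{n-1}\boxtimes 1\big)\psi^n_{a_r}e_{\stt_\lambda}$, and the concatenation fact ``$\mu\succ\lambda-A_r\Rightarrow\mu\cup A_r\succ\lambda$'' (exploited throughout \cref{mainproof} and \cref{spanner}) puts this error term into $\mathscr H^{\succ\lambda}+{\bf S}^{r+1,\lambda}$. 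For the grading: since $\sts{\downarrow}_{\leq k}=\SSTV{\downarrow}_{\leq k}$ for $k<n$ when $\sts=\SSTV\cup(n\to A_r)$, we get $\deg_\succ(\sts)=\deg_\succ(\SSTV)+\big(|\mathcal A^\succ_\sts(n)|-|\mathcal R^\succ_\sts(n)|\big)$, and the bracket depends only on $A_r$ -- it is the excess of addable over removable $\res(A_r)$-boxes of $\lambda$ lying $\succ$-below $A_r$, which is $\deg(A_r)$.

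\emph{Main obstacle.} The only non-formal inputs are the two straightening claims -- that re-reducing $a\,\psi^\sts_{\stt_\lambda}$ for $a\in\mathscr H^\sigma_{n-1}$ never moves the box of $n$ to a $\succ$-larger removable box, and that $\big(\mathscr H^{\succ(\lambda-A_r)}_{n-1}\boxtimes 1\big)\psi^n_{a_r}e_{\stt_\lambda}\subseteq\mathscr H^{\succ\lambda}+{\bf S}^{r+1,\lambda}$. I expect both to come out of \cref{mainproof} essentially verbatim, since the rightmost strand, once placed, never participates as one of the two adjacent strands in the relations used there, so the rank-reducing induction applies to the sub-diagram on the first $n-1$ strands and the concatenation bookkeeping of \cref{spanner} handles the rest; the tableau bijections and the degree count are routine.
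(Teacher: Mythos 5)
Your proposal is essentially the same route as the paper's (very terse) proof: read the $\Bbbk$-module filtration off the basis of \cref{cellularitybreedscontempt}, and obtain the module structure by straightening. The paper dismisses the lift to $\mathscr{H}^\sigma_{n-1}$-modules as ``a standard argument\dots{} checking the relations of \cref{GANRIENR}'', and that is exactly what your Step~2 is doing in disguise: your factorisation $\psi^\sts_{\stt_\lambda}=\bigl(\psi^{\sts{\downarrow}_{\leq n-1}}_{\SSTT_{\lambda-A_r}}\boxtimes 1\bigr)\psi^n_{\SSTT_\lambda(A_r)}e_{\stt_\lambda}$ is what makes those relations hold on the nose, and your degree count for $\deg(A_r)$ agrees with the paper's conventions.

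One imprecision worth repairing. You justify Step~1 by saying the moves $Y_\alpha$ of \cref{mainproof} ``only push a box to a $\succ$-smaller removable position''. By \cref{reddefn}, $Y_\alpha$ moves a box to a $\succ$-\emph{larger} position $\beta\succ\alpha$, which is precisely why the resulting shape is $\succ$-greater and falls into $\mathscr{H}^{\succ\lambda}$. The statement you actually need is that in the Specht module, re-expanding $a\psi^\sts_{\stt_\lambda}$ (with $\sts^{-1}(n)=A_y$) in the basis can move the $n$-box only to $A_{y'}$ with $y'>y$. This is best obtained from ideal membership rather than a per-move claim: write $a\psi^{\sts{\downarrow}_{\leq n-1}}_{\SSTT_{\lambda-A_y}}=\sum_\SSTV c_\SSTV\,\psi^\SSTV_{\SSTT_{\lambda-A_y}}+E$ with $E\in\mathscr{H}^{\succ(\lambda-A_y)}_{n-1}$; concatenating $E$ with the last $\res(A_y)$-strand and right-multiplying by $e_{\stt_\lambda}$ lands in $\sum_{\nu\succ\lambda-A_y}\mathscr{H}^{\succeq\,\nu\cup\alpha}_n e_{\stt_\lambda}$, and among the shapes $\nu\cup\alpha$ that equal $\lambda$ one has $\nu=\lambda-A_{y'}$ with necessarily $y'>y$ (since $\lambda-A_{y'}\succ\lambda-A_y$ exactly when $y'>y$), while the remaining terms are $\succ\lambda$ and vanish in $\mathbf{S}_\Bbbk(\lambda)$. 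With this bookkeeping in place — which is what the paper is waving off as routine — both your straightening claims close, and the proof is correct.
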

\begin{proof}
On the level of $\Bbbk$-modules, this is clear.  
Lifting this to $\mathscr{H}_{n-1}^\sigma$-modules is   a standard argument which  proceeds by checking the  relations of \cref{GANRIENR} in a routine manner.  
\end{proof}

\section{General light leaves bases for quiver Hecke algebras }\label{9999}

\newcommand{\Po}{\operatorname{Po}}

 The  principal idea of categorical Lie theory is to replace existing structures (combinatorics, bases, and presentations of Hecke algebras) with richer structures which keep track of more information.   
In this section, we replace the classical   tableaux combinatorics of symmetric groups (and quiver Hecke algebras) with that of paths in an alcove geometry.  This will allow us to construct ``light leaves" bases of these algebras, for which  $p$-Kazhdan--Lusztig is  baked-in  to the very definition.   
    The light leaves bases of ${\bf S}_\Bbbk(\la)$ are constructed in such a way as to keep track of not just the point $\la\in \mathbb{E}_{\aatchpair  }$ (or rather the single path,  $\SSTT_\la$, to the point $\la$)
  but of the many different ways    we can  get to the point $\la$ by a reduced path/word in the alcove geometry.  This extra  generality is essential when we wish to write bases in terms of ``2-generators" of the algebras of interest.

\subsection{The alcove geometry} 
\label{newsec3}
\color{black} 

 For ease of notation, we  set $H_m=h_0+\dots+h_m$ for $0\leq m <\ell$, 
and   $\aatch=\tau_0+\dots+\tau_{\ell-1}$.  
 For each $ 1\leq \I \leq n$ and $ 0\leq \M < \ell$ we let
  $\eps_{i,m}:=\eps_{(\tau_0+\dots+\tau_{m-1}) + i}$    denote a
formal symbol, and define an   $\aatch  $-dimensional real vector space 
\[
{\mathbb E}_{\aatchpair   }
=\bigoplus_{
	\begin{subarray}c 
	0 \leq m <  \ell   \\ 
		1\leq i \leq  \tau_m    
	\end{subarray}
} \mathbb{R}\varepsilon_{i,m}
\]
and $\overline{\mathbb E}_{\aatchpair  }$ to be the quotient of this space by the one-dimensional subspace spanned by 
\[\sum_{
	\begin{subarray}c 
	0 \leq m <  \ell   \\ 
		1\leq i \leq  \tau_m    
	\end{subarray}
} \varepsilon_{i,m}.\]
We have an inner product $\langle \; , \; \rangle$ on ${\mathbb E}_{\aatchpair  }$ given by extending
linearly the relations 
\[
\langle \varepsilon_{i,p} , \varepsilon_{j,q} \rangle= 
\delta_{\I,\J}\delta_{p,q}
\]
for all $1\leq \I, \J \leq n$ and $0 \leq p,q< \ell $, where
$\delta_{i,j}$ is the Kronecker delta.  
We identify $\lambda \in  {\mathcal C}_{\aatchpair  }(n)$ with an element of the integer lattice inside $\mathbb{E}_{\aatchpair  }  $ via the map
$$\lambda\longmapsto
 \sum_{\begin{subarray}c   0\leq {m}<\ell \\ 1\leq \I\leq \tau_m  \end{subarray}}
 (\lambda^{(m)})^T_\I \varepsilon_{i,m}$$
where $({-})^T$ is the transpose map. 
We let $\Phi$ denote the root system of type $A_{\aatch  -1}$ consisting of the roots 
$$\{\varepsilon_{i,p}-\varepsilon_{j,q}:  \ 0\leq p,q<\ell, \
1\leq i \leq \tau_p ,1\leq j \leq \tau_q, 
 \text{with}\ (i,p)\neq (j,q)\}$$
and $\Phi_0$ denote the root system of type $A_{\tau_0-1}\times\cdots\times A_{\tau_{\ell-1}-1}$ consisting of the roots 
$$\{\varepsilon_{i,m}-\varepsilon_{j,m}: 
  0\leq m<\ell, 1\leq i \neq j\leq \tau_m \}.$$
We choose $\Delta$ (respectively $\Delta_0$) to be the set of simple roots inside $\Phi$ (respectively  $\Phi_0$) of the form $\varepsilon_t-\varepsilon_{t+1}$ for some $t$.
Given $r\in\ZZ$ and $\alpha\in\Phi$ we define $s_{\alpha,re}$ to be the reflection which acts on ${\mathbb E}_{\aatchpair  }$ by
$$s_{\alpha,re}x=x-(\langle x,\alpha\rangle -re)\alpha$$
The group generated by the $s_{\alpha,0}$ with $\alpha\in\Phi$ (respectively $\alpha\in\Phi_0$) is isomorphic to the symmetric group $\mathfrak{S}_{\enn   }$ (respectively to $\mathfrak{S}_f:=\mathfrak{S}_{\tau_0}\times\cdots\times\mathfrak{S}_{\tau_{\ell-1}}$), while the group generated by the $s_{\alpha,re}$ with $\alpha \in\Phi$ and $r\in\ZZ$ is isomorphic to $\widehat{\mathfrak{S}}_{\enn   }$, the affine Weyl group of type $A_{\enn   -1}$. 
    We set $\alpha_0=\varepsilon_{\enn}-\varepsilon_1$ and $\Pi=\Delta\cup\{\alpha_0\}$.
The elements
 $S=
\{s_{\alpha,0}:\alpha\in\Delta\}\cup\{s_{\alpha_0,-e}\}
$ 
generate $\widehat{\mathfrak S}_{\enn}$. 

\begin{notn}
	We shall frequently find it convenient to refer to the generators in $S$ in terms of the elements of $\Pi$, and will abuse notation in two different ways. First, we will write $s_{\alpha}$ for $s_{\alpha,0}$ when $\alpha\in\Delta$ and $s_{\alpha_0}$ for $s_{\alpha_0,-e}$. This is unambiguous except in the case of the affine reflection $s_{\alpha_0,-e}$, where this notation has previously been used for the element $s_{\alpha,0}$. As the element $s_{\alpha_0,0}$ will not be referred to hereafter this should not cause confusion.
	Second, we will write $\alpha=\eps_i-\eps _{i+1}$ in all cases; if $i=\enn$ then all occurrences of $i+1$ should be interpreted modulo $\enn$ to refer to the index $1$.
\end{notn}

We shall consider a shifted action of the affine Weyl group $\widehat{\mathfrak{S}}_{\aatch  }$  on ${\mathbb E}_{h,l}$ 
by the element
$$
 \rho:= (\rho_{0}, \rho_{2}, \ldots, \rho_{\ell-1}) \in \ZZ^{\aatch  } \quad\text{where}\quad
\rho_m := (  \sigma_m+\aatch_m-1 ,  \sigma_m+\aatch_m-2,\dots,   \sigma_m ) \in \ZZ^{\tau_m},$$    
\noindent that is, given an element $w\in \widehat{\mathfrak{S}}_{\aatch  } $,  we set 
$
w\cdot x=w(x+\rho)-\rho.
$
 This shifted action induces a well-defined action on $\overline{\mathbb E}_{\aatchpair  }$; we will define various geometric objects in ${\mathbb E}_{\aatchpair  }$ in terms of this action, and denote the corresponding objects in the quotient with a bar without further comment.
We let ${\mathbb E} ({\alpha, re})$ denote the affine hyperplane
consisting of the points  
$${\mathbb E} ({\alpha, re}) = 
\{ x\in{\mathbb E}_{\aatchpair  } \mid  s_{\alpha,re} \cdot x = x\} .$$
Note that our assumption that $e>\tau_0+\dots+\tau_{\ell-1}$  implies that the origin does not lie on any hyperplane.   Given a
hyperplane ${\mathbb E} ( \alpha,re)$ we 
remove the hyperplane from ${\mathbb E}_{\aatchpair  }$ to obtain two
distinct subsets ${\mathbb 
	E}^{\great}(\alpha,re)$ and ${\mathbb E}^{\less}(\alpha,re)$
where the origin  lies in $  {\mathbb E}^{\less }(\alpha,re)$. The connected components of 
$$\overline{\mathbb E}_{\aatchpair  }  \setminus (\cup_{\alpha \in \Phi_0}\overline{\mathbb E}(\alpha,0))$$ are called chambers.
 The dominant chamber, denoted
$\overline{\mathbb E}_{\aatchpair   }^+ $, is defined to be 
$$\overline{\mathbb E}_{\aatchpair   }^+=\bigcap_{ \begin{subarray}c
	\alpha \in \Phi_0
	\end{subarray}
} \overline{\mathbb E}^{\less} (\alpha,0).$$
The connected components of $$\overline{\mathbb E}_{\aatchpair  }  \setminus (\cup_{\alpha \in \Phi,r\in \ZZ}\overline{\mathbb E}(\alpha,re))$$
are called alcoves, and any such alcove is a fundamental domain for the action of the group $  \widehat{\mathfrak{S}}_{\aatch  }$ on
the set $\Alc$ of all such alcoves. We define the {\sf fundamental alcove} $A_0$ to be the alcove containing the origin (which is inside the dominant chamber).
 We have a bijection from $\widehat{\mathfrak{S}}_{\aatch  }$ to $\Alc$ given by $w\longmapsto wA_0$. Under this identification $\Alc$ inherits a right action from the right action of $\widehat{\mathfrak{S}}_{\aatch  }$ on itself.
Consider the subgroup 
$$
\mathfrak{S}_f:=
\mathfrak{S}_{\tau_0}\times\cdots\times\mathfrak{S}_{\tau_{\ell-1}} \leq \widehat{\mathfrak{S}}_{\aatch  }.
$$
The dominant chamber is a fundamental domain for the action of $\mathfrak{S}_f$ on the set of chambers in $\overline{\mathbb E}_{\aatchpair  }$.  
We let $ \mathfrak{S}^f$ denote the set of minimal length representatives for right cosets $\mathfrak{S}_f \backslash \widehat{\mathfrak{S}}_{\aatch  }$.  So multiplication gives a bijection $\mathfrak{S}_f\times \mathfrak{S}^f \to  \widehat{\mathfrak{S}}_{\aatch  }$. 
 This induces a bijection between right cosets and the alcoves in our dominant chamber. 
  Under this identification, alcoves are partially ordered  by the Bruhat-ordering on $\widehat{ \mathfrak{S}}_h$ which is  a coarsening of   the {\em opposite}  of the order $\psucc  $.

 \color{black}
If the intersection of a hyperplane $\overline{\mathbb E}(\alpha,re)$ with the closure of an alcove $A$ is generically  of codimension one in $\overline{\mathbb E}_{\aatchpair  }$ then we call this intersection a {\sf wall} of $A$. The fundamental alcove $A_0$ has walls corresponding to $\overline{\mathbb E}(\alpha,0)$ with $\alpha\in\Delta$ together with an affine wall $\overline{\mathbb E}(\alpha_0,e)$. We will usually just write $\overline{\mathbb E}(\alpha)$ for the walls $\overline{\mathbb E}(\alpha,0)$ (when $\alpha\in\Delta$) and $\overline{\mathbb E}(\alpha,e)$ (when $\alpha=\alpha_0$). We regard each of these walls as being labelled by a distinct colour (and assign the same colour to the corresponding element of $S$). Under the action of $\widehat{\mathfrak{S}}_{\aatch  }$ each wall of a given alcove $A$ is in the orbit of a unique wall of $A_0$, and thus inherits a colour from that wall. We will sometimes use the right action of $\widehat{\mathfrak{S}}_{\aatch  }$ on $\Alc$. Given an alcove $A$ and an element $s\in S$, the alcove $As$ is obtained by reflecting $A$ in the wall of $A$ with colour corresponding to the colour of $s$. With this observation it is now easy to see that if $w=s_{1}\ldots s_{t}$ where the $s_i$ are in $S$ then $wA_0$ is the alcove obtained from $A_0$ by successively reflecting through the walls corresponding to $s_1$ up to $s_h$. 
  We will call a multipartition {\sf regular}  if its image in $\overline{\mathbb E}_{h,l}$ lies in some alcove; those multipartitions whose images lies on one or more walls will be called {\sf  singular}.

\subsection{Paths in the geometry}\color{black}
We now   develop a path combinatorics inside our geometry.  
Given   a map  $p: 
\{1,\mydots
, n\}\to \{1,\mydots ,   \aatch      \}$ we define points $\SSTP(k)\in
{\mathbb E}_{\aatchpair  }$ by
\[
\SSTP(k)=\sum_{1\leq i \leq k}\varepsilon_{p(i)} 
\]
for $1\leq i \leq n$. 
We define the associated path of length $n$    by $$\SSTP=\left(
\varnothing=\SSTP(0),\SSTP(1),\SSTP(2), \ldots, \SSTP(n) \right) $$ and we say that 
the path has shape $\pi= \SSTP(n) \in   {\mathbb E}_{\aatchpair  }$.   
 We also denote this path by 
$$\SSTP=(\varepsilon_{p(1)},\ldots,\varepsilon_{p(n)}).$$
Given $\lambda\in \mathscr{C}_{\tau}(n)$ we let $\Path(\lambda)$ denote the set of paths of length $n$ with shape $\lambda$. We define $\Path_{\aatchpair  }(\lambda)$ to be the subset of $\Path(\lambda)$ consisting of those paths lying entirely inside the dominant chamber; i.e. those $\SSTP$ such that $\SSTP(i)$ is dominant for all $0\leq i\leq n$. 
 We let $\Path_{\aatchpair  }(n) = \cup _{\la\in \mathscr{P}_{\underline{\tau}}(n)}\Path_{\aatchpair  }(\la)$.

Given a path $\mathsf{T}$ defined by such a map $p$ of length $n$ and shape $\lambda$ we can write each $p(j)$ uniquely in the form $\eps_{p(j)}=\eps_{m_j,c_j}$ where $0\leq m_j<\ell$ and $1\leq c_j\leq \tau_j$. We record these elements in a tableau of shape $\lambda^T$ by induction on $j$, where we place the positive integer $j$ in the first empty box in the $c_j$th column of component $m_j$.
By definition, such a tableau will have entries increasing down columns; if $\lambda$ is a multipartition then the entries also increase along rows if and only if the given path is in $\Path_{\aatchpair  }(\lambda)$, and hence there is a bijection between $\Path_{\aatchpair  }(\lambda)$ and $\Std(\lambda)$. 
For this reason we will sometimes refer to paths as tableaux, to emphasise that what we are doing is generalising the classical tableaux combinatorics for the symmetric group. 

  \begin{eg}
  Let    $\sigma=(0,3,6)\in  \ZZ ^3$ and $e=9$. 
  For $\la=((2,1),(2,1),(1^3))$, the  standard $\la$-tableaux  of \cref{oftttt} correspond to the paths 
$$\stt_\la= 
(\eps_1,\eps_2,\eps_3,\eps_4,\eps_5,\eps_1,\eps_3,\eps_{4},\eps_{5} ,\eps_{1},\eps_{3}, \eps_{5})
$$
$$
\sts= (\eps_1,\eps_1,\eps_3, \eps_5,\eps_4, \eps_2,\eps_3,\eps_{4},\eps_{5} ,\eps_{1},\eps_{3}, \eps_{5})   $$
\end{eg}

  \color{black}  
Given a path $\SSTP$ we define $$\res(\SSTP)=(\res_\SSTP(1),\ldots,\res_\SSTP(n))$$ where $\res_\SSTP(i)$ denotes the residue of the box labelled by $i$ in the tableau corresponding to $\SSTP$.

\begin{figure}[ht] 
	$$ 
	\scalefont{0.8}  \begin{minipage}{11cm} \begin{tikzpicture}[scale=1.1] 
	\fill[gray!30](0,0)--++(120:8)--++(180:0.2)--++(-60:8)--(0:0.2); 
	\fill[gray!30](0,0)--++(60:8)--++(0:0.2)--++(-120:8)--(180:0.2); 

	\begin{scope} \clip(0,0)--++(120:0.4*20)--++(0:0.4*20)--++(-120:0.4*20);  
		\foreach \i in {0,1,...,20}
		{    
			\path(0,0)--++(120:0.4*\i)  coordinate (a\i);
			\path(0,0)--++(60:0.4*\i)  coordinate (b\i);
			\path(0,0)--++(0:0.4*\i)  coordinate (c\i);
			\path(0,0)--++(120:0.4*20)--++(0:0.4*\i)  coordinate (d\i);   
			\path(0,0)--++(120:0.4*20)--++(0:0.4*20)--++(180:0.4*\i)  coordinate (e\i);   
			\draw[gray,densely dotted](a\i)--(b\i);
			\draw[gray,densely dotted](d\i)--(c\i);
			\draw[gray,densely dotted](a\i)--(e\i);
		}
		\path(0,0)--++(120:0.4)        --++(60:0.4) coordinate (hi);

		\path (0,0) coordinate (origin);
	\end{scope}
	
	\draw[->](-3,1.2)--++(120:0.4) node[anchor=south east]{$\varepsilon_1$};
	\draw[->](-3,1.2)--++(0:0.4) node[anchor=west]{$\varepsilon_2$};
	\draw[->](-3,1.2)--++(-120:0.4) node[anchor=north east]{$\varepsilon_3$};
	
	\draw[very thick, lime](0,0)--++(120:0.4*5)
	--++(60:0.4*5)--++(00:0.4*5); 

	\draw[very thick, lime](0,0)
	--++(120:0.4*5)--++(60:0.4*5)
	--++(120:0.4*5)
	--++(60:0.4*5)--++(00:0.4*5)--++(-60:0.4*5)--++(-120:0.4*5)--++(-180:0.4*5);
	
	\draw[very thick, lime](0,0)
	--++(120:0.4*5)--++(60:0.4*5)
	--++(120:0.4*5) --++(180:0.4*5)--++(120:0.4*5);  
	\draw[very thick, lime](0,0)
	--++(120:0.4*5)--++(60:0.4*5)
	--++(120:0.4*5)--++(-120:0.4*5); 
	
	\draw[very thick, cyan](0,0)--++(60:0.4*5)
	--++(120:0.4*5)--++(180:0.4*5);

	\draw[very thick, lime](0,0)
	--++(120:0.4*5)--++(60:0.4*5)
	--++(120:0.4*5)
	--++(60:0.4*5)--++(00:0.4*5)--++(-60:0.4*5)--++(-120:0.4*5)--++(-180:0.4*5);
	
	\path(0,0)--++(120:2) coordinate (hi);
	\draw[very thick, magenta](hi)--++(120:0.4*5)
	--++(60:0.4*5)--++(00:0.4*5) --++(-60:0.4*5)--++(-120:0.4*5)--++(-180:0.4*5);
	
	\path(0,0)--++(120:6)--++(0:2) coordinate (hi2);
	\draw[very thick, magenta](hi2)--++(120:0.4*5)
	--++(180:0.4*5);
	\path(0,0)--++(120:6)--++(0:4) coordinate (hi2);
	\draw[very thick, magenta](hi2)--++(60:0.4*5)
	--++(0:0.4*5);

	\path(0,0)--++(120:8) coordinate (hi2); 
	\draw[very thick, lime,densely dotted] (hi2)--++(60:0.4);
	\draw[very thick, magenta,densely dotted] (hi2)--++(120:0.4);
	\path(hi2) --++(0:2) coordinate (hi2); 
	\draw[very thick, magenta,densely dotted] (hi2)--++(60:0.4);
	\draw[very thick, cyan,densely dotted] (hi2)--++(120:0.4);
	\path(hi2) --++(0:2) coordinate (hi2); 
	\draw[very thick, cyan,densely dotted] (hi2)--++(60:0.4);
	\draw[very thick, lime,densely dotted] (hi2)--++(120:0.4);
	\path(hi2) --++(0:2) coordinate (hi2); 
	\draw[very thick, lime,densely dotted] (hi2)--++(60:0.4);
	\draw[very thick, magenta,densely dotted] (hi2)--++(120:0.4);
	\path(hi2) --++(0:2) coordinate (hi2); 
	\draw[very thick, magenta,densely dotted] (hi2)--++(60:0.4);
	\draw[very thick, cyan,densely dotted] (hi2)--++(120:0.4);
	\draw[very thick, cyan](0,0)
	--++(60:0.4*5)--++(120:0.4*5)
	--++(60:0.4*5)--++(0:0.4*5)       --++(60:0.4*5);
	
	\draw[very thick, cyan](0,0)
	--++(60:0.4*5)--++(120:0.4*5)       --++(60:0.4*5)
	--++(120:0.4*5)--++(180:0.4*5)       --++(-120:0.4*5)  --++(-60:0.4*5); 
	
	\path(0,0)--++(120:0.8)--++(0:0.4) coordinate (hi); 
	\draw(hi) node {\astrosun};
	\draw[very thick](hi)
	--++(120:0.4*1)
	--++(0:0.4*1)
	--++(120:0.4*1)
	--++(0:0.4*1)
	--++(120:0.4*1)
	--++(0:0.4*1)
	--++(120:0.4*3)
	--++(-120:0.4*1)
	--++(120:0.4*1)--++(120:0.4*1)
	--++(120:0.4*1)
	--++(0:0.4*1)--++(120:0.4*1)
	--++(120:0.4*1)
	--++(-120:0.4*1)
	--++(120:0.4*1)
	--++(-120:0.4*1)
	--++(120:0.4*1)
	--++(-120:0.4*1)
	--++(120:0.4*3)
	--++(0:0.4*1)--++(120:0.4*1)--++(120:0.4*1)
	--++(120:0.4*1)--++(-120:0.4*1)--++(120:0.4*1)
	;
	\path ++(60:0.4*4) coordinate (a);
	\path (a) ++(120:0.4*4) coordinate (b);
	\path (b) ++(120:0.4) coordinate (c); 
	\path (c) ++(180:0.4) coordinate (d);
	\path (d) ++(60:0.4) coordinate (e);
	\path (e) ++(120:0.4) coordinate (f);
	\path (f) ++(120:0.4*3) coordinate (g);
	\path (g) ++(180:0.4*3) coordinate (h);
	\path (h) ++(60:0.4) coordinate (i);
	\path (i) ++(120:0.4) coordinate (j);
	\path (j) ++(120:0.4) coordinate (k);
	\path (k) ++(180:0.4) coordinate (l);
	\node at (b)[circle,fill,inner sep=1.5pt]{}; 
	\node at (d)[circle,fill,inner sep=1.5pt]{}; 
	\node at (f)[circle,fill,inner sep=1.5pt]{}; 
	\node at (h)[circle,fill,inner sep=1.5pt]{}; 
	\node at (j)[circle,fill,inner sep=1.5pt]{}; 
	\node at (l)[circle,fill,inner sep=1.5pt]{}; 
\end{tikzpicture}  \end{minipage}
\begin{minipage}{2cm} \begin{tikzpicture}[scale=0.485]
	 
	\draw[thick](0,0)--++(-90:20)--++(0:1)--++(90:15)--++(0:2)--++(90:5)--++(180:3);
	
	\clip(0,0)--++(-90:20)--++(0:1)--++(90:15)--++(0:2)--++(90:5)--++(180:3);
	\foreach \i in {0,1,2,...,20}
	{
		\path  (0,0)++(-90:1*\i cm)  coordinate (a\i);
		\path  (0.5,-0.5)++(-90:1*\i cm)  coordinate (b\i);
		\path  (1.5,-0.5)++(-90:1*\i cm)  coordinate (c\i);
		\path  (2.5,-0.5)++(-90:1*\i cm)  coordinate (d\i);
		\draw[thick] (a\i)--++(0:3);
		\draw[thick] (1,0)--++(-90:5);
		\draw[thick] (2,0)--++(-90:5);
	}
\draw(b0) node {$1$};
	\draw(c0) node {$2$};
	
	\draw(b1) node {$3$};
	\draw(c1) node {$4$};
	
	\draw(b2) node {$5$};
	\draw(c2) node {$6$};
	
	\draw(b3) node {$7$};
	\draw(b4) node {$8$};
	\draw(b5) node {$9$};
	\draw(d0) node {$10$};   
	\draw(b6) node {$11$};
	\draw(b7) node {$12$};
	\draw(b8) node {$13$};
	\draw(c3) node {$14$};
	\draw(b9) node {$15$};
	\draw(b10) node {$16$};
	\draw(d1) node {$17$};\draw(b11) node {$18$};
	\draw(d2) node {$19$};\draw(b12) node {$20$};
	\draw(d3) node {$21$};\draw(b13) node {$22$};
	
	\draw(b14) node {$23$};
	\draw(b15) node {$24$};
	\draw(b16) node {$26$};
	\draw(c4) node {$25$};
	\draw(b17) node {$27$};  \draw(b18) node {$28$};
	\draw(d4) node {$29$};
	\draw(b19) node {$30$};
	\end{tikzpicture} \end{minipage}
	$$
  \caption{An alcove path in $\overline{\mathbb E}_{3,1}^+$ in $\Path_{\aatchpair  }(3^5,1^{15})$ and the corresponding tableau in $\Std(3^5,1^{15})$. The black vertices denote vertices on the path in the orbit of the origin.}
\label{diag2}
\end{figure}

Given paths $\SSTP=(\varepsilon_{p(1)},\ldots,\varepsilon_{p(n)})$ and $\SSTQ=(\varepsilon_{q(1)},\ldots,\varepsilon_{q(n)})$ we say that $\SSTP\sim\SSTQ$ if there exists an $\alpha=\varepsilon_{i,p}-\varepsilon_{j,q}\in\Phi$ and $r\in\ZZ$ and $s\leq n$ such that
$$\SSTP(s)\in{\mathbb E}(\alpha,re)\qquad \text{ 
and  }
\qquad \varepsilon_{q(t)}=\left\{\begin{array}{ll}
\varepsilon_{p(t)}& \ \text{for}\ 1\leq t\leq s\\
s_{\alpha}\varepsilon_{p(t)}& \ \text{for}\ s+1\leq t\leq n.\end{array}\right.$$ 

In other words the paths $\SSTP$ and $\SSTQ$ agree up to some point $\SSTP(s)=\SSTQ(s)$ which lies on ${\mathbb E}(\alpha,re)$, after which each $\SSTQ(t)$ is obtained from $\SSTP(t)$ by reflection in ${\mathbb E}(\alpha,re)$. We extend $\sim$ by transitivity to give an equivalence relation on paths, and say that two paths in the same equivalence class are related by a series of {\sf wall reflections of paths} and given $\SSTS \in \Path_{\aatchpair  }(n)$ we set $[\SSTS]  = \{ \SSTT \in \Path_{\aatchpair  }(n) \mid \SSTS\sim\SSTT\}$.

We recast the degree of a tableau in the path-theoretic setting as follows.

\begin{defn}\label{Soergeldegreee}
Given a path $\sts=(\sts(0),\sts(1),\sts(2), \ldots, \sts(n))$  we set
$\deg(\sts(0))=0$ and define  
 \[
 \deg(\sts ) = \sum_{1\leq k \leq n} d (\sts(k),\sts(k-1)), 
 \]
 where $d(\sts(k),\sts(k-1))$ is defined as follows. 
For $\alpha\in\Phi$ we set $d_{\alpha}(\sts(k),\sts(k-1))$ to be
\begin{itemize}
\item $+1$ if $\sts(k-1) \in 
   {\mathbb E}(\alpha,re)$ and 
   $\sts(k) \in 
   {\mathbb E}^{\less}(\alpha,re)$;
   
\item $-1$ if $\sts(k-1) \in 
   {\mathbb E}^{\great}(\alpha,re)$ and 
   $\sts(k) \in 
   {\mathbb E}(\alpha,re)$;
\item $0$ otherwise.  
   \end{itemize}
We let 
$$ 
{\rm deg}(\SSTS)= \sum _{1\leq k \leq n }\sum_{\alpha \in \Phi}d_\alpha(\sts(k-1),\sts(k)).$$  
%
%
%
%
   We say that $\SSTP=(\varepsilon_{p(1)},\ldots,\varepsilon_{p(n)})$  is a {\sf reduced path} if 
$d_\alpha(\SSTP(k-1),\SSTP(k ))=0 $ for  $1\leq k \leq n$ and $\alpha \in \Pi$.  
 \end{defn}
   

This  definition of a reduced path is easily seen to be equivalent to that of \cite[Section 2.3]{cell4us2}.

\begin{figure}[ht!]
 
$$
   \begin{minipage}{2.6cm}\begin{tikzpicture}[scale=0.75] 
    
    \path(0,0) coordinate (origin);
          \foreach \i in {0,1,2}
  {
    \path  (origin)++(60:1*\i cm)  coordinate (a\i);
    \path (origin)++(0:1*\i cm)  coordinate (b\i);
     \path (origin)++(-60:1*\i cm)  coordinate (c\i);
    }
  
      \path(3,0) coordinate (origin);
          \foreach \i in {0,1,2}
  {
    \path  (origin)++(120:1*\i cm)  coordinate (d\i);
    \path (origin)++(180:1*\i cm)  coordinate (e\i);
     \path (origin)++(-120:1*\i cm)  coordinate (f\i);
    }

  \foreach \i in {0,1,2}
  {
    \draw[gray, densely dotted] (a\i)--(b\i);
        \draw[gray, densely dotted] (c\i)--(b\i);
    \draw[gray, densely dotted] (d\i)--(e\i);
        \draw[gray, densely dotted] (f\i)--(e\i);
            \draw[gray, densely dotted] (a\i)--(d\i);
                \draw[gray, densely dotted] (c\i)--(f\i);
 
     }
  \draw[very thick, magenta] (0,0)--++(0:3) ;
    \draw[very thick, cyan] (3,0)--++(120:3) coordinate (hi);
        \draw[very thick, darkgreen] (hi)--++(-120:3) coordinate  (hi);

        \draw[very thick, darkgreen] (hi)--++(-60:3) coordinate  (hi);

    \draw[very thick, cyan] (hi)--++(60:3) coordinate (hi);

\path                 (hi)  --++(180:0.6*4) 
 coordinate (hiyer);

      \path(0,0)--++(-60:5*0.6)--++(120:2*0.6)--++(0:0.6) coordinate (hi);
     \path(0,0)--++(0:4*0.6)--++(120:3*0.6)           coordinate(hi2) ; 
     \path(0,0)--++(0:4*0.6)          coordinate(hi3) ;

     
          \path(0,0)  --++(0:1)
            coordinate (hi);

      \draw[very thick,<-] (hi) --++(60:1);

 \end{tikzpicture}\end{minipage}
 \qquad
 \quad
   \begin{minipage}{2.6cm}\begin{tikzpicture}[scale=0.75] 
    
    \path(0,0) coordinate (origin);
          \foreach \i in {0,1,2}
  {
    \path  (origin)++(60:1*\i cm)  coordinate (a\i);
    \path (origin)++(0:1*\i cm)  coordinate (b\i);
     \path (origin)++(-60:1*\i cm)  coordinate (c\i);
    }
  
      \path(3,0) coordinate (origin);
          \foreach \i in {0,1,2}
  {
    \path  (origin)++(120:1*\i cm)  coordinate (d\i);
    \path (origin)++(180:1*\i cm)  coordinate (e\i);
     \path (origin)++(-120:1*\i cm)  coordinate (f\i);
    }

  \foreach \i in {0,1,2}
  {
    \draw[gray, densely dotted] (a\i)--(b\i);
        \draw[gray, densely dotted] (c\i)--(b\i);
    \draw[gray, densely dotted] (d\i)--(e\i);
        \draw[gray, densely dotted] (f\i)--(e\i);
            \draw[gray, densely dotted] (a\i)--(d\i);
                \draw[gray, densely dotted] (c\i)--(f\i);
 
     }
  \draw[very thick, magenta] (0,0)--++(0:3) ;
    \draw[very thick, cyan] (3,0)--++(120:3) coordinate (hi);
        \draw[very thick, darkgreen] (hi)--++(-120:3) coordinate  (hi);

        \draw[very thick, darkgreen] (hi)--++(-60:3) coordinate  (hi);

    \draw[very thick, cyan] (hi)--++(60:3) coordinate (hi);

\path                 (hi)  --++(180:0.6*4) 
 coordinate (hiyer);

      \path(0,0)--++(-60:5*0.6)--++(120:2*0.6)--++(0:0.6) coordinate (hi);
     \path(0,0)--++(0:4*0.6)--++(120:3*0.6)           coordinate(hi2) ; 
     \path(0,0)--++(0:4*0.6)          coordinate(hi3) ;

     
          \path(0,0)  --++(0:2)
            coordinate (hi);

      \draw[very thick,->] (hi) --++(-120:1);

 \end{tikzpicture}\end{minipage} 
\qquad
\begin{minipage}{2.6cm}\begin{tikzpicture}[xscale=0.75,yscale=-0.75] 
    
    \path(0,0) coordinate (origin);
          \foreach \i in {0,1,2}
  {
    \path  (origin)++(60:1*\i cm)  coordinate (a\i);
    \path (origin)++(0:1*\i cm)  coordinate (b\i);
     \path (origin)++(-60:1*\i cm)  coordinate (c\i);
    }
  
      \path(3,0) coordinate (origin);
          \foreach \i in {0,1,2}
  {
    \path  (origin)++(120:1*\i cm)  coordinate (d\i);
    \path (origin)++(180:1*\i cm)  coordinate (e\i);
     \path (origin)++(-120:1*\i cm)  coordinate (f\i);
    }

  \foreach \i in {0,1,2}
  {
    \draw[gray, densely dotted] (a\i)--(b\i);
        \draw[gray, densely dotted] (c\i)--(b\i);
    \draw[gray, densely dotted] (d\i)--(e\i);
        \draw[gray, densely dotted] (f\i)--(e\i);
            \draw[gray, densely dotted] (a\i)--(d\i);
                \draw[gray, densely dotted] (c\i)--(f\i);
 
     }
  \draw[very thick, magenta] (0,0)--++(0:3) ;
    \draw[very thick, cyan] (3,0)--++(120:3) coordinate (hi);
        \draw[very thick, darkgreen] (hi)--++(-120:3) coordinate  (hi);

        \draw[very thick, darkgreen] (hi)--++(-60:3) coordinate  (hi);

    \draw[very thick, cyan] (hi)--++(60:3) coordinate (hi);

\path                 (hi)  --++(180:0.6*4) 
 coordinate (hiyer);

      \path(0,0)--++(-60:5*0.6)--++(120:2*0.6)--++(0:0.6) coordinate (hi);
     \path(0,0)--++(0:4*0.6)--++(120:3*0.6)           coordinate(hi2) ; 
     \path(0,0)--++(0:4*0.6)          coordinate(hi3) ;

     
          \path(0,0)  --++(0:1)
            coordinate (hi);

      \draw[very thick,<-] (hi) --++(60:1);

 \end{tikzpicture}\end{minipage}
 \qquad
 \quad
   \begin{minipage}{2.6cm}\begin{tikzpicture}[xscale=0.75,yscale=-0.75] 
    
    \path(0,0) coordinate (origin);
          \foreach \i in {0,1,2}
  {
    \path  (origin)++(60:1*\i cm)  coordinate (a\i);
    \path (origin)++(0:1*\i cm)  coordinate (b\i);
     \path (origin)++(-60:1*\i cm)  coordinate (c\i);
    }
  
      \path(3,0) coordinate (origin);
          \foreach \i in {0,1,2}
  {
    \path  (origin)++(120:1*\i cm)  coordinate (d\i);
    \path (origin)++(180:1*\i cm)  coordinate (e\i);
     \path (origin)++(-120:1*\i cm)  coordinate (f\i);
    }

  \foreach \i in {0,1,2}
  {
    \draw[gray, densely dotted] (a\i)--(b\i);
        \draw[gray, densely dotted] (c\i)--(b\i);
    \draw[gray, densely dotted] (d\i)--(e\i);
        \draw[gray, densely dotted] (f\i)--(e\i);
            \draw[gray, densely dotted] (a\i)--(d\i);
                \draw[gray, densely dotted] (c\i)--(f\i);
 
     }
  \draw[very thick, magenta] (0,0)--++(0:3) ;
    \draw[very thick, cyan] (3,0)--++(120:3) coordinate (hi);
        \draw[very thick, darkgreen] (hi)--++(-120:3) coordinate  (hi);

        \draw[very thick, darkgreen] (hi)--++(-60:3) coordinate  (hi);

    \draw[very thick, cyan] (hi)--++(60:3) coordinate (hi);

\path                 (hi)  --++(180:0.6*4) 
 coordinate (hiyer);

      \path(0,0)--++(-60:5*0.6)--++(120:2*0.6)--++(0:0.6) coordinate (hi);
     \path(0,0)--++(0:4*0.6)--++(120:3*0.6)           coordinate(hi2) ; 
     \path(0,0)--++(0:4*0.6)          coordinate(hi3) ;

     
          \path(0,0)  --++(0:2)
            coordinate (hi);

      \draw[very thick,->] (hi) --++(-120:1);

 \end{tikzpicture}\end{minipage} 
 $$
\caption{The  first and second paths have degrees $-1$ and $+1$ respectively.  The third and fourth paths have degree 0.  Here we take the convention that the origin is below the pink hyperplane.  
}
 \label{upanddown22}
\end{figure}

There exist a unique reduced path in each $\sim$-equivalence class (and, of course, each reduced path belongs to some 
$\sim$-equivalence class and so $\sim$-classes and reduced paths are in bijection).  
 We remark that $\SSTT_\mu$, the maximal path   in the reverse cylindric  ordering $\succ$,  is an example of a  reduced path.  
Given $\SSTS \in \Path_{\aatchpair  }(n)$, we let $\min [\SSTS]$ denote the minimal path in the $\sim$-equivalence class containing $\SSTS$.  
 Given a reduced path $\SSTP_\la \in \Path_{\aatchpair  } (\la)$, we have that 
$$ \mathscr{H}^\sigma_n e_{\SSTP_\la } =  {\bf P}(\la)\oplus \bigoplus_{\mu\succ\la} k^\mu_{\SSTP_\la}{\bf P}(\mu)$$
decomposes (in a unitriangular fashion) as a sum of projective indecomposable modules for some generalised $p$-Kostka coefficients  $k^\mu_{\SSTP_\la}\in \Bbbk$.   
   In general, we have 
 $$
  \mathscr{H}^\sigma_n e_{\SSTP_\la } \not \cong   \mathscr{H}^\sigma_n e_{\SSTQ_\la } 
 $$
 for reduced paths $ \SSTP_\la  , \SSTQ_\la \in \Path_{\aatchpair  } (\la)$ and so the choice of reduced path does  matter.   
(This is not surprising,  the auxiliary steps in  Soergel's algorithm 
for calculating Kazhdan--Lusztig polynomials produces a different pattern depending on the choice of reduced expression.)  
 However, they do agree modulo  higher terms under $\psucc$, as we shall soon see (and indeed, after the cancellations in Soergel's algorithm one obtains that the Kazhdan--Lusztig polynomials are independent of  choices of reduced expressions).

\begin{lem}\label{alemmmerd}
Given  $\la\in \mathscr{P}_ {\aatchpair}(n)$, let $\SSTP_\la $, $\SSTQ_\la$, 
 $\SSTS_\la$ be  any triple of   reduced paths in $\Path_{\aatchpair  }(\la)$.    
The element $e_{\SSTP_\la}$ generates $\mathcal{H}^{\succeq \la}/\mathcal{H}^{\succ  \la}$ and     
moreover $$\psi^{\SSTP_\la}_{\SSTQ_\la} 
\psi^{\SSTQ_\la}_{\SSTS_\la} =
k\psi^{\SSTP_\la}_{\SSTS_\la} 
+ \mathcal{H}^{\succ  \la} $$for some $k\in \Bbbk \setminus \{0\}$.   
\end{lem}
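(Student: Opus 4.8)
The plan is to deduce \cref{alemmmerd} from the tableaux-theoretic results of Section 1, chiefly \cref{cellularitybreedscontempt}, \cref{Claim 1}, \cref{Claim 2}, and \cref{Claim 3}, together with the fact (recorded just before the lemma) that the maximal path $\SSTT_\la$ in the reverse cylindric ordering is itself reduced. The key point is that reduced paths in $\Path_{\aatchpair}(\la)$ all lie in the same $\sim$-equivalence class as $\SSTT_\la$ only when $\la$ is such that $\SSTT_\la$ and the given reduced path coincide up to wall reflections; but what we actually need is weaker — we only need that, working modulo $\mathcal H^{\succ\la}$, the idempotent $e_{\SSTP_\la}$ attached to \emph{any} reduced path is conjugate (up to a nonzero scalar) to $e_{\SSTT_\la}=e_{\SSTS_\la}$. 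First I would record the residue sequence $\res(\SSTP_\la)$ and form the tableau $\sf J$ of \cref{makeatable} for $\underline j = \res(\SSTP_\la)$; since $\SSTP_\la$ is a path of shape $\la$ lying in the dominant chamber, one checks directly that $\Shape(\sf J)=\la\in\mathscr P_{\aatchpair}(n)$, so \cref{Claim 1} applies and gives
$$e_{\SSTP_\la}\in\pm\,\tilde\psi^{\sf J}_{\SSTT_\la}\tilde\psi_{\sf J}^{\SSTT_\la}+\mathscr H^{\ppsucc\la},$$
where the dot decorations in $\tilde\psi$ can be absorbed modulo $\mathscr H^{\ppsucc\la}\subseteq\mathcal H^{\succ\la}$ using \cref{Claim 2}. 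This already shows $e_{\SSTP_\la}$ generates $\mathcal H^{\succeq\la}/\mathcal H^{\succ\la}$, since $e_{\SSTT_\la}$ does (by \cref{cellularitybreedscontempt} and its proof, every cell layer contains $e_{\SSTT_\la}$) and the displayed relation expresses $e_{\SSTT_\la}$ as $\tilde\psi_{\sf J}^{\SSTT_\la}e_{\SSTP_\la}\tilde\psi^{\sf J}_{\SSTT_\la}$ up to a scalar and higher terms.

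Next I would establish the conjugation statement. Applying the anti-involution $\ast$ and \cref{Claim 1} in both directions, one obtains elements $a=\tilde\psi^{\SSTP_\la}_{\SSTT_\la}$ and $a'=\tilde\psi^{\SSTT_\la}_{\SSTP_\la}$ (built from a fixed reduced word for $w^{\SSTP_\la}_{\SSTT_\la}$, possibly dotted) with $e_{\SSTP_\la}a e_{\SSTT_\la}=e_{\SSTP_\la}a$, $e_{\SSTT_\la}a' e_{\SSTP_\la}=e_{\SSTT_\la}a'$, and $a\,a'\equiv k_0 e_{\SSTP_\la}$, $a'\,a\equiv k_0' e_{\SSTT_\la}$ modulo $\mathcal H^{\succ\la}$ for scalars $k_0,k_0'$. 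That these scalars are nonzero follows from the basis theorem \cref{cellularitybreedscontempt}: if $a\,a'\in\mathcal H^{\succ\la}$ then $e_{\SSTP_\la}\in\mathcal H^{\succ\la}$, contradicting that $e_{\SSTP_\la}$ generates the nonzero layer $\mathcal H^{\succeq\la}/\mathcal H^{\succ\la}$ (the cell module ${\bf S}_\Bbbk(\la)$ is nonzero and $e_{\SSTT_\la}$ acts as a generator). With this in hand, for three reduced paths $\SSTP_\la,\SSTQ_\la,\SSTS_\la$ I would write $\psi^{\SSTP_\la}_{\SSTQ_\la}$ and $\psi^{\SSTQ_\la}_{\SSTS_\la}$ each as a composite of the conjugating elements through $e_{\SSTT_\la}$: precisely, $\psi^{\SSTP_\la}_{\SSTQ_\la}\equiv c_1\,\tilde\psi^{\SSTP_\la}_{\SSTT_\la}\tilde\psi^{\SSTT_\la}_{\SSTQ_\la}$ and $\psi^{\SSTQ_\la}_{\SSTS_\la}\equiv c_2\,\tilde\psi^{\SSTQ_\la}_{\SSTT_\la}\tilde\psi^{\SSTT_\la}_{\SSTS_\la}$ modulo $\mathcal H^{\succ\la}$, by \cref{Claim 1,Claim 2} applied to the relevant residue sequences and \cref{BK1} to pass between reduced words, with $c_1,c_2\in\Bbbk\setminus\{0\}$. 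Multiplying, the inner product $\tilde\psi^{\SSTT_\la}_{\SSTQ_\la}\tilde\psi^{\SSTQ_\la}_{\SSTT_\la}\equiv k_0''e_{\SSTT_\la}$ collapses (again with $k_0''\neq0$ by the basis theorem), yielding
$$\psi^{\SSTP_\la}_{\SSTQ_\la}\psi^{\SSTQ_\la}_{\SSTS_\la}\equiv c_1c_2k_0''\,\tilde\psi^{\SSTP_\la}_{\SSTT_\la}\tilde\psi^{\SSTT_\la}_{\SSTS_\la}\equiv k\,\psi^{\SSTP_\la}_{\SSTS_\la}\pmod{\mathcal H^{\succ\la}}$$
for a nonzero $k\in\Bbbk$, as required.

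The main obstacle I anticipate is bookkeeping the dot decorations and the choice of reduced words carefully enough that the "error" terms genuinely land in $\mathcal H^{\succ\la}$ rather than in some larger ideal: \cref{Claim 1,Claim 3} naturally produce terms in $\mathscr H^{\ppsucc\nu}$ for various intermediate shapes $\nu$, and one must check that when the relevant shapes are all equal to $\la$ (which holds precisely because all paths in question are reduced of shape $\la$), these coincide with or are contained in $\mathcal H^{\succ\la}$. The reducedness hypothesis is exactly what guarantees that no strictly-higher intermediate shape is ever created while straightening; I would make this explicit by invoking the equivalence of reducedness with the path-theoretic condition of \cite[Section 2.3]{cell4us2} and the remark that $\psucc$ is a coarsening of $\succ$. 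The nonvanishing of all scalars is the other point requiring care, but it is uniformly handled by the linear independence half of \cref{cellularitybreedscontempt}: no nonzero element of the cellular basis, nor any nonzero multiple of $e_{\SSTT_\la}$, can lie in $\mathcal H^{\succ\la}$.
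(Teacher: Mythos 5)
Your route is genuinely different from the paper's and more computational. The paper's proof is short and conceptual: two paths have the same residue sequence if and only if they belong to the same $\sim$-class; since $\SSTP_\la$ is reduced, any other path in its $\sim$-class terminates strictly above $\la$; hence $e_{\SSTP_\la}{\bf S}_\Bbbk(\nu)\neq 0$ forces $\nu\psucceq\la$, and $e_{\SSTP_\la}{\bf S}_\Bbbk(\la)=e_{\SSTP_\la}{\bf D}_\Bbbk(\la)$ is one-dimensional, spanned by $\psi^{\SSTP_\la}_{\SSTT_\la}$. Both claims of the lemma then fall out at once, because $e_{\SSTP_\la}$ factors through the simple head of the Specht module.

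Your version via \cref{Claim 1} can be repaired, but the phrase ``one checks directly'' hides the crux. The assertion that $\Shape({\sf J})=\la$ when $\underline{j}=\res(\SSTP_\la)$ is \emph{not} a direct verification: the greedy tableau ${\sf J}$ could a priori terminate at a $\succ$-larger shape, and ruling that out requires exactly the $\sim$-class observation above together with the fact that $e_{\SSTP_\la}$ does not annihilate ${\bf S}_\Bbbk(\la)$ (if $\Shape({\sf J})\succ\la$, then \cref{Claim 1} would force $e_{\SSTP_\la}\in\mathcal{H}^{\succ\la}$, contradicting $e_{\SSTP_\la}\psi^{\SSTP_\la}_{\SSTT_\la}=\psi^{\SSTP_\la}_{\SSTT_\la}\neq 0$). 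Your handling of the scalars $k_0'$ and $c_1,c_2$ is likewise incomplete: the contradiction you give addresses $k_0$ only. It can be fixed, for instance by noting that $a(a'a)a'\equiv k_0^2 e_{\SSTP_\la}\not\equiv 0$ modulo $\mathcal{H}^{\succ\la}$ forces $a'a\not\equiv 0$. With these repairs the argument does go through, but at that point it contains the paper's proof as a subroutine, and the extra computational machinery buys nothing.
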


\begin{proof}
Let    ${{\sf R}_\la }  $ be any reduced path in $ \Path_{\aatchpair  }(\la)$.  
Two paths have the same residue sequence if and only if they belong to the same $\sim$-class.  
If $\SSTS \sim {{\sf R}_\la }$ then either $\SSTS={{\sf R}_\la }$ or $\SSTS$ terminates at a point $\mu \succ \la$.    Thus, we have that  
  \begin{equation}\label{eqnhg}
e_{{\sf R}_\la}  {\bf S}_\Bbbk(\nu) \neq 0 \text{ implies }\nu
\psucc  \la \text{ or }\nu=\la
\text{ and } e_{{\sf R}_\la}    {\bf S} (\la)=e_{{\sf R}_\la}   {\bf D}_\Bbbk (\la)= \psi^{{\sf R}_\la }_{\SSTT_\la}.  \end{equation} 
 This implies that $e_{{\sf R}_\la}\in \mathcal{H}^{\succeq \la}/\mathcal{H}^{\succ  \la}$ and therefore generates $ \mathcal{H}^{\succeq \la}/\mathcal{H}^{\succ  \la}$ and belongs to the simple head of the Specht module; the result follows.  
    \end{proof}

\begin{defn} Given two paths 
	$$\SSTP=(\eps_{i_1},\eps_{i_2},\dots, \eps_{i_p}) \in \Path(\mu)
	\quad\text{and}\quad
	\SSTQ=(\eps_{j_1},\eps_{j_2},\dots, \eps_{j_q}) 
	\in \Path(\nu)$$  we define the {\sf naive concatenated path} 
	$$\SSTP\boxtimes \SSTQ =
	(\eps_{i_1},\eps_{i_2},\dots, \eps_{i_p}, \eps_{j_1},\eps_{j_2},\dots, \eps_{j_q}) 
	\in \Path (\mu+\nu).$$ 
\end{defn}

\subsection{Branching coefficients} 
We now discuss how one can  think of a permutation as a  morphism between pairs of paths in the alcove geometries of \cref{newsec3}.  
 Let $\la \in \mathcal{C}_{\aatchpair   }(n)$.
 Given a pair of paths $\SSTS,\SSTT\in \Path(\la)$ we write the steps in 
 $\SSTS$ and $\SSTT$ in sequence along the    top     and    bottom        edges of a frame, respectively.
 We can now reinterpret the element 
    $w^{\SSTS}_{\SSTT}\in \mathfrak{S}_n$ (of \cref{sec3}) as the 
 unique step-preserving permutation   with the minimal number of crossings.

In the following (running) example we label our paths by $\SSTP_\emp(=\SSTT_{(3^3)})$ and $\SSTP_\al^\flat$. 
For this section, we do not need to know what inspires this notation; 
however,    all will become clear in  \cref{geners}.

\begin{eg}\label{concanetaion}
We consider $\Bbbk\mathfrak{S}_9$ for   $\Bbbk$    a field of characteristic $ 5$; 
 the characteristic  is unimportant now, but inspires the notation and   will be needed when we 
 refer back to this example later.
We set $\al=\eps_3-\eps_1\in \Pi $.   
Here we have    $$\SSTP_\emp= (\eps_1,\eps_2,\eps_3,\eps_1,\eps_2,\eps_3,\eps_1,\eps_2,\eps_3)
\quad  \text{  
and }
\quad \reflectpath  =(\eps_1,\eps_2,\eps_1,\eps_2,\eps_1,\eps_2,\eps_3,\eps_3,\eps_3) $$
are two examples of paths of shape $(3^3$).  
The unique step-preserving permutation of minimal length  is given by 
\begin{equation} 
w_{\SSTP_\emp} 
^{\reflectpath }
= \begin{minipage}{6cm}
 \begin{tikzpicture}
  [xscale=0.6,yscale=  
 0.6]

  \draw  (15.55-2 ,4.5)  node  {$ \SSTP_{{\al}}^\flat$};

     \draw  (15.55-2 ,3) node {$  \SSTP_\emp  $};
     \draw(3,3) rectangle (12,4.5);
        \foreach \i in {3.5,4.5,...,11.5}
  {
   \fill(\i,3) circle(1.5pt) coordinate (a\i);
          \fill(\i,4.5)circle(1.5pt)  coordinate (d\i);
    }

\draw(5.5,3) --(9.5,4.5);
\draw (4+4.5,3) --(10.5,4.5);

\draw(3.5,3) --(3.5,4.5);
\draw(4.5,3) --(4.5,4.5);
\draw(6.5,3)  --(5.5,4.5);

\draw(3+4.5,3) --(6.5,4.5);

\draw(11.5,3) --(11.5,4.5);

 \draw (4+5.5,3)  --(4+3.5,4.5);
\draw(4+6.5,3) --(4+4.5,4.5);

 \scalefont{0.9}
\draw (3.5,4.5) node[above] {$\eps_1$};
\draw (4.5,4.5) node[above] {$\eps_2$};
\draw (5.5,4.5) node[above] {$\eps_1$};
\draw (6.5,4.5) node[above] {$\eps_2$};
\draw (7.5,4.5) node[above] {$\eps_1$};
\draw (8.5,4.5) node[above] {$\eps_2$};
\draw (9.5,4.5) node[above] {$\eps_3$};
\draw (10.5,4.5) node[above] {$\eps_3$};
\draw (11.5,4.5) node[above] {$\eps_3$};

\draw (3.5,2.5) node  {$\eps_1$};
\draw (4.5,2.5) node  {$\eps_2$};
\draw (5.5,2.5) node  {$\eps_3$};
\draw (6.5,2.5) node  {$\eps_1$};
\draw (7.5,2.5) node  {$\eps_2$};
\draw (8.5,2.5) node  {$\eps_3$};
\draw (9.5,2.5) node  {$\eps_1$};
\draw (10.5,2.5) node  {$\eps_2$};
\draw (11.5,2.5) node  {$\eps_3$};

  \end{tikzpicture} 
    \end{minipage} \end{equation}
  Notice that if two strands have the same step-label, then they do not cross.  This is, of course, exactly what it means for a step-preserving permutation to be  of minimal length. \end{eg}

\begin{defn} \label{choices!!}\label{firstupsil} \color{black}    Fix $(\SSTS,\SSTT)$ an ordered pair of paths which both terminate at some point $\la\in \mathscr{P}_{\underline{\tau}}(n)$.  
We now inductively construct a reduced expression for $w^\SSTS_\SSTT$.  
We define the branching coefficients 
$$
d_p({\SSTS},\SSTT)
=w^p_{q}  \qquad \text{ where } q=|\{1\leq i \leq p \mid w^\SSTS_
\SSTT(i) \leq  w^\SSTS_
\SSTT(p)\}| 
  $$ 
 and 
  $$
\Upsilon_p({\SSTS},\SSTT)= 
(-1)^{\sharp\{  p< k \leq q \mid  i_k=i_p \}}
 \psi_{d_p({\SSTS},\SSTT)} 
$$for $1\leq p \leq n$.  
These allow us to fix a distinguished reduced expression, $\w^\SSTS_\SSTT$,   for $w^\SSTS_\SSTT $ as follows, 
$$
\w^\SSTS_\SSTT = d_1(\SSTS,\SSTT)\dots d_n(\SSTS,\SSTT).  
$$ 
   and we set 
$$
   \Upsilon ^\SSTS_\SSTT=e_\SSTS  \Upsilon_{ \w ^\SSTS_\SSTT}e_\SSTT
=e_\SSTS  \Upsilon_1({\SSTS},\SSTT) \Upsilon_2({\SSTS},\SSTT)
\cdots 
 \Upsilon_n({\SSTS},\SSTT)e_\SSTT.
 $$ 
 \end{defn}

\begin{eg}\label{concanetaion2}
We continue with the assumptions of  \cref{concanetaion}.  
 We have that 
 $$
1_{\mathfrak{S}_9}= d_p(\SSTP_\emp,\SSTP^\flat_\al) 
   $$
 for each $p=1,2,3,4,5,6,9$  because $w^{\SSTP_\emp}_{ \SSTP^\flat_\al}(p) \geq i$ for all $1\leq i \leq p$.   
 We have that 
  $$
 d_7(\SSTP_\emp,\SSTP^\flat_\al)=w^7_3
\quad
  d_8(\SSTP_\emp,\SSTP^\flat_\al)=w^8_6
  $$
 and so our reduced word is depicted in \cref{reducedeg}.
 \end{eg}

 We can think of the branching coefficients  as ``one step morphisms"   which allow us to mutate the path
  $\SSTS$ into $\SSTT$ via a series of $n$ steps (as each branching coefficient moves the position of one step in the path) and so  this mutation proceeds via $n+1$ paths  
 $$
 \SSTS=\SSTS_0,\SSTS_1,\SSTS_2,\dots, \SSTS_n=\SSTT
 $$
 see   \cref{reducedeg} for an example.  
  We now lift these branching coefficients to the KLR algebra.

 \begin{figure}[ht!]
   \begin{minipage}{6cm}
 \begin{tikzpicture}
  [xscale=0.6,yscale=  
 0.6]

      \foreach \i in {0,1.5,3,...,12}
 {\draw(0,\i)--++(0:9);}
      \draw(0,0) rectangle (9,9*1.5);
        \foreach \i in {0.5,1.5,...,8.5}
  {
   \fill(\i,0) circle(1.5pt) coordinate (a\i);
          \fill(\i,1.5)circle(1.5pt)  coordinate (d\i);
   \fill(\i,3) circle(1.5pt) coordinate (a\i);
          \fill(\i,4.5)circle(1.5pt)  coordinate (d\i);
   \fill(\i,6) circle(1.5pt) coordinate (a\i);
          \fill(\i,7.5)circle(1.5pt)  coordinate (d\i);
   \fill(\i,9) circle(1.5pt) coordinate (a\i);
          \fill(\i,10.5)circle(1.5pt)  coordinate (d\i);
   \fill(\i,12) circle(1.5pt) coordinate (a\i);
          \fill(\i,13.5)circle(1.5pt)  coordinate (d\i);
      } 
     \foreach \i in {0.5,1.5,...,8.5}
{\draw(\i,9*1.5)--++(-90:6*1.5); }

    \foreach \i in {0.5,1.5,...,8.5}
{\draw(\i,1.5)--++(-90:1*1.5); }

  \foreach \i in {4.5}
  {\draw(0.5,\i)--++(-90:1*1.5); 
  \draw(1.5,\i)--++(-90:1*1.5); 
  \draw(2.5,\i)-- (3.5,\i-1.5); 
    \draw(2.5+1,\i)-- (4.5,\i-1.5); 
      \draw(2.5+1+1,\i)-- (5.5,\i-1.5); 
        \draw(2.5+1+1+1,\i)-- (6.5,\i-1.5); 
                \draw(6.5,\i)-- (2.5,\i-1.5); 
\draw(7.5,\i)--++(-90:1*1.5);                 
\draw(8.5,\i)--++(-90:1*1.5); 
  }

    \foreach \i in {3}
  {\draw(0.5,\i)--++(-90:1*1.5); 
  \draw(1.5,\i)--++(-90:1*1.5); 
  \draw(2.5,\i)--++(-90:1*1.5);  
   \draw(3.5,\i)--++(-90:1*1.5); 
   \draw(4.5,\i)--++(-90:1*1.5); 
         \draw(2.5+1+1+1,\i)-- (6.5,\i-1.5); 
               \draw(2.5+1+1+1+1,\i)-- (6.5+1,\i-1.5); 
                \draw(7.5,\i)-- (5.5,\i-1.5); 
\draw(8.5,\i)--++(-90:1*1.5); 
  }

    \scalefont{0.9}
\draw (-3+3.5,9*1.5) node[above] {$\eps_1$};
\draw (-3+4.5,9*1.5) node[above] {$\eps_2$};
\draw (-3+5.5,9*1.5) node[above] {$\eps_1$};
\draw (-3+6.5,9*1.5) node[above] {$\eps_2$};
\draw (-3+7.5,9*1.5) node[above] {$\eps_1$};
\draw (-3+8.5,9*1.5) node[above] {$\eps_2$};
\draw (-3+9.5,9*1.5) node[above] {$\eps_3$};
\draw (-3+10.5,9*1.5) node[above] {$\eps_3$};
\draw (-3+11.5,9*1.5) node[above] {$\eps_3$};

\draw (-3+3.5,0) node[below] {$\eps_1$};
\draw (-3+4.5,0) node[below] {$\eps_2$};
\draw (-3+5.5,0) node[below] {$\eps_3$};
\draw (-3+6.5,0) node[below] {$\eps_1$};
\draw (-3+7.5,0) node[below] {$\eps_2$};
\draw (-3+8.5,0) node[below] {$\eps_3$};
\draw (-3+9.5,0) node[below] {$\eps_1$};
\draw (-3+10.5,0) node[below] {$\eps_2$};
\draw (-3+11.5,0) node[below] {$\eps_3$};

\draw (10.5,0) node  {$ \SSTS_9 = \SSTP_\emp $};
\draw (10.5,1.5) node  {$\SSTS_8\phantom{\SSTP_\emp=}$};
\draw (10.5,3) node  {$\SSTS_7\phantom{\SSTP_\emp=}$};
\draw (10.5,4.5) node  {$\SSTS_6\phantom{\SSTP_\emp=}$};
\draw (10.5,6) node  {$\SSTS_5\phantom{\SSTP_\emp=}$};
\draw (10.5,7.5) node  {$\SSTS_4\phantom{\SSTP_\emp=}$};
\draw (10.5,9) node  {$\SSTS_3\phantom{\SSTP_\emp=}$};
\draw (10.5,10.5) node  {$\SSTS_2 \phantom{\SSTP_\emp=}$};
\draw (10.5,12) node  {$\SSTS_1\phantom{\SSTP_\emp=}$};
\draw (10.5,13.5) node  {$\SSTS_0={\SSTP_\al^\flat}$};

  \end{tikzpicture} 
    \end{minipage}  
  \caption{The reduced word, $\w_{\SSTP_\emp}^
{\reflectpath } $ (see also \cref{concanetaion,concanetaion2}).  }
 \label{reducedeg}
 \end{figure}


 
 \begin{rmk}
 The  ``sign twist" in \cref{firstupsil} is of no consequence in this paper as we are mostly concerned with constructing generators and bases of quiver Hecke algebras and their truncations.  However, in order to match-up our {\em relations} with those of Elias--Williamson, this sign twist will be necessary and so we introduce it here for the purposes of consistency with \cite{cell4us2}.  
\end{rmk}

Now, let's momentarily restrict our attention to pairs of paths of the form 
$(\SSTS,\SSTT_\la)$.  In this case, the branching coefficients actually come from the ``branching  rule" for restriction along the tower   $\dots \subset\mathscr{H}_{n-1}^\sigma\subset \mathscr{H}_{n}^\sigma\subset\dots $.  To see this, we note that 
$$
\w^\SSTS_{\SSTT_\la} =   w_1({\SSTS},{\SSTT_\la})w_2({\SSTS},{\SSTT_\la})
\dots 
w_n({\SSTS},{\SSTT_\la}) 
$$
where $w_n({\SSTS},{\SSTT_\la})= w^n_{\SSTT_\la (\square)}$ for some removable box $\square \in \Rem(\la)$ and where 
\begin{align}\label{popopo}
 w_1({\SSTS},{\SSTT_\la})w_2({\SSTS},{\SSTT_\la})
\dots 
w_{n-1}({\SSTS},{\SSTT_\la}) 
=
 \w ({\SSTS_{\leq n-1}},{\SSTT_{\la-\square}}) \in \mathfrak{S}_{n-1}\leq 
  \mathfrak{S}_{n}.  
\end{align} 
By \cref{hfsaklhsalhskafhjksdlhjsadahlfdshjksadflhafskhsfajk}, we have that  
 \begin{align} \label{popopo2}
{\bf S}_\Bbbk (\la- \square )\langle 	 \deg (A_r)	\rangle  
\cong  \Bbbk\{ \Upsilon^\SSTS_{\SSTT_\la} \mid \Shape(\SSTS_{\leq n-1}) 
= \la- \square 	\}  .
\end{align} 
Thus the branching coefficients above provide a factorisation of 
the cellular  basis of  
\cref{cellularitybreedscontempt} which is compatible with the restriction rule.  
 

\begin{eg}\label{concanetaion3}

Continuing with   \cref{concanetaion,concanetaion2},
the  lift of the path-morphism to the  KLR  algebra is as follows, 
  $$
     \Upsilon _{\SSTP_\emp}^
{\reflectpath }
=
\begin{minipage}{7.6cm} \begin{tikzpicture}
  [xscale=0.7,yscale=  
  0.7]

  \draw  (0.2+14.8-2  ,4.5)  node  {$ \SSTP_{{\al}} ^\flat $};

     \draw  (0.2+14.8-2 ,3) node {$  \SSTP_\emp  $};
     \draw(3,3) rectangle (12,4.5);
        \foreach \i in {3.5,4.5,...,11.5}
  {
   \fill(\i,3) circle(1.5pt) coordinate (a\i);
          \fill(\i,4.5)circle(1.5pt)  coordinate (d\i);
    }

\draw(5.5,3)  to [out=90,in=-150] (9.5,4.5);
\draw (8.5,3)  to [out=60,in=-120] (10.5,4.5);

\draw(3.5,3)  to [out=90,in=-90](3.5,4.5);
\draw(4.5,3)  to [out=90,in=-90](4.5,4.5);
\draw(6.5,3)  to [out=90,in=-90](5.5,4.5);

\draw(3+4.5,3) to [out=90,in=-90] (6.5,4.5);

\draw(11.5,3) to [out=90,in=-90](11.5,4.5);

 \draw (4+5.5,3)  to [out=90,in=-60](4+3.5,4.5);
\draw(4+6.5,3) to [out=90,in=-60](4+4.5,4.5);

 \scalefont{0.9} 

\draw (3.5,4.5) node[above] {$ 0$};
\draw (4.5,4.5) node[above] {$ 1$};
\draw (5.5,4.5) node[above] {$ 4$};
\draw (6.5,4.5) node[above] {$ 0$};
\draw (7.5,4.5) node[above] {$ 3$};
\draw (8.5,4.5) node[above] {$ 4$};
\draw (9.5,4.5) node[above] {$ 2$};
\draw (10.5,4.5) node[above] {$ 1$};
\draw (11.5,4.5) node[above] {$ 0$};

\draw (3.5,2.5) node  {$ 0$};
\draw (4.5,2.5) node  {$ 1$};
\draw (5.5,2.5) node  {$ 2$};
\draw (6.5,2.5) node  {$ 4$};
\draw (7.5,2.5) node  {$ 0$};
\draw (8.5,2.5) node  {$ 1$};
\draw (9.5,2.5) node  {$ 3$};
\draw (10.5,2.5) node  {$ 4$};
\draw (11.5,2.5) node  {$ 0$};

  \end{tikzpicture} 
\end{minipage}.  $$
At each step in the restriction along the tower, there is precisely one removable box of any given residue and so the restriction is, in fact, a direct sum of Specht modules.   
\end{eg}


%
 We   wish to   modify the branching coefficients above so that we can consider more general (families of) reduced paths $\SSTP_\la$ in place of the path $\SSTT_\la$.   
 Given $\SSTS \in   \Path_{\aatchpair  } (\lambda ), $ 
  we can choose a {\sf reduced path vector} as follows  
  $$
 \underline{  \SSTP}_\SSTS
=
({  \SSTP}_{\SSTS,0},{  \SSTP}_{\SSTS,1},\dots ,{  \SSTP}_{\SSTS,n})
  $$
  such that $\Shape({  \SSTP}_{\SSTS,k})=\Shape({  \SSTS} _{\leq k})$ for each $0\leq k \leq n$. 
      In other words, 
  we choose a reduced path  ${  \SSTP}_{\SSTS,p}$ for each and every
  point in the path $\SSTS$.  
For $0\leq p \leq n$ and 
$\Shape(\SSTS_{< p}) + \eps_{i_p} =
    \Shape(\SSTS_{\leq p })$,  we define the {\sf modified branching coefficient}, 
$$
  d_p(\SSTS, \underline{\SSTP}_\SSTS)
  =
  \Upsilon^{\SSTP_{\SSTS,p-1}\boxtimes \SSTP_{i_p}}
  _{\SSTP_{\SSTS,p}}  
    $$ 
    and we hence define  $$
     \Upsilon^\SSTS_ {\underline{\SSTP}_{\SSTS }} = \prod_{1\leq p \leq n}d_p(\SSTS, \underline{\SSTP}_\SSTS).
    $$
Here we have freely identified elements of algebras of different sizes using the usual embedding $\mathscr{H}_{n-1}^\sigma \hookrightarrow \mathscr{H}_{n}^\sigma$ given by $d\mapsto d \boxtimes (\sum_{i\in \ZZ/e\ZZ}  e(i)		)$.  
 We set $\Upsilon_\SSTS^ {\underline{\SSTP}_{\SSTS }}:=(\Upsilon^\SSTS_ {\underline{\SSTP}_{\SSTS }})^\ast$.  

\begin{rmk}
For symmetric groups there is a canonical choice of reduced path vector coming from the coset-like combinatorics which has historically been used for studying   these groups.  
For the light leaves construction of Bott--Samelson endomorphism algebras,  Libedinsky and Elias--Williamson   require very different families of reduced path vectors 
 whose origin can be seen as coming from a  basis which can be written in terms of their  2-generators \cite{MR2441994,MR3555156}.  
\end{rmk}

\begin{eg}
Continuing with \cref{concanetaion3} we have already noted that $\SSTP_\emp= \SSTT_{(3^3)}$.  
We choose to take  the sequence $ \SSTT_\mu$ for $\mu=\Shape((\SSTP^\flat_\al)_{\leq k})$ for $k\geq 0$ as our  reduced path vector $ {\underline{\SSTP}_{\SSTS }} $.   
Having made this choice, we have  
 that $     \Upsilon ^{\SSTP_\al ^\flat}_
 {\underline{\SSTP}_{\SSTS }}  =      \Upsilon _{\SSTP_\emp}^
{\reflectpath }$ (this holds more generally, see the \cref{cellularitybreedscontempt22222233232328787} and the discussion immediately prior).   
We record this in tableaux format to help the reader transition between the old and new ways of thinking.  
$$
\Bigg(\Yvcentermath1 
\varnothing\; , \;
\gyoung(1) \; , \; \gyoung(1;2) \; , \; \gyoung(1;2,3) \; , \; \gyoung(1;2,3;4) \; , \; \gyoung(1;2,3;4,5) \; , \; \gyoung(1;2,3;4,5;6) \; , \; \gyoung(1;2;3,4;5,6;7)
\; , \; \gyoung(1;2;3,4;5;6,7;8)  
\; , \; \gyoung(1;2;3,4;5;6,7;8;9) \;\;
\Bigg)  
$$
\end{eg}

The light leaves basis will be given in terms of products 
 $    \Upsilon^\SSTS_ {\underline{  \SSTP}_\SSTS}
            \Upsilon_\SSTT^ {\underline{  \SSTP}_\SSTT}
         $ 
         for 
        $  \SSTS,\SSTT \in \Path_{\aatchpair  } (\lambda )$ and ``compatible choices" of 
           ${\underline{  \SSTP}_\SSTS}$ and      ${\underline{  \SSTP}_\SSTT}$.    
  Here the only condition for compatibility is that $\SSTP_{\SSTS,n}=\SSTQ_\la=\SSTP_{\SSTT,n}$ for some fixed choice of reduced path $\SSTQ_\la \in \Path_{\aatchpair  } (\lambda )$, in other words the final choices of reduced path for each of $\SSTS$ and $\SSTT$ coincide. 
  We remark that if           $\SSTP_{\SSTS,n}\neq \SSTP_{\SSTT,n}$ then the product is clearly equal to zero (by idempotent considerations) and so this is the only sensible choice to make for such a  product.  
  In light of the above, we
  let $\SSTQ_\la$ be a reduced path and we 
   say that  a reduced path vector  ${\underline{  \SSTP}_\SSTS}$ {\sf terminates} at  $\SSTQ_\la$ if   ${\underline{  \SSTP}_{\SSTS,n}}=\SSTQ_\la$.

   \begin{thm}[The light leaves basis] \label{cellularitybreedscontempt2222223323232} 
   Let $\Bbbk$ be an integral domain and    $\underline{\tau}= (\tau_0,\dots,\tau_{\ell-1})  \in \NN^\ell$
      be such that  $\tau_m\leq   \sigma_{m+1}-\sigma_{m} $ for $0\leq m< \ell-1$ and  $\tau_{\ell-1}<  e+\sigma_0-\sigma_{\ell-1}  $.  
For each $\la\in \mathscr{P}_{\underline{\tau}}(n)$ we fix a reduced path $\SSTQ_\la\in \Path_{\aatchpair  }(\la)$ and 
for each $\SSTS \in \Path_{\aatchpair  }(\la)$, 
 	we fix an associated
 reduced path vector 	 ${\underline{  \SSTP}_\SSTS}$ terminating with $\SSTQ_\la$.  
  The $\Bbbk$-algebra  $   \mathscr{H}^\sigma_n$ is a     graded cellular algebra with basis  
  $$
     \{    \Upsilon^\SSTS_ {\underline{  \SSTP}_\SSTS}
           \Upsilon_\SSTT^ {\underline{  \SSTP}_\SSTT}
          \mid 
           \SSTS,\SSTT \in \Path_{\aatchpair  } (\lambda ), 
 \lambda  \in \mathscr{P}_ {\aatchpair}(n) \} 
          $$
           anti-involution $\ast$ and the degree function ${\sf deg}:\Path_{\aatchpair  } \to\Bbbk$.  
  \end{thm}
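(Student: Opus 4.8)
The plan is to bootstrap off the cellular basis already established in \cref{cellularitybreedscontempt}, namely the basis $\{\psi^\SSTS_{\SSTT_\la}\psi_\SSTT^{\SSTT_\la} \mid \SSTS,\SSTT \in \Std(\la), \la\in \mathscr{P}_{\aatchpair}(n)\}$, and to show that the new family $\{\Upsilon^\SSTS_{\underline{\SSTP}_\SSTS}\Upsilon_\SSTT^{\underline{\SSTP}_\SSTT}\}$ is obtained from it by a transition matrix which is unitriangular with respect to the $\psucc$-ordering (refined to a total order), hence invertible over the integral domain $\Bbbk$. The combinatorial bookkeeping is already in place: by the bijection between $\Path_{\aatchpair}(\la)$ and $\Std(\la)$ the two indexing sets have the same cardinality, and the degree function is the same under this identification (by the construction in \cref{Soergeldegreee} matching $\deg_\succ$), so it suffices to prove that the base change is triangular and that the diagonal entries are units.

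First I would reduce to a statement about the ``one-sided'' elements $\Upsilon^\SSTS_{\underline{\SSTP}_\SSTS}$, working modulo $\mathscr{H}^{\ppsucc\la}_n$ for $\la = \Shape(\SSTS)$. The key claim is: for any reduced path vector $\underline{\SSTP}_\SSTS$ terminating at a reduced path $\SSTQ_\la$, one has
\begin{equation}\label{transitionclaim}
\Upsilon^\SSTS_{\underline{\SSTP}_\SSTS} \equiv k_\SSTS\,\psi^\SSTS_{\SSTQ_\la} + (\text{terms }\psi^\SSTU_{\SSTQ_\la}\text{ with }\SSTU\in\Path_{\aatchpair}(\la),\ \SSTU\ \psucc\text{-below }\SSTS) \pmod{\mathscr{H}^{\ppsucc\la}_n}
\end{equation}
for some $k_\SSTS \in \Bbbk\setminus\{0\}$. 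Granting this, applying $\ast$ gives the corresponding statement for $\Upsilon_\SSTT^{\underline{\SSTP}_\SSTT}$, and multiplying together, using \cref{alemmmerd} to rewrite $\psi^\SSTS_{\SSTQ_\la}\psi^{\SSTQ_\la}_\SSTT = k\,\psi^\SSTS_{\SSTT_\la}\psi^{\SSTT_\la}_\SSTT + \mathscr{H}^{\succ\la}_n$ (passing through the reference reduced path $\SSTT_\la$, which is itself reduced), shows that the matrix expressing $\{\Upsilon^\SSTS_{\underline{\SSTP}_\SSTS}\Upsilon_\SSTT^{\underline{\SSTP}_\SSTT}\}$ in terms of $\{\psi^\SSTS_{\SSTT_\la}\psi_\SSTT^{\SSTT_\la}\}$ is block-upper-triangular along $\psucc$ with diagonal blocks themselves unitriangular; its determinant is a product of the units $k_\SSTS k_\SSTT$, hence invertible. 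This forces the new family to be a basis, and cellularity (with the same anti-involution $\ast$ and degree function) is inherited because the cell-chain $\mathscr{H}^{\ppsucceq\la}_n$ is preserved and the new basis respects it by \eqref{transitionclaim}.

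To prove \eqref{transitionclaim} I would induct on $n$ and, within fixed $n$, on the $\psucc$-order. Write $\la = \mu + \eps_{i_n}$ where $\mu = \Shape(\SSTS_{\leq n-1})$, and let $\underline{\SSTP}_\SSTS = (\SSTP_{\SSTS,0},\dots,\SSTP_{\SSTS,n})$ with $\SSTP_{\SSTS,n-1} =: \SSTR_\mu$ a reduced path. By definition $\Upsilon^\SSTS_{\underline{\SSTP}_\SSTS} = \Upsilon^{\SSTS_{\leq n-1}}_{\underline{\SSTP}_{\SSTS,\leq n-1}}\cdot d_n(\SSTS,\underline{\SSTP}_\SSTS)$ where the modified branching coefficient $d_n = \Upsilon^{\SSTR_\mu\boxtimes\eps_{i_n}}_{\SSTQ_\la}$ is (a signed reduced word for) the step-preserving permutation of minimal length taking $\SSTR_\mu\boxtimes\eps_{i_n}$ to $\SSTQ_\la$. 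By induction $\Upsilon^{\SSTS_{\leq n-1}}_{\underline{\SSTP}_{\SSTS,\leq n-1}} \equiv k'\psi^{\SSTS_{\leq n-1}}_{\SSTR_\mu} + (\psucc\text{-lower terms}) \pmod{\mathscr{H}^{\ppsucc\mu}_{n-1}}$, so it remains to analyse $\psi^{\SSTS_{\leq n-1}}_{\SSTR_\mu}\cdot d_n$ and the lower-order corrections. Since $\SSTR_\mu$ and $\SSTQ_\la$ are both reduced paths, \cref{alemmmerd} (applied within $\mathscr{H}^{\succeq\la}_n/\mathscr{H}^{\succ\la}_n$, together with the fact that a non-reduced factor or a ``wrong'' reduced word contributes a term in $\mathscr{H}^{\ppsucc\la}_n$ by \cref{BK2}, \cref{BK1} and \cref{Claim 2}) lets me straighten $\psi^{\SSTS_{\leq n-1}}_{\SSTR_\mu}\,d_n$ into $k\,\psi^\SSTS_{\SSTQ_\la}$ up to $\psucc$-lower and higher-ideal terms; the $\psucc$-lower corrections coming from the inductive hypothesis are handled by the inductive hypothesis at lower order in the $\psucc$-filtration. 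The one point that needs care — and which I expect to be the main obstacle — is controlling how the corrections interact with the change of reduced path vector in the ``middle'' of $\SSTS$: a priori replacing $\SSTP_{\SSTS,k}$ by a different reduced path of the same shape changes the element, and one must check (as indicated in the remark preceding \cref{cellularitybreedscontempt22222233232328787}, and using that different reduced paths agree modulo $\psucc$-higher terms) that all such changes produce only terms that are either $\psucc$-lower at shape $\la$ or lie in $\mathscr{H}^{\ppsucc\la}_n$, so that the triangularity in \eqref{transitionclaim} is not disturbed. Once this propagation-of-corrections estimate is pinned down, the rest is the routine straightening with \cref{move a dot down} and the claims of \cref{mainproof} already in hand.
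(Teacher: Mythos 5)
Your strategy is close to the paper's in its ingredients (\cref{cellularitybreedscontempt}, \cref{alemmmerd}, the restriction proposition \cref{hfsaklhsalhskafhjksdlhjsadahlfdshjksadflhafskhsfajk}, and \cref{move a dot down} are all the right tools), and the observation that one should pass through the reference path $\SSTT_\la$ is exactly right. But your central claim \eqref{transitionclaim} is left unproven, and you honestly flag that the ``main obstacle'' — controlling the corrections coming from changing the reduced path vector in the middle — is not resolved. That obstacle is real. Moreover, your claim invokes an ordering ``$\SSTU$ $\psucc$-below $\SSTS$'' on paths of the same shape, but $\psucc$ is an ordering on box configurations, not on paths of a fixed shape $\la$; you would have to introduce and work with an auxiliary order (e.g.\ via $\Shape(\SSTS_{\le k})$ read backwards as in the restriction filtration of \cref{hfsaklhsalhskafhjksdlhjsadahlfdshjksadflhafskhsfajk}), and then actually establish the triangularity with respect to it.

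The paper avoids proving any such global triangularity. Instead it builds the new basis from the old one by a chain of moves, each of which changes only a single reduced path in a single position of the vector: at each step one inserts $\Upsilon^{\SSTT_\nu}_{\SSTR_\nu}\Upsilon_{\SSTT_\nu}^{\SSTR_\nu}=k\,e_{\SSTT_\nu}+(\text{higher under }\succ)$ with $k$ a unit (this is \cref{alemmmerd}, applied at the appropriate rank via \cref{hfsaklhsalhskafhjksdlhjsadahlfdshjksadflhafskhsfajk}), rebrackets, and simplifies with \cref{move a dot down}. Because each insertion is, modulo $\mathscr{H}^{\succ\la}$, a pure unit scalar — the ``transition matrix'' at each stage is literally $k\cdot\mathrm{id}$ — there is never a need for triangularity within the block labelled by $\la$, nor for an ordering on paths of the same shape. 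Your plan could probably be completed by proving \eqref{transitionclaim} by just such a step-at-a-time induction on the position of the modified entry in $\underline{\SSTP}_\SSTS$, but as written the crucial estimate is missing, and the unit-idempotent-insertion-and-rebracket mechanism — which is what makes the argument close — is not present in your sketch.
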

\begin{proof}By \cref{cellularitybreedscontempt}, we have that 
$$
\{ \Upsilon^\SSTS_{\SSTT_\la}     \Upsilon_\SSTT^{\SSTT_\la}	  \mid \SSTS,\SSTT \in \Path_{\aatchpair  }(\la)\}
$$
provides a  $\Bbbk$-basis of $\mathscr{H}^{\succeq \la}/\mathscr{H}^{\succ  \la}$.  By \cref{alemmmerd}, we have that $ \Upsilon^{\SSTT_\la} _{\SSTQ_\la} e_{\SSTQ_\la } \Upsilon_{\SSTT_\la} ^{\SSTQ_\la} = k e_{\SSTT_\la }$ 
for  some $k\in \Bbbk \setminus \{0\}$ modulo   higher terms under $\succ$ and so 
$$
\{ \Upsilon^\SSTS_{\SSTT_\la}( \Upsilon^{\SSTT_\la} _{\SSTQ_\la} 
  \Upsilon_{\SSTT_\la} ^{\SSTQ_\la}  	 ) \Upsilon_\SSTT^{\SSTT_\la}	  \mid \SSTS,\SSTT \in \Path_{\aatchpair  }(\la)\}
$$provides a $\Bbbk$-basis of $\mathscr{H}^{\succeq \la}/\mathscr{H}^{\succ  \la}$.   By \cref{popopo,popopo2}, we have that 
\begin{equation}\label{bas1}
  \{
 \Upsilon^{\sf s}_{\SSTT_{\la-{\eps_i}} }  \Upsilon^{\SSTT_{\la-\eps_i}\boxtimes\SSTP_i }_{\SSTT_\la } 
  ( \Upsilon^{\SSTT_\la} _{\SSTQ_\la} 
  \Upsilon_{\SSTT_\la} ^{\SSTQ_\la}  	 ) \Upsilon_\SSTT^{\SSTT_\la}	 
  \mid 
 {\sf s} \in \Std(\la-{\eps_i}),
  {\eps_i} \in \Rem(\la),\SSTT \in \Path_{\aatchpair  }(\la)\}   
\end{equation}
provides a $\Bbbk$-basis of $\mathscr{H}^{\succeq \la}/\mathscr{H}^{\succ  \la}$.  
By \cref{hfsaklhsalhskafhjksdlhjsadahlfdshjksadflhafskhsfajk}, we have that 
 $$    \Upsilon^{\SSTT_{\la-\eps_i}\boxtimes\SSTP_i }_{\SSTT_\la } 
  e_{\SSTT_\la 	}  $$ generates a left subquotient  of $\mathscr{H}^{\succeq \la}/\mathscr{H}^{\succ  \la}$ which is isomorphic to 
  ${\bf S}_\Bbbk(\la-{\eps_i})$.  
   Now, for each pair 
   $\eps_i \in {\rm Rem}(\la)$  and ${\sf s} \in \Std(\la-\eps_i)$, we fix a corresponding choice of reduced path ${\sf P}_{\SSTS,n-1} \in \Path_{\aatchpair  }(\la-{\eps_i})$.  
 By \cref{alemmmerd,hfsaklhsalhskafhjksdlhjsadahlfdshjksadflhafskhsfajk}, we have that the set of all 
\begin{equation*} 
 \Upsilon^{{\sf s } }_{\SSTT_{\la-\eps_i} } 
( \Upsilon^ {\SSTT_{\la-\eps_i}\boxtimes \SSTP_i} _ {{{\sf P}_{\SSTS,n-1}\boxtimes \SSTP_i}} 
 \Upsilon_ {\SSTT_{\la-\eps_i}\boxtimes \SSTP_i} ^ {{{\sf P}_{\SSTS,n-1}\boxtimes \SSTP_i}} )
 \Upsilon^{\SSTT_{\la-\eps_i}\boxtimes\SSTP_i }_{\SSTT_\la } 
  ( \Upsilon^{\SSTT_\la} _{\SSTQ_\la}  
  \Upsilon_{\SSTT_\la} ^{\SSTQ_\la}  	 ) \Upsilon_\SSTT^{\SSTT_\la} 
\end{equation*}
as we vary over all 
$
 {\sf s} \in \Path_{\aatchpair  }(\la-{\eps_i}),
  {\eps_i} \in \Rem(\la),
  $ and $ \SSTT \in \Path_{\aatchpair  }(\la) $
provides a $\Bbbk$-basis of   $\mathscr{H}^{\succeq \la}/\mathscr{H}^{\succ  \la}$.   Re-bracketing the above, we have that the set of all 
\begin{equation*} 
( \Upsilon^{{\sf s} }_{\SSTT_{\la-\eps_i} } 
  \Upsilon^ {\SSTT_{\la-\eps_i} } _ {{{\sf P}_{\SSTS,n-1} }} )(
 \Upsilon_ {\SSTT_{\la-\eps_i}\boxtimes \SSTP_i} ^ {{{\sf P}_{\SSTS,n-1}\boxtimes \SSTP_i}}  
 \Upsilon^{\SSTT_{\la-\eps_i}\boxtimes\SSTP_i }_{\SSTT_\la } 
    \Upsilon^{\SSTT_\la} _{\SSTQ_\la} ) 
 ( \Upsilon_{\SSTT_\la} ^{\SSTQ_\la}  	   \Upsilon_\SSTT^{\SSTT_\la} )
\end{equation*}as we vary over all 
$ {{\sf s}} \in \Path_{\aatchpair  }(\la-{\eps_i}),
  {\eps_i} \in \Rem(\la),$ and $ 
  \SSTT \in \Path_{\aatchpair  }(\la)  $ provides a $\Bbbk$-basis of   $\mathscr{H}^{\succeq \la}/\mathscr{H}^{\succ  \la}$. Finally, simplifying using \cref{move a dot down} 
 we obtain that 
\begin{equation*} 
 \{
  \Upsilon^{{\sf s} } 
 _ {{{\sf P}_{\SSTS,n-1} }}  
 \Upsilon ^ {{{\sf P}_{\SSTS,n-1}\boxtimes \SSTP_i}}  
  _{\SSTQ_\la} 
   \Upsilon ^{\SSTQ_\la}  	 _\SSTT  \mid 
 {{\sf s}\boxtimes \SSTP_i} \in \Path_{\aatchpair  }(\la-{\eps_i}),
  {\eps_i} \in \Rem(\la),
  \SSTT \in \Path_{\aatchpair  }(\la)\}   
\end{equation*}is a $\Bbbk$-basis of   $\mathscr{H}^{\succeq \la}/\mathscr{H}^{\succ  \la}$ where we note that the middle term in the KLR-product is our modified branching coefficient. 
 Repeating $n$ times, we have that 
\begin{equation*} 
 \{
 \Upsilon ^ {{{\sf P}_{\SSTS,0}\boxtimes \SSTP_{i_1}}}  _{{\sf P}_{\SSTS,1}}
\dots 
 \Upsilon ^ {{{\sf P}_{\SSTS,n-2}\boxtimes \SSTP_{i_{n-1}}}}  _{{\sf P}_{\SSTS,n-1}}
 \Upsilon ^ {{{\sf P}_{\SSTS,n-1}\boxtimes \SSTP_{i_n}		}}  
  _{\SSTQ_\la} 
   \Upsilon ^{\SSTQ_\la}  	 _\SSTT  \mid 
 \SSTS , 
  \SSTT \in \Path_{\aatchpair  }(\la)\}  
\end{equation*}is a $\Bbbk$-basis of   $\mathscr{H}^{\succeq \la}/\mathscr{H}^{\succ  \la}$; repeating the above for the righthand-side, the result follows.  
\end{proof} 
 
 In particular, we can set $ \underline{  \SSTP}_\SSTS = (\SSTQ_\la{\downarrow}_{\leq k})_{k\geq 0}$ and obtain the following corollary, which specialises to \cref{cellularitybreedscontempt} for $\SSTQ_\la=\SSTT_\la$.  
 
   \begin{cor}  \label{cellularitybreedscontempt22222233232328787} 
For each $\la\in \mathscr{P}_{\underline{\tau}}(n)$ we fix a reduced path $\SSTQ_\la\in \Path_{\aatchpair  }(\la)$.    The $\Bbbk$-algebra  $   \mathscr{H}^\sigma_n$ is a     graded cellular algebra with basis  
  $$
     \{    \Upsilon^\SSTS_ {\SSTQ_\la}
           \Upsilon_\SSTT^ {\SSTQ_\la}
          \mid 
           \SSTS,\SSTT \in \Path_{\aatchpair  } (\lambda ), 
 \lambda  \in \mathscr{P}_ {\aatchpair}(n) \} 
          $$
           anti-involution $\ast$ and the degree function ${\sf deg}:\Path_{\aatchpair  } \to\ZZ$.  
  \end{cor}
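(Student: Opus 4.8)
The plan is to read off \cref{cellularitybreedscontempt22222233232328787} as the special case of \cref{cellularitybreedscontempt2222223323232} in which, for every $\SSTS\in\Path_{\aatchpair  }(\la)$, one uses the reduced path vector assembled from truncations of the already-fixed reduced paths. Concretely: since $\SSTS$ lies in the dominant chamber every intermediate point $\Shape(\SSTS_{\leq k})$ is dominant, hence equals a genuine multipartition $\mu_k\in\mathscr{P}_{\underline{\tau}}(k)$, so the reduced path $\SSTQ_{\mu_k}$ is at our disposal; I would set $\SSTP_{\SSTS,k}:=\SSTQ_{\mu_k}$, giving a legitimate reduced path vector $\underline{\SSTP}_\SSTS=(\SSTQ_{\mu_0},\dots,\SSTQ_{\mu_n})$ terminating at $\SSTQ_{\mu_n}=\SSTQ_\la$ (this is what $(\SSTQ_\la{\downarrow}_{\leq k})_{k\geq 0}$ abbreviates). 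By \cref{cellularitybreedscontempt2222223323232} the elements $\Upsilon^\SSTS_{\underline{\SSTP}_\SSTS}\Upsilon_\SSTT^{\underline{\SSTP}_\SSTT}$ already form a graded cellular basis with anti-involution $\ast$ and degree function ${\sf deg}\colon\Path_{\aatchpair  }\to\ZZ$, so everything reduces to the identity $\Upsilon^\SSTS_{\underline{\SSTP}_\SSTS}=\Upsilon^\SSTS_{\SSTQ_\la}$; applying $\ast$ then also yields $\Upsilon_\SSTT^{\underline{\SSTP}_\SSTT}=\Upsilon_\SSTT^{\SSTQ_\la}$, and the cellular datum transports verbatim.

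The heart of the proof is thus the identity $\Upsilon^\SSTS_{\underline{\SSTP}_\SSTS}=\Upsilon^\SSTS_{\SSTQ_\la}$, which I would establish by a telescoping induction on $k$: the partial product $d_1(\SSTS,\underline{\SSTP}_\SSTS)\cdots d_k(\SSTS,\underline{\SSTP}_\SSTS)$ equals $\Upsilon^{\SSTS_{\leq k}}_{\SSTQ_{\mu_k}}$ inside $\mathscr{H}^\sigma_n$ (via $\mathscr{H}^\sigma_k\hookrightarrow\mathscr{H}^\sigma_n$), the base case $k=0$ being trivial. For the inductive step I would first note that, because the final steps of $\SSTS_{\leq k}$ and of $\SSTQ_{\mu_{k-1}}\boxtimes\SSTP_{i_k}$ coincide and a step-preserving permutation of minimal length never crosses two equally labelled strands, the rightmost strand is a through-strand; hence the embedded inductive hypothesis can be rewritten as $\Upsilon^{\SSTS_{\leq k-1}}_{\SSTQ_{\mu_{k-1}}}=\Upsilon^{\SSTS_{\leq k}}_{\SSTQ_{\mu_{k-1}}\boxtimes\SSTP_{i_k}}$, so that $d_1\cdots d_k=\Upsilon^{\SSTS_{\leq k}}_{\SSTQ_{\mu_{k-1}}\boxtimes\SSTP_{i_k}}\cdot\Upsilon^{\SSTQ_{\mu_{k-1}}\boxtimes\SSTP_{i_k}}_{\SSTQ_{\mu_k}}$. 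It then remains to check that concatenating the distinguished reduced expression of $w^{\SSTS_{\leq k}}_{\SSTQ_{\mu_{k-1}}\boxtimes\eps_{i_k}}$ with that of the branching permutation $w^{\SSTQ_{\mu_{k-1}}\boxtimes\eps_{i_k}}_{\SSTQ_{\mu_k}}$ is precisely the distinguished reduced expression $\w^{\SSTS_{\leq k}}_{\SSTQ_{\mu_k}}$ of \cref{firstupsil}, and that the sign twists agree (both being read off from $\res(\SSTQ_{\mu_k})$); this is exactly the statement that the reduced expression of \cref{firstupsil} is assembled one step at a time out of branching data — the mechanism already underlying \eqref{popopo}--\eqref{popopo2}, the only novelty being the replacement of $\SSTT_\la$ by an arbitrary reduced path $\SSTQ_\la$, which is harmless in view of \cref{alemmmerd,hfsaklhsalhskafhjksdlhjsadahlfdshjksadflhafskhsfajk}. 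Taking $k=n$ gives the claim; the worked example immediately preceding the corollary is the prototype of this induction.

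The step I expect to be the real obstacle is this last bookkeeping: confirming that the branching permutations genuinely telescope — i.e. that $\sum_{p}\ell\bigl(w^{\SSTQ_{\mu_{p-1}}\boxtimes\eps_{i_p}}_{\SSTQ_{\mu_p}}\bigr)=\ell\bigl(w^{\SSTS}_{\SSTQ_\la}\bigr)$ and that the distinguished words slot onto one another with matching signs, so that one gets an honest equality of KLR elements rather than merely an equality modulo $\mathscr{H}^{\succ\la}$. Should a clean direct proof of exact equality prove awkward, the fallback is softer: one shows instead that $\Upsilon^\SSTS_{\underline{\SSTP}_\SSTS}\equiv c_\SSTS\,\Upsilon^\SSTS_{\SSTQ_\la}\pmod{\mathscr{H}^{\succ\la}}$ for some $c_\SSTS\in\Bbbk^\times$ (using \cref{move a dot down} to straighten, and the fact that both elements map to a nonzero generator of the relevant Specht section of the restriction tower, exactly as in \cref{alemmmerd}), whence within each cell the passage from the basis of \cref{cellularitybreedscontempt2222223323232} to $\{\Upsilon^\SSTS_{\SSTQ_\la}\Upsilon_\SSTT^{\SSTQ_\la}\}$ is a $\ast$-equivariant diagonal rescaling and so preserves graded cellularity. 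In either route the closing remark — that $\SSTQ_\la=\SSTT_\la$ recovers \cref{cellularitybreedscontempt} — is then immediate, since for $\SSTQ_\la=\SSTT_\la$ the chosen reduced path vector is $(\SSTT_{\mu_k})_{k\geq 0}$ and $\Upsilon^\SSTS_{\SSTT_\la}\Upsilon_\SSTT^{\SSTT_\la}=\psi^\SSTS_{\SSTT_\la}\psi_\SSTT^{\SSTT_\la}$.
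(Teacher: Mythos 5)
Your reading of the corollary as a specialisation of \cref{cellularitybreedscontempt2222223323232} with $\SSTP_{\SSTS,k}=\SSTQ_{\Shape(\SSTS_{\leq k})}$ is exactly what the paper intends --- its entire ``proof'' is the one-line remark immediately preceding the corollary --- and you correctly isolate the hidden content: the identity $\Upsilon^\SSTS_{\underline{\SSTP}_\SSTS}=\Upsilon^\SSTS_{\SSTQ_\la}$. The obstacle you anticipate in the telescoping route is, however, a real gap rather than awkward bookkeeping: length-additivity $\sum_p \ell\bigl(w^{\SSTQ_{\mu_{p-1}}\boxtimes\eps_{i_p}}_{\SSTQ_{\mu_p}}\bigr)=\ell\bigl(w^\SSTS_{\SSTQ_\la}\bigr)$ fails in general. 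Already for $\SSTS=\SSTQ_\la$ the left-hand side vanishes only if the auxiliary family $\{\SSTQ_\mu\}_{|\mu|<n}$ nests along the shape sequence of $\SSTQ_\la$ (that is, $\SSTQ_{\mu_k}{\downarrow}_{\leq k-1}=\SSTQ_{\mu_{k-1}}$), a constraint not supplied by the hypotheses; note also that the corollary only fixes $\SSTQ_\la$ for $|\la|=n$, so the intermediate $\SSTQ_\mu$ are themselves an implicit extra choice. Exact elementwise equality therefore does not hold for generic choices, and the main route cannot be pushed through as stated.

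Your fallback is the robust argument and the one that actually discharges the corollary. By \cref{alemmmerd} each branching factor $\Upsilon^{\SSTQ_{\mu_{p-1}}\boxtimes\SSTP_{i_p}}_{\SSTQ_{\mu_p}}$ is, modulo the ideal, a unit times the corresponding classical factor, so $\Upsilon^\SSTS_{\underline{\SSTP}_\SSTS}\equiv c_\SSTS\,\Upsilon^\SSTS_{\SSTQ_\la}\pmod{\mathscr{H}^{\succ\la}}$ with $c_\SSTS\in\Bbbk^\times$; the resulting change of basis on each cell layer is a $\ast$-symmetric diagonal rescaling and hence preserves graded cellularity. This is in any case no weaker than what the proof of \cref{cellularitybreedscontempt2222223323232} itself yields, since that argument only ever manipulates elements modulo $\mathscr{H}^{\succ\la}$ (via \cref{move a dot down}), and your route has the further virtue of being insensitive to how the intermediate $\SSTQ_\mu$ are chosen, which matches the corollary's minimal hypothesis.
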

     
\section{Light leaf generators for the principal block}\label{geners}

We now restrict our attention to the principal block and illustrate how the constructions of previous sections specialise to be familiar ideas from Soergel diagrammatics.  
In particular, we provide an exact analogue of
 Libedinsky's and Elias--Williamson's algorithmic construction of a light leaves   basis  for such blocks.  
In order to do this, we provide a short list of path-morphisms 
 which we will show generate the algebra ${\sf f}_{n,\sigma }(\mathcal{H}^\sigma_n /\mathcal{H}^\sigma_n {\sf y}_{\aatchpair}\mathcal{H}^\sigma_n) {\sf f}_{n,\sigma } $ (thus proving Theorem B).  

\subsection{Alcove paths }
When passing from multicompositions to our geometry $\overline{\mathbb E}_{h,l}$, many non-trivial elements map to the origin. One such element is $\delta=((h_1),\mydots,(h_\ell))\in{\mathscr P}_{\aatchpair  } (\aatch  )$.
 (Recall our transpose convention for embedding multipartitions into our geometry.) We will sometimes refer to this as the {\sf determinant} as (for the symmetric group) it corresponds to the determinant representation of the associated general linear group. We will also need to consider elements corresponding to powers of the determinant, namely $\delta_k=((h_1^k),\mydots,(h_\ell^k))\in{\mathscr P}_{\aatchpair  }(k\aatch  )$. We now restrict our attention to paths between points in the principal linkage class, in other words to paths between points in $\Shl\cdot 0$. Such points can be represented by multicompositions $\mu$ in $\Shl\cdot\delta_k$ for some choice of $k$.

\begin{defn}\label{alphabet} We will associate alcove paths to certain words in the {\sf alphabet}
	$$
	S\cup\{1\}=\{ s_\alpha \mid \alpha \in \Pi \cup \{\emptyset\}\}
	$$
	where $s_\emptyset =1$.  That is, we will consider words in the generators of the affine Weyl group, but enriched with explicit occurrences of the identity 
	in these expressions.  We refer to the number of elements in such an expression (including the occurrences of the identity) as the {\sf degree} of this expression. 
We say that an enriched word is reduced if, upon forgetting occurrences of the identity in the expression, the resulting  word   is reduced.  	
	 \end{defn}

Given a path $\SSTP$ between points in the principal linkage class, the end point lies in the interior of an alcove of the form $wA_0$ for some $w\in\widehat{\mathfrak S}_{\aatch  }$. If we write $w$ as a word in our alphabet, and then replace each element $s_{\alpha}$ by the corresponding non-affine reflection $s_{\alpha}$ in ${\mathfrak S}_{\aatch  }$ to form the element $\overline{w}\in{\mathfrak S}_{\aatch  }$ then the basis vectors $\varepsilon_i$ are permuted by the corresponding action of $\overline{w}$ to give $\varepsilon_{\overline{w}(i)}$, and there is an isomorphism from $\overline{\mathbb E}_{h,l}$ to itself which maps $A_0$ to $wA_0$ such that $0$ maps to $w\cdot 0$, coloured walls map to walls of the same colour, and each basis element $\varepsilon_i$ map to $\varepsilon_{\overline{w}(i)}$. Under this map we can transform a path $\SSTQ$ starting at the origin to a path starting at $w\cdot 0$ which passes through the same sequence of coloured walls as $\SSTQ$ does.

\begin{defn}
	Given two paths 
	 $\SSTP=(\eps_{i_1},\eps_{i_2},\dots, \eps_{i_p}) \in \Path(\mu)$
and $	\SSTQ=(\eps_{j_1},\eps_{j_2},\dots, \eps_{j_q}) 
	\in \Path(\nu)$   with the endpoint of $\SSTP$ lying in the closure of some alcove $wA_0$
	we define the {\sf contextualised concatenated path} 
	$$\SSTP\otimes_w \SSTQ =
	(\eps_{i_1},\eps_{i_2},\dots, \eps_{i_p})\boxtimes  
	(\eps_{\overline{w}(j_1)},\eps_{\overline{w}(j_2) },\dots, \eps_{\overline{w}(j_q) }) 
	\in \Path (\mu+(w \cdot \nu)).$$ 
	If there is a unique such $w$ then we may simply write $\SSTP\otimes \SSTQ$.
	If $w=s_\al $ we will simply write $\SSTP\otimes_\al\SSTQ$.    \end{defn}
%

We now define the building blocks from which all of our distinguished paths will be constructed.  We begin by defining certain integers that describe the position of the origin in our fundamental alcove.

\begin{defn} Given ${\al}\in \Pi$ we define 
	$\exx$ to be the distance from the origin to the wall corresponding to 
	${\al}$, and let $b_\emptyset =1$.    
	Given our earlier conventions this corresponds to setting
	$$
	b_{\varepsilon_{H_m + i }-\varepsilon_{H_m+i+1 }} = 1
	$$
	for $1\leq i <h_{m+1}$ and $0 \leq m <\ell$ and that 
	$$
	b_{\varepsilon_{H_m }-\varepsilon_{ H_m+1 }} = \sigma_{m+1}-\sigma_m -h_m+1 
	\qquad
	b_{\varepsilon_{\aatch  }-\varepsilon_{1}} = e+ \sigma_{0}-\sigma_{\ell-1} -h_{\ell-1}+1 
	$$
	for $0\leq m<\ell-1$.  
 	We sometimes write $\delta_{\al}$ for the element $\delta_{\exx}$. 
	Given ${\al},\bet\in \Pi$  we set $b_{\al\bet}=b_\al + b_\bet$.   
\end{defn}

\begin{eg}\label{diag22point4}
	Let $e=5$, $h=3$ and $\ell=1$ as in \cref{diag2}.  
	Then $b_{\color{darkgreen}\varepsilon_2-\varepsilon_3}$ and 
	$b_{\color{cyan}\varepsilon_1-\varepsilon_2}$ both equal $1$, while $b_{\color{magenta}\varepsilon_3-\varepsilon_1}=3$ and $b_{\emptyset}=1$.  
\end{eg}

\begin{eg}
	Let $e=7$, $h=2$ and $\ell=2$ and $\sigma=(0,3)\in\ZZ^2$.  
	Then  
	$b_{\color{cyan}\varepsilon_1-\varepsilon_2}$ and $b_{\color{magenta}\varepsilon_3-\varepsilon_4}$ both equal $1$, while 
	$b_{\color{darkgreen}\varepsilon_4-\varepsilon_1}=3$, $b_{\color{orange}\varepsilon_2-\varepsilon_3}=2$, and $b_{\emptyset}=1$.  
\end{eg}

We can now define our basic building blocks for paths.

\begin{defn} \label{base} Given ${\al}  =\varepsilon_i -\varepsilon_{i+1}\in \Pi$, we consider the multicomposition 
	$ 
	s_\al\cdot \delta_{\al} 
	$ 
	with all columns of length $\exx$, with the exception of the $i$th and $(i+1)$st columns, which are of length  $0$ and $2\exx$, respectively.  
	We set 
	$$
	\REMOVETHESE {i} {} =  (\varepsilon_{1 }, \mydots, \varepsilon_{i-1 }
	, \widehat{\varepsilon_{ i  }},   \varepsilon_{ i+1  },\mydots, \varepsilon_{\aatch   })
	\quad \text{and}\quad 
	\ADDTHIS {i} {} =(+\eps_i)$$
	where $\widehat{.}$ denotes omission of a coordinate.
	Then our distinguished path corresponding to $s_\al$ is given by
	$$ \SSTP_\al=
 	\REMOVETHESE{i} {\exx} \boxtimes \ADDTHIS {i+1}{\exx} \in \Path(s_\al\cdot\delta _{\al}).
	$$
	The distinguished path corresponding to $\emptyset$ is given by
	$$
	\SSTP_{\emptyset}=  (\varepsilon_{1 }, \mydots, \varepsilon_{i-1 }
	,\varepsilon_{ i  } ,   \varepsilon_{ i+1  },\mydots, \varepsilon_{\aatch   })
	\in \Path(\delta)=\Path(s_\emptyset\cdot\delta) 
	$$ 
and set $\SSTP_\emp=(\SSTP_\emptyset)^{\exx}$.   
\end{defn}

Given all of the above, we can finally define our distinguished paths for general words in our alphabet. There will be one such path for each word in our alphabet, and they will be defined by induction on the degree of the word, as follows.

\begin{defn}\label{thepathwewatn}
	We now define a {\sf distinguished path} $\SSTP_{\w}$ for each word $\w$ in our alphabet $S\cup\{1\}$ by induction on the degree of $\w$. If $\w$ is $s_\emptyset$ or a simple reflection $s_\alpha$ we have already defined the distinguished path in Definition \ref{base}. Otherwise if $\w=s_\alpha\w'$ then we define  
	$$
	\SSTP_{\underline{w}}:= \SSTP_{\al }
	\;\otimes_{\al}  \; \SSTP_{\underline{w}'}	.$$ 
  \end{defn}

	If $\w$ is a reduced word in $\widehat{\mathfrak S}_{hl}$,  then   the corresponding  
	 path $\SSTP_{\w}$ is a  reduced path.

\begin{rmk}
	Contextualised concatenation is not associative (if we wish to decorate the tensor products with the corresponding elements $w$). As we will typically be constructing paths as in Definition \ref{thepathwewatn} we will adopt the convention that an unbracketed concatenation of $n$ terms corresponds to bracketing from the right: 
	$$\SSTQ_1\otimes\SSTQ_2\otimes \SSTQ_3\otimes\cdots \SSTQ_n=\SSTQ_1\otimes(\SSTQ_2\otimes(\SSTQ_3\otimes(\cdots \otimes \SSTQ_n)\cdots)).$$
\end{rmk}

We will also need certain reflections of our distinguished paths corresponding to elements of $\Pi$. 

\begin{defn}
	{ \renewcommand{\SSTU}{{\sf P}}\renewcommand{\mu}{{\pi}} 
		Given ${\al} \in \Pi$ we set 
		$$
		\reflectpath =
		\REMOVETHESE {i} {b_\al} \boxtimes \ADDTHIS {i} {\exx} =
		\REMOVETHESE {i} {b_\al} \otimes_\al \ADDTHIS {i+1} {\exx} =
		(+\varepsilon_{1 }, \mydots, +\varepsilon_{i-1 }
		, \widehat{+\varepsilon_{ i  }},   +\varepsilon_{ i+1  },\mydots, +\varepsilon_{\aatch   })^\exx 
		\boxtimes  (\eps_i)^{\exx}
		$$
		the path obtained by reflecting the second part of $\SSTP_\al$ in the wall through which it passes.  
	} 
	
\end{defn}

\begin{eg}
We illustrate these various constructions in a series of examples. In the first two diagrams of Figure \ref{figure1}, we illustrate the basic path $\SSTP_{\al}$ and the path $\reflectpath$ 
 and in the rightmost diagram of Figure \ref{figure1},  we illustrate the path $\SSTP_\emptyset$. A more complicated example is illustrated in Figure \ref{diag2}, where we show the distinguished path $\SSTP_{\underline{w}}$ for $\underline{w}=s_{\color{magenta}\varepsilon_3-\varepsilon_1}s_{\color{darkgreen}\varepsilon_2-\varepsilon_1}
	s_{\color{cyan}\varepsilon_3-\varepsilon_2}s_{\color{magenta}\varepsilon_3-\varepsilon_1}
	s_{\color{darkgreen}\varepsilon_2-\varepsilon_1}s_{\color{cyan}\varepsilon_3-\varepsilon_2}$ as in   \cref{diag2}. The components of the path between consecutive black nodes correspond to individual $\SSTP_{\al}$s.   \end{eg}

%
   
\begin{figure}[ht!]

$$
  \begin{minipage}{2.6cm}\begin{tikzpicture}[scale=0.75] 
    
    \path(0,0) coordinate (origin);
          \foreach \i in {0,1,2,3,4,5}
  {
    \path  (origin)++(60:0.6*\i cm)  coordinate (a\i);
    \path (origin)++(0:0.6*\i cm)  coordinate (b\i);
     \path (origin)++(-60:0.6*\i cm)  coordinate (c\i);
    }
  
      \path(3,0) coordinate (origin);
          \foreach \i in {0,1,2,3,4,5}
  {
    \path  (origin)++(120:0.6*\i cm)  coordinate (d\i);
    \path (origin)++(180:0.6*\i cm)  coordinate (e\i);
     \path (origin)++(-120:0.6*\i cm)  coordinate (f\i);
    }

  \foreach \i in {0,1,2,3,4,5}
  {
    \draw[gray, densely dotted] (a\i)--(b\i);
        \draw[gray, densely dotted] (c\i)--(b\i);
    \draw[gray, densely dotted] (d\i)--(e\i);
        \draw[gray, densely dotted] (f\i)--(e\i);
            \draw[gray, densely dotted] (a\i)--(d\i);
                \draw[gray, densely dotted] (c\i)--(f\i);
 
     }
  \draw[very thick, magenta] (0,0)--++(0:3) ;
    \draw[very thick, cyan] (3,0)--++(120:3) coordinate (hi);
        \draw[very thick, darkgreen] (hi)--++(-120:3) coordinate  (hi);

        \draw[very thick, darkgreen] (hi)--++(-60:3) coordinate  (hi);

    \draw[very thick, cyan] (hi)--++(60:3) coordinate (hi);

      \path(0,0)--++(-60:5*0.6)--++(120:2*0.6)--++(0:0.6) coordinate (hi);
     \path(0,0)--++(0:4*0.6)--++(120:3*0.6)           coordinate(hi2) ; 
     \path(0,0)--++(0:4*0.6)          coordinate(hi3) ;

          \path(hi)  --++(120:0.6)
           coordinate(step1) 
          --++(0:0.6)
                     coordinate(step2) 
      --++(120:0.6)
                 coordinate(step3) 
                 --++(0:0.6)           coordinate(step4) 
      --++(120:0.6) 
                 coordinate(step5)  --++(0:0.6) 
                 coordinate(step6)   ;    
     
                \path(0,0)--++(0:4*0.6)--++(120:2*0.6)           coordinate(hin) ; 

                     \path(0,0)--++(0:4*0.6)--++(120:2.85*0.6)           coordinate(hi4) ; 

        \draw[ thick,->]    (hi)  to    (step1) 
          to [out=0,in=-180]
                              (step2) 
 to   
            (step3) 
                     to [out=0,in=-180]       (step4) 
  to   
             (step5)       to [out=0,in=-180]   (step6) 
              (step6) 
              to [out=100,in=-35] (hi4)  ;

 \end{tikzpicture}\end{minipage}
\qquad
   \begin{minipage}{2.6cm}\begin{tikzpicture}[scale=0.75] 
    
    \path(0,0) coordinate (origin);
          \foreach \i in {0,1,2,3,4,5}
  {
    \path  (origin)++(60:0.6*\i cm)  coordinate (a\i);
    \path (origin)++(0:0.6*\i cm)  coordinate (b\i);
     \path (origin)++(-60:0.6*\i cm)  coordinate (c\i);
    }
  
      \path(3,0) coordinate (origin);
          \foreach \i in {0,1,2,3,4,5}
  {
    \path  (origin)++(120:0.6*\i cm)  coordinate (d\i);
    \path (origin)++(180:0.6*\i cm)  coordinate (e\i);
     \path (origin)++(-120:0.6*\i cm)  coordinate (f\i);
    }

  \foreach \i in {0,1,2,3,4,5}
  {
    \draw[gray, densely dotted] (a\i)--(b\i);
        \draw[gray, densely dotted] (c\i)--(b\i);
    \draw[gray, densely dotted] (d\i)--(e\i);
        \draw[gray, densely dotted] (f\i)--(e\i);
            \draw[gray, densely dotted] (a\i)--(d\i);
                \draw[gray, densely dotted] (c\i)--(f\i);
 
     }
  \draw[very thick, magenta] (0,0)--++(0:3) ;
    \draw[very thick, cyan] (3,0)--++(120:3) coordinate (hi);
        \draw[very thick, darkgreen] (hi)--++(-120:3) coordinate  (hi);

        \draw[very thick, darkgreen] (hi)--++(-60:3) coordinate  (hi);

    \draw[very thick, cyan] (hi)--++(60:3) coordinate (hi);

      \path(0,0)--++(-60:5*0.6)--++(120:2*0.6)--++(0:0.6) coordinate (hi);
     \path(0,0)--++(0:4*0.6)--++(120:3*0.6)           coordinate(hi2) ; 
     \path(0,0)--++(0:4*0.6)          coordinate(hi3) ;

          \path(hi)  --++(120:0.6)
           coordinate(step1) 
          --++(0:0.6)
                     coordinate(step2) 
      --++(120:0.6)
                 coordinate(step3) 
                 --++(0:0.6)           coordinate(step4) 
      --++(120:0.6) 
                 coordinate(step5)  --++(0:0.6) 
                 coordinate(step6)   ;

           \path(0,0)--++(0:4*0.6)--++(-120:3*0.6) --++(25:0.1)          coordinate(hin) ;

        \draw[ thick,->]    (hi)  to    (step1) 
          to [out=0,in=-180]
                              (step2) 
 to   
            (step3) 
                     to [out=0,in=-180]       (step4) 
  to   
             (step5)       to [out=0,in=-180]   (step6) to [out=-90,in=45] (hin)  ;

 \end{tikzpicture}\end{minipage} \qquad 
   \begin{minipage}{2.6cm}\begin{tikzpicture}[scale=0.75] 
    
    \path(0,0) coordinate (origin);
          \foreach \i in {0,1,2,3,4,5}
  {
    \path  (origin)++(60:0.6*\i cm)  coordinate (a\i);
    \path (origin)++(0:0.6*\i cm)  coordinate (b\i);
     \path (origin)++(-60:0.6*\i cm)  coordinate (c\i);
    }
  
      \path(3,0) coordinate (origin);
          \foreach \i in {0,1,2,3,4,5}
  {
    \path  (origin)++(120:0.6*\i cm)  coordinate (d\i);
    \path (origin)++(180:0.6*\i cm)  coordinate (e\i);
     \path (origin)++(-120:0.6*\i cm)  coordinate (f\i);
    }

  \foreach \i in {0,1,2,3,4,5}
  {
    \draw[gray, densely dotted] (a\i)--(b\i);
        \draw[gray, densely dotted] (c\i)--(b\i);
    \draw[gray, densely dotted] (d\i)--(e\i);
        \draw[gray, densely dotted] (f\i)--(e\i);
            \draw[gray, densely dotted] (a\i)--(d\i);
                \draw[gray, densely dotted] (c\i)--(f\i);
 
     }
  \draw[very thick, magenta] (0,0)--++(0:3) ;
    \draw[very thick, cyan] (3,0)--++(120:3) coordinate (hi);
        \draw[very thick, darkgreen] (hi)--++(-120:3) coordinate  (hi);

        \draw[very thick, darkgreen] (hi)--++(-60:3) coordinate  (hi);

    \draw[very thick, cyan] (hi)--++(60:3) coordinate (hi);

      \path(0,0)--++(-60:5*0.6)--++(120:2*0.6)--++(0:0.6) coordinate (hi);
     \path(0,0)--++(0:4*0.6)--++(120:3*0.6)           coordinate(hi2) ; 
     \path(0,0)--++(0:4*0.6)          coordinate(hi3) ;

          \path(hi)  --++(120:0.6)
           coordinate(step1) 
          --++(0:0.6)
                     coordinate(step2) 
      --++(120:0.6)
                 coordinate(step3) 
                 --++(0:0.6)           coordinate(step4) 
      --++(120:0.6) 
                 coordinate(step5)  --++(0:0.6) 
                 coordinate(step6)   ;

           \path(0,0)--++(0:4*0.6)--++(-120:3*0.6) --++(25:0.1)          coordinate(hin) ;

        \draw[ thick,->]    (hi)  to    (step1) 
          to [out=0,in=-180]
                              (step2) 
 to [out=-90,in=45] (hin)  ;

 \end{tikzpicture}\end{minipage}  
 $$
\caption{The leftmost two diagrams picture the path $\SSTP_\al$  walking through an ${\al}$-hyperplane in $\overline{\mathbb E}_{1,3}^+$, and the path $\reflectpath $  which reflects this path  through the same ${\al}$-hyperplane.
The rightmost diagram pictures the path $\SSTP_\emptyset$ in $\overline{\mathbb E}_{1,3}^+$.  
We have bent the paths slightly to make them clearer. } \label{figure1}
\end{figure}

 \!\!\!

 \subsection{The principal block of $\algebra$}    
    We now restrict our attention to regular blocks of $\algebra$.   In order to do this, 
we first recall that we consider an element  of the quiver Hecke algebra  to   be a morphism between paths.  
The easiest elements to construct are the idempotents 
corresponding to the trivial morphism from a  path to itself.      
 Given $\al$ a simple reflection  or $\al=\emptyset$, 
 we have an 
 associated  path 
     $\SSTP_\al $, a trivial bijection $w^{\SSTP_\al} _{\SSTP_\al} =1\in \mathfrak{S}_{b_\al \aatch  }$,  and  an idempotent element of the quiver Hecke algebra
 $$ 
e_{\SSTP_\al }   := e_{\res(\SSTP_\al)}\in \mathcal{H}_{b_\al \aatch  }^\sigma .  
 $$
 More generally, given any $\w=s_{ \alpha^{(1)}}s_{ \alpha^{(2)}}\dots  s_{ \alpha^{(k)} }$, we have an 
 associated  path 
     $\SSTP_\w $, and  an element of the quiver Hecke algebra
 $$ 
e_{\SSTP_\w }   := e_{\res(\SSTP_\w)}=
e_{\SSTP_{\alpha^{(1)}}}
\otimes
e_{\SSTP_{\alpha^{(2)}}}	
\otimes \dots\otimes e_{\SSTP_{\alpha^{(k)}}}	\in \mathscr{H}_{ \aatch  b_{\alpha^{(1)}}  +\dots+ \aatch b_{\alpha^{(k)}}    }^\sigma 
 $$    
 We let $\Std_{n,\sigma} (\la)$ be the set    of all standard  $\la$-tableaux which can be obtained by contextualised concatenation of paths from the set 
$$\{ \SSTP_\al \mid \al \in \Pi  \}\cup\{ \SSTP_\al^\flat  \mid \al \in \Pi  \}\cup \{ \SSTP_\emptyset   \} .$$
We let $\mathscr{P}_{\underline{\tau}}(n,\sigma)=\{ \la\in    \mathscr{P}_{\underline{\tau}}(n) \mid \Std_{n,\sigma} (\la)\neq \emptyset\}$.  
We let $\Std_{n,\sigma} = \cup_{\la\in  \mathscr{P}_{\underline{\tau}}(n ,\sigma)}\Std_{n,\sigma} (\la)$.   
For example, the path in \cref{diag2}   is equal to  $\SSTP_\al \otimes \SSTP_\gam \otimes \SSTP_\bet \otimes 
\SSTP_\al \otimes \SSTP_\gam \otimes \SSTP_\bet.$ 
 We define 
 \begin{equation}\label{redue}
    {\sf f}_{n,\sigma}=\sum_{\begin{subarray}c
   \sts\in \Std_{n,\sigma}(\la)
   \\
   \la  \in \mathscr{P}_{\underline{\tau}}(n,\sigma )
   \end{subarray}} { e}_{\SSTS}
\end{equation}
and the remainder of this paper will be dedicated to understanding the algebra
$$
     {\sf f}_{n,\sigma} (\mathcal{H}^\sigma_n/ \mathcal{H}^\sigma_n {\sf y}_{\underline{\tau} }\mathcal{H}^\sigma_n )      {\sf f}_{n,\sigma}.
  $$  
In fact, we will provide a concise list of generators for this truncated algebra (in the spirit of \cite{MR3555156}) and rewrite the basis of 
\cref{cellularitybreedscontempt2222223323232} in terms of these generators. 

 In this section, we use our concrete   branching coefficients to define the ``Soergel 2-generators" of $ {\sf f}_{n,\sigma} (\mathcal{H}^\sigma_n/ \mathcal{H}^\sigma_n {\sf y}_{\underline{\tau} }\mathcal{H}^\sigma_n )      {\sf f}_{n,\sigma}$  explicitly.  In the companion paper \cite{cell4us2}, we will show that these generators are actually independent of these choices of reduced expressions (however, this won't be needed here --- we simply make a note, again, for purposes of consistency with \cite{cell4us2}).

\subsection{Generator morphisms in degree zero}  
 We first discuss how to pass between   paths $ {\SSTP_\w } $ and $ {\SSTP_{\w'} } $  which are in {\em different linkage classes} but for which   $\w$ and $\w'$  have the same underlying permutation.    
  Fix two such paths  
 $$ 
 {\SSTP_\w }   =
 {\SSTP_{\alpha^{(1)}}}
\otimes
 {\SSTP_{\alpha^{(2)}}}	
\otimes \dots\otimes  {\SSTP_{\alpha^{(k)}}}	\in \Path_{\aatchpair  }(\la) \qquad 
 {\SSTP_{\w'} }   =
 {\SSTP_{\beta^{(1)}}}
\otimes
 {\SSTP_{\beta^{(2)}}}	
\otimes \dots\otimes  {\SSTP_{\beta^{(k)}}}	\in \Path_{\aatchpair  }(\la) $$    
with 
$\alpha^{(1)} ,\dots , \alpha^{(k)} ,
\beta^{(1)} ,\dots , \beta^{(k)} \in \Pi \cup \{\emptyset\}$. 
  We suppose, only for the purposes of this motivational discussion, that both paths are reduced.   
 In which case, we have that $w \in \Shl$ and so the   expressions $\w$ and $\w'$ differ only by applying Coxeter relations in 
of $   \Shl$ and the trivial ``adjustment" relation $s_i 1 = 1 s_i$ (made necessary by our augmentation of the Coxeter presentation).  Moreover,   $\w$ and $\w'$  are both {\em reduced} expressions and so we need only apply the ``hexagon" relation  $s_i s_{i+1} s_i= 
 s_{i+1} s_is_{i+1}$
  and the ``commutation" relation $s_i s_j = s_j s_i$ for $|i-j|>1$.  
  The remainder of this subsection  will be dedicated to lifting  these path-morphisms to the level of generators of  the KLR algebra.  
 We stress that one can apply these adjustment/hexagon/commutator  path-morphisms to any paths (not just reduced paths) but the reduced paths provide the motivation.

\subsubsection{Adjustment generator}
We will refer to the passage between alcove paths which differ only 
     by occurrences of $s_\emptyset=1$  (and their associated idempotents) as ``adjustment". 
We  define the {\sf KLR}-adjustment generator to be the element 
 $$
 {\sf adj}^{\al\emptyset}_{{\emptyset\al}}
 :=
\Upsilon^{   \SSTP_{\al\emptyset} }_{\SSTP_{\emptyset\al}}.
 $$  
Examples of the paths
${   \SSTP_{\al\emptyset} }$, ${\SSTP_{\emptyset\al}}$, and  adjustment generators 
 are given in \cref{adjustmntexamplehidihi2,adjustmntexamplehidihi}.

 \begin{figure}[ht!] 
  $$  
 \begin{tikzpicture}
  [xscale=0.5,yscale=  
  0.5]

\draw[black] (3.5-3,11*1.5) node[above] {$0$};
\draw[black] (4.5-3,11*1.5) node[above] {$1$};
\draw[black] (5.5-3,11*1.5) node[above] {$2$};
\draw[magenta] (3.5,11*1.5) node[above] {$4$};
\draw[magenta] (4.5,11*1.5) node[above] {$0$};
\draw[magenta] (5.5,11*1.5) node[above] {$3$};
 \draw[magenta] (6.5,11*1.5) node[above] {$4$};
\draw[magenta] (7.5,11*1.5) node[above] {$2$};
\draw[magenta] (8.5,11*1.5) node[above] {$3$};
\draw[magenta] (6.5+3,11*1.5) node[above] {$1$};
\draw[magenta] (7.5+3,11*1.5) node[above] {$0$};
\draw[magenta] (8.5+3,11*1.5) node[above] {$4$};

\draw[black] (3.5-3,11*1.5+1) node[above] {$\eps_1$};
\draw[black] (4.5-3,11*1.5+1) node[above] {$\eps_2$};
\draw[black] (5.5-3,11*1.5+1) node[above] {$\eps_3$};
\draw[magenta] (3.5,11*1.5+1) node[above] {$\eps_1$};
\draw[magenta] (4.5,11*1.5+1) node[above] {$\eps_2$};
\draw[magenta] (5.5,11*1.5+1) node[above] {$\eps_1$};
 \draw[magenta] (6.5,11*1.5+1) node[above] {$\eps_2$};
\draw[magenta] (7.5,11*1.5+1) node[above] {$\eps_1$};
\draw[magenta] (8.5,11*1.5+1) node[above] {$\eps_2$};
\draw[magenta] (6.5+3,11*1.5+1) node[above] {$\eps_1$};
\draw[magenta] (7.5+3,11*1.5+1) node[above] {$\eps_1$};
\draw[magenta] (8.5+3,11*1.5+1) node[above] {$\eps_1$};

\draw[magenta] (3.5-3,0+3) node[below] {$0$};
\draw[magenta] (4.5-3,0+3) node[below] {$1$};
\draw[magenta] (5.5-3,0+3) node[below] {$4$};
\draw[magenta] (3.5,0+3) node[below] {$0$};
\draw[magenta] (4.5,0+3) node[below] {$3$};
\draw[magenta] (5.5,0+3) node[below] {$4$};
 \draw[magenta] (6.5,0+3) node[below] {$2$};
\draw[magenta] (7.5,0+3) node[below] {$1$};
\draw[magenta] (8.5,0+3) node[below] {$0$};
\draw[black] (6.5+3,0+3) node[below] {$2$};
\draw[black] (7.5+3,0+3) node[below] {$3$};
\draw[black] (8.5+3,0+3) node[below] {$4$};

\draw[magenta] (3.5-3,-1+3) node[below] {$\eps_1$};
\draw[magenta] (4.5-3,-1+3) node[below] {$\eps_2$};
\draw[magenta] (5.5-3,-1+3) node[below] {$\eps_1$};
\draw[magenta] (3.5,-1+3) node[below] {$\eps_2$};
\draw[magenta] (4.5,-1+3) node[below] {$\eps_ 1$};
\draw[magenta] (5.5,-1+3) node[below] {$\eps_2$};
 \draw[magenta] (6.5,-1+3) node[below] {$\eps_1$};
\draw[magenta] (7.5,-1+3) node[below] {$\eps_1$};
\draw[magenta] (8.5,-1+3) node[below] {$\eps_1$};
\draw[black] (6.5+3,-1+3) node[below] {$\eps_3$};
\draw[black] (7.5+3,-1+3) node[below] {$\eps_2$};
\draw[black] (8.5+3,-1+3) node[below] {$\eps_1$};
 
        \foreach \i in {0.5,1.5,...,11.5}
  {
    \fill(\i,3) circle(1.5pt) coordinate (a\i);
          \fill(\i,4.5)circle(1.5pt)  coordinate (d\i);
   \fill(\i,6) circle(1.5pt) coordinate (a\i);
          \fill(\i,7.5)circle(1.5pt)  coordinate (d\i);
   \fill(\i,9) circle(1.5pt) coordinate (a\i);
          \fill(\i,10.5)circle(1.5pt)  coordinate (d\i);
   \fill(\i,12) circle(1.5pt) coordinate (a\i);
          \fill(\i,13.5)circle(1.5pt)  coordinate (d\i);
                \fill(\i,12+1.5)circle(1.5pt)  coordinate (d\i);
   \fill(\i,12+3) circle(1.5pt) coordinate (a\i);
          \fill(\i,12+4.5)circle(1.5pt)  coordinate (d\i);
      } 

\draw(0,3) rectangle (12,11*1.5);
\clip(0,3) rectangle (12,11*1.5);

     \foreach \i in {0,1.5,3,...,18}
 {\draw(0,\i)--++(0:12);}
      \draw(0,0) rectangle (12,18);

         \foreach \i in {0.5,1.5,2.5,3.5,4.5,5.5,6.5,7.5,8.5,9.5,10.5,11.5}
  {
  \draw(\i,12*1.5)--++(-90:3);
        }

        \foreach \i in {0.5,1.5,4.5,5.5,6.5,7.5,8.5,9.5,10.5,11.5}
  {
  \draw(\i,10*1.5)--++(-90:1.5);
        } 

        \foreach \i in {0.5,1.5,2.5,5.5,6.5,7.5,8.5,9.5,10.5,11.5}
  {
  \draw(\i,9*1.5)--++(-90:1.5);
        }

     \foreach \i in {0.5,1.5,2.5,3.5,6.5,7.5,8.5,9.5,10.5,11.5}
  {
  \draw(\i,8*1.5)--++(-90:1.5);
        }

     \foreach \i in {0.5,1.5,2.5,3.5,4.5,7.5,8.5,9.5,10.5,11.5}
  {
  \draw(\i,7*1.5)--++(-90:1.5);
        }

     \foreach \i in {0.5,1.5,2.5,3.5,4.5,5.5,8.5,9.5,10.5,11.5}
  {
  \draw(\i,6*1.5)--++(-90:1.5);
        } 

    \foreach \i in {0.5,1.5,2.5,3.5,4.5,5.5,6.5} 
  {
  \draw(\i,5*1.5)--++(-90:5*1.5);
        } 
 
     \foreach \i in {0.5,1.5,2.5,3.5,4.5,...,7.5,8.5,9.5,10.5,11.5}
  {
  \draw(\i,1*1.5)--++(-90:1.5);
        }

 \draw(2.5,10*1.5)--(11.5-2,3*1.5)--++(-90:3); 
 \draw(3.5,10*1.5)--(2.5,9*1.5); 
  \draw(3.5+1,9*1.5)--(2.5+1,8*1.5); 
   \draw(3.5+1+1,8*1.5)--(2.5+1+1,7*1.5); 
     \draw(3.5+1+1+1,7*1.5)--(2.5+1+1+1,6*1.5); 
          \draw(3.5+1+1+1+1,6*1.5)--(2.5+1+1+1+1,5*1.5); 
          
                    \draw(3.5+1+1+1+1+2,5*1.5)--(2.5+1+1+1+1+1,4*1.5)--++(-90:6) ;
                    
                    \draw(3.5+1+1+1+1+1,5*1.5)--(3.5+1+1+1+1+1+2,3*1.5)--++(-90:3) ;                     
  
             \draw(3.5+1+1+1+1+2+1,5*1.5)--++(90:-1.5)--(2.5+1+1+1+1+1+1,3*1.5)--++(90:-3); 
          \draw(3.5+1+1+1+1+2+1+1,5*1.5)--++(90:-6);

    \end{tikzpicture} $$  
    
    \!\!\!
    \caption{  We let $h=3$, $\ell=1$, $e=5$ and  $\al=\varepsilon_3-\varepsilon_1$.   
    The adjustment term $  {\sf adj}_{{\al \emptyset}}^{{ \emptyset \al}}
  $ is illustrated.  
      }
\label{adjustmntexamplehidihi}    \end{figure}

\begin{figure}[ht!]
$$  
  \begin{minipage}{2.6cm}\begin{tikzpicture}[scale=0.65] 
    
    \path(0,0) coordinate (origin);
          \foreach \i in {0,1,2,3,4,5}
  {
    \path  (origin)++(60:0.6*\i cm)  coordinate (a\i);
    \path (origin)++(0:0.6*\i cm)  coordinate (b\i);
     \path (origin)++(-60:0.6*\i cm)  coordinate (c\i);
    }
  
      \path(3,0) coordinate (origin);
          \foreach \i in {0,1,2,3,4,5}
  {
    \path  (origin)++(120:0.6*\i cm)  coordinate (d\i);
    \path (origin)++(180:0.6*\i cm)  coordinate (e\i);
     \path (origin)++(-120:0.6*\i cm)  coordinate (f\i);
    }

  \foreach \i in {0,1,2,3,4,5}
  {
    \draw[gray, densely dotted] (a\i)--(b\i);
        \draw[gray, densely dotted] (c\i)--(b\i);
    \draw[gray, densely dotted] (d\i)--(e\i);
        \draw[gray, densely dotted] (f\i)--(e\i);
            \draw[gray, densely dotted] (a\i)--(d\i);
                \draw[gray, densely dotted] (c\i)--(f\i);
 
     }
  \draw[very thick, magenta] (0,0)--++(0:3) ;
    \draw[very thick, cyan] (3,0)--++(120:3) coordinate (hi);
        \draw[very thick, darkgreen] (hi)--++(-120:3) coordinate  (hi);

        \draw[very thick, darkgreen] (hi)--++(-60:3) coordinate  (hi);

    \draw[very thick, cyan] (hi)--++(60:3) coordinate (hi);

      \path(0,0)--++(-60:5*0.6)--++(120:2*0.6)--++(0:0.6) coordinate (hi);
     \path(0,0)--++(0:4*0.6)--++(120:3*0.6)           coordinate(hi2) ; 
     \path(0,0)--++(0:4*0.6)          coordinate(hi3) ;

          \path(hi)  --++(120:0.6)
           coordinate(step1) 
          --++(0:0.6)
                     coordinate(step2) 
      --++(120:0.6)
                 coordinate(step3) 
                 --++(0:0.6)           coordinate(step4) 
      --++(120:0.6) 
                 coordinate(step5)  --++(0:0.6) 
                 coordinate(step6)   ;

           \path(0,0)--++(0:4*0.6)--++(120:3*0.6)           coordinate(hin) ; 

                     \path(0,0)--++(0:4*0.6)--++(120:2.85*0.6)           coordinate(hi4) ; 

        \draw[ thick,->]    (hi)  to [out=90,in=-30]  (step1) 
          to [out=-30,in=-150]
                              (step2) 
 to [out=90,in=-30] 
            (step3) 
                     to [out=-30,in=-150]       (step4) 
  to [out=90,in=-30] 
             (step5)       to [out=-30,in=-150]   (step6) 
              (step6) 
              to [out=90,in=-12.5] (hin)
         to [out=-155,in=100] ++(-120:0.6)  
                  to [out=-30,in=-150] ++(0:0.6)     
                           to [out=100,in=-45] (hi4)   ;

    \end{tikzpicture}\end{minipage}
\quad
  \begin{minipage}{2.6cm}\begin{tikzpicture}[scale=0.65] 
    
    \path(0,0) coordinate (origin);
          \foreach \i in {0,1,2,3,4,5}
  {
    \path  (origin)++(60:0.6*\i cm)  coordinate (a\i);
    \path (origin)++(0:0.6*\i cm)  coordinate (b\i);
     \path (origin)++(-60:0.6*\i cm)  coordinate (c\i);
    }
  
      \path(3,0) coordinate (origin);
          \foreach \i in {0,1,2,3,4,5}
  {
    \path  (origin)++(120:0.6*\i cm)  coordinate (d\i);
    \path (origin)++(180:0.6*\i cm)  coordinate (e\i);
     \path (origin)++(-120:0.6*\i cm)  coordinate (f\i);
    }

  \foreach \i in {0,1,2,3,4,5}
  {
    \draw[gray, densely dotted] (a\i)--(b\i);
        \draw[gray, densely dotted] (c\i)--(b\i);
    \draw[gray, densely dotted] (d\i)--(e\i);
        \draw[gray, densely dotted] (f\i)--(e\i);
            \draw[gray, densely dotted] (a\i)--(d\i);
                \draw[gray, densely dotted] (c\i)--(f\i);
 
     }
  \draw[very thick, magenta] (0,0)--++(0:3) ;
    \draw[very thick, cyan] (3,0)--++(120:3) coordinate (hi);
        \draw[very thick, darkgreen] (hi)--++(-120:3) coordinate  (hi);

        \draw[very thick, darkgreen] (hi)--++(-60:3) coordinate  (hi);

    \draw[very thick, cyan] (hi)--++(60:3) coordinate (hi);

      \path(0,0)--++(-60:5*0.6)--++(120:2*0.6)--++(0:0.6) coordinate (hi);
     \path(0,0)--++(0:4*0.6)--++(120:3*0.6)           coordinate(hi2) ; 
     \path(0,0)--++(0:4*0.6)          coordinate(hi3) ;

          \path(hi)  --++(120:0.6)
           coordinate(step1) 
          --++(0:0.6)
                     coordinate(step2) 
      --++(120:0.6)
                 coordinate(step3) 
                 --++(0:0.6)           coordinate(step4) 
      --++(120:0.6) 
                 coordinate(step5)  --++(0:0.6) 
                 coordinate(step6)   ;

           \path(0,0)--++(0:4*0.6)--++(120:3*0.6)           coordinate(hin) ; 

                     \path(0,0)--++(0:4*0.6)--++(120:2.85*0.6)           coordinate(hi4) ;

                      \path(hi) --++(-90:0.05) coordinate (boo);
    \path(step1) --++(135:0.05) coordinate (boo2);
        \path(step2) --++(135:0.05) coordinate (boo3);
        \draw[ thick  ]    
        (hi)  to [out=90,in=-30]  (step1) 
          to [out=-30,in=-150]  (step2) 
         to  [out=-90,in=30]  (boo)
             to  [out=135,in=-90]  (boo2)
                          to  [out=35,in=155]  (boo3); 
          
\draw[thick,->] (boo3) to [out=90,in=-30] 
            (step3) 
                     to [out=-30,in=-150]       (step4) 
  to [out=90,in=-30] 
             (step5)       to [out=-30,in=-150]   (step6) 
              (step6) 
              to [out=90,in=-12.5] (hin)
       ;

    \end{tikzpicture}\end{minipage}
$$

\caption{We let $h=3$, $\ell=1$, $e=5$ and  $\al=\varepsilon_3-\varepsilon_1$.    We picture the paths 
 $ \SSTP_{\al\emptyset} $  and $\SSTP_{\emptyset\al}.$}
 \label{adjustmntexamplehidihi2}
\end{figure}
\!\!\!

   \subsubsection{The KLR hexagon diagram} \label{adjustgen}
 We wish to pass between the two distinct paths around a 
 vertex in our alcove geometry  which lies at the intersection of two hyperplanes labelled by non-commuting reflections.   
 To this end, we let $\al , \bet  \in \Pi  $  label a pair of non-commuting reflections.    Of course, one path around the vertex may be longer than the other.   
 Thus, we have two cases to consider: if $b_\al \geq b_\bet$ then we must pass between the paths   
$ 
  \SSTP_{\al \bet \al}$
 and $ 
 \SSTP_{\emp-\empb}\otimes \SSTP_{ \bet \al \bet}$
  and if 
 $b_\al \leq b_\bet$ then we pass between the paths   
$ 
 \SSTP_{\empb-\emp}\otimes  \SSTP_{\al \bet \al}$
and 
$ 
  \SSTP_{ \bet \al \bet}$, where here 
  $\emp -\empb:=\emptyset^{b_\al-b_\bet}$.

 \begin{figure}[ht!]
 $$  \begin{minipage}{4.05cm}\begin{tikzpicture}[scale=0.6]

  \draw[very thick, magenta] (0,0)--++(0:3) ;
    \draw[very thick, cyan] (3,0)--++(120:3) coordinate (hi);
      \draw[very thick, cyan] (3,0)--++(0:3)  ;
            \draw[very thick, magenta] (3,0)--++(60:3)  ;
                        \draw[very thick, magenta] (3,0)--++(-60:3)  ;
        \draw[very thick, darkgreen] (hi)--++(-120:3) coordinate  (hi);

        \draw[very thick, darkgreen] (hi)--++(-60:3) coordinate  (hi);

 \draw[very thick, darkgreen] (0,0)--++(60:3)--++(0:3)--++(-60:3)--++(-120:3)--++(180:3)--++(120:3);
    \draw[very thick, cyan] (hi)--++(60:3) coordinate (hi);

\clip(0,0)--++(60:3)--++(0:3)--++(-60:3)--++(-120:3)--++(180:3)--++(120:3);

    \path(0,0) coordinate (origin);
          \foreach \i in {0,1,2,3,4,5}
  {
    \path  (origin)++(60:0.6*\i cm)  coordinate (a\i);
    \path (origin)++(0:0.6*\i cm)  coordinate (b\i);
     \path (origin)++(-60:0.6*\i cm)  coordinate (c\i);
    }
  
      \path(3,0) coordinate (origin);
          \foreach \i in {0,1,2,3,4,5}
  {
    \path  (origin)++(120:0.6*\i cm)  coordinate (d\i);
    \path (origin)++(180:0.6*\i cm)  coordinate (e\i);
     \path (origin)++(-120:0.6*\i cm)  coordinate (f\i);
    }

  \foreach \i in {0,1,2,3,4,5}
  {
    \draw[gray, densely dotted] (a\i)--(b\i);
        \draw[gray, densely dotted] (c\i)--(b\i);
    \draw[gray, densely dotted] (d\i)--(e\i);
        \draw[gray, densely dotted] (f\i)--(e\i);
            \draw[gray, densely dotted] (a\i)--(d\i);
                \draw[gray, densely dotted] (c\i)--(f\i);
 
     }

   \path(0,0)--++(0:3) coordinate (origin22);
          \foreach \i in {0,1,2,3,4,5}
  {
    \path  (origin22)++(60:0.6*\i cm)  coordinate (a\i);
    \path (origin22)++(0:0.6*\i cm)  coordinate (b\i);
     \path (origin22)++(-60:0.6*\i cm)  coordinate (c\i);
    }
  
      \path(3,0) coordinate (origin22);
          \foreach \i in {0,1,2,3,4,5}
  {
    \path  (origin22)++(120:0.6*\i cm)  coordinate (d\i);
    \path (origin22)++(180:0.6*\i cm)  coordinate (e\i);
     \path (origin22)++(-120:0.6*\i cm)  coordinate (f\i);
    }

  \foreach \i in {0,1,2,3,4,5}
  {
    \draw[gray, densely dotted] (a\i)--(b\i);
        \draw[gray, densely dotted] (c\i)--(b\i);
    \draw[gray, densely dotted] (d\i)--(e\i);
        \draw[gray, densely dotted] (f\i)--(e\i);
            \draw[gray, densely dotted] (a\i)--(d\i);
                \draw[gray, densely dotted] (c\i)--(f\i);
 
     }

   \path(0,0)--++(60:3) coordinate (origin33);
          \foreach \i in {0,1,2,3,4,5}
  {
    \path  (origin33)++(-60+60:0.6*\i cm)  coordinate (a\i);
    \path (origin33)++(-60+0:0.6*\i cm)  coordinate (b\i);
     \path (origin33)++(-60+-60:0.6*\i cm)  coordinate (c\i);
    }
  
      \path(3,0) coordinate (origin33);
          \foreach \i in {0,1,2,3,4,5}
  {
    \path  (origin33)++(-60+120:0.6*\i cm)  coordinate (d\i);
    \path (origin33)++(-60+180:0.6*\i cm)  coordinate (e\i);
     \path (origin33)++(-60+-120:0.6*\i cm)  coordinate (f\i);
    }

  \foreach \i in {0,1,2,3,4,5}
  {
    \draw[gray, densely dotted] (a\i)--(b\i);
        \draw[gray, densely dotted] (c\i)--(b\i);
    \draw[gray, densely dotted] (d\i)--(e\i);
        \draw[gray, densely dotted] (f\i)--(e\i);
            \draw[gray, densely dotted] (a\i)--(d\i);
                \draw[gray, densely dotted] (c\i)--(f\i);
 
     }

   \path(0,0)--++(-60:3) coordinate (origin44);
          \foreach \i in {0,1,2,3,4,5}
  {
    \path  (origin44)--++(+120:0.6*\i cm)  coordinate (a\i);
    \path (origin44)--++(+60:0.6*\i cm)  coordinate (b\i);
     \path (origin44)--++(0:0.6*\i cm)  coordinate (c\i);
    }
  
      \path(origin44)--++(60:3) coordinate (origin55);
          \foreach \i in {0,1,2,3,4,5}
  {
    \path  (origin55)--++(+60+120:0.6*\i cm)  coordinate (d\i);
    \path (origin55)--++(+60+180:0.6*\i cm)  coordinate (e\i);
     \path (origin55)--++(+60-120:0.6*\i cm)  coordinate (f\i);
    }

  \foreach \i in {0,1,2,3,4,5}
  {
    \draw[gray, densely dotted] (a\i)--(b\i);
        \draw[gray, densely dotted] (c\i)--(b\i);
    \draw[gray, densely dotted] (d\i)--(e\i);
        \draw[gray, densely dotted] (f\i)--(e\i);
            \draw[gray, densely dotted] (a\i)--(d\i);
                \draw[gray, densely dotted] (c\i)--(f\i);
 
     }

        \path(origin44)--++(0:3) coordinate (origin44);
          \foreach \i in {0,1,2,3,4,5}
  {
    \path  (origin44)--++(+120:0.6*\i cm)  coordinate (a\i);
    \path (origin44)--++(+60:0.6*\i cm)  coordinate (b\i);
     \path (origin44)--++(0:0.6*\i cm)  coordinate (c\i);
    }
  
      \path(origin44)--++(60:3) coordinate (origin55);
          \foreach \i in {0,1,2,3,4,5}
  {
    \path  (origin55)--++(+60+120:0.6*\i cm)  coordinate (d\i);
    \path (origin55)--++(+60+180:0.6*\i cm)  coordinate (e\i);
     \path (origin55)--++(+60-120:0.6*\i cm)  coordinate (f\i);
    }

  \foreach \i in {0,1,2,3,4,5}
  {
    \draw[gray, densely dotted] (a\i)--(b\i);
        \draw[gray, densely dotted] (c\i)--(b\i);
    \draw[gray, densely dotted] (d\i)--(e\i);
        \draw[gray, densely dotted] (f\i)--(e\i);
            \draw[gray, densely dotted] (a\i)--(d\i);
                \draw[gray, densely dotted] (c\i)--(f\i);}

                     \path(origin44)--++(120:3) coordinate (origin44);
          \foreach \i in {0,1,2,3,4,5}
  {
    \path  (origin44)--++(+120:0.6*\i cm)  coordinate (a\i);
    \path (origin44)--++(+60:0.6*\i cm)  coordinate (b\i);
     \path (origin44)--++(0:0.6*\i cm)  coordinate (c\i);
    }
  
      \path(origin44)--++(60:3) coordinate (origin55);
          \foreach \i in {0,1,2,3,4,5}
  {
    \path  (origin55)--++(+60+120:0.6*\i cm)  coordinate (d\i);
    \path (origin55)--++(+60+180:0.6*\i cm)  coordinate (e\i);
     \path (origin55)--++(+60-120:0.6*\i cm)  coordinate (f\i);
    }

  \foreach \i in {0,1,2,3,4,5}
  {
    \draw[gray, densely dotted] (a\i)--(b\i);
        \draw[gray, densely dotted] (c\i)--(b\i);
    \draw[gray, densely dotted] (d\i)--(e\i);
        \draw[gray, densely dotted] (f\i)--(e\i);
            \draw[gray, densely dotted] (a\i)--(d\i);
                \draw[gray, densely dotted] (c\i)--(f\i);}

      \path(0,0)--++(-60:5*0.6)--++(120:2*0.6)--++(0:0.6) coordinate (hi);
     \path(0,0)--++(0:4*0.6)--++(120:3*0.6)           coordinate(hi2) ; 
     \path(0,0)--++(0:4*0.6)          coordinate(hi3) ;

          \path(hi)  --++(120:0.6)
           coordinate(step1) 
          --++(0:0.6)
                     coordinate(step2) 
      --++(120:0.6)
                 coordinate(step3) 
                 --++(0:0.6)           coordinate(step4) 
      --++(120:0.54) 
                 coordinate(step5)  --++(0:0.62) 
                 coordinate(step6)   ;

   \draw[very thick,->]    (hi)  --++(120:0.6)--++(0:0.6)
   --++(120:0.6)--++(0:0.6)
   --++(120:0.6)--++(0:0.6)
   --++(120:0.6*3) 
   --++(0:0.6)
   --++(120:0.6)
   --++(0:0.6)
      --++(0:0.6)
      --++(-120:0.6)  --++(0:0.6)
            --++(-120:0.6)  --++(0:0.6)
                  --++(-120:0.6)  --++(0:0.6*3)
         ;   
   
 \end{tikzpicture}\end{minipage}
\quad   \begin{minipage}{4.05cm}\begin{tikzpicture}[scale=0.6]

  \draw[very thick, magenta] (0,0)--++(0:3) ;
    \draw[very thick, cyan] (3,0)--++(120:3) coordinate (hi);
      \draw[very thick, cyan] (3,0)--++(0:3)  ;
            \draw[very thick, magenta] (3,0)--++(60:3)  ;
                        \draw[very thick, magenta] (3,0)--++(-60:3)  ;
        \draw[very thick, darkgreen] (hi)--++(-120:3) coordinate  (hi);

        \draw[very thick, darkgreen] (hi)--++(-60:3) coordinate  (hi);

 \draw[very thick, darkgreen] (0,0)--++(60:3)--++(0:3)--++(-60:3)--++(-120:3)--++(180:3)--++(120:3);
    \draw[very thick, cyan] (hi)--++(60:3) coordinate (hi);

\clip(0,0)--++(60:3)--++(0:3)--++(-60:3)--++(-120:3)--++(180:3)--++(120:3);

    \path(0,0) coordinate (origin);
          \foreach \i in {0,1,2,3,4,5}
  {
    \path  (origin)++(60:0.6*\i cm)  coordinate (a\i);
    \path (origin)++(0:0.6*\i cm)  coordinate (b\i);
     \path (origin)++(-60:0.6*\i cm)  coordinate (c\i);
    }
  
      \path(3,0) coordinate (origin);
          \foreach \i in {0,1,2,3,4,5}
  {
    \path  (origin)++(120:0.6*\i cm)  coordinate (d\i);
    \path (origin)++(180:0.6*\i cm)  coordinate (e\i);
     \path (origin)++(-120:0.6*\i cm)  coordinate (f\i);
    }

  \foreach \i in {0,1,2,3,4,5}
  {
    \draw[gray, densely dotted] (a\i)--(b\i);
        \draw[gray, densely dotted] (c\i)--(b\i);
    \draw[gray, densely dotted] (d\i)--(e\i);
        \draw[gray, densely dotted] (f\i)--(e\i);
            \draw[gray, densely dotted] (a\i)--(d\i);
                \draw[gray, densely dotted] (c\i)--(f\i);
 
     }

   \path(0,0)--++(0:3) coordinate (origin22);
          \foreach \i in {0,1,2,3,4,5}
  {
    \path  (origin22)++(60:0.6*\i cm)  coordinate (a\i);
    \path (origin22)++(0:0.6*\i cm)  coordinate (b\i);
     \path (origin22)++(-60:0.6*\i cm)  coordinate (c\i);
    }
  
      \path(3,0) coordinate (origin22);
          \foreach \i in {0,1,2,3,4,5}
  {
    \path  (origin22)++(120:0.6*\i cm)  coordinate (d\i);
    \path (origin22)++(180:0.6*\i cm)  coordinate (e\i);
     \path (origin22)++(-120:0.6*\i cm)  coordinate (f\i);
    }

  \foreach \i in {0,1,2,3,4,5}
  {
    \draw[gray, densely dotted] (a\i)--(b\i);
        \draw[gray, densely dotted] (c\i)--(b\i);
    \draw[gray, densely dotted] (d\i)--(e\i);
        \draw[gray, densely dotted] (f\i)--(e\i);
            \draw[gray, densely dotted] (a\i)--(d\i);
                \draw[gray, densely dotted] (c\i)--(f\i);
 
     }

   \path(0,0)--++(60:3) coordinate (origin33);
          \foreach \i in {0,1,2,3,4,5}
  {
    \path  (origin33)++(-60+60:0.6*\i cm)  coordinate (a\i);
    \path (origin33)++(-60+0:0.6*\i cm)  coordinate (b\i);
     \path (origin33)++(-60+-60:0.6*\i cm)  coordinate (c\i);
    }
  
      \path(3,0) coordinate (origin33);
          \foreach \i in {0,1,2,3,4,5}
  {
    \path  (origin33)++(-60+120:0.6*\i cm)  coordinate (d\i);
    \path (origin33)++(-60+180:0.6*\i cm)  coordinate (e\i);
     \path (origin33)++(-60+-120:0.6*\i cm)  coordinate (f\i);
    }

  \foreach \i in {0,1,2,3,4,5}
  {
    \draw[gray, densely dotted] (a\i)--(b\i);
        \draw[gray, densely dotted] (c\i)--(b\i);
    \draw[gray, densely dotted] (d\i)--(e\i);
        \draw[gray, densely dotted] (f\i)--(e\i);
            \draw[gray, densely dotted] (a\i)--(d\i);
                \draw[gray, densely dotted] (c\i)--(f\i);
 
     }

   \path(0,0)--++(-60:3) coordinate (origin44);
          \foreach \i in {0,1,2,3,4,5}
  {
    \path  (origin44)--++(+120:0.6*\i cm)  coordinate (a\i);
    \path (origin44)--++(+60:0.6*\i cm)  coordinate (b\i);
     \path (origin44)--++(0:0.6*\i cm)  coordinate (c\i);
    }
  
      \path(origin44)--++(60:3) coordinate (origin55);
          \foreach \i in {0,1,2,3,4,5}
  {
    \path  (origin55)--++(+60+120:0.6*\i cm)  coordinate (d\i);
    \path (origin55)--++(+60+180:0.6*\i cm)  coordinate (e\i);
     \path (origin55)--++(+60-120:0.6*\i cm)  coordinate (f\i);
    }

  \foreach \i in {0,1,2,3,4,5}
  {
    \draw[gray, densely dotted] (a\i)--(b\i);
        \draw[gray, densely dotted] (c\i)--(b\i);
    \draw[gray, densely dotted] (d\i)--(e\i);
        \draw[gray, densely dotted] (f\i)--(e\i);
            \draw[gray, densely dotted] (a\i)--(d\i);
                \draw[gray, densely dotted] (c\i)--(f\i);
 
     }

        \path(origin44)--++(0:3) coordinate (origin44);
          \foreach \i in {0,1,2,3,4,5}
  {
    \path  (origin44)--++(+120:0.6*\i cm)  coordinate (a\i);
    \path (origin44)--++(+60:0.6*\i cm)  coordinate (b\i);
     \path (origin44)--++(0:0.6*\i cm)  coordinate (c\i);
    }
  
      \path(origin44)--++(60:3) coordinate (origin55);
          \foreach \i in {0,1,2,3,4,5}
  {
    \path  (origin55)--++(+60+120:0.6*\i cm)  coordinate (d\i);
    \path (origin55)--++(+60+180:0.6*\i cm)  coordinate (e\i);
     \path (origin55)--++(+60-120:0.6*\i cm)  coordinate (f\i);
    }

  \foreach \i in {0,1,2,3,4,5}
  {
    \draw[gray, densely dotted] (a\i)--(b\i);
        \draw[gray, densely dotted] (c\i)--(b\i);
    \draw[gray, densely dotted] (d\i)--(e\i);
        \draw[gray, densely dotted] (f\i)--(e\i);
            \draw[gray, densely dotted] (a\i)--(d\i);
                \draw[gray, densely dotted] (c\i)--(f\i);}

                     \path(origin44)--++(120:3) coordinate (origin44);
          \foreach \i in {0,1,2,3,4,5}
  {
    \path  (origin44)--++(+120:0.6*\i cm)  coordinate (a\i);
    \path (origin44)--++(+60:0.6*\i cm)  coordinate (b\i);
     \path (origin44)--++(0:0.6*\i cm)  coordinate (c\i);
    }
  
      \path(origin44)--++(60:3) coordinate (origin55);
          \foreach \i in {0,1,2,3,4,5}
  {
    \path  (origin55)--++(+60+120:0.6*\i cm)  coordinate (d\i);
    \path (origin55)--++(+60+180:0.6*\i cm)  coordinate (e\i);
     \path (origin55)--++(+60-120:0.6*\i cm)  coordinate (f\i);
    }

  \foreach \i in {0,1,2,3,4,5}
  {
    \draw[gray, densely dotted] (a\i)--(b\i);
        \draw[gray, densely dotted] (c\i)--(b\i);
    \draw[gray, densely dotted] (d\i)--(e\i);
        \draw[gray, densely dotted] (f\i)--(e\i);
            \draw[gray, densely dotted] (a\i)--(d\i);
                \draw[gray, densely dotted] (c\i)--(f\i);}

      \path(0,0)--++(-60:5*0.6)--++(120:2*0.6)--++(0:0.6) coordinate (hi);
     \path(0,0)--++(0:4*0.6)--++(120:3*0.6)           coordinate(hi2) ; 
     \path(0,0)--++(0:4*0.6)          coordinate(hi3) ;

          \path(hi)  --++(120:0.6)
           coordinate(step1) 
          --++(0:0.6)
                     coordinate(step2) 
      --++(120:0.6)
                 coordinate(step3) 
                 --++(0:0.6)           coordinate(step4) 
      --++(120:0.54) 
                 coordinate(step5)  --++(0:0.62) 
                 coordinate(step6)   ;

   \draw[very thick,->]    (hi)

   --++(0:0.6)   --++(-120:0.6)        
 --++(0:0.6*2)
   --++(120:0.6)--++(0:0.6)   --++(120:0.6)--++(0:0.6) --++(120:0.6)--++(0:0.6) 
    --++(0:0.6*2)
    --++(120:0.6)--++(0:0.6)
        --++(120:0.6*1) 
    ;   
   
 \end{tikzpicture}\end{minipage}
 \quad   \begin{minipage}{4.05cm}\begin{tikzpicture}[scale=0.6]

  \draw[very thick, cyan] (0,0)--++(0:3) ;
    \draw[very thick, darkgreen] (3,0)--++(120:3) coordinate (hi);
      \draw[very thick, darkgreen] (3,0)--++(0:3)  ;
            \draw[very thick, cyan] (3,0)--++(60:3)  ;
                        \draw[very thick, cyan] (3,0)--++(-60:3)  ;
        \draw[very thick, magenta] (hi)--++(-120:3) coordinate  (hi);

        \draw[very thick, magenta] (hi)--++(-60:3) coordinate  (hi);

 \draw[very thick, magenta] (0,0)--++(60:3)--++(0:3)--++(-60:3)--++(-120:3)--++(180:3)--++(120:3);
    \draw[very thick, darkgreen] (hi)--++(60:3) coordinate (hi);

\clip(0,0)--++(60:3)--++(0:3)--++(-60:3)--++(-120:3)--++(180:3)--++(120:3);

    \path(0,0) coordinate (origin);
          \foreach \i in {0,1,2,3,4,5}
  {
    \path  (origin)++(60:0.6*\i cm)  coordinate (a\i);
    \path (origin)++(0:0.6*\i cm)  coordinate (b\i);
     \path (origin)++(-60:0.6*\i cm)  coordinate (c\i);
    }
  
      \path(3,0) coordinate (origin);
          \foreach \i in {0,1,2,3,4,5}
  {
    \path  (origin)++(120:0.6*\i cm)  coordinate (d\i);
    \path (origin)++(180:0.6*\i cm)  coordinate (e\i);
     \path (origin)++(-120:0.6*\i cm)  coordinate (f\i);
    }

  \foreach \i in {0,1,2,3,4,5}
  {
    \draw[gray, densely dotted] (a\i)--(b\i);
        \draw[gray, densely dotted] (c\i)--(b\i);
    \draw[gray, densely dotted] (d\i)--(e\i);
        \draw[gray, densely dotted] (f\i)--(e\i);
            \draw[gray, densely dotted] (a\i)--(d\i);
                \draw[gray, densely dotted] (c\i)--(f\i);
 
     }

   \path(0,0)--++(0:3) coordinate (origin22);
          \foreach \i in {0,1,2,3,4,5}
  {
    \path  (origin22)++(60:0.6*\i cm)  coordinate (a\i);
    \path (origin22)++(0:0.6*\i cm)  coordinate (b\i);
     \path (origin22)++(-60:0.6*\i cm)  coordinate (c\i);
    }
  
      \path(3,0) coordinate (origin22);
          \foreach \i in {0,1,2,3,4,5}
  {
    \path  (origin22)++(120:0.6*\i cm)  coordinate (d\i);
    \path (origin22)++(180:0.6*\i cm)  coordinate (e\i);
     \path (origin22)++(-120:0.6*\i cm)  coordinate (f\i);
    }

  \foreach \i in {0,1,2,3,4,5}
  {
    \draw[gray, densely dotted] (a\i)--(b\i);
        \draw[gray, densely dotted] (c\i)--(b\i);
    \draw[gray, densely dotted] (d\i)--(e\i);
        \draw[gray, densely dotted] (f\i)--(e\i);
            \draw[gray, densely dotted] (a\i)--(d\i);
                \draw[gray, densely dotted] (c\i)--(f\i);
 
     }

   \path(0,0)--++(60:3) coordinate (origin33);
          \foreach \i in {0,1,2,3,4,5}
  {
    \path  (origin33)++(-60+60:0.6*\i cm)  coordinate (a\i);
    \path (origin33)++(-60+0:0.6*\i cm)  coordinate (b\i);
     \path (origin33)++(-60+-60:0.6*\i cm)  coordinate (c\i);
    }
  
      \path(3,0) coordinate (origin33);
          \foreach \i in {0,1,2,3,4,5}
  {
    \path  (origin33)++(-60+120:0.6*\i cm)  coordinate (d\i);
    \path (origin33)++(-60+180:0.6*\i cm)  coordinate (e\i);
     \path (origin33)++(-60+-120:0.6*\i cm)  coordinate (f\i);
    }

  \foreach \i in {0,1,2,3,4,5}
  {
    \draw[gray, densely dotted] (a\i)--(b\i);
        \draw[gray, densely dotted] (c\i)--(b\i);
    \draw[gray, densely dotted] (d\i)--(e\i);
        \draw[gray, densely dotted] (f\i)--(e\i);
            \draw[gray, densely dotted] (a\i)--(d\i);
                \draw[gray, densely dotted] (c\i)--(f\i);
 
     }

   \path(0,0)--++(-60:3) coordinate (origin44);
          \foreach \i in {0,1,2,3,4,5}
  {
    \path  (origin44)--++(+120:0.6*\i cm)  coordinate (a\i);
    \path (origin44)--++(+60:0.6*\i cm)  coordinate (b\i);
     \path (origin44)--++(0:0.6*\i cm)  coordinate (c\i);
    }
  
      \path(origin44)--++(60:3) coordinate (origin55);
          \foreach \i in {0,1,2,3,4,5}
  {
    \path  (origin55)--++(+60+120:0.6*\i cm)  coordinate (d\i);
    \path (origin55)--++(+60+180:0.6*\i cm)  coordinate (e\i);
     \path (origin55)--++(+60-120:0.6*\i cm)  coordinate (f\i);
    }

  \foreach \i in {0,1,2,3,4,5}
  {
    \draw[gray, densely dotted] (a\i)--(b\i);
        \draw[gray, densely dotted] (c\i)--(b\i);
    \draw[gray, densely dotted] (d\i)--(e\i);
        \draw[gray, densely dotted] (f\i)--(e\i);
            \draw[gray, densely dotted] (a\i)--(d\i);
                \draw[gray, densely dotted] (c\i)--(f\i);
 
     }

        \path(origin44)--++(0:3) coordinate (origin44);
          \foreach \i in {0,1,2,3,4,5}
  {
    \path  (origin44)--++(+120:0.6*\i cm)  coordinate (a\i);
    \path (origin44)--++(+60:0.6*\i cm)  coordinate (b\i);
     \path (origin44)--++(0:0.6*\i cm)  coordinate (c\i);
    }
  
      \path(origin44)--++(60:3) coordinate (origin55);
          \foreach \i in {0,1,2,3,4,5}
  {
    \path  (origin55)--++(+60+120:0.6*\i cm)  coordinate (d\i);
    \path (origin55)--++(+60+180:0.6*\i cm)  coordinate (e\i);
     \path (origin55)--++(+60-120:0.6*\i cm)  coordinate (f\i);
    }

  \foreach \i in {0,1,2,3,4,5}
  {
    \draw[gray, densely dotted] (a\i)--(b\i);
        \draw[gray, densely dotted] (c\i)--(b\i);
    \draw[gray, densely dotted] (d\i)--(e\i);
        \draw[gray, densely dotted] (f\i)--(e\i);
            \draw[gray, densely dotted] (a\i)--(d\i);
                \draw[gray, densely dotted] (c\i)--(f\i);}

                     \path(origin44)--++(120:3) coordinate (origin44);
          \foreach \i in {0,1,2,3,4,5}
  {
    \path  (origin44)--++(+120:0.6*\i cm)  coordinate (a\i);
    \path (origin44)--++(+60:0.6*\i cm)  coordinate (b\i);
     \path (origin44)--++(0:0.6*\i cm)  coordinate (c\i);
    }
  
      \path(origin44)--++(60:3) coordinate (origin55);
          \foreach \i in {0,1,2,3,4,5}
  {
    \path  (origin55)--++(+60+120:0.6*\i cm)  coordinate (d\i);
    \path (origin55)--++(+60+180:0.6*\i cm)  coordinate (e\i);
     \path (origin55)--++(+60-120:0.6*\i cm)  coordinate (f\i);
    }

  \foreach \i in {0,1,2,3,4,5}
  {
    \draw[gray, densely dotted] (a\i)--(b\i);
        \draw[gray, densely dotted] (c\i)--(b\i);
    \draw[gray, densely dotted] (d\i)--(e\i);
        \draw[gray, densely dotted] (f\i)--(e\i);
            \draw[gray, densely dotted] (a\i)--(d\i);
                \draw[gray, densely dotted] (c\i)--(f\i);}

      \path(0,0) --++(0:6*0.6)--++(120:0.6*2)coordinate (hi);
     \path(0,0)--++(0:4*0.6)--++(120:3*0.6)           coordinate(hi2) ; 
     \path(0,0)--++(0:4*0.6)          coordinate(hi3) ;

          \path(hi)  --++(120:0.6)
           coordinate(step1) 
          --++(0:0.6)
                     coordinate(step2) 
      --++(120:0.6)
                 coordinate(step3) 
                 --++(0:0.6)           coordinate(step4) 
      --++(120:0.54) 
                 coordinate(step5)  --++(0:0.62) 
                 coordinate(step6)   ;

   \draw[very thick,->]    (hi)

   --++(120:0.6) 
      --++(-120:0.6*3) 
            --++(0:0.6) 
                  --++(-120:0.6*2) 
                        --++(0:0.6*2) 
    ;   
   
 \end{tikzpicture}\end{minipage}
  \quad   \begin{minipage}{4.05cm}\begin{tikzpicture}[scale=0.6]

  \draw[very thick, cyan] (0,0)--++(0:3) ;
    \draw[very thick, darkgreen] (3,0)--++(120:3) coordinate (hi);
      \draw[very thick, darkgreen] (3,0)--++(0:3)  ;
            \draw[very thick, cyan] (3,0)--++(60:3)  ;
                        \draw[very thick, cyan] (3,0)--++(-60:3)  ;
        \draw[very thick, magenta] (hi)--++(-120:3) coordinate  (hi);

        \draw[very thick, magenta] (hi)--++(-60:3) coordinate  (hi);

 \draw[very thick, magenta] (0,0)--++(60:3)--++(0:3)--++(-60:3)--++(-120:3)--++(180:3)--++(120:3);
    \draw[very thick, darkgreen] (hi)--++(60:3) coordinate (hi);

\clip(0,0)--++(60:3)--++(0:3)--++(-60:3)--++(-120:3)--++(180:3)--++(120:3);

    \path(0,0) coordinate (origin);
          \foreach \i in {0,1,2,3,4,5}
  {
    \path  (origin)++(60:0.6*\i cm)  coordinate (a\i);
    \path (origin)++(0:0.6*\i cm)  coordinate (b\i);
     \path (origin)++(-60:0.6*\i cm)  coordinate (c\i);
    }
  
      \path(3,0) coordinate (origin);
          \foreach \i in {0,1,2,3,4,5}
  {
    \path  (origin)++(120:0.6*\i cm)  coordinate (d\i);
    \path (origin)++(180:0.6*\i cm)  coordinate (e\i);
     \path (origin)++(-120:0.6*\i cm)  coordinate (f\i);
    }

  \foreach \i in {0,1,2,3,4,5}
  {
    \draw[gray, densely dotted] (a\i)--(b\i);
        \draw[gray, densely dotted] (c\i)--(b\i);
    \draw[gray, densely dotted] (d\i)--(e\i);
        \draw[gray, densely dotted] (f\i)--(e\i);
            \draw[gray, densely dotted] (a\i)--(d\i);
                \draw[gray, densely dotted] (c\i)--(f\i);
 
     }

   \path(0,0)--++(0:3) coordinate (origin22);
          \foreach \i in {0,1,2,3,4,5}
  {
    \path  (origin22)++(60:0.6*\i cm)  coordinate (a\i);
    \path (origin22)++(0:0.6*\i cm)  coordinate (b\i);
     \path (origin22)++(-60:0.6*\i cm)  coordinate (c\i);
    }
  
      \path(3,0) coordinate (origin22);
          \foreach \i in {0,1,2,3,4,5}
  {
    \path  (origin22)++(120:0.6*\i cm)  coordinate (d\i);
    \path (origin22)++(180:0.6*\i cm)  coordinate (e\i);
     \path (origin22)++(-120:0.6*\i cm)  coordinate (f\i);
    }

  \foreach \i in {0,1,2,3,4,5}
  {
    \draw[gray, densely dotted] (a\i)--(b\i);
        \draw[gray, densely dotted] (c\i)--(b\i);
    \draw[gray, densely dotted] (d\i)--(e\i);
        \draw[gray, densely dotted] (f\i)--(e\i);
            \draw[gray, densely dotted] (a\i)--(d\i);
                \draw[gray, densely dotted] (c\i)--(f\i);
 
     }

   \path(0,0)--++(60:3) coordinate (origin33);
          \foreach \i in {0,1,2,3,4,5}
  {
    \path  (origin33)++(-60+60:0.6*\i cm)  coordinate (a\i);
    \path (origin33)++(-60+0:0.6*\i cm)  coordinate (b\i);
     \path (origin33)++(-60+-60:0.6*\i cm)  coordinate (c\i);
    }
  
      \path(3,0) coordinate (origin33);
          \foreach \i in {0,1,2,3,4,5}
  {
    \path  (origin33)++(-60+120:0.6*\i cm)  coordinate (d\i);
    \path (origin33)++(-60+180:0.6*\i cm)  coordinate (e\i);
     \path (origin33)++(-60+-120:0.6*\i cm)  coordinate (f\i);
    }

  \foreach \i in {0,1,2,3,4,5}
  {
    \draw[gray, densely dotted] (a\i)--(b\i);
        \draw[gray, densely dotted] (c\i)--(b\i);
    \draw[gray, densely dotted] (d\i)--(e\i);
        \draw[gray, densely dotted] (f\i)--(e\i);
            \draw[gray, densely dotted] (a\i)--(d\i);
                \draw[gray, densely dotted] (c\i)--(f\i);
 
     }

   \path(0,0)--++(-60:3) coordinate (origin44);
          \foreach \i in {0,1,2,3,4,5}
  {
    \path  (origin44)--++(+120:0.6*\i cm)  coordinate (a\i);
    \path (origin44)--++(+60:0.6*\i cm)  coordinate (b\i);
     \path (origin44)--++(0:0.6*\i cm)  coordinate (c\i);
    }
  
      \path(origin44)--++(60:3) coordinate (origin55);
          \foreach \i in {0,1,2,3,4,5}
  {
    \path  (origin55)--++(+60+120:0.6*\i cm)  coordinate (d\i);
    \path (origin55)--++(+60+180:0.6*\i cm)  coordinate (e\i);
     \path (origin55)--++(+60-120:0.6*\i cm)  coordinate (f\i);
    }

  \foreach \i in {0,1,2,3,4,5}
  {
    \draw[gray, densely dotted] (a\i)--(b\i);
        \draw[gray, densely dotted] (c\i)--(b\i);
    \draw[gray, densely dotted] (d\i)--(e\i);
        \draw[gray, densely dotted] (f\i)--(e\i);
            \draw[gray, densely dotted] (a\i)--(d\i);
                \draw[gray, densely dotted] (c\i)--(f\i);
 
     }

        \path(origin44)--++(0:3) coordinate (origin44);
          \foreach \i in {0,1,2,3,4,5}
  {
    \path  (origin44)--++(+120:0.6*\i cm)  coordinate (a\i);
    \path (origin44)--++(+60:0.6*\i cm)  coordinate (b\i);
     \path (origin44)--++(0:0.6*\i cm)  coordinate (c\i);
    }
  
      \path(origin44)--++(60:3) coordinate (origin55);
          \foreach \i in {0,1,2,3,4,5}
  {
    \path  (origin55)--++(+60+120:0.6*\i cm)  coordinate (d\i);
    \path (origin55)--++(+60+180:0.6*\i cm)  coordinate (e\i);
     \path (origin55)--++(+60-120:0.6*\i cm)  coordinate (f\i);
    }

  \foreach \i in {0,1,2,3,4,5}
  {
    \draw[gray, densely dotted] (a\i)--(b\i);
        \draw[gray, densely dotted] (c\i)--(b\i);
    \draw[gray, densely dotted] (d\i)--(e\i);
        \draw[gray, densely dotted] (f\i)--(e\i);
            \draw[gray, densely dotted] (a\i)--(d\i);
                \draw[gray, densely dotted] (c\i)--(f\i);}

                     \path(origin44)--++(120:3) coordinate (origin44);
          \foreach \i in {0,1,2,3,4,5}
  {
    \path  (origin44)--++(+120:0.6*\i cm)  coordinate (a\i);
    \path (origin44)--++(+60:0.6*\i cm)  coordinate (b\i);
     \path (origin44)--++(0:0.6*\i cm)  coordinate (c\i);
    }
  
      \path(origin44)--++(60:3) coordinate (origin55);
          \foreach \i in {0,1,2,3,4,5}
  {
    \path  (origin55)--++(+60+120:0.6*\i cm)  coordinate (d\i);
    \path (origin55)--++(+60+180:0.6*\i cm)  coordinate (e\i);
     \path (origin55)--++(+60-120:0.6*\i cm)  coordinate (f\i);
    }

  \foreach \i in {0,1,2,3,4,5}
  {
    \draw[gray, densely dotted] (a\i)--(b\i);
        \draw[gray, densely dotted] (c\i)--(b\i);
    \draw[gray, densely dotted] (d\i)--(e\i);
        \draw[gray, densely dotted] (f\i)--(e\i);
            \draw[gray, densely dotted] (a\i)--(d\i);
                \draw[gray, densely dotted] (c\i)--(f\i);}

      \path(0,0) --++(0:6*0.6)--++(120:0.6*2)coordinate (hi);
     \path(0,0)--++(0:4*0.6)--++(120:3*0.6)           coordinate(hi2) ; 
     \path(0,0)--++(0:4*0.6)          coordinate(hi3) ;

          \path(hi)  --++(120:0.6)
           coordinate(step1) 
          --++(0:0.6)
                     coordinate(step2) 
      --++(120:0.6)
                 coordinate(step3) 
                 --++(0:0.6)           coordinate(step4) 
      --++(120:0.54) 
                 coordinate(step5)  --++(0:0.62) 
                 coordinate(step6)   ;

   \draw[very thick,->]    (hi)

   --++(0:0.6) 
      --++(-120:0.6*1) 
       --++(0:0.6*2)  
      --++(-120:0.6*3)
            --++(120:0.6*1)  
                  --++(-120:0.6*1)     ;   
   
 \end{tikzpicture}\end{minipage}$$
 \caption{
 We let $h=3$, $\ell=1$, $e=5$ and  $\al=\varepsilon_3-\varepsilon_1$ and $\bet=\eps_1-\eps_2$ and $\gam=\eps_2-\eps_3$.   
The paths  $  \SSTP_{\al\bet\al}$, $  \SSTP_{ \bet\al\bet}$,
 $  \SSTP_{\gam\bet\gam}$ and  $  \SSTP_{ \bet\gam\bet}$ are pictured.  }
 \label{Steinberg}\end{figure}

  We   define the {\sf KLR-hexagon} to be the element 
 $$ 
 {\sf hex}^{\al\bet\al}_{\bet\al\bet}:=
 \Upsilon ^{ \SSTP_{\al \bet \al}}_{ \SSTP_{\emp-\empb}\otimes\SSTP_{  \bet \al  \bet}} \qquad\text{or}\qquad
  {\sf hex}^{\al\bet\al}_{\bet\al\bet}:=
  \Upsilon ^{  \SSTP_{\empb-\emp}\otimes \SSTP_{\al \bet \al}}_{ \SSTP_{  \bet \al  \bet}}
  $$ 
 for $b_\al\geq b_\bet$ or $b_\al \leq b_\bet$ respectively.  
Two such pairs of paths are despited in \cref{Steinberg}.  
For the latter pair, the corresponding KLR-hexagon element is depicted in \cref{kjhdfljkdfghlsdkfhg}.

 \begin{figure}[ht!] 
  $$  
 \begin{tikzpicture}
  [xscale=0.5,yscale=  
  0.5]

     \foreach \i in {0,1.5,3,...,12}
 {\draw(0,\i)--++(0:9);}
      \draw(0,0) rectangle (9,1.5*9);
        \foreach \i in {0.5,1.5,...,8.5}
  {
   \fill(\i,0) circle(1.5pt) coordinate (a\i);
          \fill(\i,1.5)circle(1.5pt)  coordinate (d\i);
   \fill(\i,3) circle(1.5pt) coordinate (a\i);
          \fill(\i,4.5)circle(1.5pt)  coordinate (d\i);
   \fill(\i,6) circle(1.5pt) coordinate (a\i);
          \fill(\i,7.5)circle(1.5pt)  coordinate (d\i);
   \fill(\i,9) circle(1.5pt) coordinate (a\i);
          \fill(\i,10.5)circle(1.5pt)  coordinate (d\i);
   \fill(\i,12) circle(1.5pt) coordinate (a\i);
          \fill(\i,13.5)circle(1.5pt)  coordinate (d\i);
                \fill(\i,12+1.5)circle(1.5pt)  coordinate (d\i);
       }


\draw[darkgreen] (3.5-3,0) node[below] {$0$};
\draw[darkgreen] (4.5-3,0) node[below] {$2$};
\draw[darkgreen] (5.5-3,0) node[below] {$1$};

\draw[cyan] (3.5,0) node[below] {$0$};
\draw[cyan] (4.5,0) node[below] {$ 1$};
\draw[cyan] (5.5,0) node[below] {$4 $};

 \draw[darkgreen] (6.5,0) node[below] {$3 $};
\draw[darkgreen] (7.5,0) node[below] {$0  $};
\draw[darkgreen] (8.5,0) node[below] {$ 4$};

\draw[darkgreen] (3.5-3,-1) node[below] {$\eps_1$};
\draw[darkgreen] (4.5-3,-1) node[below] {$\eps_3$};
\draw[darkgreen] (5.5-3,-1) node[below] {$\eps_3$};

\draw[cyan] (3.5,-1) node[below] {$\eps_3$};
\draw[cyan] (4.5,-1) node[below] {$\eps_2$};
\draw[cyan] (5.5,-1) node[below] {$\eps_3$};

 \draw[darkgreen] (6.5,-1) node[below] {$\eps_3$};
\draw[darkgreen] (7.5,-1) node[below] {$\eps_2$};
\draw[darkgreen] (8.5,-1) node[below] {$\eps_2$};

  \foreach \i in {13.5 }
  {
    \foreach \j in {0.5,1.5,...,8.5} 
   { \draw(\j,\i)--++(-90:1.5) ;
}}

   \foreach \i in {13.5-1.5}
  {
 \draw(0.5,\i)--(1.5,\i-1.5)--++(-90:3);
  \draw(1.5,\i)--(0.5,\i-1.5)--++(-90:4.5);
   \foreach \j in {2.5,3.5,...,8.5} 
   { \draw(\j,\i)--++(-90:1.5)--++(-90:3);
}}

  \foreach \i in {9-1.5}
  {
   \foreach \j in {1.5,2.5,...,3.5} 
{    \draw(\j,\i)--(\j+1,\i-1.5);}  
\draw(4.5,\i)--(1.5,\i-1.5);
    \foreach \j in {5.5,6.5,...,8.5} 
   { \draw(\j,\i)--++(-90:1.5);
}}

\draw(0.5,7.5-1.5)--++(-90:3);
\draw(1.5,7.5-1.5)--++(-90:3);

  \foreach \i in {7.5-1.5}
  {
   \foreach \j in {2.5,3.5,...,4.5} 
{    \draw(\j,\i)--(\j+1,\i-1.5);}  
\draw(5.5,\i)--(2.5,\i-1.5);
    \foreach \j in {6.5,7.5,...,8.5} 
   { \draw(\j,\i)--++(-90:1.5);
}}

 \foreach \i in {7.5-3}
  {
   \foreach \j in {4.5,5.5} 
{    \draw(\j,\i)--(\j+1,\i-1.5);}  
\draw(6.5,\i)--(4.5,\i-1.5);
    \foreach \j in {7.5,...,8.5} 
   { \draw(\j,\i)--++(-90:1.5);
}
   \foreach \j in {2.5,3.5} 
   { \draw(\j,\i)--++(-90:1.5);}
   }

 \foreach \i in {7.5-4.5}
  {
   \foreach \j in {0.5,1.5,...,6.5} 
{    \draw(\j,\i)--(\j+1,\i-1.5);}  
\draw(7.5,\i)--(0.5,\i-1.5);
\draw(8.5,\i)--++(-90:1.5);
 }

  \foreach \i in {7.5-6}
  {
   \foreach \j in {6.5,7.5} 
{    \draw(\j,\i)--(\j+1,\i-1.5);}  
\draw(8.5,\i)--(6.5,\i-1.5);
    \foreach \j in {0.5,1.5,...,5.5} 
   { \draw(\j,\i)--++(-90:1.5);
}
   }

\draw[cyan] (3.5-3,1+1.5*9) node[above] {$\eps_2$};
\draw[cyan] (4.5-3,1+1.5*9) node[above] {$\eps_3$};
\draw[cyan] (5.5-3,1+1.5*9) node[above] {$\eps_2$};

\draw[darkgreen] (3.5,1+1.5*9) node[above] {$\eps_2$};
\draw[darkgreen] (4.5,1+1.5*9) node[above] {$\eps_3$};
\draw[darkgreen] (5.5,1+1.5*9) node[above] {$\eps_3$};

 \draw[cyan] (6.5,1+1.5*9) node[above] {$\eps_3$};
\draw[cyan] (7.5,1+1.5*9) node[above] {$\eps_1$};
\draw[cyan] (8.5,1+1.5*9) node[above] {$\eps_3$};

\draw[cyan] (3.5-3,1.5*9) node[above] {$1$};
\draw[cyan] (4.5-3,1.5*9) node[above] {$2$};
\draw[cyan] (5.5-3,1.5*9) node[above] {$0$};

\draw[darkgreen] (3.5,1.5*9) node[above] {$4$};
\draw[darkgreen] (4.5,1.5*9) node[above] {$1$};
\draw[darkgreen] (5.5,1.5*9) node[above] {$0$};

 \draw[cyan] (6.5,1.5*9) node[above] {$4$};
\draw[cyan] (7.5,1.5*9) node[above] {$0$};
\draw[cyan] (8.5,1.5*9) node[above] {$3$};

    \end{tikzpicture} $$

    \caption{  We let $h=3$, $\ell=1$, $e=5$ and $\bet=\eps_1-\eps_2$ and $\gam=\eps_2-\eps_3$.   
  We picture ${\sf hex}^
    {\SSTP_{\bet\gam\bet}}
    _{\SSTP_{\gam\bet\gam}}
    $.   }
    \label{kjhdfljkdfghlsdkfhg}    \end{figure}

     \subsubsection{The KLR commutator}

Let $\gam,\bet\in \Pi$ be roots labelling commuting reflections. 
We wish to understand the  morphism relating the paths 
    $\SSTP_{\gam }\otimes \SSTP_{\bet}$ to $\SSTP_{\bet}\otimes \SSTP_\gam$.  
     We define the {\sf KLR-commutator} to be the element 
 $$ 
 {\sf com}^{\gam\bet}_{\bet\gam}:= 
\Upsilon   ^{\SSTP_{\gam }\otimes \SSTP_{\bet} }
  _{\SSTP_{\bet}\otimes \SSTP_\gam}. $$

  \begin{figure}[ht!]
  $$
    \begin{minipage}{3.4cm}\begin{tikzpicture}[scale=0.8]

    \draw[very thick,->](1.2,0.6)--++(0:0.6*4)--++(90:0.6*4);
    
    \clip(0.6*2,0.6) rectangle (0.6*6,0.6*5);

    \draw[very thick, darkgreen ]  (0.4*6,0)--++(90:0.6*6); 
    \draw[very thick, cyan]  (0,0.6*3)--++(0:0.6*8);

    \clip(0.6*2,0.6) rectangle (0.6*6,0.6*5);

    \path(0,0) coordinate (origin);
          \foreach \i in {-1,0,1,2,3,4,5,6,7,8,9}
  {
    \path  (origin)++(90:0.6*\i cm)  coordinate (a\i);
     }
     
               \foreach \i in {-1,0,1,2,3,4,5,6,7,8,9}
  {
     \path (origin)++(0:0.6*\i cm)  coordinate (b\i);
    }

      \path(0.6*8,0) coordinate (origin);
           \foreach \i in {-1,0,1,2,3,4,5,6,7,8,9}
  {
    \path  (origin)++(90:0.6*\i cm)  coordinate (c\i);
     }

      \path(0,0.6*6) coordinate (origin);
           \foreach \i in {-1,0,1,2,3,4,5,6,7,8,9}
  {
    \path  (origin)++(90:0.6*\i cm)  coordinate (d\i);
     }

  \foreach \i in {0,1,2,3,4,5,6,7,8}
  {
    \draw[gray, densely dotted] (a\i)--(c\i);
         \draw[gray, densely dotted] (b\i)--++(90:6*0.6); 
     
     }

 \end{tikzpicture}\end{minipage}
 \quad   \begin{minipage}{3.4cm}\begin{tikzpicture}[scale=0.8]

    \draw[very thick,->](1.2,0.6)--++(90:0.6*4)--++(0:0.6*4);
    
    \clip(0.6*2,0.6) rectangle (0.6*6,0.6*5);

    \draw[very thick, darkgreen ]  (0.4*6,0)--++(90:0.6*6); 
    \draw[very thick, cyan]  (0,0.6*3)--++(0:0.6*8);

    \clip(0.6*2,0.6) rectangle (0.6*6,0.6*5);

    \path(0,0) coordinate (origin);
          \foreach \i in {-1,0,1,2,3,4,5,6,7,8,9}
  {
    \path  (origin)++(90:0.6*\i cm)  coordinate (a\i);
     }
     
               \foreach \i in {-1,0,1,2,3,4,5,6,7,8,9}
  {
     \path (origin)++(0:0.6*\i cm)  coordinate (b\i);
    }

      \path(0.6*8,0) coordinate (origin);
           \foreach \i in {-1,0,1,2,3,4,5,6,7,8,9}
  {
    \path  (origin)++(90:0.6*\i cm)  coordinate (c\i);
     }

      \path(0,0.6*6) coordinate (origin);
           \foreach \i in {-1,0,1,2,3,4,5,6,7,8,9}
  {
    \path  (origin)++(90:0.6*\i cm)  coordinate (d\i);
     }

  \foreach \i in {0,1,2,3,4,5,6,7,8}
  {
    \draw[gray, densely dotted] (a\i)--(c\i);
         \draw[gray, densely dotted] (b\i)--++(90:6*0.6); 
     
     }

 \end{tikzpicture}\end{minipage} 
  \begin{minipage}{3.4cm}\begin{tikzpicture}[scale=0.58]     
  
    \draw[very thick,   <-> ]  (0.4*6,0.9)--++(90:0.6*3); 
    \draw[very thick, <->]  (0.6*2.5,0.6*3)--++(0:0.6*3); 
   
 \draw(0.6,0.6*3) node {$+\eps_3$};
  \draw(0.6*7,0.6*3.1) node {$+\eps_{4}$};

  \draw(0.6*4,0.6*0.75) node {$+\eps_{1}$};
  \draw(0.6*4,0.6*6-0.6*0.75) node {$+\eps_{2}$};



 \end{tikzpicture}\end{minipage}
 $$  
  \caption{
  We let $h=1$, $\ell=4$, $\kappa=(0,2,4,6)\in (\ZZ/8\ZZ)^4$ and $\bet=\eps_1-\eps_2$ and $\gam=\eps_3-\eps_4$. 
We picture the     paths    $\SSTP_{\gam\bet}$ and $\SSTP_{\bet\gam}$,  the corresponding element $ {\sf com}_{\gam\bet}^{\bet\gam}$ is depicted in \cref{kjhdfljkdfghlsdkfhg222222222222}.}
\label{chopin}   \end{figure}

 \begin{figure}[ht!] 
  $$  
 \begin{tikzpicture}
  [xscale=0.5,yscale=  
 - 0.5]
      \foreach \i in {0,1.5,3,...,16.5}
 {\draw(0,\i)--++(0:16);}
      \draw(0,0) rectangle (16,1.5*11);
        \foreach \i in {0.5,1.5,...,15.5}
  {
   \fill(\i,0) circle(1.5pt) coordinate (a\i);
          \fill(\i,1.5)circle(1.5pt)  coordinate (d\i);
   \fill(\i,3) circle(1.5pt) coordinate (a\i);
          \fill(\i,4.5)circle(1.5pt)  coordinate (d\i);
   \fill(\i,6) circle(1.5pt) coordinate (a\i);
          \fill(\i,7.5)circle(1.5pt)  coordinate (d\i);
   \fill(\i,9) circle(1.5pt) coordinate (a\i);
          \fill(\i,10.5)circle(1.5pt)  coordinate (d\i);
   \fill(\i,12) circle(1.5pt) coordinate (a\i);
          \fill(\i,13.5)circle(1.5pt)  coordinate (d\i);
                \fill(\i,12+1.5)circle(1.5pt)  coordinate (d\i);
                \fill(\i,12+1.5*2)circle(1.5pt);
                                \fill(\i,12+1.5*3)circle(1.5pt);        
                               }

\draw[cyan] (3.5-3,0) node[above] {$2$};
\draw[cyan] (4.5-3,0) node[above] {$4$};
\draw[cyan] (5.5-3,0) node[above] {$6$};

\draw[cyan] (3.5,0) node[above] {$1$};
\draw[cyan] (4.5,0) node[above] {$ 3$};
\draw[cyan] (5.5,0) node[above] {$5 $};

 \draw[cyan] (6.5,0) node[above] {$0 $};
\draw[cyan] (7.5,0) node[above] {$7 $};

\draw[cyan] (3.5-3,-1) node[above] {$\eps_2$};
\draw[cyan] (4.5-3,-1) node[above] {$\eps_3$};
\draw[cyan] (5.5-3,-1) node[above] {$\eps_4$};

\draw[cyan] (3.5,-1) node[above] {$\eps_2$};
\draw[cyan] (4.5,-1) node[above] {$\eps_3$};
\draw[cyan] (5.5,-1) node[above] {$\eps_4$};

 \draw[cyan] (6.5,-1) node[above] {$\eps_2$};
\draw[cyan] (7.5,-1) node[above] {$\eps_2$};

 \draw[darkgreen] (8+3.5-3,-1) node[above] {$\eps_2$};
\draw[darkgreen] (8+4.5-3,-1) node[above] {$\eps_1$};
\draw[darkgreen] (8+5.5-3,-1) node[above] {$\eps_4$};

\draw[darkgreen] (8+3.5,-1) node[above] {$\eps_2$};
\draw[darkgreen] (8+4.5,-1) node[above] {$\eps_1$};
\draw[darkgreen] (8+5.5,-1) node[above] {$\eps_4$};

 \draw[darkgreen] (8+6.5,-1) node[above] {$\eps_4$};
\draw[darkgreen] (8+7.5,-1) node[above] {$\eps_4$};

\draw[darkgreen] (8+3.5-3,0) node[above] {$6$};
\draw[darkgreen] (8+4.5-3,0) node[above] {$0$};
\draw[darkgreen] (8+5.5-3,0) node[above] {$4$};

\draw[darkgreen] (8+3.5,0) node[above] {$5$};
\draw[darkgreen] (8+4.5,0) node[above] {$7$};
\draw[darkgreen] (8+5.5,0) node[above] {$3 $};

 \draw[darkgreen] (8+6.5,0) node[above] {$2 $};
\draw[darkgreen] (8+7.5,0) node[above] {$1 $};

  \foreach \i in {1*1.5 }
  {
    \foreach \j in {3.5,4.5,...,15.5} 
   { \draw(\j,\i)--++(-90:1.5) ;
}
\draw(0.5,\i)--++(-90:1.5) ;
\draw(1.5,\i)-- (2.5,\i-1.5) ;
\draw(2.5,\i)-- (1.5,\i-1.5) ;
}

  \foreach \i in {2*1.5 }
  {
    \foreach \j in {4.5,5.5,...,15.5} 
   { \draw(\j,\i)--++(-90:1.5) ;
}
\draw(0.5,\i)--++(-90:1.5) ;
\draw(1.5,\i)--++(-90:1.5) ;

\draw(1+1.5,\i)-- (1+2.5,\i-1.5) ;
\draw(1+2.5,\i)-- (1+1.5,\i-1.5) ;
}

  \foreach \i in {3*1.5 }
  {
    \foreach \j in {6.5,7.5,...,15.5} 
   { \draw(\j,\i)--++(-90:1.5) ;
}
\draw(0.5,\i)--++(-90:1.5) ;
\draw(1.5,\i)--++(-90:1.5) ;
\draw(2.5,\i)--++(-90:1.5) ;

\draw(1+4.5,\i)-- (1+3.5,\i-1.5) ;
\draw(1+3.5,\i)-- (1+2.5,\i-1.5) ;
\draw(1+2.5,\i)-- (1+4.5,\i-1.5) ;
}

  \foreach \i in {4*1.5 }
  {
    \foreach \j in {7.5,8.5,...,15.5} 
   { \draw(\j,\i)--++(-90:1.5) ;
}
\draw(0.5,\i)--++(-90:1.5) ;
\draw(1.5,\i)--++(-90:1.5) ;
\draw(2.5,\i)--++(-90:1.5) ;
\draw(3.5,\i)--++(-90:1.5) ;

\draw(1+1+4.5,\i)-- (1+1+3.5,\i-1.5) ;
\draw(1+1+3.5,\i)-- (1+1+2.5,\i-1.5) ;
\draw(1+1+2.5,\i)-- (1+1+4.5,\i-1.5) ;
}

  \foreach \i in {5*1.5 }
  {
    \foreach \j in {8.5,9.5,...,15.5} 
   { \draw(\j,\i)--++(-90:1.5) ;
}
\draw(0.5,\i)--++(-90:1.5) ;
\draw(1.5,\i)--++(-90:1.5) ;
\draw(2.5,\i)--++(-90:1.5) ;
\draw(3.5,\i)--++(-90:1.5) ;
\draw(4.5,\i)--++(-90:1.5) ;
\draw(5.5,\i)--++(-90:1.5) ;

\draw(1+2+4.5,\i)-- (1+2+3.5,\i-1.5) ;
\draw(1+2+3.5,\i)-- (1+2+4.5,\i-1.5) ;
 }

  \foreach \i in {6*1.5 }
  {
    \foreach \j in {10.5,11.5,...,15.5} 
   { \draw(\j,\i)--++(-90:1.5) ;
} 
\draw(9.5,\i)-- (8.5,\i-1.5) ;
\draw(8.5,\i)-- (7.5,\i-1.5) ;
\draw(-1+8.5,\i)-- (-1+7.5,\i-1.5) ;
\draw(-1+-1+8.5,\i)-- (-1+-1+7.5,\i-1.5) ;
\draw(-1+-1+-1+8.5,\i)-- (-1+-1+-1+7.5,\i-1.5) ;
\draw(-1+-1+-1+-1+8.5,\i)-- (-1+-1+-1+-1+7.5,\i-1.5) ;
\draw(-1+-1+-1+-1+-1+8.5,\i)-- (-1+-1+-1+-1+-1+7.5,\i-1.5) ;
\draw(-1+-1+-1+-1+-1+-1+8.5,\i)-- (-1+-1+-1+-1+-1+-1+7.5,\i-1.5) ;
\draw(-1-1+-1+-1+-1+-1+-1+8.5,\i)-- (-1-1+-1+-1+-1+-1+-1+7.5,\i-1.5) ;
\draw(0.5,\i)-- (9.5,\i-1.5) ;
 }

  \foreach \i in {7*1.5 }
  {
    \foreach \j in {11.5,12.5,...,15.5} 
   { \draw(\j,\i)--++(-90:1.5) ;
}
\draw(0.5,\i)--++(-90:1.5) ;
\draw(1.5,\i)--++(-90:1.5) ;
\draw(2.5,\i)--++(-90:1.5) ;
\draw(3.5,\i)--++(-90:1.5) ;
\draw(4.5,\i)--++(-90:1.5) ;
 
\draw(6.5,\i)--(5.5,\i-1.5) ;
\draw(1+6.5,\i)--(1+5.5,\i-1.5) ;
\draw(1+1+6.5,\i)--(1+1+5.5,\i-1.5) ;
\draw(1+1+1+6.5,\i)--(1+1+1+5.5,\i-1.5) ;
\draw(1+1+1+1+6.5,\i)--(1+1+1+1+5.5,\i-1.5) ;

\draw(5.5,\i)--(10.5,\i-1.5) ;
  }

  \foreach \i in {8*1.5 }
  {
    \foreach \j in {13.5,14.5,...,15.5} 
   { \draw(\j,\i)--++(-90:1.5) ;
}
\draw(0.5,\i)--++(-90:1.5) ;
\draw(1.5,\i)--++(-90:1.5) ;
\draw(2.5,\i)--++(-90:1.5) ;

 \draw(-1+2+6.5,\i)--(-1+2+5.5,\i-1.5) ;
 \draw(-1+-1+2+6.5,\i)--(-1+-1+2+5.5,\i-1.5) ;
 \draw(-1+-1+-1+2+6.5,\i)--(-1+-1+-1+2+5.5,\i-1.5) ;
 \draw(-1+-1+-1+-1+2+6.5,\i)--(-1+-1+-1+-1+2+5.5,\i-1.5) ;

\draw(2+6.5,\i)--(2+5.5,\i-1.5) ;
\draw(2+1+6.5,\i)--(2+1+5.5,\i-1.5) ;
\draw(2+1+1+6.5,\i)--(2+1+1+5.5,\i-1.5) ;
\draw(2+1+1+1+6.5,\i)--(2+1+1+1+5.5,\i-1.5) ;
\draw(2+1+1+1+1+6.5,\i)--(2+1+1+1+1+5.5,\i-1.5) ;

\draw(3.5,\i)--(2+10.5,\i-1.5) ;
  }

  \foreach \i in {9*1.5 }
  {
    \foreach \j in {14.5,15.5} 
   { \draw(\j,\i)--++(-90:1.5) ;
}
\draw(0.5,\i)--++(-90:1.5) ;
\draw(1.5,\i)--++(-90:1.5) ;
\draw(2.5,\i)--++(-90:1.5) ;

 \draw(-1+-1+2+6.5,\i)--(-1+-1+2+5.5+1,\i-1.5) ;
 \draw(-1+-1+-1+2+6.5,\i)--(-1+-1+-1+2+5.5+1,\i-1.5) ;
  \draw(-1-1+-1+-1+2+6.5,\i)--(-1-1+-1+-1+2+5.5+1,\i-1.5) ;
   \draw(-1-1-1+-1+-1+2+6.5,\i)--(-1-1-1+-1+-1+2+5.5+1,\i-1.5) ;

\draw(2+6.5,\i)--(2+5.5,\i-1.5) ;
\draw(2+1+6.5,\i)--(2+1+5.5,\i-1.5) ;
\draw(2+1+1+6.5,\i)--(2+1+1+5.5,\i-1.5) ;
\draw(2+1+1+1+6.5,\i)--(2+1+1+1+5.5,\i-1.5) ;
\draw(2+1+1+1+1+6.5,\i)--(2+1+1+1+1+5.5,\i-1.5) ;
\draw(2+1+1+1+1+1+6.5,\i)--(2+1+1+1+1+1+5.5,\i-1.5) ;

\draw(7.5,\i)--(2+11.5,\i-1.5) ;
  }

  \foreach \i in {10*1.5 }
  {
    \foreach \j in { 15.5} 
   { \draw(\j,\i)--++(-90:1.5) ;
}
\draw(0.5,\i)--++(-90:1.5) ;
\draw(1.5,\i)--++(-90:1.5) ;
\draw(2.5,\i)--++(-90:1.5) ;
\draw(7.5,\i)--++(-90:1.5) ;
\draw(8.5,\i)--++(-90:1.5) ;

 \draw(-1+-1+2+6.5,\i)--(-1+-1+2+5.5+1,\i-1.5) ;
 \draw(-1+-1+-1+2+6.5,\i)--(-1+-1+-1+2+5.5+1,\i-1.5) ;
  \draw(-1-1+-1+-1+2+6.5,\i)--(-1-1+-1+-1+2+5.5+1,\i-1.5) ;
   \draw(-1-1-1+-1+-1+2+6.5,\i)--(-1-1-1+-1+-1+2+5.5+1,\i-1.5) ;

\draw(1+2+1+6.5,\i)--(1+2+1+5.5,\i-1.5) ;
\draw(1+2+1+1+6.5,\i)--(1+2+1+1+5.5,\i-1.5) ;
\draw(1+2+1+1+1+6.5,\i)--(1+2+1+1+1+5.5,\i-1.5) ;
\draw(1+2+1+1+1+1+6.5,\i)--(1+2+1+1+1+1+5.5,\i-1.5) ;
\draw(1+2+1+1+1+1+1+6.5,\i)--(1+2+1+1+1+1+1+5.5,\i-1.5) ;

\draw(1+1+7.5,\i)--(1+2+11.5,\i-1.5) ;
  }

  \foreach \i in {11*1.5 }
{\draw(0.5,\i)--++(-90:1.5) ;
\draw(1.5,\i)--++(-90:1.5) ;
\draw(2.5,\i)--++(-90:1.5) ;
\draw(7.5,\i)--++(-90:1.5) ;
\draw(8.5,\i)--++(-90:1.5) ;
\draw(9.5,\i)--++(-90:1.5) ;

\draw(10.5,\i)--++(-90:1.5) ;

 \draw(-1+-1+2+6.5,\i)--(-1+-1+2+5.5+1,\i-1.5) ;
 \draw(-1+-1+-1+2+6.5,\i)--(-1+-1+-1+2+5.5+1,\i-1.5) ;
  \draw(-1-1+-1+-1+2+6.5,\i)--(-1-1+-1+-1+2+5.5+1,\i-1.5) ;
   \draw(-1-1-1+-1+-1+2+6.5,\i)--(-1-1-1+-1+-1+2+5.5+1,\i-1.5) ;

  \draw(1+1+2+1+1+5.5,\i)--(1+1+2+1+1+5.5,\i-1.5) ;
\draw(1+1+2+1+1+1+6.5,\i)--(1+1+2+1+1+1+5.5,\i-1.5) ;
\draw(1+1+2+1+1+1+1+6.5,\i)--(1+1+2+1+1+1+1+5.5,\i-1.5) ;
\draw(1+1+2+1+1+1+1+1+6.5,\i)--(1+1+2+1+1+1+1+1+5.5,\i-1.5) ;

\draw(1+1+1+1+8.5,\i)--(2 +2+11.5,\i-1.5) ;
  }

\draw[darkgreen] (3.5-3,16*1.5+1-4.5-3) node[above] {$0$};
\draw[darkgreen] (4.5-3,16*1.5+1-4.5-3) node[above] {$2$};
\draw[darkgreen] (5.5-3,16*1.5+1-4.5-3) node[above] {$6$};

\draw[darkgreen] (3.5,16*1.5+1-4.5-3) node[above] {$7$};
\draw[darkgreen] (4.5,16*1.5+1-4.5-3) node[above] {$ 1$};
\draw[darkgreen] (5.5,16*1.5+1-4.5-3) node[above] {$5 $};

 \draw[darkgreen] (6.5,16*1.5+1-4.5-3) node[above] {$4 $};
\draw[darkgreen] (7.5,16*1.5+1-4.5-3) node[above] {$3  $};

\draw[darkgreen] (3.5-3,16*1.5+1+1-4.5-3) node[above] {$\eps_1$};
\draw[darkgreen] (4.5-3,16*1.5+1+1-4.5-3) node[above] {$\eps_2$};
\draw[darkgreen] (5.5-3,16*1.5+1+1-4.5-3) node[above] {$\eps_4$};

\draw[darkgreen] (3.5,16*1.5+1+1-4.5-3) node[above] {$\eps_1$};
\draw[darkgreen] (4.5,16*1.5+1+1-4.5-3) node[above] {$\eps_2$};
\draw[darkgreen] (5.5,16*1.5+1+1-4.5-3) node[above] {$\eps_4$};

 \draw[darkgreen] (6.5,16*1.5+1+1-4.5-3) node[above] {$\eps_4$};
\draw[darkgreen] (7.5,16*1.5+1+1-4.5-3) node[above] {$\eps_4$};

 \draw[cyan] (8+3.5-3,16*1.5+1+1-4.5-3) node[above] {$\eps_2$};
\draw[cyan] (8+4.5-3,16*1.5+1+1-4.5-3) node[above] {$\eps_4$};
\draw[cyan] (8+5.5-3,16*1.5+1+1-4.5-3) node[above] {$\eps_3$};

\draw[cyan] (8+3.5,16*1.5+1+1-4.5-3) node[above] {$\eps_2$};
\draw[cyan] (8+4.5,16*1.5+1+1-4.5-3) node[above] {$\eps_4$};
\draw[cyan] (8+5.5,16*1.5+1+1-4.5-3) node[above] {$\eps_3$};

 \draw[cyan] (8+6.5,16*1.5+1+1-4.5-3) node[above] {$\eps_2$};
\draw[cyan] (8+7.5,16*1.5+1+1-4.5-3) node[above] {$\eps_2$};

\draw[cyan] (8+3.5-3,16*1.5+1-4.5-3) node[above] {$0$};
\draw[cyan] (8+4.5-3,16*1.5+1-4.5-3) node[above] {$2$};
\draw[cyan] (8+5.5-3,16*1.5+1-4.5-3) node[above] {$4$};

\draw[cyan] (8+3.5,16*1.5+1-4.5-3) node[above] {$7$};
\draw[cyan] (8+4.5,16*1.5+1-4.5-3) node[above] {$1$};
\draw[cyan] (8+5.5,16*1.5+1-4.5-3) node[above] {$3 $};

 \draw[cyan] (8+6.5,16*1.5+1-4.5-3) node[above] {$6 $};
\draw[cyan] (8+7.5,16*1.5+1-4.5-3) node[above] {$5 $};

    \end{tikzpicture} $$

    \caption{     We let $h=1$, $\ell=4$, $\kappa=(0,2,4,6)\in (\ZZ/8\ZZ)^4$ and $\bet=\eps_1-\eps_2$ and $\gam=\eps_3-\eps_4$. 
We picture the element $ {\sf com}_{\gam\bet}^{\bet\gam}$, the corresponding paths are depicted in \cref{chopin}.}
    \label{kjhdfljkdfghlsdkfhg222222222222}    \end{figure}

   \subsection{Generator morphisms in non-zero degree}\label{KLRSPOTSECTION}
We have already seen how to pass between 
  $\SSTS,\SSTT\in \Std_{n,\sigma}(\la)$ any two {\em reduced} paths.  
  We will now see how to inflate a reduced path to obtain a non-reduced path.  
    Given $\SSTS,\SSTT\in \Std_{n,\sigma}(\la)$, we suppose that the former is obtained from the latter by 
    inflating by a path through a  single hyperplane $\al\in \Pi$.  
    Of course, since $\SSTS $ and $\SSTT$ have the same shape, this inflation must add an  
      $\SSTP_\al^\flat$  at some point (and will involve removing  an occurrence of $\SSTT_\emp$ in order to preserve $n$).     
        There are two ways which one can approach a hyperplane: from above or from below.   
Adding an   upward/downward occurrence of $\SSTP_\al^\flat$ corresponds to the spot/fork Soergel generator.

 \subsubsection{The spot morphism}
We now define the     morphism which 
corresponds to   reflection towards the origin through the hyperplane labelled by $\al \in \Pi$.
We   consider the paths 
\begin{align*}
\SSTP_{\emp}  = 
(\varepsilon_{1 }, \mydots, \varepsilon_{i-1 }
 ,  {\varepsilon_{ i  }},   \varepsilon_{ i+1  },\mydots, \varepsilon_{\aatch   })^\exx 
\qquad 
\SSTP_{\al }^\flat 
= 
(\varepsilon_{1 }, \mydots, \varepsilon_{i-1 }
 , \widehat{\varepsilon_{ i  }},   \varepsilon_{ i+1  },\mydots, \varepsilon_{\aatch   })^\exx 
 \boxtimes (\eps_i)^\exx
 \end{align*}
 examples of these paths are depicted in \cref{pathbendingforkspot}.  
We define the    {\sf KLR-spot} to be the element 
 $$ 
 {\sf spot}^{\emp}_{\al}:= 
\Upsilon   ^{\SSTP_{\emp } }
  _{\SSTP_{\al }^\flat} $$ 
which is of degree $+1$ (corresponding to the unique step of off the $\al$-hyperplane).  
 We have already  constructed an example of  an element ${\sf spot}^\emp_\al$ in great detail over the course of 
\cref{concanetaion,concanetaion2,concanetaion3,reducedeg}.

\begin{figure}[ht!]
$$  
  \begin{minipage}{2.6cm}\begin{tikzpicture}[scale=0.65] 
    
    \path(0,0) coordinate (origin);
          \foreach \i in {0,1,2,3,4,5}
  {
    \path  (origin)++(60:0.6*\i cm)  coordinate (a\i);
    \path (origin)++(0:0.6*\i cm)  coordinate (b\i);
     \path (origin)++(-60:0.6*\i cm)  coordinate (c\i);
    }
  
      \path(3,0) coordinate (origin);
          \foreach \i in {0,1,2,3,4,5}
  {
    \path  (origin)++(120:0.6*\i cm)  coordinate (d\i);
    \path (origin)++(180:0.6*\i cm)  coordinate (e\i);
     \path (origin)++(-120:0.6*\i cm)  coordinate (f\i);
    }

  \foreach \i in {0,1,2,3,4,5}
  {
    \draw[gray, densely dotted] (a\i)--(b\i);
        \draw[gray, densely dotted] (c\i)--(b\i);
    \draw[gray, densely dotted] (d\i)--(e\i);
        \draw[gray, densely dotted] (f\i)--(e\i);
            \draw[gray, densely dotted] (a\i)--(d\i);
                \draw[gray, densely dotted] (c\i)--(f\i);
 
     }
  \draw[very thick, magenta] (0,0)--++(0:3) ;
    \draw[very thick, cyan] (3,0)--++(120:3) coordinate (hi);
        \draw[very thick, darkgreen] (hi)--++(-120:3) coordinate  (hi);

        \draw[very thick, darkgreen] (hi)--++(-60:3) coordinate  (hi);

    \draw[very thick, cyan] (hi)--++(60:3) coordinate (hi);

      \path(0,0)--++(-60:5*0.6)--++(120:2*0.6)--++(0:0.6) coordinate (hi);
     \path(0,0)--++(0:4*0.6)--++(120:3*0.6)           coordinate(hi2) ; 
     \path(0,0)--++(0:4*0.6)          coordinate(hi3) ;

          \path(hi)  --++(120:0.6)
           coordinate(step1) 
          --++(0:0.6)
                     coordinate(step2) 
      --++(120:0.6)
                 coordinate(step3) 
                 --++(0:0.6)           coordinate(step4) 
      --++(120:0.6) 
                 coordinate(step5)  --++(0:0.6) 
     coordinate(step6)   ;

       \path(step6)   --++(-120:0.6*3)--++(30:0.1)  coordinate(hi4)   ;

        \draw[ thick,->]    (hi)  to [out=90,in=-30]  (step1) 
          to [out=-30,in=-150]
                              (step2) 
 to [out=90,in=-30] 
            (step3) 
                     to [out=-30,in=-150]       (step4) 
  to [out=90,in=-30] 
             (step5)       to [out=-30,in=-150]   (step6) 
              (step6) 
              to [out=-100,in=35] (hi4)  ;

    \end{tikzpicture}\end{minipage}
\quad
  \begin{minipage}{2.6cm}\begin{tikzpicture}[scale=0.65] 
    
    \path(0,0) coordinate (origin);
          \foreach \i in {0,1,2,3,4,5}
  {
    \path  (origin)++(60:0.6*\i cm)  coordinate (a\i);
    \path (origin)++(0:0.6*\i cm)  coordinate (b\i);
     \path (origin)++(-60:0.6*\i cm)  coordinate (c\i);
    }
  
      \path(3,0) coordinate (origin);
          \foreach \i in {0,1,2,3,4,5}
  {
    \path  (origin)++(120:0.6*\i cm)  coordinate (d\i);
    \path (origin)++(180:0.6*\i cm)  coordinate (e\i);
     \path (origin)++(-120:0.6*\i cm)  coordinate (f\i);
    }

  \foreach \i in {0,1,2,3,4,5}
  {
    \draw[gray, densely dotted] (a\i)--(b\i);
        \draw[gray, densely dotted] (c\i)--(b\i);
    \draw[gray, densely dotted] (d\i)--(e\i);
        \draw[gray, densely dotted] (f\i)--(e\i);
            \draw[gray, densely dotted] (a\i)--(d\i);
                \draw[gray, densely dotted] (c\i)--(f\i);
 
     }
  \draw[very thick, magenta] (0,0)--++(0:3) ;
    \draw[very thick, cyan] (3,0)--++(120:3) coordinate (hi);
        \draw[very thick, darkgreen] (hi)--++(-120:3) coordinate  (hi);

        \draw[very thick, darkgreen] (hi)--++(-60:3) coordinate  (hi);

    \draw[very thick, cyan] (hi)--++(60:3) coordinate (hi);

      \path(0,0)--++(-60:5*0.6)--++(120:2*0.6)--++(0:0.6) coordinate (hi);
     \path(0,0)--++(0:4*0.6)--++(120:3*0.6)           coordinate(hi2) ; 
     \path(0,0)--++(0:4*0.6)          coordinate(hi3) ;

          \path(hi)  --++(120:0.6)
           coordinate(step1) 
          --++(0:0.6)
                     coordinate(step2) 
      --++(120:0.6)
                 coordinate(step3) 
                 --++(0:0.6)           coordinate(step4) 
      --++(120:0.6) 
                 coordinate(step5)  --++(0:0.6) 
                 coordinate(step6)   ;

           \path(0,0)--++(0:4*0.6)--++(120:3*0.6)           coordinate(hin) ; 

                     \path(0,0)--++(0:4*0.6)--++(120:2.85*0.6)           coordinate(hi4) ;

                      \path(hi) --++(-60:0.05) coordinate (boo);
    \path(step1) --++(135:0.05) coordinate (boo2);
        \path(step2) --++(135:0.05) coordinate (boo3);
        \draw[ thick,->  ]    
        (hi)  to [out=90,in=-30]  (step1) 
          to [out=-30,in=-150]  (step2) 
         to  [out=-90,in=30]  (boo); 
          

    \end{tikzpicture}\end{minipage}
$$
\caption{
We let $h=3$, $\ell=1$, $e=5$ and $\al=\eps_3-\eps_1$  and we depict the paths $\SSTP_\al^\flat $ and $\SSTP_\emp $ (we actually only depict $\SSTP_\emptyset$ which is a third of the path $\SSTP_\emp $).  
 We have already  constructed the corresponding element ${\sf spot}^\emp_\al$ in great detail over the course of 
\cref{concanetaion,concanetaion2,concanetaion3,reducedeg}.  
}
\label{pathbendingforkspot}
\end{figure}

  \subsubsection{The fork morphism}

We wish to understand the  morphism from 
    $\SSTP_{\al }\otimes\reflectpath$ to $\SSTP_{\emp}\otimes \SSTP_{\al}$.  
We  define the {\sf KLR-fork} to be the elements  
 $${\sf fork}_{\al\al}^{\emp\al}:=		 
 \Upsilon _{ \SSTP_{\al }\otimes\reflectpath }^{\SSTP_{{\emp}}\otimes \SSTP_{{\al}}}
 $$  
 The element $ {\sf fork}_{\al\al}^{\emp\al}$ is of degree $-1$.   
 \begin{figure}[ht!]
   \begin{minipage}{12cm}
 \begin{tikzpicture}
  [xscale=0.6,yscale=  
 -0.6]

      \foreach \i in {0,1.5,3,...,10.5}
 {\draw(0,\i)--++(0:18);}
      \draw(0,0) rectangle (18,10.5);
        \foreach \i in {0.5,1.5,...,17.5}
  {
   \fill(\i,0) circle(1.5pt) coordinate (a\i);
          \fill(\i,1.5)circle(1.5pt)  coordinate (d\i);
   \fill(\i,3) circle(1.5pt) coordinate (a\i);
          \fill(\i,4.5)circle(1.5pt)  coordinate (d\i);
   \fill(\i,6) circle(1.5pt) coordinate (a\i);
          \fill(\i,7.5)circle(1.5pt)  coordinate (d\i);
   \fill(\i,9) circle(1.5pt) coordinate (a\i);
          \fill(\i,10.5)circle(1.5pt)  coordinate (d\i); 
   }


\draw(9.5,0)--++(90:1*1.5)--(2.5,3)--++(90:5*1.5);

\draw(11.5,0)--++(90:3*1.5)--(5.5,4*1.5)--++(90:3*1.5);

\draw(13.5,0)--++(90:5*1.5)--(8.5,6*1.5)--++(90:1*1.5);

    \scalefont{0.9}
\draw (-3+3.5,0) node[above] {$\eps_1$};
\draw (-3+4.5,0) node[above] {$\eps_2$};
\draw (-3+5.5,0) node[above] {$\eps_1$};
\draw (-3+6.5,0) node[above] {$\eps_2$};
\draw (-3+7.5,0) node[above] {$\eps_1$};
\draw (-3+8.5,0) node[above] {$\eps_2$};
\draw (-3+9.5,0) node[above] {$\eps_1$};
\draw (-3+10.5,0) node[above] {$\eps_1$};
\draw (-3+11.5,0) node[above] {$\eps_1$};
\draw (9+-3+3.5,0) node[above] {$\eps_3$};
\draw (9+-3+4.5,0) node[above] {$\eps_2$};
\draw (9+-3+5.5,0) node[above] {$\eps_3$};
\draw (9+-3+6.5,0) node[above] {$\eps_2$};
\draw (9+-3+7.5,0) node[above] {$\eps_3$};
\draw (9+-3+8.5,0) node[above] {$\eps_2$};
\draw (9+-3+9.5,0) node[above] {$\eps_1$};
\draw (9+-3+10.5,0) node[above] {$\eps_1$};
\draw (9+-3+11.5,0) node[above] {$\eps_1$};

\draw (20,0) node  {$ \SSTP_\al \otimes \SSTP_\al^\flat $};
\draw (20,10.5) node  {$ \SSTP_\emp  \otimes \SSTP_\al  $};

\draw (-3+3.5,10.5) node[below] {$\eps_1$};
\draw (-3+4.5,10.5) node[below] {$\eps_2$};
\draw (-3+5.5,10.5) node[below] {$\eps_3$};
\draw (-3+6.5,10.5) node[below] {$\eps_1$};
\draw (-3+7.5,10.5) node[below] {$\eps_2$};
\draw (-3+8.5,10.5) node[below] {$\eps_3$};
\draw (-3+9.5,10.5) node[below] {$\eps_1$};
\draw (-3+10.5,10.5) node[below] {$\eps_2$};
\draw (-3+11.5,10.5) node[below] {$\eps_3$};
\draw (9+-3+3.5,10.5) node[below] {$\eps_1$};
\draw (9+-3+4.5,10.5) node[below] {$\eps_2$};
\draw (9+-3+5.5,10.5) node[below] {$\eps_1$};
\draw (9+-3+6.5,10.5) node[below] {$\eps_2$};
\draw (9+-3+7.5,10.5) node[below] {$\eps_1$};
\draw (9+-3+8.5,10.5) node[below] {$\eps_2$};
\draw (9+-3+9.5,10.5) node[below] {$\eps_1$};
\draw (9+-3+10.5,10.5) node[below] {$\eps_1$};
\draw (9+-3+11.5,10.5) node[below] {$\eps_1$};

\draw(2+4.5,0)--(2+4.5,1.5)--(2+5.5,3)--(2+5.5,4.5)--(2+6.5,6) --(2+6.5,7.5)
--(3+6.5,9)--(3+6.5,10.5)  ;

\draw(0.5,0)--++(90:7*1.5);
\draw(1.5,0)--++(90:7*1.5);
\draw(9+-3+11.5,0)--++(90:7*1.5);
\draw(9+-4+11.5,0)--++(90:7*1.5);
\draw(9+-5+11.5,0)--++(90:7*1.5);
\draw(9+-6+11.5,0)--++(90:7*1.5);
\draw(2.5,0)--(2.5,1.5)--(3.5,3)--++(90:5*1.5);
\draw(3.5,0)--(3.5,1.5)--(4.5,3)--++(90:5*1.5);

\draw(4.5,0)--(4.5,1.5)--(5.5,3)--(5.5,4.5)--(6.5,6)--++(90:3*1.5);
\draw(1+4.5,0)--(1+4.5,1.5)--(1+5.5,3)--(1+5.5,4.5)--(1+6.5,6)--++(90:3*1.5);

\draw(8.5,0)--(8.5,1.5)--(8.5+5,6*1.5)--++(90:1.5)  ;

 \draw(12.5,0)--(12.5,4*1.5) --(11.5,5*1.5) 
--(12.5,6*1.5)--(12.5,7*1.5)
  ;

\draw(-1+1+1+2+4.5,0)--(-1+1+1+2+4.5,1.5)--(-1+1+1+2+5.5,3)--(-1+1+1+1+2+5.5,4.5)--( +1+1+2+6.5,6) --(-1+1+2+2+6.5,7.5)
--(-1+1+2+3+6.5,9)--(-2+2+2+3+6.5,10.5)  ;

\draw(10.5,0)--(10.5,2*1.5) --(8.5,3*1.5) --(9.5,4*1.5) 
--(9.5,5*1.5)
--(10.5,6*1.5)
--(10.5,7*1.5)
 ;

  \end{tikzpicture} 
    \end{minipage}  
  \caption{ Fix $\ell=1$ and $h=3$ and $e=5$ and $\al=\varepsilon_3-\varepsilon_1$
 (so that $\exx=3$).    We picture the element  $ \Upsilon^{\SSTP_\al\otimes\reflectpath}_
{\SSTP_\emp\otimes \SSTP_\al } $. The first 9 and final 4  of the branching coefficients are trivial and so we do not waste trees by picturing all of them. 
The corresponding paths are pictured in 
\cref{pathbendingfork}.    }
 \label{reducedeasfjkdhklfghsdlkfghsdfgkjsdhfglsdifjghdsfgksdfgjhsdfglg}
 \end{figure}

\begin{figure}[ht!]
$$
  \begin{minipage}{2.6cm}\begin{tikzpicture}[scale=0.65] 
    
    \path(0,0) coordinate (origin);
          \foreach \i in {0,1,2,3,4,5}
  {
    \path  (origin)++(60:0.6*\i cm)  coordinate (a\i);
    \path (origin)++(0:0.6*\i cm)  coordinate (b\i);
     \path (origin)++(-60:0.6*\i cm)  coordinate (c\i);
    }
  
      \path(3,0) coordinate (origin);
          \foreach \i in {0,1,2,3,4,5}
  {
    \path  (origin)++(120:0.6*\i cm)  coordinate (d\i);
    \path (origin)++(180:0.6*\i cm)  coordinate (e\i);
     \path (origin)++(-120:0.6*\i cm)  coordinate (f\i);
    }

  \foreach \i in {0,1,2,3,4,5}
  {
    \draw[gray, densely dotted] (a\i)--(b\i);
        \draw[gray, densely dotted] (c\i)--(b\i);
    \draw[gray, densely dotted] (d\i)--(e\i);
        \draw[gray, densely dotted] (f\i)--(e\i);
            \draw[gray, densely dotted] (a\i)--(d\i);
                \draw[gray, densely dotted] (c\i)--(f\i);
 
     }
  \draw[very thick, magenta] (0,0)--++(0:3) ;
    \draw[very thick, cyan] (3,0)--++(120:3) coordinate (hi);
        \draw[very thick, darkgreen] (hi)--++(-120:3) coordinate  (hi);

        \draw[very thick, darkgreen] (hi)--++(-60:3) coordinate  (hi);

    \draw[very thick, cyan] (hi)--++(60:3) coordinate (hi);

      \path(0,0)--++(-60:5*0.6)--++(120:2*0.6)--++(0:0.6) coordinate (hi);
     \path(0,0)--++(0:4*0.6)--++(120:3*0.6)           coordinate(hi2) ; 
     \path(0,0)--++(0:4*0.6)          coordinate(hi3) ;

          \path(hi)  --++(120:0.6)
           coordinate(step1) 
          --++(0:0.6)
                     coordinate(step2) 
      --++(120:0.6)
                 coordinate(step3) 
                 --++(0:0.6)           coordinate(step4) 
      --++(120:0.54) 
                 coordinate(step5)  --++(0:0.62) 
                 coordinate(step6)   ;

        \draw[ thick]    (hi)  to [out=90,in=-30]  (step1) 
          to [out=-30,in=-150]
                              (step2) 
 to [out=90,in=-30] 
            (step3) 
                     to [out=-30,in=-150]       (step4) 
  to [out=90,in=-30] 
             (step5)       to [out=-30,in=-150]   (step6) to [out=80,in=-15] (hi2)  ;

    \path(hi2)  --++(-120:0.6)
           coordinate(step1) 
          --++(0:0.6)
                     coordinate(step2) 
      --++(-120:0.6)
                 coordinate(step3) 
                 --++(0:0.6)           coordinate(step4) 
      --++(-120:0.56)    coordinate    (step5)        --++(0:0.54) 
                 coordinate(step6)   ;

               \path(0,0)--++(0:4*0.6)--++(120:2.85*0.6)           coordinate(hi4) ;

   \draw[ thick,->]    (hi2)  to [out=-90,in=30]  (step1) 
          to [out=30,in=150]
                              (step2) 
 to [out=-90,in=30] 
            (step3) 
                     to [out=30,in=150]       (step4) 
  to [out=-90,in=30] 
             (step5)       to [out=30,in=150]   (step6) 
              to [out=110,in=-50] (hi4)  ;  ;

 \end{tikzpicture}\end{minipage}
  \qquad\quad 
  \begin{minipage}{2.6cm}\begin{tikzpicture}[scale=0.65] 
    
    \path(0,0) coordinate (origin);
          \foreach \i in {0,1,2,3,4,5}
  {
    \path  (origin)++(60:0.6*\i cm)  coordinate (a\i);
    \path (origin)++(0:0.6*\i cm)  coordinate (b\i);
     \path (origin)++(-60:0.6*\i cm)  coordinate (c\i);
    }
  
      \path(3,0) coordinate (origin);
          \foreach \i in {0,1,2,3,4,5}
  {
    \path  (origin)++(120:0.6*\i cm)  coordinate (d\i);
    \path (origin)++(180:0.6*\i cm)  coordinate (e\i);
     \path (origin)++(-120:0.6*\i cm)  coordinate (f\i);
    }

  \foreach \i in {0,1,2,3,4,5}
  {
    \draw[gray, densely dotted] (a\i)--(b\i);
        \draw[gray, densely dotted] (c\i)--(b\i);
    \draw[gray, densely dotted] (d\i)--(e\i);
        \draw[gray, densely dotted] (f\i)--(e\i);
            \draw[gray, densely dotted] (a\i)--(d\i);
                \draw[gray, densely dotted] (c\i)--(f\i);
 
     }
  \draw[very thick, magenta] (0,0)--++(0:3) ;
    \draw[very thick, cyan] (3,0)--++(120:3) coordinate (hi);
        \draw[very thick, darkgreen] (hi)--++(-120:3) coordinate  (hi);

        \draw[very thick, darkgreen] (hi)--++(-60:3) coordinate  (hi);

    \draw[very thick, cyan] (hi)--++(60:3) coordinate (hi);

      \path(0,0)--++(-60:5*0.6)--++(120:2*0.6)--++(0:0.6) coordinate (hi);
     \path(0,0)--++(0:4*0.6)--++(120:3*0.6)           coordinate(hi2) ; 
     \path(0,0)--++(0:4*0.6)          coordinate(hi3) ;

          \path(hi)  --++(120:0.6)
           coordinate(step1) 
          --++(0:0.6)
                     coordinate(step2) 
      --++(120:0.6)
                 coordinate(step3) 
                 --++(0:0.6)           coordinate(step4) 
      --++(120:0.54) 
                 coordinate(step5)  --++(0:0.62) 
                 coordinate(step6)   ;

           \path(0,0)--++(0:4*0.6)--++(120:2*0.6)           coordinate(hin) ; 

                     \path(0,0)--++(0:4*0.6)--++(120:2.85*0.6)           coordinate(hi4) ; 

        \draw[ thick,->]    (hi)  to [out=90,in=-30]  (step1) 
          to [out=-30,in=-150]
                              (step2) 
 to [out=90,in=-30] 
            (step3) 
                     to [out=-30,in=-150]       (step4) 
  to [out=90,in=-30] 
             (step5)       to [out=-30,in=-150]   (step6) 
              (step6) 
              to [out=100,in=-35] (hi4)  ;

    \end{tikzpicture}\end{minipage}
$$
\caption{    Fix $\ell=1$ and $h=3$ and $e=5$ and $\al=\varepsilon_3-\varepsilon_1$
 (so that $\exx=3$).  
We depict the paths ${\sf P}_{\al}\otimes {\sf P}_{\al}^\flat$ 
and ${\sf P}_{\emp}  \otimes {\sf P}_{\al} $  (although we do not depict the determinant path). 
The corresponding  fork generator is pictured in 
 \cref{reducedeasfjkdhklfghsdlkfghsdfgkjsdhfglsdifjghdsfgksdfgjhsdfglg}. }
\label{pathbendingfork}
\end{figure}

\subsection{Light leaves for the Bott--Samelson truncation}
We now rewrite the truncated basis of  \cref{cellularitybreedscontempt2222223323232}  in terms of the Bott--Samelson generators (thus showing that these are, indeed, generators of the truncated algebra).  
 Of course, the idempotent of \cref{redue} is specifically chosen so that the truncated algebra 
$$
     {\sf f}_{n,\sigma} (\mathcal{H}^\sigma_n/ \mathcal{H}^\sigma_n {\sf y}_{\underline{\tau} }\mathcal{H}^\sigma_n )      {\sf f}_{n,\sigma} 
  $$  
has basis  indexed by the  (sub)set of alcove-tableaux (and this basis is simply obtained from that of  \cref{cellularitybreedscontempt}  by truncation).  
It only remains to illustrate how the  reduced-path-vectors can be chosen  to mirror the construction of 
 paths  in $\Std_{n,\sigma}(\la)$ via  concatenation.  

  We can extend a path    $\SSTT'\in\Std_{n,\sigma}(\la)$   to obtain a new path $\SSTT$ in one of three possible ways 
$$
\SSTT=\SSTT'\otimes \SSTP_\al 	\qquad 	\SSTT=\SSTT'\otimes  \reflectpath   		\qquad \SSTT=\SSTT'\otimes \SSTP_\emptyset 
$$
for some  $\al \in \Pi$. 
  The first two cases each subdivide into a further two cases based on whether $\al$ is an upper or lower wall of the alcove containing $\la$.    These four cases are pictured in \cref{upanddown} (for $\SSTS_\emptyset$ we refer the reader to \cref{figure1}).  
  Any two reduced paths $ \SSTP_{\w} , \SSTP_{\underline{v}}\in \Std_{n,\sigma} (\la)$  can be obtained from one another 
by some iterated application of hexagon, adjustment, and commutativity permutations.
    We let 
 $${\sf rex}_{\SSTP_{\w} }^{\SSTP_{\underline{v}}  }  $$ denote the corresponding path-morphism in the  
   algebras $\mathcal{H}_n^\sigma/\mathcal{H}_n^\sigma{\sf y}_\aatchpair  \mathcal{H}_n^\sigma$ (so-named as they permute  reduced expressions).  
   In the following construction, we will assume that 
   the elements $c^{\SSTS'}_{\SSTT'}$  exist for any choice of
   reduced path  $\SSTS'$.  
 We then extend $\SSTS'$ using one of the 
$   U_0, U_1,  D_0, $ and $D_1$ paths (which puts a restriction on the form of the reduced expression)
 but then use a ``rex move" to   
 obtain  cellular basis elements ``glued  together'' along an idempotent corresponding to an arbitrary reduced path.

\begin{figure}[ht!]
 
$$
   \begin{minipage}{2.6cm}\begin{tikzpicture}[scale=0.75] 
    
    \path(0,0) coordinate (origin);
          \foreach \i in {0,1,2,3,4,5}
  {
    \path  (origin)++(60:0.6*\i cm)  coordinate (a\i);
    \path (origin)++(0:0.6*\i cm)  coordinate (b\i);
     \path (origin)++(-60:0.6*\i cm)  coordinate (c\i);
    }
  
      \path(3,0) coordinate (origin);
          \foreach \i in {0,1,2,3,4,5}
  {
    \path  (origin)++(120:0.6*\i cm)  coordinate (d\i);
    \path (origin)++(180:0.6*\i cm)  coordinate (e\i);
     \path (origin)++(-120:0.6*\i cm)  coordinate (f\i);
    }

  \foreach \i in {0,1,2,3,4,5}
  {
    \draw[gray, densely dotted] (a\i)--(b\i);
        \draw[gray, densely dotted] (c\i)--(b\i);
    \draw[gray, densely dotted] (d\i)--(e\i);
        \draw[gray, densely dotted] (f\i)--(e\i);
            \draw[gray, densely dotted] (a\i)--(d\i);
                \draw[gray, densely dotted] (c\i)--(f\i);
 
     }
  \draw[very thick, magenta] (0,0)--++(0:3) ;
    \draw[very thick, cyan] (3,0)--++(120:3) coordinate (hi);
        \draw[very thick, darkgreen] (hi)--++(-120:3) coordinate  (hi);

        \draw[very thick, darkgreen] (hi)--++(-60:3) coordinate  (hi);

    \draw[very thick, cyan] (hi)--++(60:3) coordinate (hi);

\path                 (hi)  --++(180:0.6*4) 
 coordinate (hiyer);

      \path(0,0)--++(-60:5*0.6)--++(120:2*0.6)--++(0:0.6) coordinate (hi);
     \path(0,0)--++(0:4*0.6)--++(120:3*0.6)           coordinate(hi2) ; 
     \path(0,0)--++(0:4*0.6)          coordinate(hi3) ;

          \path(hi)  --++(120:0.6)
           coordinate(step1) 
          --++(0:0.6)
                     coordinate(step2) 
      --++(120:0.6)
                 coordinate(step3) 
                 --++(0:0.6)           coordinate(step4) 
      --++(120:0.6) 
                 coordinate(step5)  --++(0:0.6) 
                 coordinate(step6)   ;

           \path(0,0)--++(0:4*0.6)--++(-120:3*0.6) --++(25:0.1)          coordinate(hin) ;

        \draw[ thick,->]    (hi)  to    (step1) 
          to [out=0,in=-180]
                              (step2) 
 to   
            (step3) 
                     to [out=0,in=-180]       (step4) 
  to   
             (step5)       to [out=0,in=-180]   (step6) to [out=-90,in=45] (hin)  ;

 \end{tikzpicture}\end{minipage} 
 \quad  \begin{minipage}{2.6cm}\begin{tikzpicture}[scale=0.75] 
    
    \path(0,0) coordinate (origin);
          \foreach \i in {0,1,2,3,4,5}
  {
    \path  (origin)++(60:0.6*\i cm)  coordinate (a\i);
    \path (origin)++(0:0.6*\i cm)  coordinate (b\i);
     \path (origin)++(-60:0.6*\i cm)  coordinate (c\i);
    }
  
      \path(3,0) coordinate (origin);
          \foreach \i in {0,1,2,3,4,5}
  {
    \path  (origin)++(120:0.6*\i cm)  coordinate (d\i);
    \path (origin)++(180:0.6*\i cm)  coordinate (e\i);
     \path (origin)++(-120:0.6*\i cm)  coordinate (f\i);
    }

  \foreach \i in {0,1,2,3,4,5}
  {
    \draw[gray, densely dotted] (a\i)--(b\i);
        \draw[gray, densely dotted] (c\i)--(b\i);
    \draw[gray, densely dotted] (d\i)--(e\i);
        \draw[gray, densely dotted] (f\i)--(e\i);
            \draw[gray, densely dotted] (a\i)--(d\i);
                \draw[gray, densely dotted] (c\i)--(f\i);
 
     }
  \draw[very thick, magenta] (0,0)--++(0:3) ;
    \draw[very thick, cyan] (3,0)--++(120:3) coordinate (hi);
        \draw[very thick, darkgreen] (hi)--++(-120:3) coordinate  (hi);

        \draw[very thick, darkgreen] (hi)--++(-60:3) coordinate  (hi);

    \draw[very thick, cyan] (hi)--++(60:3) coordinate (hi);

\path                 (hi)  --++(180:0.6*4) 
 coordinate (hiyer);
      
      \path(0,0)--++(-60:5*0.6)--++(120:2*0.6)--++(0:0.6) coordinate (hi);
     \path(0,0)--++(0:4*0.6)--++(120:3*0.6)           coordinate(hi2) ; 
     \path(0,0)--++(0:4*0.6)          coordinate(hi3) ;

          \path(hi)  --++(120:0.6)
           coordinate(step1) 
          --++(0:0.6)
                     coordinate(step2) 
      --++(120:0.6)
                 coordinate(step3) 
                 --++(0:0.6)           coordinate(step4) 
      --++(120:0.6) 
                 coordinate(step5)  --++(0:0.6) 
                 coordinate(step6)   ;    
     
                \path(0,0)--++(0:4*0.6)--++(120:2*0.6)           coordinate(hin) ; 

                     \path(0,0)--++(0:4*0.6)--++(120:2.85*0.6)           coordinate(hi4) ; 

        \draw[ thick,->]    (hi)  to    (step1) 
          to [out=0,in=-180]
                              (step2) 
 to   
            (step3) 
                     to [out=0,in=-180]       (step4) 
  to   
             (step5)       to [out=0,in=-180]   (step6) 
              (step6) 
              to [out=100,in=-35] (hi4)  ;

 \end{tikzpicture}\end{minipage}
\qquad
  \begin{minipage}{2.6cm}\begin{tikzpicture}[xscale=0.75,yscale=-0.75] 
    
    \path(0,0) coordinate (origin);
          \foreach \i in {0,1,2,3,4,5}
  {
    \path  (origin)++(60:0.6*\i cm)  coordinate (a\i);
    \path (origin)++(0:0.6*\i cm)  coordinate (b\i);
     \path (origin)++(-60:0.6*\i cm)  coordinate (c\i);
    }
  
      \path(3,0) coordinate (origin);
          \foreach \i in {0,1,2,3,4,5}
  {
    \path  (origin)++(120:0.6*\i cm)  coordinate (d\i);
    \path (origin)++(180:0.6*\i cm)  coordinate (e\i);
     \path (origin)++(-120:0.6*\i cm)  coordinate (f\i);
    }

  \foreach \i in {0,1,2,3,4,5}
  {
    \draw[gray, densely dotted] (a\i)--(b\i);
        \draw[gray, densely dotted] (c\i)--(b\i);
    \draw[gray, densely dotted] (d\i)--(e\i);
        \draw[gray, densely dotted] (f\i)--(e\i);
            \draw[gray, densely dotted] (a\i)--(d\i);
                \draw[gray, densely dotted] (c\i)--(f\i);
 
     }
  \draw[very thick, magenta] (0,0)--++(0:3) ;
    \draw[very thick, cyan] (3,0)--++(120:3) coordinate (hi);
        \draw[very thick, darkgreen] (hi)--++(-120:3) coordinate  (hi);

        \draw[very thick, darkgreen] (hi)--++(-60:3) coordinate  (hi);

    \draw[very thick, cyan] (hi)--++(60:3) coordinate (hi);

\path                 (hi)  --++(180:0.6*4) 
 coordinate (hiyer);
      
      \path(0,0)--++(-60:5*0.6)--++(120:2*0.6)--++(0:0.6) coordinate (hi);
     \path(0,0)--++(0:4*0.6)--++(120:3*0.6)           coordinate(hi2) ; 
     \path(0,0)--++(0:4*0.6)          coordinate(hi3) ;

          \path(hi)  --++(120:0.6)
           coordinate(step1) 
          --++(0:0.6)
                     coordinate(step2) 
      --++(120:0.6)
                 coordinate(step3) 
                 --++(0:0.6)           coordinate(step4) 
      --++(120:0.6) 
                 coordinate(step5)  --++(0:0.6) 
                 coordinate(step6)   ;    
     
                \path(0,0)--++(0:4*0.6)--++(120:2*0.6)           coordinate(hin) ; 

                     \path(0,0)--++(0:4*0.6)--++(120:2.85*0.6)           coordinate(hi4) ; 

        \draw[ thick,->]    (hi)  to    (step1) 
          to [out=0,in=-180]
                              (step2) 
 to   
            (step3) 
                     to [out=0,in=-180]       (step4) 
  to   
             (step5)       to [out=0,in=-180]   (step6) 
              (step6) 
              to [out=100,in=-35] (hi4)  ;

 \end{tikzpicture}\end{minipage}
\qquad
   \begin{minipage}{2.6cm}\begin{tikzpicture}[xscale=0.75,yscale=-0.75] 
    
    \path(0,0) coordinate (origin);
          \foreach \i in {0,1,2,3,4,5}
  {
    \path  (origin)++(60:0.6*\i cm)  coordinate (a\i);
    \path (origin)++(0:0.6*\i cm)  coordinate (b\i);
     \path (origin)++(-60:0.6*\i cm)  coordinate (c\i);
    }
  
      \path(3,0) coordinate (origin);
          \foreach \i in {0,1,2,3,4,5}
  {
    \path  (origin)++(120:0.6*\i cm)  coordinate (d\i);
    \path (origin)++(180:0.6*\i cm)  coordinate (e\i);
     \path (origin)++(-120:0.6*\i cm)  coordinate (f\i);
    }

  \foreach \i in {0,1,2,3,4,5}
  {
    \draw[gray, densely dotted] (a\i)--(b\i);
        \draw[gray, densely dotted] (c\i)--(b\i);
    \draw[gray, densely dotted] (d\i)--(e\i);
        \draw[gray, densely dotted] (f\i)--(e\i);
            \draw[gray, densely dotted] (a\i)--(d\i);
                \draw[gray, densely dotted] (c\i)--(f\i);
 
     }
  \draw[very thick, magenta] (0,0)--++(0:3) ;
    \draw[very thick, cyan] (3,0)--++(120:3) coordinate (hi);
        \draw[very thick, darkgreen] (hi)--++(-120:3) coordinate  (hi);

        \draw[very thick, darkgreen] (hi)--++(-60:3) coordinate  (hi);

    \draw[very thick, cyan] (hi)--++(60:3) coordinate (hi);

\path                 (hi)  --++(180:0.6*4) 
 coordinate (hiyer);

      \path(0,0)--++(-60:5*0.6)--++(120:2*0.6)--++(0:0.6) coordinate (hi);
     \path(0,0)--++(0:4*0.6)--++(120:3*0.6)           coordinate(hi2) ; 
     \path(0,0)--++(0:4*0.6)          coordinate(hi3) ;

          \path(hi)  --++(120:0.6)
           coordinate(step1) 
          --++(0:0.6)
                     coordinate(step2) 
      --++(120:0.6)
                 coordinate(step3) 
                 --++(0:0.6)           coordinate(step4) 
      --++(120:0.6) 
                 coordinate(step5)  --++(0:0.6) 
                 coordinate(step6)   ;

           \path(0,0)--++(0:4*0.6)--++(-120:3*0.6) --++(25:0.1)          coordinate(hin) ;

        \draw[ thick,->]    (hi)  to    (step1) 
          to [out=0,in=-180]
                              (step2) 
 to   
            (step3) 
                     to [out=0,in=-180]       (step4) 
  to   
             (step5)       to [out=0,in=-180]   (step6) to [out=-90,in=45] (hin)  ;

 \end{tikzpicture}\end{minipage} 
 $$
\caption{The first (respectively last) two paths   are  $\SSTS_\al$ and $\reflectpath $ originating in an alcove with $\al$ labelling an upper (respectively lower) wall. 
Here we take the convention that the origin is below the pink hyperplane.  
The degrees of these paths are $  1, 0,  0, -1$ respectively.  
We call these paths $  U_0, U_1,  D_0, $ and $D_1$ respectively.  
}
 \label{upanddown}
\end{figure}

\renewcommand{\SSTS}{\SSTP}
   \begin{defn}\label{twins} 
Suppose that $\la $ belongs to an alcove which has a hyperplane labelled by $\al$ as an   upper alcove wall.  
Let 
   $\SSTT'\in\Std_{n,\sigma}(\la)$. 
 If $\SSTT= \SSTT' \otimes \SSTP_\al$ then we 
    inductively define  
\begin{align*}
   &c^{\SSTT}_\SSTS= 
 (c^{\SSTT'}_{\SSTS'} \;\otimes \; e_{\SSTP_\al})
  {\sf rex}^{\SSTS'\otimes \SSTP_\al}_{\SSTS}. 
\intertext{
If $\SSTT= \SSTT' \otimes  \reflectpath  $ then
we  inductively define  
}  
&c^{\SSTT}_\SSTS= 
 (c^{\SSTT'}_{\SSTS'} \;\otimes \; {\sf spot}^{\al}_\emp)
  {\sf rex}^{\SSTS'\otimes \SSTP_\emp}_{\SSTS}. 
\end{align*}
Now suppose that $\la $ belongs to an alcove which has a hyperplane labelled by $\al$ as a lower alcove wall. 
Thus we can choose $ \SSTS_{\underline{v}} \otimes \SSTP_\al=\SSTS'\in\Std(\la) $.  
For 
$\SSTT= \SSTT' \otimes \SSTP_\al$, 
we 
   inductively define  
  \begin{align*}  c^{\SSTT}_\SSTS=  &%
\big(  c_{\SSTS'}^{\SSTT'} 
\otimes e_{\SSTP_\al}\big)   
\big( 
{\sf e} _ {\SSTS_{\underline{v}}}\otimes
(   {\sf fork}^{\al\al}_{\al\emp} \circ {\sf spot}^\al_\emp  )\big) 
{\sf rex}_{		
  \SSTS  }^{\SSTS_{\underline{v} \emp\emp}}
\end{align*}
  and if $\SSTT= \SSTT' \otimes  \reflectpath  $ then 
  then
we  
  inductively define    
\begin{align*}
 c^{\SSTT}_\SSTS=  
 \big(   c_{\SSTS'}^{\SSTT'} \otimes e_{\SSTP_\al}\big)
 \big( { e}_{\SSTS_{\underline{v}}}\otimes {\sf fork}^{\al\al}_{\al\emp}\big) 
{\sf rex}_{	  \SSTS  }^{ \SSTS_{\underline{v}\al \emp}} .  
 \end{align*}
  \end{defn}

\begin{thm}[The Libedinsky--Williamson  light leaves basis] \label{Theorembbyanyothername}
Given weakly increasing $\sigma \in \ZZ^\ell$, we let    $\underline{\tau}= (\tau_0,\dots,\tau_{\ell-1})  \in \NN^\ell$
      be such that  $\tau_m\leq   \sigma_{m+1}-\sigma_{m} $ for $0\leq m< \ell-1$ and  $\tau_{\ell-1}<  e+\sigma_0-\sigma_{\ell-1}  $.  
Suppose that  
 $n$ is divisible by $\aatch  $.  
For each   $\la \in \mathscr{P}_{\underline{\tau}}(n,\sigma)$  we fix an arbitrary 
reduced path   $
 \SSTP_\la \in \Std_{n,\sigma}(\la )
 $.   
The algebra 
  $ {\sf f}_{n,\sigma}(\mathcal{H}^\sigma_n/ \mathcal{H}^\sigma_n {\sf y}_\aatchpair  \mathcal{H}^\sigma_n) {\sf f}_{n,\sigma}$ is   quasi-hereditary   with  graded  integral cellular basis  
 $$
 \{ c^{\SSTS}_{\SSTP_\la}  
   c^{\SSTP_\la}_\SSTT
 \mid
  \SSTS,\SSTT \in \Std_{n,\sigma}(\la ), \la  \in  \mathscr{P}_{\underline{\tau}}(n ,\sigma)  \}  
 $$
 with respect to the    ordering $\;\psucc $ on  $ \mathscr{P}_{\underline{\tau}} (n )$,  the anti-involution $\ast$ given by flipping a diagram through the horizontal axis and the map $\deg :   \Std_{n,\sigma}  (\la ) \to \ZZ$.  
\end{thm}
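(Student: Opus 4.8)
The plan is to deduce Theorem~\ref{Theorembbyanyothername} from the already-established light leaves basis of Theorem~\ref{cellularitybreedscontempt2222223323232} (equivalently \cref{cellularitybreedscontempt22222233232328787}) by truncation, together with a bookkeeping argument showing that the truncated basis elements $\Upsilon^\SSTS_{\underline{\SSTP}_\SSTS}\Upsilon_\SSTT^{\underline{\SSTP}_\SSTT}$ can be rewritten, for the \emph{specific} reduced-path-vectors dictated by the concatenation combinatorics of $\Std_{n,\sigma}(\la)$, as the products $c^\SSTS_{\SSTP_\la}c^{\SSTP_\la}_\SSTT$ of \cref{twins}. First I would observe that, by definition of ${\sf f}_{n,\sigma}$ in \eqref{redue}, the truncated algebra ${\sf f}_{n,\sigma}(\algebra){\sf f}_{n,\sigma}$ has a basis obtained from the cellular basis of \cref{cellularitybreedscontempt22222233232328787} by keeping exactly those basis vectors $\Upsilon^\SSTS_{\SSTQ_\la}\Upsilon^{\SSTQ_\la}_\SSTT$ with $\SSTS,\SSTT\in\Std_{n,\sigma}(\la)$ and $\la\in\mathscr{P}_{\underline\tau}(n,\sigma)$ --- this is the standard fact that ${\sf f}A{\sf f}$ inherits a cellular structure from a cellular algebra $A$ when ${\sf f}$ is a sum of a subset of the cellular idempotents $\{e_\SSTS\}$, with cell datum restricted to the relevant $\la$ and $\SSTS$; here one uses that $e_{\SSTS}$ for $\SSTS$ in the truncation set is exactly a summand of ${\sf f}_{n,\sigma}$ and $e_{\SSTS}{\sf f}_{n,\sigma}=e_\SSTS={\sf f}_{n,\sigma}e_\SSTS$. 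Since each layer of the cell chain of $\algebra$ indexed by $\la\in\mathscr{P}_{\underline\tau}(n,\sigma)$ contains the idempotent $e_{\SSTP_\la}$ (by \cref{alemmmerd}, a reduced path generates its cell-layer), which lies in ${\sf f}_{n,\sigma}\algebra{\sf f}_{n,\sigma}$, the truncated algebra is quasi-hereditary as well.

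Next I would make the choice of reduced-path-vectors in Theorem~\ref{cellularitybreedscontempt2222223323232} \emph{compatible with the concatenation recursion} of \cref{twins}: for $\SSTS=\SSTS'\otimes X$ with $X\in\{\SSTP_\al,\reflectpath,\SSTP_\emptyset\}$ one takes $\underline\SSTP_\SSTS$ to be $\underline\SSTP_{\SSTS'}$ followed by the sequence of reduced paths prescribed in the definition of $c^\SSTS_{\SSTP_\la}$ (i.e. $\SSTP_{\SSTS',n-b_\al h}\otimes(\text{partial paths through }X)$ in the ``upper wall'' cases, and the appropriately contextualised/forked versions in the ``lower wall'' cases), with final term $\SSTP_{\SSTS,n}=\SSTP_\la$. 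The content of the theorem is then that, with these choices, $\Upsilon^\SSTS_{\underline\SSTP_\SSTS}=c^\SSTS_{\SSTP_\la}$ up to a nonzero scalar and elements of the ideal $\mathscr{H}^{\succ\la}$. This I would prove by induction on $n$ (on the number of concatenation steps), checking the four cases of \cref{twins} one at a time: in each case the modified branching coefficient $d_p(\SSTS,\underline\SSTP_\SSTS)=\Upsilon^{\SSTP_{\SSTS,p-1}\boxtimes\eps_{i_p}}_{\SSTP_{\SSTS,p}}$ associated to the single step added by $X$ must be identified, via \cref{firstupsil} and the explicit path pictures in \cref{figure1,pathbendingforkspot,pathbendingfork,upanddown}, with the corresponding tensor factor $e_{\SSTP_\al}$, ${\sf spot}^\al_\emp$, ${\sf fork}^{\al\al}_{\al\emp}$ (composed with a spot in the lower-wall $\SSTP_\al$ case), precomposed with the ``rex move'' ${\sf rex}^{\,\cdot}_\SSTS$ that converts between the reduced path chosen inside the recursion and the arbitrary reduced path in $\underline\SSTP_\SSTS$. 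The rex moves are themselves products of ${\sf adj}$, ${\sf hex}$, ${\sf com}$ by the discussion preceding \cref{twins} (any two reduced paths differ by adjustment/hexagon/commutativity moves, this being the statement that reduced expressions in $\widehat{\mathfrak S}_h$ are connected by braid and commutation relations plus the trivial $s_i1=1s_i$), and their being well-defined \emph{modulo $\mathscr{H}^{\succ\la}$} is exactly \cref{alemmmerd}.

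Having matched basis elements, it remains to note that the degree function matches: $\deg(c^\SSTS_{\SSTP_\la})=\deg(\SSTS)$ in the sense of \cref{Soergeldegreee}, which follows from additivity of the path-degree under concatenation together with the known degrees $+1,0,0,-1$ of the $U_0,U_1,D_0,D_1$ steps recorded in \cref{upanddown} and degree $0$ of $\SSTP_\emptyset$, matching the degrees of ${\sf spot}$ ($+1$), $e_{\SSTP_\al}$ ($0$), and ${\sf fork}$ ($-1$) recorded in \cref{KLRSPOTSECTION}; the rex moves and adjustments are degree $0$. The anti-involution $\ast$ is the one already fixed on $\algebra$, which on the truncated algebra is visibly ``flip through the horizontal axis'' since $\Upsilon^\SSTT_\SSTS{}^\ast=\Upsilon_\SSTT^\SSTS$ by construction (\cref{firstupsil}), so $(c^\SSTS_{\SSTP_\la})^\ast=c^{\SSTP_\la}_\SSTS$ by functoriality of $\ast$ under $\otimes$ and composition. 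The cellularity axioms (that $a\cdot c^\SSTS_{\SSTP_\la}c^{\SSTP_\la}_\SSTT\equiv\sum_{\SSTU}r_{\SSTU}(a)\,c^\SSTU_{\SSTP_\la}c^{\SSTP_\la}_\SSTT$ modulo $\succ\la$, with coefficients independent of $\SSTT$) are inherited directly from the corresponding axioms for the basis of \cref{cellularitybreedscontempt2222223323232} after truncation, since the truncation idempotent ${\sf f}_{n,\sigma}$ is $\ast$-fixed and commutes with the cell-ideal filtration.

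The main obstacle I expect is the case analysis identifying the modified branching coefficient of a single concatenation step with the named Soergel generator: this requires reading off, from the reduced expression $\w^{\SSTP_{\SSTS,p-1}\boxtimes\eps_{i_p}}_{\SSTP_{\SSTS,p}}$ produced by the combinatorial recipe of \cref{choices!!}, that it literally equals (after the sign twist) the KLR diagram drawn in Figures~\ref{reducedeg}, \ref{reducedeasfjkdhklfghsdlkfghsdfgkjsdhfglsdifjghdsfgksdfgjhsdfglg}, etc.\ --- in particular handling the ``lower wall'' cases where the step through $X$ is \emph{not} the last step of $\SSTP_\la$ and one must commute the new step past an already-present $\SSTP_\al$ using a fork, which is where the extra ${\sf fork}^{\al\al}_{\al\emp}\circ{\sf spot}^\al_\emp$ (resp.\ ${\sf fork}^{\al\al}_{\al\emp}$) factor and the ${\sf rex}$ to $\SSTS_{\underline v\emp\emp}$ (resp.\ $\SSTS_{\underline v\al\emp}$) come from. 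Verifying this compatibility is essentially a careful unwinding of \cref{base,thepathwewatn,choices!!} against \cref{twins}, and is the technical heart; everything else is formal transport of structure along truncation.
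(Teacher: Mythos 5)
Your proposal is correct and follows essentially the same route as the paper: both arguments proceed by inductively constructing a reduced path vector compatible with the recursion of \cref{twins} so that $c^\SSTS_{\SSTP_\la}=\Upsilon^\SSTS_{\underline\SSTP_\SSTS}$, incorporating the ``rex'' move into the final branching coefficient, and then specialise \cref{cellularitybreedscontempt2222223323232} and truncate by ${\sf f}_{n,\sigma}$. Your write-up spells out a few details the paper leaves implicit (inheritance of cellularity under idempotent truncation, the degree and $\ast$-compatibility checks), but the core argument is the same.
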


\begin{proof}
Suppose that $\SSTQ, \SSTU \in \Std_{k,\sigma}( \nu)$ with $\SSTQ$ reduced and $k<n$ divisible by $h$.  
 By induction, we may assume that 
 $c^{\SSTQ}_{\SSTU} =\Upsilon ^{\underline{\SSTP}_{\SSTU}}_{\SSTU} $ for some reduced path vector ${\underline{\SSTP}_{\SSTU}}$ such that 
 ${\underline{\SSTP}_{\SSTU}}=(\SSTP_{\SSTU,0},\SSTP_{\SSTU,1},\dots , \SSTP_{\SSTU,k})$ with 
$ \SSTP_{\SSTU,k}=\SSTQ$.  
 By \cref{cellularitybreedscontempt2222223323232} and our inductive assumption, the result holds for all $k<n$ divisible by $h$.   
 Now suppose that $\la \in \mathscr{P}_\aatchpair(n,\sigma)$ and that 
 $\la$ belongs to an alcove, $A_\la$, which has a hyperplane labelled by $\al$      and that $\mu = \la \cdot s_\al$.    
  We now   reconstruct the element $c^{\SSTS }_{\SSTT }$ in terms of the basis of modified branching coefficients (as in  \cref{cellularitybreedscontempt2222223323232}) with $\SSTS:=\SSTP_\la$ reduced and $\SSTT$  equal to either 
  $ \SSTU\otimes \SSTP_\al$ 
   or 
 $  \SSTU\otimes \SSTP_\al^\flat$.  
This amounts to defining a reduced path vector,
 $$
\underline{ \SSTP}_{{\SSTT}}= 
(\underline{ \SSTP}_{{\SSTU}}, 
 { \SSTP}_{\SSTT,k+1}, { \SSTP}_{\SSTT,k+2},\dots, 
 { \SSTP}_{{\SSTT,n}})
 $$
 for which 
   $c^{\SSTP}_{\SSTT} =\Upsilon ^{\underline{\SSTP}_{\SSTT}}_{\SSTT} $.  
To do this,  we  simply  set 
 $$  {\SSTP}_{\SSTT,j}
 =
 \begin{cases}
 (\SSTQ \otimes \SSTP_\al)\downarrow_{\leq j}	&\text{if }\SSTT= \SSTU \otimes \SSTP_\al \text{ and } k < j <n \\
 (\SSTQ \otimes \SSTP_\al^\flat)\downarrow_{\leq j}	&\text{if }\SSTT= \SSTU \otimes \SSTP^\flat_\al \text{ and } k < j <n \\ 
 \SSTP &\text{if }j=n.
 \end{cases} 
 $$
      To summarise: we incorporate the ``rex" move into the final branching coefficient (and all other branching coefficients are left unmodified).  Choosing the reduced path vectors in this fashion, we obtain the required basis as a special case of  \cref{cellularitybreedscontempt2222223323232} .  
  \end{proof}
   
    We have shown that we can write a basis for our algebra entirely in terms of the elements  $$
   e_{\SSTP_\al}, \quad  {\sf fork}_{\al\al}^{\al\emp},
 \quad
  {\sf spot}_{\al}^{\emp},
 \quad
  {\sf hex}_{\al\bet\al}^{\bet\al\bet},  
   \quad
  {\sf com}_{\bet\gam}^{\gam\bet},
   \quad e_{\SSTP_\emptyset}, \ \text{and} \ \
  {\sf adj}_{\al\emptyset}^{\emptyset\al}
 $$
  for $\al,\bet,\gam \in \Pi$ such that $\al$ and $\bet$ label an arbitrary pair  of non-commuting reflections and 
$\bet$ and $\gam$ label  an arbitrary pair  of commuting reflections.  
Thus we deduce the following:  

\begin{cor}
Theorem B of the introduction holds.  
\end{cor}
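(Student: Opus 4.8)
The plan is to deduce Theorem B as an immediate consequence of \cref{Theorembbyanyothername} together with the generator list recorded just before the statement of the corollary. The work is essentially bookkeeping: \cref{Theorembbyanyothername} produces an explicit cellular basis $\{c^{\SSTS}_{\SSTP_\la}c^{\SSTP_\la}_\SSTT\}$ of ${\sf f}_{n,\sigma}(\mathcal{H}^\sigma_n/\mathcal{H}^\sigma_n {\sf y}_\aatchpair \mathcal{H}^\sigma_n){\sf f}_{n,\sigma}$, and the inductive construction in \cref{twins} exhibits each basis element $c^{\SSTS}_{\SSTP_\la}$ (and hence each $c^{\SSTP_\la}_\SSTT$, via $\ast$) as a word in the elementary morphisms $e_{\SSTP_\al}$, $e_{\SSTP_\emptyset}$, ${\sf spot}^\emp_\al$, ${\sf fork}^{\al\al}_{\al\emp}$, and the ${\sf rex}$-moves, composed by horizontal concatenation ($\otimes$, equivalently $\boxtimes$) and vertical concatenation ($\circ$). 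So the only remaining point is that every ${\sf rex}$-move is itself a composite of horizontal and vertical concatenations of ${\sf hex}^{\bet\al\bet}_{\al\bet\al}$, ${\sf com}^{\gam\bet}_{\bet\gam}$, ${\sf adj}^{\emptyset\al}_{\al\emptyset}$ (and the idempotents $e_{\SSTP_\al}, e_{\SSTP_\emptyset}$).

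Concretely, I would argue as follows. First recall from the discussion in Section~\ref{geners} (the paragraph introducing ${\sf rex}_{\SSTP_{\w}}^{\SSTP_{\underline v}}$) that any two reduced paths $\SSTP_{\w}, \SSTP_{\underline v}\in\Std_{n,\sigma}(\la)$ differ by an iterated application of hexagon, adjustment, and commutativity permutations; this is the path-level incarnation of the fact that $\w$ and $\underline v$ are related by Coxeter braid relations in $\Shl$ together with the trivial $s_i 1 = 1 s_i$ adjustment. For a single such elementary move applied in a fixed ``window'' of the path, the corresponding KLR morphism is, by the definition of $\Upsilon$ via branching coefficients, the horizontal concatenation of an idempotent on the untouched part of the path with one of ${\sf hex}^{\bet\al\bet}_{\al\bet\al}$, ${\sf com}^{\gam\bet}_{\bet\gam}$, or ${\sf adj}^{\emptyset\al}_{\al\emptyset}$ on the window; then composing these windowed moves vertically realises the full ${\sf rex}$-move. (Here one should remark that the values of $b_\al$ may force the ``$\emptyset$-padding'' $\SSTP_{\emp-\empb}$ appearing in the definition of ${\sf hex}$, but this padding is exactly an idempotent $e_{\SSTP_\emptyset}^{\otimes k}$ horizontally concatenated on, so it stays within the claimed generating set.) Feeding this back into \cref{twins}, every $c^{\SSTS}_{\SSTP_\la}$ and $c^{\SSTP_\la}_\SSTT$ — and therefore every basis element — lies in the subalgebra generated by the seven listed morphisms under horizontal and vertical concatenation; since the basis spans the whole algebra, these morphisms generate ${\sf f}_{n,\sigma}(\mathcal{H}^\sigma_n/\mathcal{H}^\sigma_n {\sf y}_\aatchpair \mathcal{H}^\sigma_n){\sf f}_{n,\sigma}$, which is precisely Theorem~B.

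I would therefore write the proof of the corollary in one short paragraph: invoke \cref{Theorembbyanyothername} for the cellular basis, cite \cref{twins} for the expression of $c^\SSTS_{\SSTP_\la}$ in terms of $e_{\SSTP_\al}$, ${\sf spot}^\emp_\al$, ${\sf fork}^{\al\al}_{\al\emp}$ and ${\sf rex}$-moves, observe that $\ast$ (flip through the horizontal axis) converts these into expressions for $c^{\SSTP_\la}_\SSTT$ in the same generators (using that ${\sf spot}$ and ${\sf fork}$ are $\ast$-dual up to the already-listed morphisms, or simply adding ${\sf spot}$ and ${\sf fork}$ in both orientations does not enlarge the generating set since the reverse orientations are $\ast$ of the given ones), and finally reduce ${\sf rex}$-moves to ${\sf hex}$, ${\sf com}$, ${\sf adj}$ by the braid/adjustment decomposition above, noting the idempotent-padding point. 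The main obstacle — and it is a mild one — is making the ``windowed move = horizontal concatenation of an idempotent with a single generator'' claim fully precise for the hexagon case when $b_\al\neq b_\bet$, i.e.\ checking that the longer-versus-shorter bracketing $\SSTP_{\emp-\empb}\otimes\SSTP_{\bet\al\bet}$ vs.\ $\SSTP_{\al\bet\al}$ really does factor as ${\sf hex}$ concatenated with identities and not something subtler; this is exactly the content of the case analysis already set up in \cref{adjustgen}, so no new ideas are needed, only care with the concatenation conventions (right-bracketing, and that contextualised concatenation with a trivial $w$ is ordinary $\boxtimes$).
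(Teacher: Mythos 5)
Your proposal is correct and takes the same route as the paper, whose proof of the corollary is essentially the sentence immediately preceding it: invoke the cellular basis of \cref{Theorembbyanyothername}, unpack each basis element via \cref{twins} into composites of the idempotents, ${\sf spot}$, ${\sf fork}$ and ${\sf rex}$-moves, and note that ${\sf rex}$-moves factor through ${\sf hex}$, ${\sf com}$ and ${\sf adj}$ as recorded in the paragraph that introduces ${\sf rex}$. The padding subtlety you flag for $b_\al\neq b_\bet$ is already absorbed into the two-case definition of ${\sf hex}$ (together with horizontal concatenation by $e_{\SSTP_\emptyset}$), so your outline matches the paper's implicit argument.
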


\begin{Acknowledgements*}  The first and third authors thank the Institut Henri Poincar\'e for hosting us during the thematic trimester on representation theory.  
The first author was funded by EPSRC grant EP/V00090X/1 and the 
third author was funded by the Royal Commission for the Exhibition of 1851.
The authors would like to express their gratitude 
to the referee for their incredibly helpful comments and careful reading of the paper.    
 \end{Acknowledgements*}

\bibliographystyle{amsalpha}   
\bibliography{master} 
\end{document}